\newcommand\blfootnote[1]{%
  \begingroup
  \renewcommand\thefootnote{}\footnote{#1}%
  \addtocounter{footnote}{-1}%
  \endgroup
}
\newtheorem{theorem}{Theorem}[section]
\newtheorem{lemma}[theorem]{Lemma} 
\newtheorem{proposition}[theorem]{Proposition}
\newtheorem{corollary}[theorem]{Corollary}
\theoremstyle{definition}
\newtheorem{definition}[theorem]{Definition} 
\newtheorem{example}[theorem]{Example}
\theoremstyle{remark}
\newtheorem{remark}{Remark}
\newcommand{\C}{\mathbb{C}}
\newcommand{\R}{\mathbb{R}}
\newcommand{\N}{\mathbb{N}}
\newcommand{\Z}{\mathbb{Z}}
\def\Bsp{{\boldsymbol B}} 
\def\Xsp{{\boldsymbol X}}
\newcommand{\SO}{\textnormal{\textbf{S}}_{0}}
\newcommand{\SOprime}{\textnormal{\textbf{S}}'_{0}}
\newcommand{\SOdoubleprime}{\textnormal{\textbf{S}}''_{0}}
\newcommand{\ws}{weak$^{*}$}
\newcommand{\ghat}{\widehat{G}}
\newcommand{\TFelement}{\nu} 
\newcommand{\netparam}{\alpha} 
\newcommand{\Lp}[1]{\mathbf{L}^{#1}}
\newcommand{\Bil}{\mathbf{Bil}}
\newcommand{\Lin}{\mathbf{Lin}}
\newcommand{\Misp}{\mathbf{M}^1}
\newcommand{\bigsubseteq}{\mathlarger{\mathlarger{\subseteq}}}
\newcommand{\rotsubseteq}{\mathrel{{\rotatebox[origin=c]{270}{$\bigsubseteq$}}}}
\newcommand{\rotasubseteq}{\mathrel{{\rotatebox[origin=c]{45}{$\bigsubseteq$}}}}
\title{The inner kernel theorem for a certain Segal algebra}
\author{Hans G.\ Feichtinger\thanks{University of Vienna, Faculty of Mathematics, Vienna, Austria, ORCID: 0000-0002-9927-0742,  \mbox{E-mail: \protect\url{hans.feichtinger@univie.ac.at}}} \ and Mads S.\ Jakobsen\thanks{corresponding author, Center for Data and Computing in Natural Science CDCS, Deutsches Elektronen-Synchrotron DESY, Germany, ORCID: 0000-0003-0421-5343, \mbox{E-mail: \protect\url{mads.jakobsen@desy.de}}}}
\begin{document} 

\maketitle
\begin{abstract}
The Segal algebra $\SO(G)$ is well defined for arbitrary 
locally compact Abelian Hausdorff (LCA) groups $G$. It is a Banach space that exhibits a kernel theorem similar to the well-known Schwartz kernel theorem. 
Specifically, we call this 
characterization of the continuous linear operators from $\SO(G_1)$
to $\SOprime(G_2)$ by generalized functions in $\SOprime(G_1 \times G_2)$ the ``outer kernel theorem''. The main subject of this paper is to formulate what we call the ``inner kernel theorem''. This is the characterization of those linear operators 
that have kernels in $\SO(G_1 \times G_2)$. Such operators are regularizing -- in the sense that they map $\SOprime(G_1)$ 
into $\SO(G_2)$ in a $w^{*}$ to norm
continuous manner. A detailed functional
analytic treatment of these operators is given and applied to the case of
general LCA groups. This is done without the use of
Wilson bases, which have previously been employed for the case of elementary LCA 
groups. We apply our approach to describe natural laws of composition for operators that
imitate those of linear mappings via matrix multiplications. Furthermore, we detail how these operators approximate general operators (in a weak form). 
As a concrete example, we derive the widespread statement of engineers and physicists that pure frequencies ``integrate'' to a Dirac delta distribution in a mathematically justifiable way.
\end{abstract} 


\section{Introduction}

\blfootnote{Keywords: Feichtinger Algebra; Test functions; Generalized Functions; Kernel Theorem; Nuclear Operators; Time-frequency analysis}
\blfootnote{AMS Classification 2020: 41A65, 43A15, 43A25, 44A05, 46A11, 46F05, 46F10, 47B10, 47B34}

\noindent The focus of this paper is the kernel theorem associated with the Segal algebra $\SO(G)$. This space of functions was introduced by the first named author in \cite{fe81-2}.  
Given a locally compact Abelian Hausdorff (LCA) group $G$ we write $\ghat$ for its dual group, and for each $\omega\in \ghat$ we denote by 
$E_{\omega} f(t) = \omega(t) f(t), t\in G$  the \emph{modulation-} or \emph{frequency-shift} operator. We  define the set $\SO(G)$ via convolution ``$*$'' and the usual norm in $\Lp{1}$: 
\begin{equation}
\label{defSO1}
\SO(G) = \Big\{ f \in \Lp{1}(G) \, : \,  \int_{\ghat} \Vert E_{\omega} f * f\Vert_{1} \, d\omega < \infty \ \Big\}. 
\end{equation} 
Any non-zero function $g \in \SO(G)$ (also called \emph{window}) defines a norm on $\SO(G)$ via
\begin{equation}
\label{defSO2}
\Vert f \Vert_{\SO,g} = 
\Vert f \Vert_{\SO(G),g} := \int_{\ghat} \Vert E_{\omega} f * g\Vert_{1} \, d\omega,
\end{equation}
that turns $\SO$ into a Banach space.  For any non-zero funcions $g_{1}$ and $g_{2}$ in $\SO(G)$ these norms are pairwise equivalent and we therefore allow ourselves to simply write $\Vert  \cdot  \Vert_{\SO}$ without specifying the function $g$. 
The space $\SO(G)$ is a Fourier invariant Banach algebra under convolution and pointwise multiplication. 
The continuous linear functionals on this space form a space of \emph{generalized functions}.
Altogether they comprise the dual space 
 $\SOprime(G)$, which is a Banach space itself. The action $\sigma\in\SOprime(G)$ on a function $f\in \SO(G)$ is described by the bilinear form
\begin{equation}
\label{bilinSO2}
 ( \, \cdot \, , \, \cdot \, )_{\SO,\SOprime(G)}  \,  : \, \SO(G) \times \SOprime(G) \to \C, 
 \quad ( f, \sigma)_{\SO,\SOprime(G)} = \sigma(f).
\end{equation}  

Throughout the paper $\Bil(X\times Y,Z)$ is the space of bilinear and norm continuous operators from the normed space $X\times Y$ into the normed space $Z$. Similarly, $\Lin(X,Y)$ is the space of linear and norm continuous operators from $X$ into $Y$. The spaces $\Bil$ and $\Lin$ are endowed with their natural operator norms.

Using these spaces 
we can formulate the following result.
\begin{theorem}[Outer kernel theorem for $\SO$] \label{th:outer-kernel-theorem}
 For any two LCA groups $G_{1}$ and $G_{2}$ the following four Banach spaces 
 are naturally isomorphic:
\[ \SOprime(G_{1}\times G_{2}), \ \Bil(\SO(G_{1})\times \SO(G_{2}), \C) , \ \Lin(\SO(G_{1}),\SOprime(G_{2})) \ \text{and} \ \Lin(\SO(G_{2}),\SOprime(G_{1})). \] 
In particular, given any of the four $K \in \SOprime(G_{1}\times G_{2})$,
\[ A\in\Bil(\SO(G_{1})\times \SO(G_{2}), \C), \ T\in\Lin(\SO(G_{1}),\SOprime(G_{2})) \ \text{or} \ S\in\Lin(\SO(G_{2}),\SOprime(G_{1}))\] 
the others are uniquely determined by the following identity, valid for all $f^{(i)}\in\SO(G_{i})$, $i=1,2$: 
\[ (f^{(1)}\otimes f^{(2)}, K)_{\SO,\SOprime(G_{1}\times G_{2})} \ = \ A(f^{(1)},f^{(2)}) \ = \  ( f^{(2)},Tf^{(1)})_{\SO,\SOprime(G_{2})} \ = \ ( f^{(1)},S f^{(2)})_{\SO,\SOprime(G_{1})}. \]
\end{theorem}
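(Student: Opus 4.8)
The plan is to separate the four-fold isomorphism into two layers: a purely formal (``soft'') layer relating the three operator/bilinear spaces by currying, and a genuinely analytic layer identifying $\SOprime(G_{1}\times G_{2})$ with the bilinear forms, which rests on the tensor-product structure of $\SO$.

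First I would record the standard currying isomorphisms, valid for any normed spaces $X,Y$: a bounded bilinear form $A$ corresponds isometrically to the operator $T\in\textnormal{Lin}(X,Y')$ determined by $(y,Tx)=A(x,y)$, and symmetrically to $S\in\textnormal{Lin}(Y,X')$ via $(x,Sy)=A(x,y)$. Specializing to $X=\SO(G_{1})$, $Y=\SO(G_{2})$, so that $Y'=\SOprime(G_{2})$ and $X'=\SOprime(G_{1})$, this immediately produces the natural isometric isomorphisms between $\textnormal{Bil}(\SO(G_{1})\times\SO(G_{2}),\C)$, $\textnormal{Lin}(\SO(G_{1}),\SOprime(G_{2}))$ and $\textnormal{Lin}(\SO(G_{2}),\SOprime(G_{1}))$, with the three rightmost terms of the displayed identity agreeing by construction.

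The remaining step is the isomorphism $\SOprime(G_{1}\times G_{2})\cong\textnormal{Bil}(\SO(G_{1})\times\SO(G_{2}),\C)$. In one direction, given $\sigma$ I would set $A_{\sigma}(f^{(1)},f^{(2)})=(f^{(1)}\otimes f^{(2)},\sigma)_{\SO,\SOprime(G_{1}\times G_{2})}$; boundedness of $A_{\sigma}$ follows from the elementary-tensor estimate $\Vert f^{(1)}\otimes f^{(2)}\Vert_{\SO(G_{1}\times G_{2})}\le\Vert f^{(1)}\Vert_{\SO}\Vert f^{(2)}\Vert_{\SO}$ available in $\SO$. Injectivity of $\sigma\mapsto A_{\sigma}$ reduces to the density of the linear span of elementary tensors $f^{(1)}\otimes f^{(2)}$ in $\SO(G_{1}\times G_{2})$: a distribution annihilating all of them annihilates a dense subspace, hence vanishes.

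Surjectivity is where I expect the main obstacle to sit. Given $A\in\textnormal{Bil}(\SO(G_{1})\times\SO(G_{2}),\C)$, the prescription $\sigma(f^{(1)}\otimes f^{(2)})=A(f^{(1)},f^{(2)})$ extends linearly to the algebraic tensor product $\SO(G_{1})\otimes\SO(G_{2})$, and one must show this functional is continuous for the $\SO(G_{1}\times G_{2})$-norm so that, by the density above, it extends uniquely to some $\sigma\in\SOprime(G_{1}\times G_{2})$. The essential point is therefore that the $\SO(G_{1}\times G_{2})$-norm controls, up to a constant, the projective tensor norm on $\SO(G_{1})\otimes\SO(G_{2})$ — equivalently, that $\SO(G_{1}\times G_{2})$ is isomorphic to the projective tensor product $\SO(G_{1})\,\widehat{\otimes}\,\SO(G_{2})$. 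Once this tensor-factorization theorem for $\SO$ is in hand, the whole identification is exactly the universal property $(X\,\widehat{\otimes}\,Y)'\cong\textnormal{Bil}(X\times Y,\C)$ of the projective tensor product. I expect establishing (or invoking) this factorization of $\SO$ to carry the real weight of the argument; matching all four identifications against the single displayed identity, and checking that the isomorphisms are natural in $G_{1},G_{2}$, are then routine.
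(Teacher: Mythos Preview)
Your proposal is correct and follows the standard route. Note, however, that the paper does \emph{not} give its own proof of this theorem: it simply records the statement and refers to \cite{fe80,fegr92-1,feko98,ja19} for the proof. So there is no in-paper argument to compare against.

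That said, the approach you outline is exactly the one underlying those references. The currying step is, as you say, soft. The substantive analytic input is precisely the tensor factorization $\SO(G_{1}\times G_{2})\cong\SO(G_{1})\,\widehat{\otimes}\,\SO(G_{2})$, which the paper records as Lemma~\ref{le:SO-tensor-factorization} (with proof again deferred to \cite{fe81-2,ja19}). Once that lemma is in hand, your invocation of the universal property $(X\,\widehat{\otimes}\,Y)'\cong\textnormal{Bil}(X\times Y,\C)$ finishes the job. Your identification of where the ``real weight'' lies is accurate; just be aware that in this paper that weight is outsourced rather than carried.
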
 

The unique generalized function $K \in \SOprime(G_{1}\times G_{2})$  associated with $A, T$ or 
$S$ is called the \emph{kernel} of $A,T$ or $S$, respectively and we write $\kappa(A) = \kappa(T) = \kappa(S) = K$. The outer kernel theorem for $\SO$ was first announced in \cite{fe80}. Its proof can be found in, for example,  \cite{fegr92-1,feko98,ja19}. 

\vspace{2mm}  \noindent 
This paper considers the following question: 
\begin{enumerate}
\item[]\textit{Is there an analogue of Theorem \ref{th:outer-kernel-theorem} concerning operators that can be naturally identified with the functions in $\SO(G_{1}\times G_{2})$ (rather than its dual space $\SOprime(G_{1}\times G_{2})$)?} 
\end{enumerate}
This question has been considered and answered before in \cite{cofelu08} and \cite{feko98}, however not in the generality considered here (cf.\ Remark \ref{rem:2601c} following Theorem \ref{th:new-inner-kernel-theorem} below).
As is well known (and as we will explain in detail in Section \ref{sec:preliminaries}) there is a natural isomorphic copy of the Banach space of functions $\SO(G)$ inside its dual space $\SOprime(G)$. We are therefore also interested in the following question:

\begin{enumerate}
\item[] \textit{Among the operators in $\Lin(\SO(G_{1}),\SOprime(G_{2}))$,
how do we characterize those having a kernel $K \in \SOprime(G_{1}\times G_{2})$ which is induced by a function in $\SO(G_{1}\times G_{2})$?}
\end{enumerate}
As it turns out, these will in fact be operators that map $\SOprime(G_{1})$ into $\SO(G_{2})$ in a certain way. We therefore have the immediate follow-up question:
\begin{enumerate}
\item[] \textit{Given an operator in $\Lin(\SO(G_{1}),\SOprime(G_{2}))$ that has its kernel $K\in \SOprime(G_{1}\times G_{2})$ induced by a function in $\SO(G_{1}\times G_{2})$, how do we extend its domain from $\SO(G_{1})$ to all of $\SOprime(G_{1})$?}
\end{enumerate}
The main results of this paper, Theorem \ref{th:new-inner-kernel-theorem} and  Theorem \ref{th:1608a}, answer these questions. For the formulation of our results, we need two auxiliary spaces:

\begin{definition} \label{def:the-spaces}
For  LCA groups $G_{1}$ and $G_{2}$  
we define the following two sets of operators:
\begin{align*}
 \mathcal{A}(G_{1},G_{2}) & = \{ A \in \Bil(\SOprime(G_{1})\times \SOprime(G_{2}),\C) : \, A  \ \text{is \ws \ continuous in each coordinate} \, \}, \\
\mathcal{B}(G_{1},G_{2}) & = \{ T \in \Lin(\SOprime(G_{1}),\SO(G_{2})) \, : \, T \ \text{maps norm bounded \ws convergent nets in $\SOprime(G_{1})$} \\ & \quad \quad \text{into norm convergent nets in $\SO(G_{2})$} \, \}. \end{align*}
\end{definition}

In Section \ref{sec:proof} we prove that the spaces $\mathcal{A}(G_{1},G_{2})$ and $\mathcal{B}(G_{1},G_{2})$ are complete with respect to their natural subspace topologies. Furthermore, we shall show that all elements in $\mathcal{B}(G_{1},G_{2})$ are \emph{nuclear/trace class} (and thus, in particular, also \emph{compact}) operators from $\SOprime(G_{1})$ into $\SO(G_{2})$ (see Section \ref{sec:examples}). 

\noindent 
We are  now ready to  formulate our first main result: 
\begin{theorem}[Inner kernel theorem for $\SO$] \label{th:new-inner-kernel-theorem} 
For  LCA groups $G_{1}$ and $G_{2}$ the four Banach spaces 
\[ \SO(G_{1}\times G_{2}), \ \mathcal{A}(G_{1},G_{2}), \ \mathcal{B}(G_{1},G_{2}) \ \text{and} \ \mathcal{B}(G_{2},G_{1})\] 
are naturally isomorphic. In particular, if any  of the four
\[ K\in\SO(G_{1}\times G_{2}), \, A\in \mathcal{A}(G_{1},G_{2}), \, T \in \mathcal{B}(G_{1},G_{2}) \ \text{or} \ S\in \mathcal{B}(G_{2},G_{1})\]  
is given, then the others are uniquely determined such that, 
for all $\sigma^{(i)}\in\SOprime(G_{i})$, $i=1,2$,
\begin{equation} \label{eq:1402a}(K,\sigma^{(1)}\otimes \sigma^{(2)})_{\SO,\SOprime(G_{1}\times G_{2})} = A(\sigma^{(1)},\sigma^{(2)}) = (T\sigma^{(1)},\sigma^{(2)})_{\SO,\SOprime(G_{2})} = (S\sigma^{(2)},\sigma^{(1)})_{\SO,\SOprime(G_{1})}.\end{equation}
\end{theorem}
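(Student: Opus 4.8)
The plan is to realize all four spaces as isomorphic copies of $\SO(G_1\times G_2)$, with the genuinely new content being the identification $\SO(G_1\times G_2)\cong\mathcal{B}(G_1,G_2)$; the remaining identifications are comparatively soft. First I would dispose of the equivalences among $\mathcal{A}(G_1,G_2)$, $\mathcal{B}(G_1,G_2)$ and $\mathcal{B}(G_2,G_1)$. Given $A\in\mathcal{A}(G_1,G_2)$ and a fixed $\sigma^{(2)}\in\SOprime(G_2)$, the functional $\sigma^{(1)}\mapsto A(\sigma^{(1)},\sigma^{(2)})$ is \wsc{} on $\SOprime(G_1)$, hence, by the standard description of \wsc{} functionals on a dual space, is given by evaluation against a unique element $S\sigma^{(2)}\in\SO(G_1)$. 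This defines $S\in\textnormal{Lin}(\SOprime(G_2),\SO(G_1))$, and the \ws{} continuity of $A$ in its \emph{second} argument is exactly the regularizing property that places $S$ in $\mathcal{B}(G_2,G_1)$; symmetrically one obtains $T\in\mathcal{B}(G_1,G_2)$. Joint norm-boundedness of the resulting forms and operators is immediate from the respective operator norms, so these correspondences are isometric and mutually inverse; it remains to identify $\SO(G_1\times G_2)$ with, say, $\mathcal{B}(G_1,G_2)$.

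For the forward map I would send $K\in\SO(G_1\times G_2)$ to the operator $T_K$ determined by $(T_K\sigma^{(1)},\sigma^{(2)})_{\SO,\SOprime(G_2)}=(K,\sigma^{(1)}\otimes\sigma^{(2)})_{\SO,\SOprime(G_1\times G_2)}$, which is meaningful since $\sigma^{(1)}\otimes\sigma^{(2)}\in\SOprime(G_1\times G_2)$ by Theorem \ref{th:outer-kernel-theorem}. Concretely I would use the projective tensor factorization $\SO(G_1\times G_2)\cong\SO(G_1)\,\widehat{\otimes}\,\SO(G_2)$ (a standard structural property of the Segal algebra), writing $K=\sum_n f_n\otimes g_n$ with $f_n\in\SO(G_1)$, $g_n\in\SO(G_2)$ and $\sum_n\Vert f_n\Vert_{\SO}\Vert g_n\Vert_{\SO}<\infty$, so that $T_K\sigma^{(1)}=\sum_n\sigma^{(1)}(f_n)\,g_n$. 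Absolute convergence of this series in $\SO(G_2)$ yields boundedness and nuclearity of $T_K$ at once. The regularizing property is then a dominated-convergence argument for series: along a bounded \ws{} convergent net $\sigma^{(1)}_{\netparam}\to\sigma^{(1)}$ each term $\sigma^{(1)}_{\netparam}(f_n)\,g_n$ converges in norm, while the uniform bound on $\Vert\sigma^{(1)}_{\netparam}\Vert_{\SOprime}$ together with the summable majorant $\sum_n\Vert f_n\Vert_{\SO}\Vert g_n\Vert_{\SO}$ controls the tail uniformly, giving $T_K\sigma^{(1)}_{\netparam}\to T_K\sigma^{(1)}$ in $\SO(G_2)$-norm. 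Injectivity of $K\mapsto T_K$ follows from the density of elementary tensors and the defining identity evaluated on embedded test functions.

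The converse — surjectivity of $K\mapsto T_K$ — is the main obstacle. Given $T\in\mathcal{B}(G_1,G_2)$, composing with the embedding $\SO(G_2)\hookrightarrow\SOprime(G_2)$ and restricting to $\SO(G_1)\subseteq\SOprime(G_1)$ produces a bounded operator in $\textnormal{Lin}(\SO(G_1),\SOprime(G_2))$, to which Theorem \ref{th:outer-kernel-theorem} assigns a kernel $\sigma\in\SOprime(G_1\times G_2)$. The entire difficulty is to upgrade this distribution to a genuine test function $K\in\SO(G_1\times G_2)$; this is exactly where the regularizing hypothesis, rather than mere boundedness, must be spent. My plan is to approximate the identity on $\SOprime(G_1)$ by a net of \emph{finite-rank} smoothing operators $P_{\netparam}$ — built from product-convolution (or spline-type quasi-interpolation) operators adapted to a bounded partition of unity on $G_1$ — chosen so that $P_{\netparam}\sigma^{(1)}\to\sigma^{(1)}$ boundedly in the \ws{} topology. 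Each $T P_{\netparam}$ is then finite rank with a kernel patently in $\SO(G_1\times G_2)$, and the regularizing property forces $T P_{\netparam}\sigma^{(1)}\to T\sigma^{(1)}$ in $\SO(G_2)$-norm; the crux is to promote this to convergence of the associated kernels in the $\SO(G_1\times G_2)$-norm, whereupon completeness of $\SO(G_1\times G_2)$ deposits the limit kernel $K$ in the space and identifies $T=T_K$.

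Three points deserve care. First, because the groups are general LCA and $\SO(G_1)$ need not be separable, the approximation must be organized through \emph{nets} rather than sequences, and the smoothing net must be constructed uniformly enough that the bounded \ws{} convergence $P_{\netparam}\to\textnormal{Id}$ holds on all of $\SOprime(G_1)$; this is the technically heaviest ingredient. Second, the passage from norm convergence of $T P_{\netparam}\sigma^{(1)}$ to norm convergence of the kernels is precisely the step that uses weak$^{*}$-to-norm continuity and not merely compactness, and it is what rules out distributional (non-function) kernels. Third, once $K$ is in hand I would verify that all three identities in \eqref{eq:1402a} hold not only on embedded test functions but for arbitrary $\sigma^{(i)}\in\SOprime(G_i)$, which follows by \ws{} density of $\SO(G_i)$ in $\SOprime(G_i)$ together with the separate \ws{} continuity built into $\mathcal{A}$ and $\mathcal{B}$. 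Tracking the equivalent-norm constants throughout then shows the four correspondences are Banach-space isomorphisms, completing the proof.
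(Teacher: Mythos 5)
Your first two steps are essentially sound and, in fact, close to the paper's own mechanism. The passage between $\mathcal{A}(G_{1},G_{2})$ and the two $\mathcal{B}$-spaces via the predual characterization of \ws{} continuous functionals is exactly how the paper's map $d$ works; note, however, that the regularizing property of $S$ is \emph{not} literally ``exactly'' the \ws{} continuity of $A$ in the second argument: you need $\sup_{\Vert\sigma^{(1)}\Vert_{\SOprime}\le 1}\vert A(\sigma^{(1)},\sigma^{(2)}_{\netparam}-\sigma^{(2)}_{0})\vert\to 0$, i.e.\ uniformity over the unit ball, and mere coordinatewise \ws{} continuity gives only the pointwise limit; the paper closes this gap in Lemma \ref{le:d-operator} with a Banach--Alaoglu compactness argument. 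Your forward map $K\mapsto T_{K}$ via the projective decomposition $\SO(G_{1}\times G_{2})=\SO(G_{1})\hat{\otimes}\SO(G_{2})$, with the tail estimate giving the regularizing property, is correct and somewhat more concrete than the paper's map $c$.

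The genuine gap is in the surjectivity step, which you yourself flag as ``the crux'' and then do not prove. From the regularizing hypothesis you obtain that $TP_{\netparam}\sigma^{(1)}\to T\sigma^{(1)}$ in $\SO(G_{2})$-norm for each fixed $\sigma^{(1)}$, but this is pointwise convergence of a net of operators, and it gives no control whatsoever over the $\SO(G_{1}\times G_{2})$-norm (a projective, nuclear-type norm) of the kernels $\kappa(TP_{\netparam})$; there is no reason for that net of kernels to be Cauchy, and no argument is offered. The quantitative norm that would make this step workable (Corollary \ref{cor:2408a}, resp.\ Theorem \ref{th:1608a}) is in the paper a \emph{consequence} of the inner kernel theorem, so invoking it here would be circular; proving it directly from the regularizing hypothesis amounts to reproving the theorem. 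The paper avoids this dead end entirely: it never constructs a norm-convergent approximation of the kernel. Instead it identifies $\SO(G_{1}\times G_{2})$ with $\iota(\SO(G_{1}\times G_{2}))$, the space of \ws{} continuous functionals on $\SOprime(G_{1}\times G_{2})$, defines $e(T)$ on the algebraic tensor product $\SOprime(G_{1})\otimes\SOprime(G_{2})$ by $e(T)\big(\sum_{j}\sigma^{(1)}_{j}\otimes\sigma^{(2)}_{j}\big)=\sum_{j}T(\sigma^{(1)}_{j})(\sigma^{(2)}_{j})$, proves that this functional is \ws{} continuous (Lemma \ref{le:e-operator}; the delicate point, handled by a normalization trick for elementary tensors, is to reduce $\sigma^{(1)}_{\netparam}\otimes\sigma^{(2)}_{\netparam}\stackrel{\textnormal{w}^{*}}{\longrightarrow}0$ to a bounded net with $\sigma^{(1)}_{\netparam}\stackrel{\textnormal{w}^{*}}{\longrightarrow}0$, whereupon the regularizing property of $T$ applies), and extends it by \ws{} density of $\SOprime(G_{1})\otimes\SOprime(G_{2})$ in $\SOprime(G_{1}\times G_{2})$. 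Membership of the extension in $\SO(G_{1}\times G_{2})$ then comes for free from the duality characterization of the predual --- no approximation net, no completeness argument, no kernel norms. I recommend replacing your approximation scheme by this duality mechanism; as it stands, your proposal proves injectivity of $K\mapsto T_{K}$ but not surjectivity, which is the actual content of the theorem.
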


\begin{remark} \label{rem:2601c} If the groups $G_{1}$ and $G_{2}$ are elementary, i.e., isomorphic to $\R^{n}\times \Z^{m} \times \mathbb{T}^{l} \times F$, where $F$ is some finite Abelian group and $l,n,m\in \N_{0}$, then a proof of  Theorem \ref{th:new-inner-kernel-theorem} is accessible using the isomorphism between $\SO$ and $\ell^{1}$ that is granted by the construction of Wilson bases. This approach to the inner kernel theorem can be found in \cite{feko98}. 
Our line of argumentation does not make use of this isomorphism and treats the general case of arbitrary LCA groups. We devote the entirety of Section \ref{sec:proof} to the proof of Theorem \ref{th:new-inner-kernel-theorem}.
\end{remark}

\begin{remark} \label{rem:2601a} Similar to the outer kernel theorem, given any $A\in \mathcal{A}(G_{1},G_{2}), \, T \in \mathcal{B}(G_{1},G_{2})$ or $S\in \mathcal{B}(G_{2},G_{1})$, the function $K\in \SO(G_{1}\times G_{2})$ satisfying \eqref{eq:1402a} is called the kernel of $A$, $T$ or $S$ and we denote this function by $\kappa(A)$, $\kappa(T)$ or $\kappa(S)$. \end{remark}

A combination of the inner and outer kernel theorem together with the continuous embedding of $\SO$ into $\SOprime$ (see Lemma \ref{le:SOprime-induced-by-SO}) allows us to make the following diagram for any two LCA groups $G_{1}$ and $G_{2}$. In the diagram the Hilbert-Schmidt operators from $\Lp{2}(G_{1})$ into $\Lp{2}(G_{2})$ are denoted by $\mathcal{HS}(G_{1},G_{2})$ .
\[ \setlength\arraycolsep{4pt} \def\arraystretch{1.3}  \begin{matrix} 
\multicolumn{3}{l}{ \text{\footnotesize{Inner Kernel Theorem}} } & & \\
\cline{1-3}
\multicolumn{1}{|c}{\mathcal{A}(G_{1},G_{2})} & \cong & \multicolumn{1}{c|}{\SO(G_{1}\times G_{2})} & & \\ 
\cline{2-3}
\multicolumn{1}{|c|}{\cong} & & & & \\
\multicolumn{1}{|c|}{\mathcal{B}(G_{1},G_{2})} & \bigsubseteq & \Lin(\SOprime(G_{1}),\SO(G_{2})) & \bigsubseteq & \Lin(\Lp{2}(G_{1}),\SO(G_{2})) & \bigsubseteq & \Lin(\SO(G_{1}),\SO(G_{2})) \\
\cline{1-1}
& & \rotsubseteq & & \rotsubseteq & & \rotsubseteq \\
& & \Lin(\SOprime(G_{1}),\Lp{2}(G_{2}))
& \bigsubseteq & \Lin(\Lp{2}(G_{1}),\Lp{2}(G_{2})) & \bigsubseteq & \Lin(\SO(G_{1}),\Lp{2}(G_{2})) \\
& & & & \rotasubseteq \phantom{MMM} & & & \\
& & \multicolumn{3}{c}{
\begin{matrix} \cline{3-3} \rotsubseteq & \ & \multicolumn{1}{|c|}{\Lp{2}(G_{1}\times G_{2})\cong\mathcal{HS}(G_{1},G_{2})} & & \ & \rotsubseteq \\ \cline{3-3} \end{matrix}
} & & \rotsubseteq \\
& & \multicolumn{3}{c}{
\begin{matrix} & \ & \text{\footnotesize{Hilbert-Schmidt Operators}} & & \end{matrix}
} & & \\[4pt]
\cline{7-7}
& & \Lin(\SOprime(G_{1}),\SOprime(G_{2})) & \bigsubseteq & \Lin(\Lp{2}(G_{1}),\SOprime(G_{2})) & \bigsubseteq & 
\multicolumn{1}{|c|}{\Lin(\SO(G_{1}),\SOprime(G_{2}))} \\  
& & & & & & \multicolumn{1}{|c|}{\cong} \\
\cline{5-6}
& & & & \multicolumn{1}{|c}{\Bil(\SO(G_{1})\times\SO(G_{2}),\C)} & \cong & \multicolumn{1}{c|}{\SOprime(G_{1}\times G_{2})} \\
\cline{5-7}
& & & & \multicolumn{3}{r}{ \text{\footnotesize{Outer Kernel Theorem}} }
\end{matrix} \]
Furthermore, we have the following inclusions for Banach spaces of operators: 
\begin{equation} \label{eq:op-BGT} \mathcal{B}(G_{1},G_{2}) \subseteq \mathcal{HS}(G_{1},G_{2}) \subseteq \Lin(\SO(G_{1}),\SOprime(G_{2})).\end{equation}
In fact, these three spaces form a Banach Gelfand triple and have been investigated in \cite{ba10-2},\cite{cofelu08} and \cite{feko98}. For appliications of th \cite{fe09} for applications.

\begin{remark} \label{rem:2601b} Both the inner and outer kernel theorem for $\SO$ are analogous to the situation for nuclear spaces, cf.\ Chapter 50 and 51 in Tr\`eves book  \cite{tr67}. Further references to the theory of nuclear spaces and their kernel theorems are Delcroix \cite{de10-1} and H\"ormander \cite{ho03}. \end{remark}
Note that $\SO$ contains the Schwartz(-Bruhat) space as a dense subspace (\cite[Theorem 9]{fe81-2}) and that $\SOprime$ is a subspace of the tempered distributions. For more on the Schwartz-Bruhat functions we refer to the original literature \cite{br61,os75}.
 
The paper is structured as follows. 
Section \ref{sec:preliminaries} recollects necessary facts about the function space $\SO(G)$ and its continuous dual space $\SOprime(G)$. Section \ref{sec:three} is comprised of several smaller pieces. The first of which, Section \ref{sec:nets}, states when the continuity of the operators in the spaces $\mathcal{A}$ and $\mathcal{B}$ can be described with the notion of sequences rather than that of nets. Section \ref{sec:kernel-in-s0} contains the second main result of this paper, Theorem \ref{th:1608a}. This result gives a more quantitative description of the operators in $\Lin(\SO(G_{1}),\SOprime(G_{2}))$ that have a kernel in $\SO$ and establishes a more natural norm on those operators (rather than the subspace topologies as mentioned following Definition \ref{def:the-spaces}). Section \ref{sec:linear-albera} shows similarities between the matrix representation of operators between finite dimensional spaces and the space $\mathcal{B}(G_{1},G_{2})$. Examples of operators with kernel in $\SO$ and results concerning series representations, nuclearity and trace-class properties of the operators in $\mathcal{B}$ are shown in Section \ref{sec:examples}. In Section \ref{sec:reg-app-id} we define and show examples of what we call regularizing approximations of the identity. Finally, Section \ref{sec:kernel-for-modspaces} contains some comments on extensions of the theory and references to related work. As mentioned earlier, Section \ref{sec:proof} is solely concerned with the proof of the Theorem \ref{th:new-inner-kernel-theorem}.

\section{Preliminaries} \label{sec:preliminaries}
\subsection{Harmonic analysis on LCA groups}
Throughout the paper we will be working with locally compact Abelian Hausdorff groups, which we denote by $G$, $G_{i}$, $i = 1,2,\ldots$. As any locally compact group, an LCA groups carries an (up to scaling) unique translation invariant measure, the \emph{Haar measure}. The \emph{dual group} of an LCA group $G$ is the multiplicative group of all continuous group homomorphisms from $G$ into the torus $\{ z\in\C \, : \, \vert z \vert =1\}$, which we denote by  $\ghat$. Under the topology of uniform convergence on compact sets the dual group becomes an LCA group itself. As such, it  carries a Haar measure. Without loss of generality we always assume that these measures are normalized such that 
\[ f(x) = \int_{\ghat} \hat{f}(\omega) \, \omega(x) \, d\mu_{\ghat}(\omega) \ \ \text{for almost every} \ x\in G\]
for all $f\in \Lp{1}(G)$ with $\hat{f}\in \Lp{1}(\ghat)$,
where $\hat{f}$ is the Fourier transform of $f$, $\hat{f}(\omega) = \int_{G} f(x) \, \overline{\omega(x)} \, d\mu_{G}(x)$, $\omega\in\ghat$. 
Typically we will perform integration in the time-frequency plane (phase space) $G\times\ghat$ so that we encounter integrals of the form $\int_{G\times\ghat} f(\TFelement) \, d\mu_{G\times\ghat}(\TFelement)$
for suitable complex valued functions $f$ on $G\times\ghat$. From now on we shall  simplify the  notation and write $\int_{G} \ldots \, dx$, $\int_{\ghat} \ldots \, d\omega$, and $\int_{G\times\ghat} \ldots \, d\TFelement$, rather than, e.g., $\int_{G\times\ghat} \ldots \, d\mu_{G\times\ghat}(\TFelement)$. For more on integration on locally compact groups and abstract harmonic analysis we refer to, e.g., \cite{fo16,na65-2} and \cite{rest00}.

\subsection{The space $\SO$} \label{sec:S0}
In this section we summarize results on the space $\SO$ and its dual space $\SOprime$. As we often will deal with functions in the spaces $\SO(G_{1})$ and $\SO(G_{2})$ and as well as with generalized functions in $\SOprime(G_{1})$ and $\SOprime(G_{2})$ for typically different locally compact Abelian groups $G_{i}$, $i=1,2$, we define once and for all that $f^{(i)}$ and $\sigma^{(i)}$ denote a function and a generalized function in $\SO(G_{i})$ and $\SOprime(G_{i})$, respectively. Different functions in $\SO(G_{i})$ will be denoted either by different letters, e.g., $f^{(i)},g^{(i)}$ and $h^{(i)}$, or with an index, $f^{(i)}_{j}$. 

For functions in $\SO(G_{1})$ and $\SO(G_{2})$ the tensor product
\[ \big( f^{(1)} \otimes f^{(2)} \big)(x^{(1)},x^{(2)}) = f^{(1)}(x^{(1)}) \cdot f^{(2)}(x^{(2)}), \ \ (x^{(1)},x^{(2)})\in G_{1}\times G_{2},\]
is a bilinear and bounded operator into $\SO(G_{1}\times G_{2})$. In fact, 
\[ \Vert f^{(1)}\otimes f^{(2)} \Vert_{\SO(G_{1}\times G_{2}),g^{(1)}\otimes g^{(2)}} = \Vert f^{(1)} \Vert_{\SO(G_{1}),g^{(1)} } \cdot \Vert f^{(2)} \Vert_{\SO(G_{2}),g^{(2)} }.\]
Any $f\in \SO(G_{1}\times G_{2})$ can be written (in a non-unique way) as an infinite sum for appropriately chosen sequences $(f_{j}^{(i)})_{j\in \N}$ in $\SO(G_{i})$, $i=1,2$,
\begin{equation}
\label{eq:1508a}
 f = \sum_{j\in \N} f^{(1)}_{j} \otimes f^{(2)}_{j}
\ \ \text{such that} \ \ \sum_{j\in\N} \Vert f^{(1)}_{j} \Vert_{\SO} \, \Vert f^{(2)}_{j}\Vert_{\SO} < \infty, \end{equation}
where the sum is absolutely norm convergent in $\SO(G_{1}\times G_{2})$.
Moreover, the $\SO(G_{1}\times G_{2})$-norm is equivalent to the projective tensor product norm
\begin{equation}
\label{eq:1508b}
 \Vert f \Vert_{\hat{\otimes}} = \inf \big\{ \sum_{j\in \N} \Vert f^{(1)}_{j} \Vert_{\SO} \, \Vert f^{(2)}_{j} \Vert_{\SO} \big\}, 
 \end{equation}
where the infimum is taken over all admissible representations of $f$ as in \eqref{eq:1508a}. We thus have the following.
\begin{lemma}\label{le:SO-tensor-factorization} 
Given LCA groups  $G_{1}$ and $G_{2}$  one has 
$\SO(G_{1}\times G_{2}) = \SO(G_{1})\hat{\otimes} \SO(G_{2})$.
\end{lemma}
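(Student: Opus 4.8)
The plan is to realize the stated equality as a pair of mutually inverse bounded maps between the projective tensor product $\SO(G_{1})\hat{\otimes}\SO(G_{2})$ and $\SO(G_{1}\times G_{2})$. One direction is immediate from the material already recorded: the tensor map $(f^{(1)},f^{(2)})\mapsto f^{(1)}\otimes f^{(2)}$ is bilinear and, by the norm identity $\Vert f^{(1)}\otimes f^{(2)}\Vert_{\SO,g^{(1)}\otimes g^{(2)}}=\Vert f^{(1)}\Vert_{\SO,g^{(1)}}\Vert f^{(2)}\Vert_{\SO,g^{(2)}}$, isometric on elementary tensors into $\SO(G_{1}\times G_{2})$. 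By the universal property of the projective tensor norm it therefore extends to a linear contraction $\Phi\colon\SO(G_{1})\hat{\otimes}\SO(G_{2})\to\SO(G_{1}\times G_{2})$. What remains is to show that $\Phi$ is onto with a bounded inverse; equivalently, that an arbitrary $F\in\SO(G_{1}\times G_{2})$ can be written as an absolutely convergent series of elementary tensors with $\sum_{n}\Vert f^{(1)}_{n}\Vert_{\SO}\Vert f^{(2)}_{n}\Vert_{\SO}\le C\Vert F\Vert_{\SO(G_{1}\times G_{2})}$.

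The crux is the construction of a bounded right (and, as it will turn out, two-sided) inverse $\Psi$ using the short-time Fourier transform. Fix nonzero windows $g^{(i)}\in\SO(G_{i})$ and put $g=g^{(1)}\otimes g^{(2)}\in\SO(G_{1}\times G_{2})$. I would use two standard facts about $\SO$ (recalled in Section \ref{sec:preliminaries}): the $\SO$-norm $\Vert F\Vert_{\SO,g}$ agrees with the $L^1$-norm $\Vert V_{g}F\Vert_{1}$ of the short-time Fourier transform over the phase space of $G_{1}\times G_{2}$, and the inversion formula $F=c^{-1}\int V_{g}F(\nu)\,\pi(\nu)g\,d\nu$ holds with convergence in the norm of $\SO(G_{1}\times G_{2})$, where $\pi(\nu)$ denotes the time-frequency shift and $c$ depends only on $g$. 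Because the phase space of $G_{1}\times G_{2}$ factors and $g$ is a tensor, the time-frequency shift factors as $\pi(\nu)g=\big(\pi_{1}(\nu_{1})g^{(1)}\big)\otimes\big(\pi_{2}(\nu_{2})g^{(2)}\big)$ for $\nu=(\nu_{1},\nu_{2})$. Each $\pi_{i}(\nu_{i})$ acts isometrically and strongly continuously on $\SO(G_{i})$, so $\nu\mapsto\big(\pi_{1}(\nu_{1})g^{(1)}\big)\otimes\big(\pi_{2}(\nu_{2})g^{(2)}\big)$ is a continuous map into $\SO(G_{1})\hat{\otimes}\SO(G_{2})$ of constant norm $\Vert g^{(1)}\Vert_{\SO}\Vert g^{(2)}\Vert_{\SO}$. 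Since $V_{g}F\in L^1$, the integrand $\nu\mapsto V_{g}F(\nu)\big(\pi_{1}(\nu_{1})g^{(1)}\big)\otimes\big(\pi_{2}(\nu_{2})g^{(2)}\big)$ is Bochner integrable as a function valued in the Banach space $\SO(G_{1})\hat{\otimes}\SO(G_{2})$, and I define $\Psi(F)$ to be its Bochner integral divided by $c$. The triangle inequality for the Bochner integral gives $\Vert\Psi(F)\Vert_{\hat{\otimes}}\le c^{-1}\Vert g^{(1)}\Vert_{\SO}\Vert g^{(2)}\Vert_{\SO}\Vert V_{g}F\Vert_{1}=C\Vert F\Vert_{\SO(G_{1}\times G_{2})}$, so $\Psi$ is bounded.

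It remains to check that $\Phi$ and $\Psi$ are mutually inverse. Applying the continuous map $\Phi$ under the Bochner integral turns $\Phi\Psi(F)$ into the $\SO(G_{1}\times G_{2})$-valued integral $c^{-1}\int V_{g}F(\nu)\pi(\nu)g\,d\nu$, which equals $F$ by the inversion formula; hence $\Phi\Psi=\mathrm{id}$. For the opposite composition it suffices, by density of elementary tensors and continuity, to verify $\Psi\Phi=\mathrm{id}$ on a tensor $f^{(1)}\otimes f^{(2)}$. There the short-time Fourier transform factors, $V_{g}(f^{(1)}\otimes f^{(2)})(\nu_{1},\nu_{2})=V_{g^{(1)}}f^{(1)}(\nu_{1})\,V_{g^{(2)}}f^{(2)}(\nu_{2})$, and a Fubini argument for the Bochner integral together with the boundedness of the tensor map into the projective tensor product splits $\Psi\Phi(f^{(1)}\otimes f^{(2)})$ into the tensor product of the two one-variable inversion integrals, which returns $f^{(1)}\otimes f^{(2)}$. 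Thus $\Phi$ is a Banach space isomorphism and $\SO(G_{1}\times G_{2})=\SO(G_{1})\hat{\otimes}\SO(G_{2})$ with equivalent norms. I expect the main obstacle to be the second paragraph: justifying that the inversion integral, a priori only an element of $\SO(G_{1}\times G_{2})$, genuinely converges in the finer projective tensor norm, which forces one to run the Bochner-integral argument inside $\SO(G_{1})\hat{\otimes}\SO(G_{2})$ and to secure the strong continuity and measurability of the tensor-valued integrand there.
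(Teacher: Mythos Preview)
The paper does not prove this lemma; immediately after the statement it simply cites \cite[Theorem 7]{fe81-2} and \cite[Theorem 7.4]{ja19}. So there is no in-paper argument to compare against.

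Your approach is sound. The two delicate points you flag --- Bochner integrability of the tensor-valued integrand and the Fubini-type splitting for $\Psi\Phi$ --- both go through: strong continuity of $\nu_{i}\mapsto\pi_{i}(\nu_{i})g^{(i)}$ in $\SO(G_{i})$ together with boundedness of the tensor map yields a continuous (hence strongly measurable) integrand with bounded vector part and $L^{1}$ scalar part; and since for fixed $a$ the map $b\mapsto a\otimes b$ is bounded linear into the projective tensor product, Bochner integrals pass through it, giving the factorization (the normalization constant splits because $\Vert g^{(1)}\otimes g^{(2)}\Vert_{2}^{2}=\Vert g^{(1)}\Vert_{2}^{2}\Vert g^{(2)}\Vert_{2}^{2}$).

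A shorter route, already available from the paper's toolbox, is to use the discrete atomic decomposition of Lemma~\ref{le:S0-Gabor-expansion} with atom $g=g^{(1)}\otimes g^{(2)}$: any $F\in\SO(G_{1}\times G_{2})$ can be written as
\[
F=\sum_{j}c_{j}\,\pi(\nu_{j})\big(g^{(1)}\otimes g^{(2)}\big)=\sum_{j}c_{j}\,\big(\pi_{1}(\nu_{j}^{(1)})g^{(1)}\big)\otimes\big(\pi_{2}(\nu_{j}^{(2)})g^{(2)}\big)
\]
with $\sum_{j}|c_{j}|\lesssim\Vert F\Vert_{\SO}$, which is directly an absolutely convergent series of elementary tensors with the required norm control. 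This bypasses the Bochner bookkeeping and yields the series representation immediately, whereas your argument first establishes the isomorphism and then relies on the general structure of projective tensor products to extract a series. Both are valid; the atomic route is a bit more economical.
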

\noindent These statements were originally proven in \cite[Theorem 7]{fe81-2} and can also be found in \cite[Theorem 7.4]{ja19}. \\

The translation operator $T_{x}$ and the modulation operator $E_{\omega}$ are given by 
\[ T_{x} f(t) = f(t-x) \ \ \text{and} \ \ E_{\omega} f(t) = \omega(t) f(t), \ \ t,x\in G, \, \omega\in \ghat.\]
They act as linear and isometric operators on $\SO(G)$ and so do time-frequency shift operators: 
\[ \pi(\TFelement) = \pi(x,\omega) = E_{\omega} T_{x}  \quad \mbox{for} \,\,  \nu = 
(x , \omega) \in G \times \ghat.
\]

Besides the definition of $\SO$ in the introduction, there is also an atomic characterization:
\begin{lemma} \label{le:S0-Gabor-expansion}
Fix a non-zero function $g\in \SO(G)$. For any $f\in\SO(G)$ there exists a sequence $c\in \ell^{1}(\N)$ and elements $\TFelement_{j}\in G\times\ghat$, $j\in\N$ such that $f = \sum_{j\in\N} c_{j} \, \pi(\TFelement_{j}) g$. Furthermore, $\Vert f \Vert = \inf\,\Vert c \Vert_1$, where the infimum is taken over all admissible representations of $f$ as above, defines an equivalent norm on $\SO(G)$.
\end{lemma}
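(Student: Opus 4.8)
The plan is to realise $f$ as a discretisation of the continuous reconstruction formula attached to the window $g$. For $f,g\in\SO(G)\subseteq\Lp{2}(G)$ write $V_{g}f(\TFelement)=\langle f,\pi(\TFelement)g\rangle_{\Lp{2}(G)}$ for the short-time Fourier transform. Starting from the norm \eqref{defSO2}, one first checks that $f\mapsto\Vert V_{g}f\Vert_{\Lp{1}(G\times\ghat)}$ is an equivalent norm on $\SO(G)$ and that the reconstruction formula
\[ f=\Vert g\Vert_{\Lp{2}}^{-2}\int_{G\times\ghat}V_{g}f(\TFelement)\,\pi(\TFelement)g\,d\TFelement \]
holds, the integral converging as a Bochner integral in $\SO(G)$ (the integrand is $\SO(G)$-valued continuous with $\int\Vert V_{g}f(\TFelement)\pi(\TFelement)g\Vert_{\SO}\,d\TFelement=\Vert g\Vert_{\SO}\Vert V_{g}f\Vert_{\Lp{1}}<\infty$). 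I record at once the easy half of the norm assertion: since each $\pi(\TFelement_{j})$ acts isometrically on $\SO(G)$, any representation $f=\sum_{j}c_{j}\,\pi(\TFelement_{j})g$ with $c\in\ell^{1}$ converges absolutely in $\SO(G)$ and satisfies $\Vert f\Vert_{\SO}\le\Vert g\Vert_{\SO}\,\Vert c\Vert_{1}$, whence $\Vert f\Vert_{\SO}\le\Vert g\Vert_{\SO}\,\inf\Vert c\Vert_{1}$.

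To discretise, fix a relatively compact open neighbourhood $U$ of the identity in $G\times\ghat$, a relatively separated and $U$-dense family $(\TFelement_{i})_{i\in I}$, and a bounded uniform partition of unity $(\psi_{i})_{i\in I}$ subordinate to $(\TFelement_{i}+U)_{i\in I}$; such families exist on every locally compact group. Define the synthesis map $D\colon\ell^{1}(I)\to\SO(G)$, $Dc=\sum_{i}c_{i}\,\pi(\TFelement_{i})g$, and the coefficient map $C\colon\SO(G)\to\ell^{1}(I)$, $(Cf)_{i}=\Vert g\Vert_{\Lp{2}}^{-2}\int_{G\times\ghat}\psi_{i}\,V_{g}f\,d\TFelement$. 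Both are bounded: $\Vert Dc\Vert_{\SO}\le\Vert g\Vert_{\SO}\Vert c\Vert_{1}$ by the isometry of $\pi$, and $\Vert Cf\Vert_{1}\le\Vert g\Vert_{\Lp{2}}^{-2}\Vert V_{g}f\Vert_{\Lp{1}}\asymp\Vert f\Vert_{\SO}$ because $\sum_{i}\psi_{i}=1$. The heart of the proof is to show that the sampled reconstruction operator $DC$ approximates the identity, i.e.\ $\Vert\mathrm{Id}-DC\Vert_{\SO\to\SO}\to 0$ as $U$ shrinks. Once this norm drops below $1$, the operator $DC$ is invertible by a Neumann series, and $f=D\big(C(DC)^{-1}f\big)$ exhibits $f=\sum_{i}\tilde c_{i}\,\pi(\TFelement_{i})g$ with $\tilde c=C(DC)^{-1}f\in\ell^{1}(I)$ and $\Vert\tilde c\Vert_{1}\le\Vert C\Vert\,\Vert(DC)^{-1}\Vert\,\Vert f\Vert_{\SO}$. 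Since $\tilde c\in\ell^{1}(I)$ has at most countably many non-zero entries, relabelling gives the representation indexed by $\N$ claimed in the statement.

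This last bound is exactly the reverse inequality $\inf\Vert c\Vert_{1}\le C_{0}\Vert f\Vert_{\SO}$, and together with the easy half above it shows that $\inf\Vert c\Vert_{1}$ is equivalent to $\Vert\cdot\Vert_{\SO}$. That $f\mapsto\inf\Vert c\Vert_{1}$ is a norm is then routine: homogeneity is clear, subadditivity follows by concatenating representations of two functions, and positive definiteness is forced by $\Vert f\Vert_{\SO}\le\Vert g\Vert_{\SO}\inf\Vert c\Vert_{1}$. I expect the genuine obstacle to be the estimate $\Vert\mathrm{Id}-DC\Vert_{\SO\to\SO}\to 0$. A naive cell-by-cell bound fails, since $\Vert\pi(\TFelement)g-\pi(\TFelement_{i})g\Vert_{\SO}$ for $\TFelement\in\TFelement_{i}+U$ carries a unimodular phase arising from the non-commutativity of the translations $T_{x}$ and modulations $E_{\omega}$, and this phase is not small uniformly over the unbounded family $(\TFelement_{i})_{i\in I}$. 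Controlling it requires working on the short-time Fourier transform side in the Wiener amalgam space $W(C_{0},\Lp{1})(G\times\ghat)$ and using that the reproducing kernel $V_{g}g$ lies in $W(C_{0},\Lp{1})$ with integrable local oscillation, so that the phase variation is absorbed coherently rather than estimated term by term. It is precisely this kernel localisation---available for every non-zero $g\in\SO(G)$ and on arbitrary LCA groups---that upgrades the pointwise reconstruction formula to a norm-convergent discrete expansion and yields the exact $\ell^{1}$ representation.
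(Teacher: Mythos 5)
The paper itself gives no proof of this lemma; it cites \cite{fe87-1} and \cite[Theorem 7.2]{ja19}, and your outline follows exactly the route of those references (i.e.\ the coorbit-type argument of \cite{fegr89}): discretize the continuous inversion formula by a bounded uniform partition of unity, show that the sampled reconstruction operator $DC$ is invertible by a Neumann series once $\Vert \mathrm{Id}-DC\Vert_{\SO\to\SO}<1$, and read off the $\ell^{1}$ coefficients from $f=D\big(C(DC)^{-1}f\big)$. The parts you execute are correct: the absolute convergence and the bound $\Vert f\Vert_{\SO}\le\Vert g\Vert_{\SO}\Vert c\Vert_{1}$ (isometry of $\pi(\TFelement)$ on $\SO(G)$), the boundedness of $D$ and $C$, the countable relabelling, the deduction of norm equivalence from the two inequalities, and the verification that $\inf\Vert c\Vert_{1}$ is a norm. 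Your diagnosis of why a naive cell-by-cell bound fails is also exactly right: since $\pi(\TFelement_{i}+u)=\omega_{u}(x_{i})\,\pi(\TFelement_{i})\pi(u)$, the quantity $\sup_{u\in U}\Vert\pi(\TFelement_{i}+u)g-\pi(\TFelement_{i})g\Vert_{\SO}$ does not become small uniformly over the unbounded family of cell centres $\TFelement_{i}$.

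Nevertheless the proposal has a genuine gap, precisely at the point you flag: the estimate $\Vert\mathrm{Id}-DC\Vert_{\SO\to\SO}\to0$ as $U$ shrinks is the entire substance of the lemma, and you describe the tools for it without proving anything. Concretely, three steps are missing. First, that $V_{g}g$ lies in the amalgam space $W(\mathbf{C}_{0},\Lp{1})(G\times\ghat)$; this follows from the facts (quoted in Section \ref{sec:preliminaries}) that the STFT maps $\SO(G)$ into $\SO(G\times\ghat)$ and that $\SO$ embeds into $W(\mathbf{C}_{0},\Lp{1})$, but it must be invoked — it is also what justifies your Bochner-integral claim on a general, possibly non-$\sigma$-compact, LCA group. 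Second, the oscillation lemma: with $\mathrm{osc}_{U}F(\TFelement)=\sup_{u\in U}\vert F(\TFelement+u)-F(\TFelement)\vert$, one needs $\Vert \mathrm{osc}_{U}(V_{g}g)\Vert_{W(\Lp{\infty},\Lp{1})}\to0$ as $U$ shrinks; this is a real lemma (proved by approximating $V_{g}g$ in the amalgam norm by compactly supported continuous functions and using uniform continuity on compact sets), not a formality. Third, the transfer of that amalgam estimate to the operator bound: passing to the transform side via the reproducing identity $V_{g}f=\Vert g\Vert_{2}^{-2}\,V_{g}f\ast V_{g}g$ (in its twisted form) and dominating $\vert V_{g}(f-DCf)\vert$ pointwise by a constant multiple of $\vert V_{g}f\vert\ast\mathrm{osc}_{U}(V_{g}g)$, which yields $\Vert f-DCf\Vert_{\SO}\le C\,\Vert V_{g}f\Vert_{\Lp{1}}\,\Vert\mathrm{osc}_{U}(V_{g}g)\Vert_{W(\Lp{\infty},\Lp{1})}$. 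Saying that the phase variation ``is absorbed coherently'' names this phenomenon but does not establish it. In short: the approach is the right one and coincides with that of the cited literature, the skeleton is sound, but the key estimate is left as a plan, so what you have is an outline of the known proof rather than a proof.
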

\noindent This result goes back to  \cite{fe87-1} and can also be found in \cite[Theorem 7.2]{ja19}. 

The dual space $\SOprime(G)$ is a Banach space with respect to the usual operator topology 
\begin{equation} \label{eq:operator-norm-on-SOprime} \Vert \sigma \Vert_{\SOprime(G),g} = \sup_{f\in \SO(G)\backslash \{0\}} \frac{\vert (f,\sigma)_{\SO,\SOprime(G)}\vert}{\Vert f \Vert_{\SO(G),g}}, \ \sigma\in \SOprime(G),\end{equation}
where $g\in \SO(G)$ is any non-zero function.
There is another indispensable norm on $\SOprime(G)$.
\begin{lemma}[{see \cite[Proposition 6.11]{ja19}}] \label{le:STFT-norm-on-SOprime} For any $g\in \SO(G)\backslash\{0\}$
\[ \Vert \cdot \Vert_{\mathbf{M}^{\infty}_g} : \SOprime(G) \to \R_{0}^{+}, \ \Vert \sigma \Vert_{\mathbf{M}^{\infty}_g} = \sup_{\TFelement\in G\times\ghat} \vert ( \pi(\TFelement) g, \sigma)_{\SO,\SOprime(G)} \, \vert \]
is a norm on $\SOprime(G)$ which is equivalent to the norm in \eqref{eq:operator-norm-on-SOprime}. 
\end{lemma}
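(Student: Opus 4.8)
The plan is to verify the three norm axioms for $\Vert\cdot\Vert_{\mathbf{M}^{\infty}_g}$ and then trap it between two constant multiples of the operator norm in \eqref{eq:operator-norm-on-SOprime}, from which equivalence follows at once. Homogeneity and the triangle inequality are immediate and I would dispatch them first: since the pairing $\TFelement \mapsto (\pi(\TFelement)g,\,\cdot\,)$ is linear in its second argument, we have $|(\pi(\TFelement)g,\lambda\sigma)| = |\lambda|\,|(\pi(\TFelement)g,\sigma)|$ and $|(\pi(\TFelement)g,\sigma+\tau)| \le |(\pi(\TFelement)g,\sigma)| + |(\pi(\TFelement)g,\tau)|$ for every $\TFelement\in G\times\ghat$, so homogeneity and subadditivity survive the passage to the supremum over $\TFelement$. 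Positive definiteness I would postpone, since it drops out for free once the equivalence is in hand.

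For the easy inequality I would exploit that time-frequency shifts act isometrically on $\SO(G)$: a short computation using translation-invariance of the Haar measures shows $\Vert \pi(\TFelement) g \Vert_{\SO(G),g} = \Vert g \Vert_{\SO(G),g}$ for all $\TFelement$. Feeding this into the defining inequality of the operator norm \eqref{eq:operator-norm-on-SOprime} gives $|(\pi(\TFelement)g,\sigma)| \le \Vert g\Vert_{\SO(G),g}\,\Vert\sigma\Vert_{\SOprime(G),g}$, and taking the supremum over $\TFelement$ yields $\Vert\sigma\Vert_{\mathbf{M}^{\infty}_g} \le \Vert g\Vert_{\SO(G),g}\,\Vert\sigma\Vert_{\SOprime(G),g}$, a bound with a constant depending only on $g$.

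The substantial direction, and the main obstacle, is the reverse estimate, where the atomic structure of $\SO(G)$ is exactly what makes a supremum over the single-window orbit $\{\pi(\TFelement)g\}$ control the supremum over all test functions. Given $f\in\SO(G)$, Lemma \ref{le:S0-Gabor-expansion} supplies a representation $f = \sum_{j\in\N} c_j\,\pi(\TFelement_j)g$ with $c\in\ell^{1}(\N)$, convergent in $\SO$-norm, whose optimal $\Vert c\Vert_1$ is comparable to $\Vert f\Vert_{\SO(G),g}$. By continuity of $\sigma$ I may pair term by term, $(f,\sigma) = \sum_{j} c_j\,(\pi(\TFelement_j)g,\sigma)$, and estimate
\[ |(f,\sigma)| \le \sum_{j} |c_j|\,|(\pi(\TFelement_j)g,\sigma)| \le \Vert c\Vert_1\,\Vert\sigma\Vert_{\mathbf{M}^{\infty}_g}. \]
Passing to the infimum over all admissible coefficient sequences and invoking the norm equivalence of Lemma \ref{le:S0-Gabor-expansion} produces a constant $C>0$ with $|(f,\sigma)| \le C\,\Vert f\Vert_{\SO(G),g}\,\Vert\sigma\Vert_{\mathbf{M}^{\infty}_g}$ for every $f$, hence $\Vert\sigma\Vert_{\SOprime(G),g} \le C\,\Vert\sigma\Vert_{\mathbf{M}^{\infty}_g}$.

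Combining the two inequalities gives the asserted equivalence. Positive definiteness now follows without extra work: if $\Vert\sigma\Vert_{\mathbf{M}^{\infty}_g}=0$, the reverse inequality forces $\Vert\sigma\Vert_{\SOprime(G),g}=0$ and therefore $\sigma=0$. Equivalently, $\Vert\sigma\Vert_{\mathbf{M}^{\infty}_g}=0$ says $\sigma$ annihilates every $\pi(\TFelement)g$ and hence, by Lemma \ref{le:S0-Gabor-expansion}, their dense linear span, so $\sigma=0$ on all of $\SO(G)$. This confirms that $\Vert\cdot\Vert_{\mathbf{M}^{\infty}_g}$ is a norm equivalent to the one in \eqref{eq:operator-norm-on-SOprime}.
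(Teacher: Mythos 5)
Your proof is correct. Note, however, that the paper itself offers no proof of this lemma at all: it is stated with a citation to \cite[Proposition 6.11]{ja19}, so there is no internal argument to compare against. Your route is a legitimate, self-contained alternative built from tools the paper does state: the easy bound $\Vert\sigma\Vert_{\mathbf{M}^{\infty}_g}\le\Vert g\Vert_{\SO,g}\Vert\sigma\Vert_{\SOprime,g}$ from the isometry of time-frequency shifts on $\SO(G)$, and the reverse bound from the \emph{discrete} atomic decomposition of Lemma \ref{le:S0-Gabor-expansion}, where absolute convergence of $\sum_j c_j\pi(\TFelement_j)g$ in $\SO$-norm justifies the term-by-term pairing with $\sigma$. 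The proof in the cited reference (and the route most natural inside this paper) instead uses the \emph{continuous} reproducing formula of Lemma \ref{le:STFT-SO-SOprime}: writing $(f,\sigma)=\Vert g\Vert_2^{-2}\int_{G\times\ghat}\langle f,\pi(\TFelement)g\rangle\,(\pi(\TFelement)g,\sigma)\,d\TFelement$ and bounding the integral by $\Vert\mathcal{V}_g f\Vert_1\,\Vert\sigma\Vert_{\mathbf{M}^{\infty}_g}$, using that $\Vert\mathcal{V}_g f\Vert_1$ is equivalent to $\Vert f\Vert_{\SO}$. The two arguments trade one deep structural fact about $\SO$ for another (atomic decomposition versus integrability of the short-time Fourier transform of test functions); yours has the mild advantage of needing only $\ell^1$ summation rather than an integral estimate, while the continuous route generalizes directly to the $\mathbf{M}^p$ scale used later in Section \ref{sec:kernel-for-modspaces}.
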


For alternative recent approaches to this space, whose elements are now called {\it ``mild distribution''} see 
\cite{fe19} and \cite{fe20-1}.

In many situations the norm convergence in $\SOprime$ is too strong and therefore we also have to make use of the {\it  \ws -topology}. Recall 
that $\sigma_0\in\SOprime(G)$ is the \ws \ limit of a net $(\sigma_\alpha)$ in $\SOprime(G)$ 
 if 
\[ \lim_{\netparam} \vert (f, \sigma_{\netparam}-\sigma_{0})_{\SO,\SOprime(G)} \vert = 0 \ \  \mbox{for any} \ f \in \SO(G).\]

As for every Banach space (see \cite[p.\ 98]{me98-1}), also for $\SO(G)$ the Hahn-Banach Theorem provides 
an isometric embedding  into its double dual $\SOdoubleprime(G)$ via the canonical embedding
\[ \iota : \SO(G)\to \SOdoubleprime(G), \ \iota(f) = \sigma \mapsto (f,\sigma)_{\SO,\SOprime(G)}, \ f\in \SO(G), \, \sigma\in \SOprime(G). \]
Moreover, $\iota(\SO(G))$ is exactly the set of all bounded \ws \ continuous functionals on $\SOprime(G)$. That is, a linear and bounded functional $\varphi: \SOprime(G) \to \C$ sends bounded \ws \ convergent nets in $\SOprime(G)$ into norm convergent nets in $\C$ if and only if $\varphi$ is of the form $\varphi(\sigma) = (f,\sigma)_{\SO,\SOprime(G)}$ for some $f\in \SO(G)$ (see \cite[Proposition 2.6.4]{me98-1}). Henceforth we view, if necessary, $\SO(G)$ as a closed subspace of $ \SOdoubleprime(G)$.  This fact is essential for our proof of Theorem \ref{th:new-inner-kernel-theorem} in Section \ref{sec:proof}.

Similar as for functions, we can define the tensor product $\sigma^{(1)}\otimes \sigma^{(2)}$ of two generalized functions $\sigma^{(1)}\in \SOprime(G_{1})$ and $\sigma^{(2)}\in \SOprime(G_{2})$. It is the unique element in $\SOprime(G_{1}\times G_{2})$ with the property that
\begin{equation}
( f^{(1)} \otimes f^{(2)} , \sigma^{(1)} \otimes \sigma^{(2)} )_{\SO,\SOprime(G_{1}\times G_{2}) } = ( f^{(1)} , \sigma^{(1)} )_{\SO,\SOprime(G_{1})} \, ( f^{(2)}, \sigma^{(2)} )_{\SO,\SOprime(G_{2})},
\end{equation} 
for all $f^{(i)}\in \SO(G_{i})$, $i=1,2$.
One can show that
\begin{equation} \label{eq:soprime-tensor-norm} \Vert \sigma^{(1)} \otimes \sigma^{(2)} \Vert_{\mathbf{M}^{\infty}_{g^{(1)}\otimes g^{(2)}}} = \Vert \sigma^{(1)} \Vert_{\mathbf{M}^{\infty}_{g^{(1)}}} \, \Vert \sigma^{(2)} \Vert_{\mathbf{M}^{\infty}_{g^{(2)}}}.\end{equation}
For a proof of this we refer to \cite[Corollary 9.2]{ja19}.

As mentioned in the introduction, the space $\SO(G)$ is embedded into its dual space $\SOprime(G)$ in a very natural way. In order to properly formulate this result we define the \emph{modulation space} (for the parameter $1$) as the subspace of $\SOprime(G)$ given by
\begin{equation}
\label{Midef1}
\Misp(G) = \Big\{ \sigma\in \SOprime(G) \, : \, \int_{G\times\ghat} \vert ( \pi(\TFelement) g, \sigma)_{\SO,\SOprime(G)} \vert \, d\TFelement < \infty \Big\}, 
\end{equation}
where $g$ is some non-zero function in $\SO(G)$. In Section \ref{sec:kernel-for-modspaces} we give references to literature on the modulation spaces.
 The norm
\begin{equation}
\label{Minorm1}
\Vert \cdot \Vert_{\Misp_{g}} :  \Misp(G) \to \R_{0}^{+}, \quad \Vert \sigma \Vert_{\Misp_{g}} = \int_{G\times\ghat} \vert ( \pi(\TFelement) g, \sigma)_{\SO,\SOprime(G)} \vert \, d\TFelement 
\end{equation}
turns $\Misp(G)$ into a Banach space. 
Each function $g\in \SO(G)\backslash\{0\}$ induces an equivalent norm on $\Misp(G)$. 
One can show that there exists a constant $c>0$ such that $\Vert \sigma \Vert_{\SOprime} \le c \, \Vert \sigma \Vert_{\Misp}$ for all $\sigma\in \Misp(G)$. That is, $\Misp(G)$ is continuously embedded into $\SOprime(G)$.

\begin{lemma}\label{le:SOprime-induced-by-SO} The Banach spaces $\SO(G)$ and $\Misp(G)$ are naturally isomorphic. In particular: 
\begin{enumerate} 
\item[(i)] Via the Haar measure on $G$  every  $h\in \SO(G)$ 
induces a (unique) functional $\sigma_h \in \SOprime(G)$:  
\begin{equation} \label{eq:regular-distribution} (f,\sigma_h)_{\SO,\SOprime(G)} = \int_{G} f(t) \, h(t) \, dt \ \ \text{for all} \ \ f\in\SO(G).\end{equation}
This embedding of $\SO(G)$ into $\SOprime(G)$ is linear, continuous and injective.
\item[(ii)] If $\sigma$ is a generalized function in $\SOprime(G)$, then there exists a function $h\in \SO(G)$ such that \eqref{eq:regular-distribution} holds if and only if $\sigma\in \Misp(G)$. The function $h \in \SO(G)$ is characterized by the fact that for some $g\in\SO(G)\backslash\{0\}$ (and then for every such $g$) 
one has:  
\begin{equation}
 (h,\tilde{\sigma})_{\SO,\SOprime(G)} = \Vert g \Vert_{2}^{-2} \int_{G\times\ghat}
 \big( \overline{\pi(\TFelement) g}, \sigma \big) \, \big( \pi(\TFelement)g,\tilde{\sigma}\big) \, d\TFelement \ \ \text{for all} \ \ \tilde{\sigma}\in\SOprime(G).
 \end{equation}
\end{enumerate} 
\end{lemma}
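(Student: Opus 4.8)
The plan is to prove (i) by elementary integration estimates and (ii) by the short-time Fourier transform (STFT) together with its inversion, using the weak* continuity characterization of $\iota(\SO(G))$ recalled above to guarantee that the reconstructed object actually lies in $\SO(G)$. For part (i) I would use the continuous embeddings $\SO(G)\hookrightarrow \Lp{1}(G)\cap\Lp{\infty}(G)$. Then for $h,f\in\SO(G)$ the product $f\cdot h$ is integrable, so $(f,\sigma_h)_{\SO,\SOprime(G)}=\int_{G} f(t)h(t)\,dt$ is well defined and linear in $f$, with $\vert(f,\sigma_h)_{\SO,\SOprime(G)}\vert\le \Vert f\Vert_{\infty}\Vert h\Vert_{1}\le C\,\Vert f\Vert_{\SO}\Vert h\Vert_{\SO}$; this yields $\sigma_h\in\SOprime(G)$ and continuity of $h\mapsto\sigma_h$. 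Injectivity follows since $\sigma_h=0$ forces $\int_{G}fh=0$ for all $f\in\SO(G)$, and as the time-frequency shifts $\pi(\TFelement)g$ of a fixed window span a dense subspace by Lemma \ref{le:S0-Gabor-expansion}, this gives $h=0$.

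For part (ii) I write $V_g\sigma(\TFelement)=(\pi(\TFelement)g,\sigma)_{\SO,\SOprime(G)}$; by Lemma \ref{le:STFT-norm-on-SOprime} this is bounded, and by definition $\sigma\in\Misp(G)$ exactly when $V_g\sigma\in\Lp{1}(G\times\ghat)$. The forward implication is quick: if $\sigma=\sigma_h$ with $h\in\SO(G)$, then $V_g\sigma_h(\TFelement)=\int_{G}\pi(\TFelement)g(t)\,h(t)\,dt$ equals, up to complex conjugation and reflection of the window, the STFT of $h$, which lies in $\Lp{1}$ precisely because $h\in\SO(G)$ (the defining norm of $\SO$ being, after reflecting the window, the $\Lp{1}$-norm of the STFT). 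Hence $\sigma_h\in\Misp(G)$.

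For the converse, given $\sigma\in\Misp(G)$, I set $c(\TFelement)=(\overline{\pi(\TFelement)g},\sigma)_{\SO,\SOprime(G)}$ and note $c\in\Lp{1}(G\times\ghat)$, because $\overline{\pi(\TFelement)g}$ is a time-frequency shift of the admissible window $\overline{g}$ and the norms $\Vert\cdot\Vert_{\Misp,g}$ and $\Vert\cdot\Vert_{\Misp,\overline{g}}$ are equivalent. I then define
\[ h=\Vert g\Vert_{2}^{-2}\int_{G\times\ghat} c(\TFelement)\,\pi(\TFelement)g\,d\TFelement, \]
interpreted as a Bochner integral in the Banach space $\SO(G)$. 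Since $\TFelement\mapsto\pi(\TFelement)g$ is continuous and isometric into $\SO(G)$ and $c\in\Lp{1}$, the integral converges absolutely, so $h\in\SO(G)$ with $\Vert h\Vert_{\SO}\le \Vert g\Vert_{2}^{-2}\Vert g\Vert_{\SO}\Vert c\Vert_{1}$. Pairing this Bochner integral against an arbitrary $\tilde\sigma\in\SOprime(G)$ and commuting the bounded functional with the integral yields $(h,\tilde\sigma)_{\SO,\SOprime(G)}=\Vert g\Vert_{2}^{-2}\int_{G\times\ghat}(\overline{\pi(\TFelement)g},\sigma)\,(\pi(\TFelement)g,\tilde\sigma)\,d\TFelement$, which is exactly the displayed formula in (ii). Choosing $\tilde\sigma=\sigma_f$ for $f\in\SO(G)$ and using Fubini gives $(f,\sigma_h)_{\SO,\SOprime(G)}=\Vert g\Vert_{2}^{-2}\int_{G\times\ghat}(\overline{\pi(\TFelement)g},\sigma)\,(\pi(\TFelement)g,\sigma_f)\,d\TFelement$, and I verify by the orthogonality relation (Moyal's formula) for the STFT that the right-hand side equals $(f,\sigma)_{\SO,\SOprime(G)}$; hence $\sigma_h=\sigma$. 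The two norm bounds ($\Vert\sigma_h\Vert_{\Misp}\lesssim\Vert h\Vert_{\SO}$ from the forward direction, $\Vert h\Vert_{\SO}\lesssim\Vert\sigma\Vert_{\Misp}$ from the Bochner estimate) then promote the bijection $h\mapsto\sigma_h$ to a Banach space isomorphism of $\SO(G)$ onto $\Misp(G)$.

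I expect the main obstacle to be the rigorous justification of the reproducing/orthogonality identity inside the $\SO$--$\SOprime$ duality: giving a meaning to Moyal's formula for distributions, controlling the resulting double integral by Fubini, and passing from regular distributions $\sigma=\sigma_\phi$ (where it is a direct consequence of the $\Lp{2}$-orthogonality relation) to all of $\Misp(G)$ by a density and weak*-continuity argument. The second technical point is ensuring that the inversion integral converges in the $\SO$-norm rather than merely weakly; this is exactly where the strong continuity and isometry of $\TFelement\mapsto\pi(\TFelement)g$ on $\SO(G)$ enters.
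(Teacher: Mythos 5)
The paper itself does not prove this lemma: it states it and refers to \cite[Theorem 6.12]{ja19}, so there is no in-paper argument to compare yours against. Your proposal is correct in substance and is essentially the standard STFT-inversion proof that the cited reference carries out. Part (i) is fine as written. For (ii), your forward direction amounts to the identity $(\pi(x,\omega)g,\sigma_h)_{\SO,\SOprime(G)} = (E_{\omega}h * \check{g})(x)$ with $\check{g}(t)=g(-t)$, i.e.\ $\Vert \sigma_h \Vert_{\Misp,g} = \Vert h \Vert_{\SO,\check{g}}$ by the very definition \eqref{defSO2}; spelling this out also justifies your claim that $c(\TFelement)=(\overline{\pi(\TFelement)g},\sigma)$ is integrable, via $\overline{\pi(x,\omega)g}=\pi(x,-\omega)\overline{g}$ and the equivalence of the $\Misp$-norms for the windows $g$ and $\overline{g}$. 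One genuine simplification you missed: the ``main obstacle'' you anticipate (justifying Moyal's formula in the $\SO$--$\SOprime$ duality and then running a density/weak$^{*}$-limit argument) does not arise, because Lemma \ref{le:STFT-SO-SOprime} is already stated in the paper for \emph{all} $f\in\SO(G)$ and $\sigma\in\SOprime(G)$; applying it with window $\overline{g}$ and using the invariance of $d\omega$ under $\omega\mapsto-\omega$ gives exactly
\[ (f,\sigma)_{\SO,\SOprime(G)} = \Vert g\Vert_{2}^{-2}\int_{G\times\ghat}\langle f,\overline{\pi(\TFelement)g}\rangle\,\big(\overline{\pi(\TFelement)g},\sigma\big)_{\SO,\SOprime(G)}\,d\TFelement, \]
which is the right-hand side you need to identify with $(f,\sigma_h)$, with no approximation by regular distributions.

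The one place where your write-up has an actual (fixable) gap is the existence of the Bochner integral defining $h$. Strong measurability of $\TFelement\mapsto c(\TFelement)\pi(\TFelement)g$ needs two things: (a) norm-continuity of $\TFelement\mapsto\pi(\TFelement)g$ in $\SO(G)$, which is true but nowhere stated in the paper (it follows from strong continuity of translations on the Segal algebra $\SO$ plus Fourier invariance to handle modulations), and (b) essential separable-valuedness, which is not automatic when $G\times\ghat$ is not $\sigma$-compact; there you must use inner regularity of Haar measure to replace the carrier of $c\in\Lp{1}(G\times\ghat)$ by a $\sigma$-compact set up to a null set. Both points can be bypassed entirely by defining $h$ weakly, as the functional $\tilde{\sigma}\mapsto\Vert g\Vert_{2}^{-2}\int c(\TFelement)\,(\pi(\TFelement)g,\tilde{\sigma})\,d\TFelement$ on $\SOprime(G)$, checking it is bounded and (by dominated convergence, using Lemma \ref{le:STFT-norm-on-SOprime} for the uniform bound) sends bounded weak$^{*}$ convergent nets to convergent nets, and then invoking the characterization of $\iota(\SO(G))\subseteq\SOdoubleprime(G)$ recalled in Section \ref{sec:preliminaries} --- the route you announce in your opening sentence but never actually use.
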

One can verify that the embeddings in Lemma \ref{le:SOprime-induced-by-SO}(i) and (ii) are inverses of one another (independently of the choice of the function $g$ in (ii)). The details can be found in \cite[Theorem 6.12]{ja19}. A side note: if $h$ is any function in $\Lp{p}(G)$, $p\in[1,\infty]$, then $h$ also induces a functional in $\SOprime(G)$ as in \eqref{eq:regular-distribution}. 

By the natural isomorphism between $\SO(G)$ and $\Misp(G)$ the function space $\SO(G)$ is continuously embedded into its dual space $\SOprime(G)$. Due to this relation between $\SO(G)$ and $\SOprime(G)$ we allow ourselves, for all $f,h\in \SO(G)$, to write $(f,h)_{\SO,\SOprime(G)}$,
by which we mean the action that the function $h$ has on $f$ as in Lemma \ref{le:SOprime-induced-by-SO}(i). 
Note that $(f,h)_{\SO,\SOprime(G)} = (h,f)_{\SO,\SOprime(G)}$.

It turns out that the just mentioned embedding places $\SO$ inside $\SOprime$ as a \ws \ dense space. Actually, we have the following.
\begin{lemma}[{see \cite[Proposition 6.15]{ja19}}] \label{le:bounded-S0prime-approx} For any $\sigma\in\SOprime(G)$ there exists a net 
 $(\sigma_{\netparam}) $ in $\Misp(G)\cong \SO(G)$
such that
\[ \lim_{\netparam} \big\vert ( f, \sigma-\sigma_{\netparam})_{\SO,\SOprime} \vert = 0 \ \ \text{for all} \ \ f\in\SO(G) \ \ \text{and such that} \ \ \Vert \sigma_{\netparam} \Vert_{\SOprime} \le \Vert \sigma \Vert_{\SOprime}.\]
\end{lemma}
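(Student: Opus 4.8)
The plan is to produce the approximating net by a \emph{product--convolution regularization} of $\sigma$ that is manifestly contractive for the dual norm and lands in the predual copy $\Misp(G)\cong\SO(G)$. Fix the window $g$ defining $\Vert\cdot\Vert_{\SOprime}=\Vert\cdot\Vert_{\SOprime,g}$. Choose two approximate identities: probability densities $\phi_{\netparam}\in\SO(G)$ with $\phi_{\netparam}\ge 0$ and $\int_G\phi_{\netparam}=1$ concentrating at $0\in G$, and positive--definite functions $\psi_{\beta}(t)=\int_{\ghat}\omega(t)\,d\nu_{\beta}(\omega)$ where $\nu_{\beta}=\eta_{\beta}\,d\omega$ with $\eta_{\beta}\in\SO(\ghat)$, $\eta_{\beta}\ge 0$, $\int_{\ghat}\eta_{\beta}=1$ concentrating at the neutral element of $\ghat$; then $\psi_{\beta}\in\SO(G)$ and $\psi_{\beta}\to 1$ locally uniformly. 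On $\SOprime(G)$ define the (dual) operations $\phi_{\netparam}*\sigma$ and $\psi_{\beta}\cdot\sigma$ by $(f,\phi_{\netparam}*\sigma)_{\SO,\SOprime}=(\phi_{\netparam}^{\vee}*f,\sigma)_{\SO,\SOprime}$ and $(f,\psi_{\beta}\cdot\sigma)_{\SO,\SOprime}=(\psi_{\beta}f,\sigma)_{\SO,\SOprime}$, and set
\[ \sigma_{(\netparam,\beta)}=\psi_{\beta}\cdot\big(\phi_{\netparam}*\sigma\big),\qquad (\netparam,\beta)\in A\times B \text{ with the product order}. \]

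I would first record that the \emph{pre-adjoint} operator $U_{(\netparam,\beta)}f=\psi_{\beta}\cdot(\phi_{\netparam}^{\vee}*f)$ on $\SO(G)$ is a contraction for \emph{every} fixed norm $\Vert\cdot\Vert_{\SO,g}$. Indeed, writing $\phi_{\netparam}^{\vee}*f=\int_G T_x f\,\phi_{\netparam}^{\vee}(x)\,dx$ and $\psi_{\beta}h=\int_{\ghat}E_{\omega}h\,d\nu_{\beta}(\omega)$ as Bochner integrals, the isometry of the translations $T_x$ and modulations $E_{\omega}$ on $\SO(G)$ together with $\Vert\phi_{\netparam}\Vert_1=\Vert\nu_{\beta}\Vert=1$ gives $\Vert U_{(\netparam,\beta)}\Vert_{\SO\to\SO}\le 1$. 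Dualising, $\Vert\sigma_{(\netparam,\beta)}\Vert_{\SOprime}=\sup_{\Vert f\Vert_{\SO}\le 1}\vert(U_{(\netparam,\beta)}f,\sigma)_{\SO,\SOprime}\vert\le\Vert\sigma\Vert_{\SOprime}$, which is exactly the required norm bound. The weak$^{*}$ convergence then reduces to a statement purely on the test-function side: since $\phi_{\netparam}*f\to f$ and $\psi_{\beta}\cdot h\to h$ in the norm of $\SO(G)$ (approximate identities for convolution and for pointwise multiplication act on the Segal algebra $\SO$), one has $U_{(\netparam,\beta)}f\to f$ in $\SO(G)$ along the product net, whence $(f,\sigma_{(\netparam,\beta)})_{\SO,\SOprime}=(U_{(\netparam,\beta)}f,\sigma)_{\SO,\SOprime}\to(f,\sigma)_{\SO,\SOprime}$ for every $f\in\SO(G)$ by continuity of $\sigma$; note that no uniform-boundedness interchange is needed here, as the convergence is already on the argument.

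It remains to check that each $\sigma_{(\netparam,\beta)}$ actually belongs to $\Misp(G)\cong\SO(G)$, and this I expect to be the main obstacle. The mechanism is the classical product--convolution one: convolving $\sigma\in\SOprime(G)$ with $\phi_{\netparam}\in\SO(G)$ produces a bounded continuous function lying in $\mathbf{M}^{\infty}(G)$ (local $\mathcal{F}\Lp{1}$-regularity but no global decay), while subsequent pointwise multiplication by $\psi_{\beta}\in\SO(G)$ restores global summability, the pointwise module property $\SO(G)\cdot\mathbf{M}^{\infty}(G)\subseteq\SO(G)$ yielding $\sigma_{(\netparam,\beta)}\in\SO(G)$. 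Concretely I would verify membership through Lemma~\ref{le:SOprime-induced-by-SO}(ii), i.e.\ by showing that $\TFelement\mapsto(\pi(\TFelement)g,\sigma_{(\netparam,\beta)})_{\SO,\SOprime}$ is integrable on $G\times\ghat$; this amounts to an amalgam-space estimate controlling the short-time Fourier transform of a product by the $\Lp{1}$-type norm contributed by $\psi_{\beta}$ against the $\Lp{\infty}$-type norm contributed by $\phi_{\netparam}*\sigma$.

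Finally I would assemble the three ingredients. For the index set $A\times B$ directed by the product order, the net $(\sigma_{(\netparam,\beta)})$ lies in $\Misp(G)\cong\SO(G)$, satisfies the uniform bound $\Vert\sigma_{(\netparam,\beta)}\Vert_{\SOprime}\le\Vert\sigma\Vert_{\SOprime}$, and converges to $\sigma$ in the weak$^{*}$ topology, which is precisely the assertion. I note that the cheaper route of truncating the reconstruction integral $\sigma=\Vert g\Vert_2^{-2}\int_{G\times\ghat}(\pi(\TFelement)g,\sigma)_{\SO,\SOprime}\,\pi(\TFelement)g\,d\TFelement$ over compact sets delivers weak$^{*}$ density and membership in $\SO(G)$ immediately, but only with the factor $\Vert V_g g\Vert_1$ of the reproducing kernel in front of the norm estimate; it is exactly to secure the sharp constant $1$ in $\Vert\sigma_{(\netparam,\beta)}\Vert_{\SOprime}\le\Vert\sigma\Vert_{\SOprime}$ that the contractive product--convolution construction is used instead.
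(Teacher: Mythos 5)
Your route is essentially the paper's own. The paper does not reprove this lemma internally but points to product--convolution operators (Example \ref{ex:prod-conv-op} and \cite[Propositions 4.18 and 6.15]{ja19}): one regularizes $\sigma$ by operators of the form $(\sigma\cdot g_{\netparam})*h_{\netparam}$ with $\Vert h_{\netparam}\Vert_{1}\le 1$ and $\Vert g_{\netparam}\Vert_{\mathbf{A}(G)}\le 1$. Your probability densities $\phi_{\netparam}$ and positive-definite functions $\psi_{\beta}=\mathcal{F}^{-1}\eta_{\beta}$ are exactly such a pair (note $\Vert\psi_{\beta}\Vert_{\mathbf{A}(G)}=\Vert\eta_{\beta}\Vert_{1}=1$), and writing convolution and multiplication as Bochner averages of translations resp.\ modulations is the standard way to obtain the contractivity giving the sharp constant $1$; the weak$^{*}$ convergence via strong convergence of the pre-adjoints is likewise the same argument.

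Two slips should be repaired, neither fatal. (a) With the paper's dual module actions, the pre-adjoint of your net $\sigma_{(\netparam,\beta)}=\psi_{\beta}\cdot(\phi_{\netparam}*\sigma)$ is $f\mapsto(f\cdot\psi_{\beta})*\phi_{\netparam}^{\vee}$, not $U_{(\netparam,\beta)}f=\psi_{\beta}\cdot(\phi_{\netparam}^{\vee}*f)$; the latter is the pre-adjoint of $\phi_{\netparam}*(\psi_{\beta}\cdot\sigma)$. Since both maps are contractions on $(\SO(G),\Vert\cdot\Vert_{\SO,g})$ for every $g$ and both converge strongly to the identity, either pairing works, but you must use one consistently. (b) The stated inclusion $\SO(G)\cdot\mathbf{M}^{\infty}(G)\subseteq\SO(G)$ is false: by the paper's conventions $\mathbf{M}^{\infty}(G)=\SOprime(G)$, and for non-discrete $G$ the product $h\cdot\delta_{0}=h(0)\,\delta_{0}$ (with $h\in\SO(G)$, $h(0)\neq 0$) does not lie in $\Misp(G)$. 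What you actually need, and what your parenthetical ``local $\mathcal{F}\Lp{1}$-regularity, no global decay'' indicates, is that $\phi_{\netparam}*\sigma$ lies in the Wiener amalgam space $W(\mathcal{F}\Lp{1},\Lp{\infty})$, whose elements are pointwise multipliers of $\SO(G)$; equivalently, carry out the STFT-integrability check via Lemma \ref{le:SOprime-induced-by-SO}(ii) that you outline, which is the clean way to finish. With those corrections the proof is complete and coincides in substance with the proof the paper refers to.
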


The translation and modulation operators can be uniquely extended from operators on $\SO(G)$ to \ws-\ws \ continuous operators on $\SOprime(G)$. We will denote these extensions by the same symbol. Specifically, for $f\in \SO(G)$, $\sigma\in \SOprime(G)$ and $\TFelement = (x,\omega) \in G\times\ghat$, 
they are characterized by the following identities: 
\begin{align*} ( f , T_{x} \sigma)_{\SO,\SOprime(G)} & = (T_{-x} f, \sigma)_{\SO,\SOprime(G)}, \\
(f, E_{\omega} \sigma)_{\SO,\SOprime(G)} & = (E_{\omega} f, \sigma)_{\SO,\SOprime(G)}, \\
( f, \pi(x,\omega) \,\sigma)_{\SO,\SOprime(G)} & = \omega(x) \, ( \pi(-x,\omega) f , \sigma)_{\SO,\SOprime(G)}.\end{align*}
In addition, for $g, h \in\SO(G)$, we define
\begin{align*} ( f, h \cdot \sigma)_{\SO,\SOprime(G)} & = (f\cdot h, \sigma)_{\SO,\SOprime(G)}, \\
(f, g  \ast \sigma)_{\SO,\SOprime(G)} & = (f \ast g^{\checkmark}\!, \sigma)_{\SO,\SOprime(G)} , \ \ g^{\checkmark}\!(t) = g(-t), \ t\in G. 
\end{align*}
These formulas remain valid for $h$ being a pointwise multiplier of 
$\SO(G)$ or $g$ having a Fourier transform with this property (defining a bounded convolution operator on $\SO(G)$). 

The complex conjugation of a generalized function is defined by the relation
\[ (f, \overline{{\sigma}} )_{\SO,\SOprime(G)} = \overline{(\overline{f}, \sigma)}_{\SO,\SOprime(G)},\]
The reader may verify that these definitions are compatible with the embedding of $\SO(G)$ into $\SOprime(G)$ as described in Lemma \ref{le:SOprime-induced-by-SO}
and are in fact uniquely determined based on this consistency consideration. 

Observe that the extension of the translation operator to $\SOprime(G)$ is \emph{not} the same as its Banach space adjoint, which, by definition, is the operator given by
\begin{align*}
(T_{x})^{\times} : \SOprime(G) \to \SOprime(G), \ ( f, (T_{x})^{\times} \sigma)_{\SO,\SOprime(G)} = ( T_{x} f, \sigma)_{\SO,\SOprime(G)}.
\end{align*}
However, it so happens that the Banach space adjoint of the modulation operator $E_{\omega} : \SO(G) \to \SO(G)$ is the same as its unique extension to an operator on $\SOprime(G)$.  

Throughout the paper $\langle \cdot , \cdot \rangle$ is the $\Lp{2}$-inner product (with the anti-linearity in the second entry), which is well-defined for functions in $\SO(G)$ as $\SO(G)\subseteq \Lp{2}(G)$. In fact, $\SO(G)$ is continuously embedded into all the $\Lp{p}(G)$ spaces: for all $p\in [1,\infty]$ and $f\in \SO(G)$,
\[ \Vert f \Vert_{p} \le \Vert g \Vert_{q}^{-1} \, \Vert f \Vert_{\SO(G),g}, \]
where $p^{-1}+q^{-1} = 1$ for $p\in (1,\infty)$ and the usual convention if $p=1$ or $p=\infty$ (this follows from \cite[Lemma 4.19]{ja19}). Furthermore $\SO(G)$ is continuously embedded into $\mathbf{C}_{0}(G)$ and hence $\SOprime(G)$ contains the Dirac delta distribution $\delta_{x}: f \mapsto f(x)$, $x\in G$, $f\in\SO(G)$. 

We will make frequent use of the following equality.
\begin{lemma}[{see \cite[Lemma 6.10(iv)]{ja19}}] \label{le:STFT-SO-SOprime} If $g\in\SO(G)\backslash\{0\}$, then for any $f\in \SO(G)$ and $\sigma\in \SOprime(G)$
\begin{equation} \label{eq:1412a} (f,\sigma)_{\SO,\SOprime(G)} = \Vert g \Vert_{2}^{-2} \int_{G\times\ghat} \langle f, \pi(\TFelement) g\rangle \, (\pi(\TFelement) g, \sigma)_{\SO,\SOprime(G)} \, d\TFelement.\end{equation}
\end{lemma}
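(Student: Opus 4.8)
The plan is to reduce the identity to the short-time Fourier transform inversion formula in $\SO(G)$ together with the elementary fact that a bounded linear functional commutes with an absolutely convergent vector-valued integral. This route deliberately avoids passing from the function case to a general $\sigma\in\SOprime(G)$ by weak$^*$ density and dominated convergence: the latter would force a dominated-convergence argument along a \emph{net} (via Lemma \ref{le:bounded-S0prime-approx}), which is exactly the step that is not valid for nets and is the usual pitfall here.

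First I would fix $g\in\SO(G)\setminus\{0\}$ and, for $f\in\SO(G)$, introduce the $\SO(G)$-valued integral
\[ h_f := \Vert g\Vert_2^{-2}\int_{G\times\ghat} \langle f,\pi(\TFelement)g\rangle\,\pi(\TFelement)g\,d\TFelement. \]
Since the time-frequency shifts act isometrically and strongly continuously on $\SO(G)$, the map $\TFelement\mapsto \langle f,\pi(\TFelement)g\rangle\,\pi(\TFelement)g$ is continuous into $\SO(G)$; and because $\TFelement\mapsto\langle f,\pi(\TFelement)g\rangle$ lies in $\Lp1(G\times\ghat)$ for $f,g\in\SO(G)$ (this is precisely the $\Misp$-finiteness built into $\SO(G)\cong\Misp(G)$), one has $\int_{G\times\ghat}\Vert \langle f,\pi(\TFelement)g\rangle\,\pi(\TFelement)g\Vert_{\SO}\,d\TFelement = \Vert g\Vert_{\SO}\int_{G\times\ghat}\vert\langle f,\pi(\TFelement)g\rangle\vert\,d\TFelement<\infty$. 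Hence the integrand is Bochner integrable and $h_f\in\SO(G)$ is well defined. The only point needing care on a general, possibly non-$\sigma$-compact, LCA group is strong measurability of the integrand, which follows from its continuity together with the fact that its $\Lp1$-integrability confines the relevant mass to a $\sigma$-compact set.

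Next I would establish two facts which together give the lemma. (a) For every $\sigma\in\SOprime(G)$ the bounded functional $\sigma$ may be moved inside the Bochner integral, giving $(h_f,\sigma)_{\SO,\SOprime(G)} = \Vert g\Vert_2^{-2}\int_{G\times\ghat}\langle f,\pi(\TFelement)g\rangle\,(\pi(\TFelement)g,\sigma)_{\SO,\SOprime(G)}\,d\TFelement$, which is the right-hand side of \eqref{eq:1412a} and holds for \emph{all} $\sigma$, not merely on a dense subspace. (b) $h_f=f$. For (b) I would test $h_f$ against the functionals $\sigma_{\tilde h}$ induced by $\tilde h\in\SO(G)$ as in Lemma \ref{le:SOprime-induced-by-SO}(i): using (a) and the identity $(\pi(\TFelement)g,\sigma_{\tilde h})_{\SO,\SOprime(G)}=\overline{\langle\overline{\tilde h},\pi(\TFelement)g\rangle}$, the orthogonality (Moyal) relation for the short-time Fourier transform on $\Lp2(G)$ yields $\int_G h_f\,\tilde h\,dt = \Vert g\Vert_2^{-2}\,\langle f,\overline{\tilde h}\rangle\,\Vert g\Vert_2^{2} = \int_G f\,\tilde h\,dt$. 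As this holds for all $\tilde h\in\SO(G)$ and the embedding $h\mapsto\sigma_h$ is injective, $h_f=f$. (Alternatively, (b) is nothing but the continuous reconstruction formula for $\SO(G)$, which may be cited directly.)

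Combining (a) and (b) gives, for every $\sigma\in\SOprime(G)$, the chain $(f,\sigma)_{\SO,\SOprime(G)}=(h_f,\sigma)_{\SO,\SOprime(G)}=\Vert g\Vert_2^{-2}\int_{G\times\ghat}\langle f,\pi(\TFelement)g\rangle\,(\pi(\TFelement)g,\sigma)_{\SO,\SOprime(G)}\,d\TFelement$, which is exactly \eqref{eq:1412a}. The main obstacle is step (a) together with the well-posedness of the $\SO(G)$-valued integral: one must verify Bochner integrability — in particular strong measurability of $\TFelement\mapsto\langle f,\pi(\TFelement)g\rangle\,\pi(\TFelement)g$ on an arbitrary LCA group — and then justify interchanging $\sigma$ with the integral. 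Once the integral is set up as an absolutely convergent $\SO(G)$-valued integral, the interchange is automatic and Moyal's relation does the rest, so all the delicate general-LCA features are isolated in the vector-valued integration rather than in any net or limit argument.
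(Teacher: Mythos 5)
Your proof is correct, but note that the paper itself contains no proof of this lemma to compare against: it is quoted verbatim from \cite[Lemma 6.10(iv)]{ja19}, so the only ``comparison'' available is with the standard argument underlying that citation, which your proposal essentially reconstructs. The route you take is sound: the key facts you invoke are all available in the stated generality, namely (1) $\TFelement\mapsto\langle f,\pi(\TFelement)g\rangle$ lies in $\Lp{1}(G\times\ghat)$ for $f,g\in\SO(G)$ (this is the $\SO\cong\Misp$ identification, Lemma \ref{le:SOprime-induced-by-SO}), (2) time-frequency shifts act isometrically and strongly continuously on $\SO(G)$, so the integrand $\TFelement\mapsto\langle f,\pi(\TFelement)g\rangle\,\pi(\TFelement)g$ is continuous with $\SO$-norm equal to $|\langle f,\pi(\TFelement)g\rangle|\,\Vert g\Vert_{\SO}$, and (3) inner regularity of Haar measure confines the $\Lp{1}$-mass to a $\sigma$-compact set, giving essential separable-valuedness and hence Bochner integrability even when $G\times\ghat$ is not $\sigma$-compact. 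With the integral $h_f$ so defined, pulling the bounded functional $\sigma$ inside is automatic, and your identification $h_f=f$ by testing against the embedded functionals $\sigma_{\tilde h}$, $\tilde h\in\SO(G)$, via Moyal's orthogonality relation (legitimate since $\SO(G)\subseteq\Lp{2}(G)$) together with the injectivity of $h\mapsto\sigma_h$ from Lemma \ref{le:SOprime-induced-by-SO}(i) closes the argument. Your observation that this design deliberately isolates all difficulty in the vector-valued integration — rather than in a weak$^*$-density/net-limit argument, where dominated convergence would be unavailable — is exactly the right structural point; the proof is complete modulo routine verification of the Bochner prerequisites you explicitly flag.
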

Lastly, we define the short-time Fourier transform with respect to a function $g\in\SO(G)$ to be the operator
\[ \mathcal{V}_{g} : \SOprime(G)\to \mathbf{C}_{b}(G\times\ghat) , \ \mathcal{V}_{g}\sigma(\TFelement) = \big( \overline{\pi(\TFelement) g} , \sigma\big)_{\SO,\SOprime(G)} \ \ \text{for all} \ \ \sigma\in\SOprime(G), \ \TFelement\in G\times\ghat. \] 
The operator maps $\Lp{2}(G)$ into $\Lp{2}(G\times\ghat)$ and it maps $\SO(G)$ into $\SO(G\times\ghat)$ (see \cite[Section 6]{feko98} or \cite[Theorem 5.3(ii)]{ja19}. Note that if $f\in \Lp{2}(G)$, then $\mathcal{V}_{g}f (\TFelement) =\langle f, \pi(\TFelement) g\rangle$, $\TFelement\in G\times\ghat$. Using the short-time Fourier transform we can reformulate \eqref{eq:1412a} as
$$\Vert g\Vert_{2}^{2} \ (f,\overline{\sigma})_{\SO,\SOprime(G)} = \int_{G\times\ghat} \mathcal{V}_{g}f(\TFelement) \overline{\mathcal{V}_{g}\sigma(\TFelement)} \, d\TFelement. $$ 

\section{Operators that have a kernel in $\SO$}
\label{sec:three}
\subsection{Nets versus sequences}
\label{sec:nets}

The spaces of operators that are identified with $\SO(G_{1}\times G_{2})$ by Theorem \ref{th:new-inner-kernel-theorem} are 
defined using \ws \ 
continuity in $\SOprime$. The \ws \ topology on $\SOprime$ is non-metrizable (unless $\SO$ is finite dimensional, \cite[Proposition 2.6.12]{me98-1}) and it is therefore properly described using nets. However, in some cases, e.g., if $G=\R^{d}$, we may use the notion of sequences to describe the spaces $\mathcal{A}$ and $\mathcal{B}$.

\begin{lemma} If $G_{1}$ and $G_{2}$ are $\sigma$-compact and metrizable, then the Banach spaces $\mathcal{A}(G_{1},G_{2})$ and $\mathcal{B}(G_{1},G_{2})$ can be described by the behavior of convergent sequences. Specifically, 
\begin{align*}
    \mathcal{A}(G_{1},G_{2}) & = \{ A \in \Bil(\SOprime(G_{1})\times \SOprime(G_{2}),\C) \, : \\ & \qquad \qquad A \textnormal{ is sequentially \ws \ continuous in each coordinate } \} \\
    \mathcal{B}(G_{1},G_{2}) & = \{ T \in \Lin(\SOprime(G_{1}),\SO(G_{2})) \, : \\ & \qquad \qquad  T \textnormal{ maps \ws-convergent sequences in $\SOprime(G_{1})$} \\
    & \qquad \qquad \textnormal{into norm convergent sequences in $\SO(G_{2})$ } \}
\end{align*}


\end{lemma}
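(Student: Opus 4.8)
The plan is to reduce both assertions to a single functional-analytic principle: if $X$ is a separable Banach space, then the \ws{}-topology on every norm-bounded subset of $X^{*}$ is metrizable, and on a metrizable space continuity coincides with sequential continuity. Since sequences are a special case of nets, in each claim one inclusion is immediate, and the content lies entirely in the converse, which is exactly where separability enters. So I would first verify that the hypotheses force $\SO(G_{1})$ and $\SO(G_{2})$ to be separable. If $G$ is $\sigma$-compact and metrizable, then by the Pontryagin duality relations ($G$ metrizable is equivalent to $\ghat$ being $\sigma$-compact, and $G$ $\sigma$-compact is equivalent to $\ghat$ being metrizable) the dual $\ghat$ is itself $\sigma$-compact and metrizable, so $G\times\ghat$ is a $\sigma$-compact metric space and hence separable. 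Fixing a window $g\in\SO(G)\setminus\{0\}$, the map $\TFelement\mapsto\pi(\TFelement)g$ is norm-continuous from $G\times\ghat$ into $\SO(G)$, and by the atomic characterization of Lemma~\ref{le:S0-Gabor-expansion} the closed linear span of $\{\pi(\TFelement)g:\TFelement\in G\times\ghat\}$ is all of $\SO(G)$. Taking a countable dense set $\{\TFelement_{j}\}\subseteq G\times\ghat$ and rational-coefficient combinations of the $\pi(\TFelement_{j})g$ then yields a countable dense subset of $\SO(G)$, so each bounded ball $M\cdot B_{\SOprime(G_{i})}$ carries a metrizable \ws{}-topology (a standard consequence of separability, see \cite{me98-1}).

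For $\mathcal{B}(G_{1},G_{2})$ the converse inclusion runs as follows. Let $T$ send \ws{}-convergent sequences to norm-convergent sequences, and let $(\sigma_{\netparam})$ be a bounded \ws{}-convergent net with limit $\sigma_{0}$, all contained in the \ws{}-closed, \ws{}-metrizable ball $K=M\cdot B_{\SOprime(G_{1})}$. The restriction $T|_{K}:(K,w^{*})\to(\SO(G_{2}),\Vert\cdot\Vert)$ is sequentially continuous by hypothesis, since a \ws{}-convergent sequence in $K$ keeps its limit in $K$; by metrizability of $K$ it is therefore continuous, whence $T\sigma_{\netparam}\to T\sigma_{0}$ in norm. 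In the opposite direction one uses that a \ws{}-convergent sequence is automatically norm-bounded by the uniform boundedness principle, so it is a bounded \ws{}-convergent net, and the sequence description follows from the net description. This shows the two descriptions of $\mathcal{B}$ agree.

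For $\mathcal{A}(G_{1},G_{2})$ I would argue one coordinate at a time. For fixed $\sigma^{(2)}$ the functional $\varphi=A(\,\cdot\,,\sigma^{(2)}):\SOprime(G_{1})\to\C$ is linear and norm-bounded, and one must show that it is \ws{}-continuous precisely when it is sequentially \ws{}-continuous. Given sequential \ws{}-continuity, restrict $\varphi$ to $B_{\SOprime(G_{1})}$; since this ball is \ws{}-metrizable the restriction is \ws{}-continuous, and the Krein--Smulian theorem then upgrades \ws{}-continuity on the ball to \ws{}-continuity of $\varphi$ on all of $\SOprime(G_{1})$. The same argument in the second variable completes the identification of the two descriptions of $\mathcal{A}$.

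The main obstacle is the converse direction throughout, and it splits into two genuinely different pieces. For $\mathcal{B}$ one must pass from control on sequences to control on arbitrary bounded nets, which is impossible in general and works here only because separability metrizes the bounded \ws{}-sets. For $\mathcal{A}$ the subtlety is sharper: sequential \ws{}-continuity of a single bounded linear functional does not by itself yield \ws{}-continuity, so the argument must first localize to a bounded ball (where metrizability applies) and only then globalize via Krein--Smulian. Establishing the separability of $\SO(G)$ is the one step requiring genuine harmonic-analysis input, namely the duality between $\sigma$-compactness and metrizability together with the atomic decomposition; once that is in hand, everything else is a careful but routine application of the metrization of bounded \ws{}-sets.
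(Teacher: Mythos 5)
Your proposal is correct, and its functional-analytic core coincides with the paper's: separability of $\SO(G_{i})$, Banach--Alaoglu metrizability of the \ws{} topology on norm-bounded subsets of $\SOprime(G_{i})$, and the resulting identification of sequential with bounded-net \ws{} continuity. You deviate from the paper in two places. First, the paper obtains separability of $\SO(G)$ by quoting coorbit theory ($\SO$ as a coorbit space of the Heisenberg representation, \cite{fegr92-1}, together with \cite[Theorem 6.1]{fegr89}), whereas you derive it directly from the atomic characterization of Lemma~\ref{le:S0-Gabor-expansion}: the orbit $\{\pi(\nu)g\}$ spans a dense subspace, $G\times\ghat$ is separable (being $\sigma$-compact and metrizable), and $\nu\mapsto\pi(\nu)g$ is norm continuous, so rational combinations over a countable dense set of phase-space points are dense in $\SO(G)$. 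This is more elementary and self-contained; the one ingredient you should justify or cite (e.g.\ from \cite{ja19}) is the norm continuity of $\nu\mapsto\pi(\nu)g$, since the paper only records that each $\pi(\nu)$ is an isometry. Second, you make explicit two points the paper's four-line proof glosses over: that a \ws{}-convergent sequence is automatically a \emph{bounded} net (uniform boundedness), and, for $\mathcal{A}$, that metrizability of balls only yields continuity against bounded nets, so a Krein--Smulian-type argument is needed to reach \ws{} continuity against arbitrary nets, which is what Definition~\ref{def:the-spaces} demands. The paper covers this last step only implicitly, via its Lemma~\ref{le:weak-star-continuity-of-functionals} (\cite[Corollary 2.7.9]{me98-1}), which is precisely the corollary of Krein--Smulian that you invoke; your treatment is the more complete one.
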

\begin{proof} If a locally compact Abelian group $G$ is $\sigma$-compact and metrizable then also its dual group $\ghat$ is $\sigma$-compact and metrizable \cite[Section 3]{boro15}. It is a fact that $\SO$ can be described as a coorbit space associated to the Heisenberg representation of $G\times \ghat$ \cite{fegr92-1}. Coorbit theory \cite[Theorem 6.1]{fegr89}, together with the fact that the time-frequency plane $G\times \ghat$ is $\sigma$-compact, implies the separability of $\SO(G)$. Thus, by the assumption in the lemma, the spaces $\SO(G_{i})$, $i=1,2$ are separable. The Banach-Alaoglu theorem thus implies that the \ws \ topology on $\SOprime$ on any bounded set is metrizable. Hence the notions of continuity by \emph{bounded} convergent nets and convergent sequences coincide. 
\end{proof}

The commonly used  locally compact Abelian groups $\R$, $\Z$, $\mathbb{T}$, $\Z/N\Z$ $N=1,2,\ldots$ and the $p$-adic numbers are $\sigma$-compact and metrizable. The additive group $\R$ under the discrete topology is an example of a non-$\sigma$-compact (albeit metrizable) locally compact Abelian group.

\subsection{Identifying operators that have a kernel in $\SO$} 
\label{sec:kernel-in-s0}

In this section we answer the second and third question posed in the Introduction, which we expand on here. 

Let $T$ be an operator in $\Lin(\SO(G_{1}),\SOprime(G_{2}))$. By the outer kernel theorem $T$ has a kernel $K$ in $\SOprime(G_{1}\times G_{2})$. 
Assume now that this kernel is induced by a function in $\SO(G_{1}\times G_{2})$. By the inner kernel theorem we know that these operators are exactly the ones that belong to $\mathcal{B}(G_{1},G_{2})\subseteq \Lin(\SOprime(G_{1}),\SO(G_{2}))$. For such an operator $T: \SO(G_{1}) \to \SOprime(G_{2})$ we are faced with the following questions.
\begin{itemize}
    \item[(a)] How do we verify that the domain of the operator $T$ can be extended from $\SO(G_{1})$ to $\SOprime(G_{1})$?
    \item[(b)] How do we know that its co-domain actually is $\SO(G_{2})$ rather than $\SOprime(G_{2})$?
    \item[(c)] How can we verify its continuity properties as described in Definition \ref{def:the-spaces}?
\end{itemize} 
Naturally, the same questions can be formulated for operators $A\in \Bil(\SO(G_{1})\times\SO(G_{2}),\C)$ whose kernel might be induced by a function in $\SO(G_{1}\times G_{2})$. 

The following theorem characterizes the operators in $\Lin(\SO(G_{1}),\SOprime(G_{2}))$ and $\Bil(\SO(G_{1})\times\SO(G_{2}),\C)$ that have a kernel in $\SO(G_{1}\times G_{2})$ and it describes how their domain extends from $\SO$ to $\SOprime$. 
\begin{theorem} \label{th:1608a} For $i=1,2$ fix a function $g^{(i)}\in\SO(G_{i})\backslash\{0\}$ such that $\Vert g^{(i)} \Vert_{2} = 1$. 
\begin{enumerate}
\item[(i)] If $A$ is an operator in $\Bil(\SO(G_{1})\times \SO(G_{2}),\C)$, then its kernel $\kappa(A)\in \SOprime(G_{1}\times G_{2})$ is induced by a function in $\SO(G_{1}\times G_{2})$, i.e.\ $A\in\mathcal{A}(G_{1},G_{2})$, if and only if 
\begin{equation} \label{eq:norm-A}  \int\limits_{\substack{G_{1}\times\ghat_{1}  \times G_{2}\times\ghat_{2}}} \hspace{-0.5cm} \big\vert A\big(\pi(\TFelement^{(1)}) g^{(1)}, \pi(\TFelement^{(2)})g^{(2)}\big) \big\vert \, d(\TFelement^{(1)},\TFelement^{(2)}) < \infty.\end{equation}
In that case the operator $A:\SOprime(G_{1})\times\SOprime(G_{2})\to\C$ satisfies
\begin{align} & A(\sigma^{(1)},\sigma^{(2)}) \nonumber \\
& = \int\limits_{\substack{G_{1}\times \ghat_{1}  \times G_{2}\times \ghat_{2}}} \hspace{-0.5cm} \mathcal{V}_{g^{(1)}}\sigma^{(1)}(\TFelement^{(1)})  \cdot \mathcal{V}_{g^{(2)}}\sigma^{(2)}(\TFelement^{(2)}) \cdot  A\big( \pi(\TFelement^{(1)})g^{(1)} , \pi(\TFelement^{(2)})g^{(2)}\big) \ d(\TFelement^{(1)},\TFelement^{(2)}). \label{eq:integral-A} \end{align}
\item[(ii)] If $T$ is an operator in $\Lin\big(\SO(G_{1}),\SOprime(G_{2})\big)$, then its kernel  $\kappa(T)\in \SOprime(G_{1}\times G_{2})$ is induced by a function in $\SO(G_{1}\times G_{2})$, i.e.\ $T\in\mathcal{B}(G_{1},G_{2})$, if and only if
\begin{equation} \label{eq:norm-B} \int\limits_{\substack{G_{1}\times\ghat_{1} \times G_{2}\times\ghat_{2}}} \hspace{-0.5cm} \big\vert \big( \pi(\TFelement^{(2)}) g^{(2)}, T \circ \pi(\TFelement^{(1)})g^{(1)}\big)_{\SO,\SOprime(G_{2})} \big\vert \, d(\TFelement^{(1)},\TFelement^{(2)}) < \infty.\end{equation}
In that case the operators $T:\SOprime(G_{1})\to\SO(G_{2})$ satisfies
\end{enumerate}
\begin{align} & (T\sigma^{(1)},\sigma^{(2)})_{\SO,\SOprime(G_{2})}  \nonumber \\
& = \int\limits_{\substack{G_{1}\times \ghat_{1}  \times G_{2}\times \ghat_{2}}} \hspace{-0.5cm} \mathcal{V}_{g^{(1)}}\sigma^{(1)}(\TFelement^{(1)}) \cdot \mathcal{V}_{g^{(2)}}\sigma^{(2)}(\TFelement^{(2)}) \cdot  \big( \pi(\TFelement^{(2)})g^{(2)}, T\pi(\TFelement^{(1)})g^{(1)} \big)_{\SO,\SOprime} \ d(\TFelement^{(1)},\TFelement^{(2)}).\label{eq:integral-B} \end{align}
\end{theorem}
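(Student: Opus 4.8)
Before the proof I would record the overall strategy: I treat part~(i) in full and deduce part~(ii) for free. Under the isomorphisms of Theorem~\ref{th:outer-kernel-theorem} and Theorem~\ref{th:new-inner-kernel-theorem} the operators $A$ and $T$ correspond with $A(f^{(1)},f^{(2)}) = (f^{(2)},Tf^{(1)})_{\SO,\SOprime(G_2)}$, so that $A(\pi(\TFelement^{(1)})g^{(1)},\pi(\TFelement^{(2)})g^{(2)}) = (\pi(\TFelement^{(2)})g^{(2)}, T\pi(\TFelement^{(1)})g^{(1)})_{\SO,\SOprime(G_2)}$. This single identity turns \eqref{eq:norm-A} into \eqref{eq:norm-B} and \eqref{eq:integral-A} into \eqref{eq:integral-B}, so I lose nothing by working with $A$ throughout. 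Within part~(i) I split the work into the equivalence (the ``if and only if'') and the integral formula.

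For the equivalence I would compute the $\Misp(G_1\times G_2)$-norm of the kernel $\kappa(A)\in\SOprime(G_1\times G_2)$ with respect to the window $g^{(1)}\otimes g^{(2)}$, which has $\Lp{2}$-norm $1$. The structural point is that under the canonical measure-preserving identification of the phase space $(G_1\times G_2)\times(\ghat_1\times\ghat_2)$ with $(G_1\times\ghat_1)\times(G_2\times\ghat_2)$ one has $\pi(\TFelement^{(1)},\TFelement^{(2)})(g^{(1)}\otimes g^{(2)}) = \pi(\TFelement^{(1)})g^{(1)}\otimes\pi(\TFelement^{(2)})g^{(2)}$. Combined with the outer kernel identity of Theorem~\ref{th:outer-kernel-theorem}, namely $(\pi(\TFelement^{(1)})g^{(1)}\otimes\pi(\TFelement^{(2)})g^{(2)},\kappa(A))_{\SO,\SOprime(G_1\times G_2)} = A(\pi(\TFelement^{(1)})g^{(1)},\pi(\TFelement^{(2)})g^{(2)})$, this shows that the integral in \eqref{eq:norm-A} is exactly $\Vert\kappa(A)\Vert_{\Misp,\,g^{(1)}\otimes g^{(2)}}$. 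By Lemma~\ref{le:SOprime-induced-by-SO}(ii) the finiteness of this quantity is equivalent to $\kappa(A)$ being induced by a function in $\SO(G_1\times G_2)$, which by Theorem~\ref{th:new-inner-kernel-theorem} is precisely the assertion $A\in\mathcal{A}(G_1,G_2)$.

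For the integral formula I would first verify \eqref{eq:integral-A} on the \ws-dense subset where $\sigma^{(i)}=\sigma_{h^{(i)}}$ is induced by a function $h^{(i)}\in\SO(G_i)$. Writing $K=\kappa(A)\in\SO(G_1\times G_2)$, the left-hand side equals $\int_{G_1\times G_2}K(s_1,s_2)\,h^{(1)}(s_1)\,h^{(2)}(s_2)\,d(s_1,s_2)$, while on the right-hand side $\mathcal{V}_{g^{(i)}}\sigma_{h^{(i)}}=\mathcal{V}_{g^{(i)}}h^{(i)}$ is the ordinary short-time Fourier transform. Since $K\in\SO\subseteq\Lp{1}$ and \eqref{eq:norm-A} holds, the resulting iterated integral is absolutely convergent, so Fubini lets me integrate over $\TFelement^{(1)}$ and $\TFelement^{(2)}$ first; the inner integrals $\int\mathcal{V}_{g^{(i)}}h^{(i)}(\TFelement^{(i)})\,(\pi(\TFelement^{(i)})g^{(i)})(s_i)\,d\TFelement^{(i)}$ reproduce $h^{(i)}(s_i)$ by the inversion formula (Lemma~\ref{le:STFT-SO-SOprime} applied with $\sigma=\delta_{s_i}\in\SOprime(G_i)$, using $\Vert g^{(i)}\Vert_2=1$), and the two sides coincide.

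The main obstacle is the passage from this dense case to arbitrary $\sigma^{(i)}\in\SOprime(G_i)$. I would do this by showing that the right-hand side of \eqref{eq:integral-A}, call it $B(\sigma^{(1)},\sigma^{(2)})$, is separately \ws\ continuous. Fixing $\sigma^{(2)}$ and integrating out $\TFelement^{(2)}$ produces a function $F\in\Lp{1}(G_1\times\ghat_1)$, where I use \eqref{eq:norm-A} together with the boundedness of $\mathcal{V}_{g^{(2)}}\sigma^{(2)}$. Then $B(\sigma^{(1)},\sigma^{(2)}) = \int_{G_1\times\ghat_1}(\overline{\pi(\TFelement^{(1)})g^{(1)}},\sigma^{(1)})\,F(\TFelement^{(1)})\,d\TFelement^{(1)} = (w^{(1)},\sigma^{(1)})_{\SO,\SOprime(G_1)}$, where $w^{(1)}=\int F(\TFelement^{(1)})\,\overline{\pi(\TFelement^{(1)})g^{(1)}}\,d\TFelement^{(1)}$ is a Bochner integral converging in $\SO(G_1)$ because $\TFelement^{(1)}\mapsto\overline{\pi(\TFelement^{(1)})g^{(1)}}$ is norm continuous and norm bounded into $\SO(G_1)$. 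This exhibits $B(\cdot,\sigma^{(2)})$ as evaluation against the fixed test function $w^{(1)}$, hence \ws\ continuous, and symmetrically in the second slot. Since $B$ and the $\mathcal{A}$-extension of $A$ are both separately \ws\ continuous and agree on the \ws-dense set $\Misp\cong\SO$ (Lemma~\ref{le:bounded-S0prime-approx}), a two-step density argument — first vary $\sigma^{(1)}$ while $\sigma^{(2)}$ is a function, then vary $\sigma^{(2)}$ — forces $B=A$ on all of $\SOprime(G_1)\times\SOprime(G_2)$, establishing \eqref{eq:integral-A}; finally \eqref{eq:integral-B} follows through $A(\sigma^{(1)},\sigma^{(2)})=(T\sigma^{(1)},\sigma^{(2)})_{\SO,\SOprime(G_2)}$.
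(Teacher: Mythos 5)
Your proposal is correct, and its two halves compare differently with the paper. The equivalence (the ``if and only if'') is argued exactly as in the paper: rewrite $A\big(\pi(\TFelement^{(1)})g^{(1)},\pi(\TFelement^{(2)})g^{(2)}\big)$ via the outer kernel theorem as the pairing of $\kappa(A)$ against $\pi(\TFelement^{(1)},\TFelement^{(2)})(g^{(1)}\otimes g^{(2)})$, recognize the integral in \eqref{eq:norm-A} as the $\Misp$-norm of $\kappa(A)$ with window $g^{(1)}\otimes g^{(2)}$, and invoke Lemma \ref{le:SOprime-induced-by-SO}(ii) plus Theorem \ref{th:new-inner-kernel-theorem}; your explicit reduction of (ii) to (i) through the correspondence $A(f^{(1)},f^{(2)})=(f^{(2)},Tf^{(1)})_{\SO,\SOprime(G_{2})}$ is also a clean way to make precise what the paper dismisses as ``similar.'' Where you genuinely diverge is the integral formula \eqref{eq:integral-A}. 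The paper gets it in one stroke by applying Lemma \ref{le:STFT-SO-SOprime} on the product group $G_{1}\times G_{2}$, with $f=\kappa(A)$ (now known to lie in $\SO(G_{1}\times G_{2})$), window $\overline{g^{(1)}\otimes g^{(2)}}$, and $\sigma=\sigma^{(1)}\otimes\sigma^{(2)}$ --- since that lemma is valid for \emph{arbitrary} $\sigma\in\SOprime$, no approximation is needed, and the integrand factors by the tensor structure. You instead verify the formula only on the pairs induced by functions $h^{(i)}\in\SO(G_{i})$ (via Fubini and the pointwise inversion obtained from Lemma \ref{le:STFT-SO-SOprime} with $\sigma=\delta_{s_{i}}$), and then extend by a two-step \ws\ density argument, exhibiting the right-hand side as $\sigma^{(1)}\mapsto(w^{(1)},\sigma^{(1)})_{\SO,\SOprime(G_{1})}$ for a vector-valued integral $w^{(1)}\in\SO(G_{1})$ to obtain separate \ws\ continuity, and using the bounded approximating nets of Lemma \ref{le:bounded-S0prime-approx}. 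This works, but it essentially re-proves, by hand and only for this integrand, the robustness that Lemma \ref{le:STFT-SO-SOprime} already supplies; what it buys is a self-contained argument whose density-plus-continuity template generalizes to situations where no such ready-made inversion formula for distributions is available. One technical point you should make explicit: to define $w^{(1)}$ as a Bochner integral on a possibly non-$\sigma$-compact group you need the integrand to be essentially separably valued; this holds because $F\in\Lp{1}(G_{1}\times\ghat_{1})$ vanishes outside a $\sigma$-compact set and $\TFelement\mapsto\pi(\TFelement)g^{(1)}$ is norm continuous into $\SO(G_{1})$, but it deserves a sentence.
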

\begin{remark} The formula in \eqref{eq:integral-A} extends the domain of $A$ from $\SO(G_{1})\times\SO(G_{2})$ to $\SOprime(G_{1})\times\SOprime(G_{2})$, and \eqref{eq:integral-B} extends the domain of $T$ from $\SO(G_{1})$ to $\SOprime(G_{1})$.
\end{remark}
\begin{remark}The condition in Theorem \ref{th:1608a} that $\Vert g^{(i)}\Vert_{2} = 1$ is only necessary to make the equalities in \eqref{eq:integral-A} and \eqref{eq:integral-B} more pleasant. Otherwise the integrals need to be normalized by $\Vert g^{(1)}\otimes g^{(2)} \Vert_{2}^{-2}$, see the details in the proof.
\end{remark}

\begin{proof}[Proof of Theorem \ref{th:1608a}]
We will only prove (i) as the proof of (ii) is similar. By Theorem \ref{th:outer-kernel-theorem} and by assumption we know that $A$ has a kernel $\kappa(A)\in \SOprime(G_{1}\times G_{2})$ so that
\begin{align*}
& \quad \  \int\limits_{\substack{G_{1}\times\ghat_{1}\times G_{2}\times\ghat_{2}}} \hspace{-0.5cm} \big\vert A\big(\pi(\TFelement^{(1)}) g^{(1)}, \pi(\TFelement^{(2)})g^{(2)}\big) \big\vert \, d(\TFelement^{(1)},\TFelement^{(2)}) \\
& = \int\limits_{\substack{G_{1}\times\ghat_{1} \times G_{2}\times\ghat_{2}}} \hspace{-0.5cm} \big\vert \big(\pi(\TFelement^{(1)}) g^{(1)}\otimes\pi(\TFelement^{(2)})g^{(2)} , \kappa(A) \big)_{\SO,\SOprime(G_{1}\times G_{2})} \big\vert \, d(\TFelement^{(1)},\TFelement^{(2)}). \\
& = \int\limits_{\substack{G_{1}\times\ghat_{1} \times G_{2}\times\ghat_{2}}} \hspace{-0.5cm} \big\vert \big(E_{\omega^{(1)},\omega^{(2)}} T_{x^{(1)},x^{(2)}} ( g^{(1)}\otimes g^{(2)}) , \kappa(A) \big)_{\SO,\SOprime(G_{1}\times G_{2})} \big\vert \, d(x^{(1)},\omega^{(1)},x^{(2)},\omega^{(2)}).
\end{align*}
By Lemma \ref{le:SOprime-induced-by-SO} the last integral is finite if and only if the generalized function $\kappa(A)\in \SOprime(G_{1}\times G_{2})$ is induced by a (unique) function in $\SO(G_{1}\times G_{2})$, which we shall also call $\kappa(A)$.  By Theorem \ref{th:new-inner-kernel-theorem}  this kernel is identifiable with an operator $A\in \mathcal{A}\subseteq \Bil(\SOprime(G_{1})\times\SOprime(G_{2}),\C)$ which satisfies
\[ A(\sigma^{(1)},\sigma^{(2)}) = ( \kappa(A) , \sigma^{(1)}\otimes\sigma^{(2)})_{\SO,\SOprime(G_{1}\times G_{2})}.\]
By use of Lemma \ref{le:STFT-SO-SOprime} (with $g= \overline{g^{(1)}\otimes g^{(2)}}$, $f=\kappa(A)$, $\sigma=\sigma^{(1)}\otimes\sigma^{(2)}$) we can establish the desired equality.
\begin{align*}
& \quad \ ( \kappa(A) , \sigma^{(1)}\otimes\sigma^{(2)})_{\SO,\SOprime(G_{1}\times G_{2})}\\
\vspace{2mm}
& = \Vert g^{(1)} \otimes g^{(2)} \Vert_{2}^{-2} \int\limits_{\substack{G_{1}\times\ghat_{1} \times G_{2}\times \ghat_{2}}} \hspace{-0.5cm} (\kappa(A), \pi(\TFelement^{(1)}) g^{(1)}\otimes \pi(\TFelement^{(2)}) g^{(2)})_{\SO,\SOprime(G_{1}\times G_{2})} 
\\ & \hspace{4cm} \cdot \,  
(\overline{\pi(\TFelement^{(1)})g^{(1)} \otimes \pi(\TFelement^{(2)})g^{(2)}}, \sigma^{(1)}\otimes \sigma^{(2)})_{\SO,\SOprime(G_{1}\times G_{2})} \, d(\TFelement^{(1)},\TFelement^{(2)}) \\
& = \int\limits_{\substack{G_{1}\times\ghat_{1} \times G_{2}\times \ghat_{2}}} \hspace{-0.5cm} A\big( \pi(\TFelement^{(1)}) g^{(1)}, \pi(\TFelement^{(2)}) g^{(2)} \big) 
\\ & \hspace{2cm} \cdot \  
(\overline{\pi(\TFelement^{(1)})g^{(1)}},\sigma^{(1)})_{\SO,\SOprime(G_{1})} \ (\overline{\pi(\TFelement^{(2)})g^{(2)}}, \sigma^{(2)})_{\SO,\SOprime(G_{2})} \, d(\TFelement^{(1)},\TFelement^{(2)}).
\end{align*} 
\end{proof}

In the Introduction we stated that the spaces $\mathcal{A}(G_{1},G_{2})$ and $\mathcal{B}(G_{1},G_{2})$ are Banach spaces with respect to their subspace topologies which they naturally inherit from $\Bil(\SOprime(G_{1})\times\SOprime(G_{2}),\C)$ and $\Lin(\SOprime(G_{1}),\SO(G_{2}))$, respectively. At the same time it is clear that the induced norms themselves fail to capture the continuity requirements for operators in $\mathcal{A}(G_{1},G_{2})$ and $\mathcal{B}(G_{1},G_{2})$ as described in Definition \ref{def:the-spaces}. That is, the induced norm on $\mathcal{A}(G_{1},G_{2})$ can not distinguish between operators in $\Bil(\SOprime(G_{1})\times\SOprime(G_{2}))$ that belong to $\mathcal{A}(G_{1},G_{2})$ and those that do not. Similarly, the norm on $\Lin(\SOprime(G_{1}),\SO(G_{2}))$ can not detect if an operator actually belongs to $\mathcal{B}(G_{1},G_{2})$ or not. The results from Theorem \ref{th:1608a} show how we can define a norm on the spaces $\mathcal{A}(G_{1},G_{2})$ and $\mathcal{B}(G_{1},G_{2})$ that exactly captures operators with a kernel in $\SO(G_{1}\times G_{2})$.

\begin{corollary} \label{cor:2408a} Let $A$, $T$, $K$, $\kappa(T)$, $\kappa(A)$ be related as in Remark \ref{rem:2601a}. For $i=1,2$ fix a function $g^{(i)}\in\SO(G_{i})\backslash\{0\}$.
\begin{enumerate}
\item[(i)] $\ \ \Vert \cdot \Vert_{\mathcal{A}, g_1,g_2} : \mathcal{A}(G_{1},G_{2})\to \R_{0}^{+},$
\begin{align*} \Vert A \Vert_{\mathcal{A}, g_1,g_2} & = \int\limits_{\substack{G_{1}\times\ghat_{1} \times G_{2}\times\ghat_{2}}} \hspace{-0.5cm} \big\vert A\big(\pi(\TFelement^{(1)}) g^{(1)}, \pi(\TFelement^{(2)})g^{(2)}\big) \big\vert \, d(\TFelement^{(1)},\TFelement^{(2)})\\
& = \Vert K \Vert_{\SO(G_1 \times G_2),g_1 \otimes g_2} = \Vert \kappa(A) \Vert_{\SO(G_1 \times G_2),g_1 \otimes g_2}, \ A\in \mathcal{A}(G_{1},G_{2}),\end{align*}
defines a norm on $\mathcal{A}(G_{1},G_{2})$. This norm is equivalent to the subspace norm on $\mathcal{A}(G_{1},G_{2})$ induced by the space $\Bil(\SOprime(G_{1})\times \SOprime(G_{2}),\C)$. 
\item[(ii)] $ \ \ \Vert \cdot \Vert_{\mathcal{B}, g_1,g_2} :  \ \mathcal{B}(G_{1},G_{2}) \to \R_{0}^{+}$, 
\begin{align*}
 \Vert T \Vert_{\mathcal{B}, g_1,g_2} & = \int\limits_{\substack{G_{1}\times\ghat_{1} \times G_{2}\times\ghat_{2}}} \hspace{-0.5cm} \big\vert \big( \pi(\TFelement^{(2)}) g^{(2)}, T \pi(\TFelement^{(1)})g^{(1)}\big)_{\SO,\SOprime(G_{2})} \big\vert \, d(\TFelement^{(1)},\TFelement^{(2)}) \\
 & = \Vert K \Vert_{\SO(G_1 \times G_2),g_1 \otimes g_2} = \Vert \kappa(T) \Vert_{\SO(G_1 \times G_2),g_1 \otimes g_2}, \ T\in \mathcal{B}(G_{1},G_{2}),\end{align*}
defines a norm on $\mathcal{B}(G_{1},G_{2})$. This norm is equivalent to the subspace norm on $\mathcal{B}(G_{1},G_{2})$ induced by the space $\Lin(\SOprime(G_{1}),\SO(G_{2}))$.
\end{enumerate}
\end{corollary}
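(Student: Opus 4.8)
The plan is to reduce both parts to the single fact, recorded in \eqref{Minorm1} and Lemma~\ref{le:SOprime-induced-by-SO}, that $\Vert\cdot\Vert_{\Misp,g}$ is a norm on $\Misp(G)$ which is equivalent to the $\SO$-norm of the inducing function. I would carry out the argument for (i) in detail; part (ii) is entirely parallel. Throughout write $G=g^{(1)}\otimes g^{(2)}$, a nonzero element of $\SO(G_{1}\times G_{2})$, and recall from Theorem~\ref{th:new-inner-kernel-theorem} that $\kappa$ is a linear bijection from $\mathcal{A}(G_{1},G_{2})$ onto $\SO(G_{1}\times G_{2})$, and is an isomorphism of Banach spaces when $\mathcal{A}(G_{1},G_{2})$ carries the subspace norm inherited from $\textnormal{Bil}(\SOprime(G_{1})\times\SOprime(G_{2}),\C)$.

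First I would identify the integrand. By the outer kernel theorem, for $A\in\mathcal{A}(G_{1},G_{2})$ one has
\[ A\big(\pi(\TFelement^{(1)})g^{(1)},\pi(\TFelement^{(2)})g^{(2)}\big)=\big(\pi(\TFelement^{(1)})g^{(1)}\otimes\pi(\TFelement^{(2)})g^{(2)},\kappa(A)\big)_{\SO,\SOprime(G_{1}\times G_{2})}, \]
and, exactly as in the proof of Theorem~\ref{th:1608a}, the product-group time-frequency shift factors as $\pi(\TFelement^{(1)})g^{(1)}\otimes\pi(\TFelement^{(2)})g^{(2)}=\pi(\TFelement^{(1)},\TFelement^{(2)})G$ once the phase space $(G_{1}\times G_{2})\times\widehat{G_{1}\times G_{2}}$ is identified with $(G_{1}\times\ghat_{1})\times(G_{2}\times\ghat_{2})$. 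Since membership $A\in\mathcal{A}(G_{1},G_{2})$ means precisely that $\kappa(A)$ is induced by a function in $\SO(G_{1}\times G_{2})\subseteq\Misp(G_{1}\times G_{2})$, comparing with the definition \eqref{Minorm1} of the $\Misp$-norm yields the identity
\[ \Vert A\Vert_{\mathcal{A}}=\Vert\kappa(A)\Vert_{\Misp(G_{1}\times G_{2}),\,G}. \]

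From this identity the conclusion follows quickly. The norm axioms for $\Vert\cdot\Vert_{\mathcal{A}}$ transfer from $\Vert\cdot\Vert_{\Misp,G}$: homogeneity and the triangle inequality are immediate from linearity of $\kappa$ and of $A\mapsto A(\cdot,\cdot)$, while definiteness follows because $\kappa$ is injective and $\Vert\cdot\Vert_{\Misp,G}$ is a norm, so $\Vert A\Vert_{\mathcal{A}}=0$ forces $\kappa(A)=0$ and hence $A=0$. For equivalence with the subspace norm I would chain two independent facts: by Lemma~\ref{le:SOprime-induced-by-SO} the spaces $\SO(G_{1}\times G_{2})$ and $\Misp(G_{1}\times G_{2})$ are naturally isomorphic, so $\Vert\kappa(A)\Vert_{\Misp,G}$ is equivalent to $\Vert\kappa(A)\Vert_{\SO(G_{1}\times G_{2})}$; and by Theorem~\ref{th:new-inner-kernel-theorem} the map $\kappa$ is an isomorphism of $\mathcal{A}(G_{1},G_{2})$ (with its subspace norm) onto $\SO(G_{1}\times G_{2})$, so $\Vert\kappa(A)\Vert_{\SO(G_{1}\times G_{2})}$ is equivalent to the subspace norm of $A$. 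Composing the two gives the assertion.

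For part (ii) the only additional observation is that, for $T\in\mathcal{B}(G_{1},G_{2})$, both $T\pi(\TFelement^{(1)})g^{(1)}$ and $\pi(\TFelement^{(2)})g^{(2)}$ lie in $\SO(G_{2})$, so the symmetry $(f,h)_{\SO,\SOprime}=(h,f)_{\SO,\SOprime}$ of the $\SO$--$\SO$ pairing lets me match the integrand of $\Vert T\Vert_{\mathcal{B}}$ with the inner kernel identity $(T\sigma^{(1)},\sigma^{(2)})_{\SO,\SOprime(G_{2})}=(\kappa(T),\sigma^{(1)}\otimes\sigma^{(2)})_{\SO,\SOprime(G_{1}\times G_{2})}$ of \eqref{eq:1402a}; this again produces $\Vert T\Vert_{\mathcal{B}}=\Vert\kappa(T)\Vert_{\Misp(G_{1}\times G_{2}),G}$, after which the argument is verbatim. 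The main thing to watch is the bookkeeping in the identification step, namely the factorization of product-group time-frequency shifts together with the matching of Haar measures on the identified phase spaces and the (harmless, measure-preserving) conjugation/reflection of windows relating $\pi(\TFelement)g$ to the short-time Fourier transform, along with the fact that two logically separate equivalences (Lemma~\ref{le:SOprime-induced-by-SO} and Theorem~\ref{th:new-inner-kernel-theorem}) must be composed to reach the subspace norm. Beyond these identifications there is no analytic difficulty.
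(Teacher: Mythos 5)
Your proposal is correct and follows essentially the same route the paper intends for this corollary: the computation in the proof of Theorem~\ref{th:1608a} (outer/inner kernel identity plus the factorization of product-group time-frequency shifts) identifies $\Vert A\Vert_{\mathcal{A}}$ and $\Vert T\Vert_{\mathcal{B}}$ with $\Vert\kappa(\cdot)\Vert_{\Misp(G_{1}\times G_{2}),\,g^{(1)}\otimes g^{(2)}}$, after which the norm equivalence follows by composing the Banach space isomorphisms $\Misp\cong\SO$ (Lemma~\ref{le:SOprime-induced-by-SO}) and $\SO(G_{1}\times G_{2})\cong\mathcal{A},\mathcal{B}$ (Theorem~\ref{th:new-inner-kernel-theorem}). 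Your explicit handling of the definiteness of the norm via injectivity of $\kappa$, and of the symmetry of the $\SO$--$\SO$ pairing in part (ii), fills in exactly the bookkeeping the paper leaves implicit.
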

\begin{remark}Theorem \ref{th:1608a} implies that the integrals used to define the norms in Corollary \ref{cor:2408a}(i) and (ii) are finite \emph{exactly} when $A$ and $T$ belong to $\mathcal{A}(G_{1},G_{2})$ and $\mathcal{B}(G_{1},G_{2})$, respectively. 
\end{remark}

We can use \eqref{eq:integral-B} and Lemma \ref{le:STFT-SO-SOprime} with respect to the time-frequency plane $G_{2}\times\ghat_{2}$ to show that an operator in $\mathcal{B}(G_{1},G_{2})$ is uniquely determined by its action on all time-frequency shifts of a given function in $\SO(G_{1})$.

\begin{corollary} \label{cor:2408b} Fix $g\in \SO(G_{1})\backslash\{0\}$. An operator in $\mathcal{B}(G_{1},G_{2})$ is uniquely determined by its action on the set $\{ \pi(\TFelement)g \, : \, \TFelement\in G_{1}\times\ghat_{1}\}$.  Specifically, for all $T\in \mathcal{B}(G_{1},G_{2})$ and $\sigma^{(i)}\in\SOprime(G_{i})$, $i=1,2$,
\[ (T\sigma^{(1)},\sigma^{(2)})_{\SO,\SOprime(G_{2})} = \Vert g \Vert_{2}^{-2} \int_{G_{1}\times\ghat_{1}} \, \mathcal{V}_{g}\sigma^{(1)}(\TFelement) \, \cdot \, \big(T \, \pi(\TFelement) g, \sigma^{(2)}\big)_{\SO,\SOprime(G_{2})} \, d\TFelement.\]
\end{corollary}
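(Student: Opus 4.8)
The plan is to derive the stated single integral by starting from the double-integral formula \eqref{eq:integral-B} of Theorem~\ref{th:1608a}(ii) and integrating out the $G_{2}\times\ghat_{2}$-variable with the reproducing formula of Lemma~\ref{le:STFT-SO-SOprime}. First I would fix an arbitrary auxiliary window $g^{(2)}\in\SO(G_{2})\backslash\{0\}$ and invoke Theorem~\ref{th:1608a}(ii) with $g^{(1)}=g$. Since neither window is normalized in $\Lp2$, the identity \eqref{eq:integral-B} then carries the constant $\Vert g\Vert_{2}^{-2}\Vert g^{(2)}\Vert_{2}^{-2}$ (as noted in the Remark on normalization following Theorem~\ref{th:1608a}), and its integrand is the product of the two bounded transforms $\mathcal{V}_{g}\sigma^{(1)}$ and $\mathcal{V}_{g^{(2)}}\sigma^{(2)}$ with the factor $(\pi(\TFelement^{(2)})g^{(2)},T\pi(\TFelement^{(1)})g)_{\SO,\SOprime(G_{2})}$. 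Because the modulus of this last factor is integrable over the whole product group by \eqref{eq:norm-B}, the entire integrand is absolutely integrable, so Fubini's theorem lets me carry out the inner integration over $\TFelement^{(2)}$ first, for each fixed $\TFelement^{(1)}$.

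The heart of the argument is the identity, valid for every $f\in\SO(G_{2})$ and $\sigma^{(2)}\in\SOprime(G_{2})$,
\[ \int_{G_{2}\times\ghat_{2}}\mathcal{V}_{g^{(2)}}\sigma^{(2)}(\TFelement)\,\big(\pi(\TFelement)g^{(2)},f\big)_{\SO,\SOprime(G_{2})}\,d\TFelement=\Vert g^{(2)}\Vert_{2}^{2}\,\big(f,\sigma^{(2)}\big)_{\SO,\SOprime(G_{2})}, \]
which I would apply with $f=T\pi(\TFelement^{(1)})g\in\SO(G_{2})$. To prove it I would recast both factors so that \eqref{eq:1412a} becomes applicable: using $(\phi,\psi)=(\psi,\phi)$ for $\phi,\psi\in\SO(G_{2})$ one has $(\pi(\TFelement)g^{(2)},f)=\langle f,\overline{\pi(\TFelement)g^{(2)}}\rangle$, while $\mathcal{V}_{g^{(2)}}\sigma^{(2)}(\TFelement)=(\overline{\pi(\TFelement)g^{(2)}},\sigma^{(2)})$ by definition of the short-time Fourier transform. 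Writing $\TFelement=(x,\omega)$ and using $\overline{\pi(x,\omega)g^{(2)}}=\pi(x,\overline{\omega})\,\overline{g^{(2)}}$, where $\overline{\omega}$ is the inverse character, the reflection $\omega\mapsto\overline{\omega}$ is a measure-preserving automorphism of $\ghat_{2}$ that turns the left-hand side into the right-hand side of \eqref{eq:1412a} with window $\overline{g^{(2)}}\in\SO(G_{2})$, function $f$ and distribution $\sigma^{(2)}$; this yields $\Vert\overline{g^{(2)}}\Vert_{2}^{2}(f,\sigma^{(2)})=\Vert g^{(2)}\Vert_{2}^{2}(f,\sigma^{(2)})$. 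I expect this conjugation bookkeeping to be the main obstacle, since the transforms in \eqref{eq:integral-B} carry a conjugation in their window while the pairings do not, and \eqref{eq:1412a} can only be invoked after the windows are matched through this reflection.

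Substituting this evaluation of the inner integral back, the factors $\Vert g^{(2)}\Vert_{2}^{\pm2}$ cancel and I am left with
\[ (T\sigma^{(1)},\sigma^{(2)})=\Vert g\Vert_{2}^{-2}\int_{G_{1}\times\ghat_{1}}\mathcal{V}_{g}\sigma^{(1)}(\TFelement)\,\big(T\pi(\TFelement)g,\sigma^{(2)}\big)_{\SO,\SOprime(G_{2})}\,d\TFelement, \]
which is the claimed formula; note that the right-hand side is independent of the auxiliary window $g^{(2)}$, as it must be. The uniqueness statement then follows at once: this formula involves $T$ only through the family $\{T\pi(\TFelement)g:\TFelement\in G_{1}\times\ghat_{1}\}$, so any two operators in $\mathcal{B}(G_{1},G_{2})$ that agree on $\{\pi(\TFelement)g\}$ give the same value $(T\sigma^{(1)},\sigma^{(2)})$ for all $\sigma^{(1)}\in\SOprime(G_{1})$ and $\sigma^{(2)}\in\SOprime(G_{2})$; since $\SOprime(G_{2})$ separates the points of $\SO(G_{2})$ (for instance through the Dirac distributions $\delta_{x}$, $x\in G_{2}$), this forces $T\sigma^{(1)}$ to agree for every $\sigma^{(1)}$, hence the two operators coincide.
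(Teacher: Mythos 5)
Your proposal is correct and follows essentially the route the paper itself indicates: it derives the formula by applying \eqref{eq:integral-B} of Theorem \ref{th:1608a}(ii) and then collapsing the $G_{2}\times\ghat_{2}$-integral via the reproducing formula of Lemma \ref{le:STFT-SO-SOprime}, which is exactly the paper's stated one-line argument. Your careful handling of the conjugated window (via $\overline{\pi(x,\omega)g^{(2)}}=\pi(x,\overline{\omega})\,\overline{g^{(2)}}$ and inversion-invariance of the Haar measure on $\ghat_{2}$), the Fubini justification from \eqref{eq:norm-B}, and the normalization constants simply fill in details the paper leaves implicit.
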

We refer to Corollary \ref{cor:2408b} by saying that the following identity holds true in the weak sense: 
\begin{equation} \label{eq:1403c}  T \sigma = \Vert g \Vert_{2}^{-2} \int_{G_{1}\times\ghat_{1}} \mathcal{V}_{g}\sigma(\TFelement) \, \cdot \, T\,\pi(\TFelement) g \ d\TFelement \ \ \text{for all} \ \ \sigma\in\SOprime(G_{1}).\end{equation}

\subsubsection{A note on operator with kernel in $\SOprime$}
The results of Theorem \ref{th:1608a} and Corollaries \ref{cor:2408a} and \ref{cor:2408b} are not restricted to the operators in $\mathcal{A}(G_{1},G_{2})\cong\mathcal{B}(G_{1},G_{2})$, but can be formulated in a very similar form for the much larger spaces of operators that have a kernel in $\SOprime(G_{1}\times G_{2})$ (by use of the outer rather than the inner kernel theorem and Lemma \ref{le:STFT-SO-SOprime}). For operators in $\Lin(\SO(G_{1}),\SOprime(G_{2}))$ they take the following form.

\begin{proposition} \label{pr:1108} 
Given $f^{(i)},g^{(i)}\in \SO(G_{i})$, $g^{(i)}\neq 0$ for $i = 1,2$ one has for any operator  $ T  \in \Lin(\SO(G_{1}),\SOprime(G_{2}))$:  
\begin{enumerate} 
\item[(i)] $\ \ \Vert g^{(1)} \otimes g^{(2)} \Vert_{2}^{2} \, \cdot \,   (f^{(2)},Tf^{(1)})_{\SO,\SOprime(G_{2})} $
\[ = \int\limits_{\substack{G_{1}\times \ghat_{1} \times G_{2}\times \ghat_{2}}} \hspace{-0.5cm} \mathcal{V}_{g^{(1)}}f^{(1)}(\TFelement^{(1)}) \, \cdot\, \mathcal{V}_{g^{(2)}}f^{(2)}(\TFelement^{(2)}) \cdot \Big(\pi(\TFelement^{(2)}) g^{(2)}, T\circ \pi(\TFelement^{(1)}) g \Big)_{\SO,\SOprime(G_{2})} \, d(\TFelement^{(1)},\TFelement^{(2)}),\]
\item[(ii)] $ \displaystyle \ \ Tf^{(1)} = \Vert g^{(1)} \Vert_{2}^{-2} \int_{G_{1}\times \ghat_{1}} \mathcal{V}_{g^{(1)}} f^{(1)}(\TFelement^{(1)}) \,\cdot\, T \left( \pi(\TFelement^{(1)}) g \right ) \, d\TFelement^{(1)}$,
\item[(iii)] and $ \ \ \Vert \cdot \Vert_{\mathcal{B}', g_1,g_2} : \Lin(\SO(G_{1}),\SOprime(G_{2}))\to \R_{0}^{+}$,
\begin{align*}
     \Vert T \Vert_{\mathcal{B}', g_1,g_2} & = \sup_{ \substack{ \TFelement^{(1)}\in G_{1}\times\ghat_{1}\\ \TFelement^{(2)}\in G_{2}\times \ghat_{2} }} \vert (\pi(\TFelement^{(2)}) g^{(2)} , T \pi(\TFelement^{(1)}) g^{(1)} )_{\SO,\SOprime(G_{2})} \vert
     & = \Vert \kappa(T) \Vert_{\SOprime(G_1\times G_2),g_1\otimes g_2}  \end{align*}
defines a norm on $\Lin(\SO(G_{1}),\SOprime(G_{2}))$ which is equivalent to the usual operator norm on $\Lin(\SO(G_{1}),\SOprime(G_{2}))$.
\end{enumerate}
\end{proposition}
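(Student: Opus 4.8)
The plan is to derive all three statements from a single tool, namely the reconstruction identity of Lemma \ref{le:STFT-SO-SOprime} (i.e.\ \cite[Lemma 6.10(iv)]{ja19}), applied successively in the two time-frequency planes $G_1\times\ghat_1$ and $G_2\times\ghat_2$. First I would prove (i). Fixing $f^{(1)},f^{(2)}$ and applying Lemma \ref{le:STFT-SO-SOprime} in $G_2$ to $(f^{(2)},Tf^{(1)})_{\SO,\SOprime(G_2)}$ (with $f=f^{(2)}$, $\sigma=Tf^{(1)}$, window $g^{(2)}$) rewrites this pairing as $\Vert g^{(2)}\Vert_2^{-2}\int_{G_2\times\ghat_2}\mathcal{V}_{g^{(2)}}f^{(2)}(\TFelement^{(2)})\,(\pi(\TFelement^{(2)})g^{(2)},Tf^{(1)})_{\SO,\SOprime(G_2)}\,d\TFelement^{(2)}$. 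For each fixed $\TFelement^{(2)}$ the map $h\mapsto(\pi(\TFelement^{(2)})g^{(2)},Th)_{\SO,\SOprime(G_2)}$ is a bounded linear functional on $\SO(G_1)$ (composition of the continuous $T$ with a fixed pairing), hence a distribution $\tau_{\TFelement^{(2)}}\in\SOprime(G_1)$ with $(h,\tau_{\TFelement^{(2)}})_{\SO,\SOprime(G_1)}=(\pi(\TFelement^{(2)})g^{(2)},Th)_{\SO,\SOprime(G_2)}$. Applying Lemma \ref{le:STFT-SO-SOprime} a second time, now in $G_1$ with $f=f^{(1)}$, $\sigma=\tau_{\TFelement^{(2)}}$ and window $g^{(1)}$, produces an integral over $G_1\times\ghat_1$ whose kernel factor is $(\pi(\TFelement^{(1)})g^{(1)},\tau_{\TFelement^{(2)}})=(\pi(\TFelement^{(2)})g^{(2)},T\pi(\TFelement^{(1)})g^{(1)})_{\SO,\SOprime(G_2)}$. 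Substituting and interchanging the two integrals by Fubini yields (i) after multiplying through by $\Vert g^{(1)}\otimes g^{(2)}\Vert_2^2=\Vert g^{(1)}\Vert_2^2\,\Vert g^{(2)}\Vert_2^2$. The interchange is legitimate since $\mathcal{V}_{g^{(1)}}f^{(1)},\mathcal{V}_{g^{(2)}}f^{(2)}\in\Lp1$ while the factor $(\pi(\TFelement^{(2)})g^{(2)},T\pi(\TFelement^{(1)})g^{(1)})$ is bounded uniformly in $(\TFelement^{(1)},\TFelement^{(2)})$ by $\Vert T\Vert_{\textnormal{op}}\Vert g^{(1)}\Vert_{\SO}\Vert g^{(2)}\Vert_{\SO}$, using that $\pi(\TFelement^{(i)})$ is isometric on $\SO(G_i)$; thus the double integrand is absolutely integrable.

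Statement (ii) I would read off from the very same computation, stopping after the single expansion in $G_1$: testing the claimed identity against an arbitrary $f^{(2)}\in\SO(G_2)$ and applying Lemma \ref{le:STFT-SO-SOprime} in $G_1$ to the distribution $\rho\in\SOprime(G_1)$ defined by $(h,\rho)_{\SO,\SOprime(G_1)}=(f^{(2)},Th)_{\SO,\SOprime(G_2)}$ shows that both sides agree once paired with $f^{(2)}$. As $f^{(2)}$ is arbitrary, the identity holds in the weak$^*$ sense, exactly as in Corollary \ref{cor:2408b} and \eqref{eq:1403c}.

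For (iii), the upper estimate $\Vert T\Vert_{\mathcal{B}'}\le\Vert g^{(1)}\Vert_{\SO}\Vert g^{(2)}\Vert_{\SO}\,\Vert T\Vert_{\textnormal{op}}$ is immediate from $|(\pi(\TFelement^{(2)})g^{(2)},T\pi(\TFelement^{(1)})g^{(1)})|\le\Vert\pi(\TFelement^{(2)})g^{(2)}\Vert_{\SO}\Vert T\pi(\TFelement^{(1)})g^{(1)}\Vert_{\SOprime}$ combined with the isometry of the time-frequency shifts. For the reverse inequality I would feed (i) into $\Vert T\Vert_{\textnormal{op}}=\sup_{\Vert f^{(1)}\Vert_{\SO},\Vert f^{(2)}\Vert_{\SO}\le1}|(f^{(2)},Tf^{(1)})|$: bounding the kernel factor by $\Vert T\Vert_{\mathcal{B}'}$ and pulling it out of the integral gives $|(f^{(2)},Tf^{(1)})|\le\Vert g^{(1)}\otimes g^{(2)}\Vert_2^{-2}\,\Vert T\Vert_{\mathcal{B}'}\,\Vert\mathcal{V}_{g^{(1)}}f^{(1)}\Vert_1\,\Vert\mathcal{V}_{g^{(2)}}f^{(2)}\Vert_1$. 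Homogeneity and the triangle inequality for $\Vert\cdot\Vert_{\mathcal{B}'}$ hold because it is a supremum of the seminorms $|(\pi(\TFelement^{(2)})g^{(2)},T\pi(\TFelement^{(1)})g^{(1)})|$, and positive definiteness follows once the reverse estimate is in place, since $\Vert T\Vert_{\mathcal{B}'}=0$ then forces $\Vert T\Vert_{\textnormal{op}}=0$.

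The crux — and the only place where a property of $\SO$ beyond Lemma \ref{le:STFT-SO-SOprime} enters — is the $\Lp1$-bound $\Vert\mathcal{V}_{g^{(i)}}f^{(i)}\Vert_1\le C_i\Vert f^{(i)}\Vert_{\SO}$ required to finish (iii). This is precisely the continuity of the identification $\SO(G_i)\cong\Misp(G_i)$ of Lemma \ref{le:SOprime-induced-by-SO}, read through the definition \eqref{Minorm1} of the $\Misp$-norm, modulo the harmless window conjugation relating $\mathcal{V}_{g}f(\TFelement)=(\overline{\pi(\TFelement)g},f)_{\SO,\SOprime}$ to $(\pi(\TFelement)g,f)_{\SO,\SOprime}$ (here $\overline{\pi(x,\omega)g}=\pi(x,-\omega)\overline{g}$, and $\omega\mapsto-\omega$ is measure-preserving on $\ghat_i$, so the two $\Lp1$-integrals coincide). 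Granting this bound and taking the supremum over $\Vert f^{(i)}\Vert_{\SO}\le1$ yields $\Vert T\Vert_{\textnormal{op}}\le\Vert g^{(1)}\otimes g^{(2)}\Vert_2^{-2}C_1C_2\,\Vert T\Vert_{\mathcal{B}'}$, which together with the upper estimate establishes the asserted equivalence of norms.
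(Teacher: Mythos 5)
Your proof is correct, but it follows a genuinely different route from the one the paper intends. The paper obtains Proposition \ref{pr:1108} as a direct parallel of Theorem \ref{th:1608a} and Corollary \ref{cor:2408a}, ``by use of the outer rather than the inner kernel theorem and Lemma \ref{le:STFT-SO-SOprime}'': one first replaces $(f^{(2)},Tf^{(1)})_{\SO,\SOprime(G_{2})}$ by $(f^{(1)}\otimes f^{(2)},\kappa(T))_{\SO,\SOprime(G_{1}\times G_{2})}$ with $\kappa(T)\in\SOprime(G_{1}\times G_{2})$ given by Theorem \ref{th:outer-kernel-theorem}, and then applies Lemma \ref{le:STFT-SO-SOprime} \emph{once}, on the product group $G_{1}\times G_{2}$ with the tensor window $g^{(1)}\otimes g^{(2)}$; the factorizations $\mathcal{V}_{g^{(1)}\otimes g^{(2)}}(f^{(1)}\otimes f^{(2)})(\TFelement^{(1)},\TFelement^{(2)})=\mathcal{V}_{g^{(1)}}f^{(1)}(\TFelement^{(1)})\,\mathcal{V}_{g^{(2)}}f^{(2)}(\TFelement^{(2)})$ and $(\pi(\TFelement^{(1)})g^{(1)}\otimes\pi(\TFelement^{(2)})g^{(2)},\kappa(T))=(\pi(\TFelement^{(2)})g^{(2)},T\pi(\TFelement^{(1)})g^{(1)})$ then yield (i) with no interchange of integrals to justify, and (iii) drops out in one line because $\Vert T\Vert_{\mathcal{B}'}=\Vert\kappa(T)\Vert_{\mathbf{M}^{\infty}_{g^{(1)}\otimes g^{(2)}}}$, which is equivalent to $\Vert\kappa(T)\Vert_{\SOprime(G_{1}\times G_{2})}$ by Lemma \ref{le:STFT-norm-on-SOprime}, hence to $\Vert T\Vert_{\textnormal{op}}$ by the isomorphism in the outer kernel theorem. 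You instead bypass the kernel entirely: you iterate the one-variable reconstruction formula (first in $G_{2}$, then, for each fixed $\TFelement^{(2)}$, in $G_{1}$ applied to the distribution $h\mapsto(\pi(\TFelement^{(2)})g^{(2)},Th)_{\SO,\SOprime(G_{2})}$) and merge the two integrals by Fubini, which you justify correctly, since $\mathcal{V}_{g^{(i)}}f^{(i)}\in\SO(G_{i}\times\ghat_{i})\subseteq\Lp{1}$ while the bracket is uniformly bounded by $\Vert T\Vert_{\textnormal{op}}\Vert g^{(1)}\Vert_{\SO}\Vert g^{(2)}\Vert_{\SO}$. Likewise, in (iii) you replace the $\mathbf{M}^{\infty}$-norm argument by the bound $\Vert\mathcal{V}_{g^{(i)}}f^{(i)}\Vert_{1}\le C_{i}\Vert f^{(i)}\Vert_{\SO}$, i.e.\ the continuity of the identification $\SO(G_{i})\cong\Misp(G_{i})$ of Lemma \ref{le:SOprime-induced-by-SO}; your treatment of the conjugate window ($\overline{\pi(x,\omega)g}=\pi(x,-\omega)\overline{g}$ together with invariance of the Haar measure of $\ghat_{i}$ under inversion, so the integral equals the $\Misp$-norm with window $\overline{g}$, an equivalent norm) is sound. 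What your route buys is self-containedness at the level of the operator: no appeal to the existence of a distributional kernel is needed, only Lemma \ref{le:STFT-SO-SOprime} and the $\Misp$-embedding. What the paper's route buys is brevity and structural clarity: the single product-group application makes Fubini invisible, and the norm equivalence in (iii) is inherited directly from the already-established equivalence of $\Vert\cdot\Vert_{\mathbf{M}^{\infty}}$ and $\Vert\cdot\Vert_{\SOprime}$ transported through the outer kernel isomorphism, which is precisely why the paper can state the proposition without writing out a proof.
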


This result has the following consequence.

\begin{corollary} For $i=1,2$ take $g^{(i)}\in \SO(G_{i})\backslash\{0\}$. Every continuous and bounded function $F\in \mathbf{C}_{b}(G_{1}\times \ghat_{1}\times G_{2}\times\ghat_{2})$ defines a linear and bounded operator $T:\SO(G_{1})\to \SOprime(G_{2})$ via
\begin{equation} \label{eq:0802a} (f^{(2)}, T f^{(1)})_{\SO,\SOprime(G_{2})} = \int\limits_{\substack{G_{1}\times \ghat_{1}  \times G_{2}\times \ghat_{2}}} \hspace{-0.5cm}\mathcal{V}_{g^{(1)}} f^{(1)} (\TFelement^{(1)}) \, \cdot \, \mathcal{V}_{g^{(2)}} f^{(2)} (\TFelement^{(2)}) \, \cdot \, F(\TFelement^{(1)},\TFelement^{(2)}) \ d(\TFelement^{(1)},\TFelement^{(2)}). \end{equation}
Conversely, for every linear and bounded operator $T:\SO(G_{1})\to\SOprime(G_{2})$ there exists a (non-unique) function $F\in \mathbf{C}_{b}(G_{1}\times\ghat_{1}\times G_{2}\times \ghat_{2})$ (which also depends on $g^{(i)}$) such that \eqref{eq:0802a} holds. Moreover, the function $F$ can be taken to be in $\Lp{1}(G_{1}\times \ghat_{1}\times G_{2}\times \ghat_{2})$ if and only if $T$ has a kernel in $\SO(G_{1}\times G_{2})$. 
\end{corollary}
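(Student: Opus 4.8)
The plan is to establish the two implications of the first claim, then the converse existence of $F$, and finally the $\Lp{1}$-characterization, reusing Proposition~\ref{pr:1108} and Theorem~\ref{th:1608a} throughout.

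\textbf{A bounded $F$ yields $T\in\textnormal{Lin}(\SO(G_1),\SOprime(G_2))$.} First I would check that the integral in \eqref{eq:0802a} is well defined. Since $\mathcal{V}_{g^{(i)}}$ maps $\SO(G_i)$ boundedly into $\SO(G_i\times\ghat_i)$ and $\SO(G_i\times\ghat_i)\hookrightarrow \Lp{1}(G_i\times\ghat_i)$ continuously, the tensor product $\mathcal{V}_{g^{(1)}}f^{(1)}\otimes\mathcal{V}_{g^{(2)}}f^{(2)}$ lies in $\Lp{1}(G_1\times\ghat_1\times G_2\times\ghat_2)$ with norm controlled by $\Vert f^{(1)}\Vert_{\SO}\,\Vert f^{(2)}\Vert_{\SO}$. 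As $F\in\mathbf{C}_b$, the integrand is in $\Lp{1}$ and the integral is bounded by $C\,\Vert F\Vert_\infty\,\Vert f^{(1)}\Vert_{\SO}\,\Vert f^{(2)}\Vert_{\SO}$. For fixed $f^{(1)}$ the right-hand side is linear and bounded in $f^{(2)}$ (linearity of $\mathcal{V}_{g^{(2)}}$ and of the pairing), hence defines $Tf^{(1)}\in\SOprime(G_2)$; linearity and boundedness in $f^{(1)}$ then give $T\in\textnormal{Lin}(\SO(G_1),\SOprime(G_2))$.

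\textbf{Existence of a representing $F$.} For the converse I would read $F$ off from Proposition~\ref{pr:1108} by setting
\[ F(\TFelement^{(1)},\TFelement^{(2)}) = \Vert g^{(1)}\otimes g^{(2)}\Vert_2^{-2}\,\big(\pi(\TFelement^{(2)})g^{(2)},\,T\pi(\TFelement^{(1)})g^{(1)}\big)_{\SO,\SOprime(G_2)}. \]
Boundedness of $F$ is precisely Proposition~\ref{pr:1108}(iii), and continuity follows from the norm-continuity of $\TFelement\mapsto\pi(\TFelement)g^{(i)}$ into $\SO(G_i)$, the boundedness of $T$, and the joint continuity of the $\SO$--$\SOprime$ pairing; thus $F\in\mathbf{C}_b$. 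That this $F$ reproduces $T$ through \eqref{eq:0802a} is exactly Proposition~\ref{pr:1108}(i). Non-uniqueness is clear, since any function annihilating all products $\mathcal{V}_{g^{(1)}}f^{(1)}\otimes\mathcal{V}_{g^{(2)}}f^{(2)}$ may be added.

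\textbf{The $\Lp{1}$ criterion.} The implication ``kernel in $\SO \Rightarrow \Lp{1}$-representative'' is immediate: if $T$ has kernel in $\SO(G_1\times G_2)$ then Theorem~\ref{th:1608a}(ii) says the integral of the modulus of the canonical $F$ above is finite, so that very $F$ lies in $\Lp{1}\cap\mathbf{C}_b$. The substantive direction is the reverse: given some $F\in\Lp{1}\cap\mathbf{C}_b$ representing $T$, I must show $T$ has a kernel in $\SO$. The point is that $F$ need not equal the canonical matrix-coefficient function $F_0$ of the previous step, and only $F_0$ directly controls the kernel, since $F_0\in\Lp{1}$ is equivalent, by Theorem~\ref{th:1608a}(ii) (together with Lemma~\ref{le:SOprime-induced-by-SO}, i.e.\ $\kappa(T)\in\Misp(G_1\times G_2)$), to $\kappa(T)\in\SO(G_1\times G_2)$. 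To bridge this, I would evaluate \eqref{eq:0802a} at $f^{(i)}=\pi(\TFelement^{(i)})g^{(i)}$ to recover $F_0$ in terms of $F$; using the covariance identity $|\mathcal{V}_{g^{(i)}}[\pi(\TFelement^{(i)})g^{(i)}](\mu^{(i)})| = |\mathcal{V}_{g^{(i)}}g^{(i)}(\TFelement^{(i)}-\mu^{(i)})|$ one sees that $|F_0|$ is dominated pointwise by the ordinary convolution $\big(|\mathcal{V}_{g^{(1)}}g^{(1)}|\otimes|\mathcal{V}_{g^{(2)}}g^{(2)}|\big)*|F|$. Since $g^{(i)}\in\SO(G_i)$ forces $\mathcal{V}_{g^{(i)}}g^{(i)}\in\SO(G_i\times\ghat_i)\subseteq\Lp{1}$, Young's inequality gives $\Vert F_0\Vert_1\le C\,\Vert\mathcal{V}_{g^{(1)}}g^{(1)}\Vert_1\,\Vert\mathcal{V}_{g^{(2)}}g^{(2)}\Vert_1\,\Vert F\Vert_1<\infty$, so $F_0\in\Lp{1}$ and hence $\kappa(T)\in\SO(G_1\times G_2)$.

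I expect the reverse implication in the last step to be the main obstacle, precisely because of the non-uniqueness of $F$: the essential observation is that passing from an arbitrary representative $F$ to the canonical $F_0$ amounts to convolution against the integrable ambiguity functions $\mathcal{V}_{g^{(i)}}g^{(i)}$, so that integrability is automatically inherited. Everything else reduces to the mapping properties of $\mathcal{V}_{g^{(i)}}$, the embedding $\SO\subseteq\Lp{1}$, and the already-established Proposition~\ref{pr:1108} and Theorem~\ref{th:1608a}.
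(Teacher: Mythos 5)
Your proof is correct and follows the route the paper intends: the corollary is stated there without proof as an immediate consequence of Proposition \ref{pr:1108} (which supplies the canonical representative $F_0(\TFelement^{(1)},\TFelement^{(2)}) = \Vert g^{(1)}\otimes g^{(2)}\Vert_2^{-2}\,(\pi(\TFelement^{(2)})g^{(2)}, T\pi(\TFelement^{(1)})g^{(1)})_{\SO,\SOprime(G_2)}$, bounded by part (iii) and reproducing $T$ by part (i)) together with Theorem \ref{th:1608a} (for the $\Lp{1}$ criterion), which is exactly your argument. The one step the paper leaves entirely implicit — that integrability of an \emph{arbitrary} representative $F$, which by non-uniqueness need not coincide with $F_0$, still forces $F_0\in\Lp{1}$ and hence $\kappa(T)\in\SO(G_1\times G_2)$ — you identify and settle correctly via the STFT covariance relation, the pointwise domination of $|F_0|$ by a convolution with $|\mathcal{V}_{g^{(1)}}g^{(1)}|\otimes|\mathcal{V}_{g^{(2)}}g^{(2)}|\in\Lp{1}$, and Young's inequality (with Tonelli justifying the interchange of integrals).
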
 

\begin{remark}
This shows that it is possible to have a calculus for operators in $\Lin(\SO(G_{1}),\SOprime(G_{2}))$ where the operators are represented by bounded and continuous functions (rather than abstract functionals as in the outer kernel theorem). However, as the Corollary states, one has to pay for this by giving up on the non-uniqueness of such a representation.
\end{remark}

\subsubsection{A note on Gabor frames} \label{sec:gabor-frames}

Recall that a function $g\in\SO(G)$ generates a \emph{Gabor frame} for $\Lp{2}(G)$ with respect to a closed subgroup $\Lambda$ in $G\times\ghat$ (typically $\Lambda$ is a discrete and co-compact subgroup, a lattice, in the time-frequency plane) if there exist constants $A,B>0$ such that
\begin{equation} \label{eq:frame-ineq} A \, \Vert f \Vert_{2}^{2} \le \int_{\Lambda} \big\vert \big\langle f, \pi(\lambda) g \big\rangle \big\vert^{2} \, d\lambda \le B \, \Vert f \Vert_{2}^{2} \ \ \forall \ f\in \Lp{2}(G).\end{equation} 
In that case there exists a (not necessarily unique) function $h\in \SO(G)$ such that
\begin{equation} \label{eq:frame-rep} ( f, \sigma)_{\SO,\SOprime} = \int_{\Lambda} \langle f, \pi(\lambda) g \rangle \, ( \pi(\lambda) h , \sigma)_{\SO,\SOprime} \, d\lambda
\quad \text{for all} \quad f\in\SO(G), \ \sigma\in\SOprime(G). 
\end{equation}
We will not go into details of how pairs of function $g$ and $h$ can be found or be characterized so that \eqref{eq:frame-rep} holds. For general  Gabor and time-frequency analysis we refer to  \cite{ch16,gr01} and \cite{jale16-2}. 

We have already encountered a Gabor frame for $\Lp{2}(G)$ with respect to the subgroup $G\times\ghat$: Lemma \ref{le:STFT-SO-SOprime} shows that any non-zero function $g\in\SO(G)$ generates a Gabor frame for $\Lp{2}(G)$ with respect to $\Lambda = G\times\ghat$ (in \eqref{eq:1412a} take $\sigma$ to be induced by $\overline{f}$; since \eqref{eq:1412a} holds for all $f\in\SO(G)$, which is dense in $\Lp{2}(G)$, it follows that \eqref{eq:frame-ineq} is satisfied and that $A=B=\Vert g \Vert_{2}^{2}$). In this case, if $\tilde{g}$ is any other function in $\SO(G)$ such that $\langle g, \tilde{g}\rangle\ne 0$, then the pair $(g,h)$, where $h=(\langle\tilde{g} ,g\rangle)^{-1} \tilde{g}$ satisfies \eqref{eq:frame-rep}.

Using \eqref{eq:frame-rep} rather than \eqref{eq:1412a} for the proofs of Section \ref{sec:kernel-in-s0} leads to the following results for the operators in $\mathcal{B}(G_{1},G_{2})$ (we leave the formulation of the corresponding results for $\mathcal{A}(G_{1},G_{2})$ to the reader).

\begin{theorem} \label{th:frames} For $i=1,2$ let $g^{(i)}$ and $h^{(i)}$ be functions in $\SO(G_{i})$ such that they generate Gabor frames for $\Lp{2}(G_{i})$ with respect to a  subgroup $\Lambda_{i}$ in $G_{i}\times\ghat_{i}$ and such that \eqref{eq:frame-rep} holds. 
\begin{enumerate}
\item[(i)] $\ \ \Vert \cdot \Vert_{\mathcal{B},\Lambda_1\times\Lambda_2} :  \ \mathcal{B}(G_{1},G_{2}) \to \R_{0}^{+}$,
\[ \Vert T \Vert_{\Lambda_{1}\times\Lambda_{2}} = \int_{\Lambda_{1}\times\Lambda_{2}}  \big\vert \big( \pi(\lambda^{(2)}) g^{(2)}, T \pi(\lambda^{(1)})g^{(1)}\big)_{\SO,\SOprime(G_{2})} \big\vert \, d(\lambda^{(1)},\lambda^{(2)}), \ T\in \mathcal{B}(G_{1},G_{2}),\]
defines a norm on $\mathcal{B}(G_{1},G_{2})$. This norm is equivalent to the subspace norm on $\mathcal{B}(G_{1},G_{2})$ induced by the space $\Lin(\SOprime(G_{1}),\SO(G_{2}))$. For an operator $T\in \Lin\big(\SO(G_{1}),\SOprime(G_{2})\big)$ the norm is finite if and only if $T\in\mathcal{B}(G_{1},G_{2})$.
\item[(ii)] Given $T\in\Lin\big(\SO(G_{1}),\SOprime(G_{2})\big)$ with kernel $\kappa(T)\in \SO(G_{1}\times G_{2})$, then 
\begin{align*} & (T\sigma^{(1)},\sigma^{(2)})_{\SO,\SOprime(G_{2})} 
 \nonumber \\ 
 & = \int_{\Lambda_{1}\times\Lambda_{2}} \!\! \, \mathcal{V}_{g^{(1)}}\sigma^{(1)}(\lambda^{(1)}) \, \cdot \, \mathcal{V}_{g^{(2)}}\sigma^{(2)}(\lambda^{(2)})  \cdot  \big( \pi(\lambda^{(2)})h^{(2)}, T\pi(\lambda^{(1)})h^{(1)} \big)_{\SO,\SOprime(G_{2})} \ d(\lambda^{(1)},\lambda^{(2)}). \end{align*}
\item[(iii)] Any $T\in \mathcal{B}(G_{1},G_{2})$ satisfies 
\[ (T\sigma^{(1)},\sigma^{(2)})_{\SO,\SOprime(G_{2})} = \int_{\Lambda_{1}} \, \mathcal{V}_{g^{(1)}}\sigma^{(1)}(\lambda^{(1)}) \, \cdot \, (T \, \pi(\lambda^{(1)}) h^{(1)}, \sigma^{(2)})_{\SO,\SOprime(G_{2})} \, d\lambda^{(1)}.\]
\end{enumerate}
\end{theorem}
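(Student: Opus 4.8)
\emph{Overall strategy.} The plan is to run the arguments of Section~\ref{sec:kernel-in-s0} essentially verbatim, but with the continuous reproducing formula \eqref{eq:1412a} of Lemma~\ref{le:STFT-SO-SOprime} replaced throughout by the Gabor frame reproducing formula \eqref{eq:frame-rep}; I would prove the three parts in the order (ii), (i), (iii). The one new preliminary I would establish first is that the tensor pair $g^{(1)}\otimes g^{(2)}, h^{(1)}\otimes h^{(2)}\in\SO(G_{1}\times G_{2})$ generates a Gabor frame for $\Lp{2}(G_{1}\times G_{2})$ with respect to $\Lambda_{1}\times\Lambda_{2}$ for which \eqref{eq:frame-rep} holds on the $\SO$--$\SOprime(G_{1}\times G_{2})$ pairing: the frame inequality \eqref{eq:frame-ineq} tensorizes, on elementary tensors the reconstruction identity is the product of the two one-group identities, and this extends to all of $\Lp{2}$ by density and to the $\SO$--$\SOprime$ duality by the same boundedness argument that yields \eqref{eq:frame-rep} for a single group (cf.\ \cite{ch16,gr01}). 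Since $(h^{(i)},g^{(i)})$ is again a reconstructing pair, the symmetric form of \eqref{eq:frame-rep} with $g^{(i)}$ and $h^{(i)}$ interchanged is available too.

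\emph{Part (ii).} Since $T\in\mathcal{B}(G_{1},G_{2})$, the inner kernel theorem (Theorem~\ref{th:new-inner-kernel-theorem}) gives a kernel $K=\kappa(T)\in\SO(G_{1}\times G_{2})$ with $(T\sigma^{(1)},\sigma^{(2)})_{\SO,\SOprime(G_{2})}=(K,\sigma^{(1)}\otimes\sigma^{(2)})_{\SO,\SOprime(G_{1}\times G_{2})}$. I would apply the product-frame reconstruction \eqref{eq:frame-rep} to this pairing in the interchanged form, so that $K$ is paired with the windows $h^{(i)}$ and the distributions $\sigma^{(i)}$ with the windows $g^{(i)}$. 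Factoring every tensor pairing into its two coordinate factors via the definition of $\sigma^{(1)}\otimes\sigma^{(2)}$ produces the integrand $\mathcal{V}_{g^{(1)}}\sigma^{(1)}(\lambda^{(1)})\,\mathcal{V}_{g^{(2)}}\sigma^{(2)}(\lambda^{(2)})$ times the factor $(\pi(\lambda^{(1)})h^{(1)}\otimes\pi(\lambda^{(2)})h^{(2)},K)$, which the outer kernel identity (Theorem~\ref{th:outer-kernel-theorem}, with $K$ as kernel) rewrites as $(\pi(\lambda^{(2)})h^{(2)},T\pi(\lambda^{(1)})h^{(1)})_{\SO,\SOprime(G_{2})}$. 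This is exactly the stated formula; the only care needed is the conjugation bookkeeping on the windows, handled precisely as in the proof of Theorem~\ref{th:1608a} by a suitable relabeling of the $g^{(i)},h^{(i)}$.

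\emph{Part (i) and the main obstacle.} By the outer kernel identity one has $(\pi(\lambda^{(2)})g^{(2)},T\pi(\lambda^{(1)})g^{(1)})_{\SO,\SOprime(G_{2})}=(\pi(\lambda^{(1)})g^{(1)}\otimes\pi(\lambda^{(2)})g^{(2)},K)$, so the proposed $\Vert T\Vert$ is the $\Lp{1}(\Lambda_{1}\times\Lambda_{2})$-norm of the modulus of the sampled short-time Fourier transform of $K$ with respect to a non-zero window in $\SO(G_{1}\times G_{2})$. The crux is the norm equivalence $\Vert K\Vert_{\SO(G_{1}\times G_{2})}\asymp\Vert\mathcal{V}_{g^{(1)}\otimes g^{(2)}}K\Vert_{\Lp{1}(\Lambda_{1}\times\Lambda_{2})}$, together with finiteness of the right-hand side being \emph{equivalent} to $K\in\SO$. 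This is the Gabor frame (atomic) characterization of $\SO\cong\Misp$: for an $\SO$ window the analysis map $\SO\to\Lp{1}(\Lambda)$ is bounded, for the dual $\SO$ window the synthesis map $\Lp{1}(\Lambda)\to\SO$ is bounded, and their composition is the identity on $\SO$, which yields both the two-sided norm bound and the converse that an $\Lp{1}$ coefficient function forces the synthesized element into $\SO$. Invoking this on $G_{1}\times G_{2}$ identifies finiteness with $K\in\SO$, hence with $T\in\mathcal{B}(G_{1},G_{2})$ by Theorem~\ref{th:new-inner-kernel-theorem}, and the norm equivalence then chains through Corollary~\ref{cor:2408a}(ii) to the subspace norm. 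I expect this characterization of $\SO$ over a general closed subgroup $\Lambda$ (not merely a lattice in an elementary group) to be the main obstacle, as it is exactly where the continuous reproducing formula is unavailable and where the discreteness/co-compactness of $\Lambda$ must enter; I would supply it from coorbit theory \cite{fegr89,fegr92-1} or \cite{gr01}.

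\emph{Part (iii).} This follows from part (ii) just as Corollary~\ref{cor:2408b} follows from Theorem~\ref{th:1608a}(ii). For fixed $\lambda^{(1)}$ the function $T\pi(\lambda^{(1)})h^{(1)}$ lies in $\SO(G_{2})$, because $T\in\mathcal{B}(G_{1},G_{2})$ maps into $\SO(G_{2})$; hence the inner integral over $\Lambda_{2}$ is an instance of the reconstruction formula \eqref{eq:frame-rep} on $G_{2}$ and collapses to the single pairing $(T\pi(\lambda^{(1)})h^{(1)},\sigma^{(2)})_{\SO,\SOprime(G_{2})}$, leaving the claimed single integral over $\Lambda_{1}$. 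The interchange of the order of integration needed to carry out this collapse is justified by the absolute convergence established in part~(i) via Fubini's theorem.
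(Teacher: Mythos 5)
Your proposal is correct and follows essentially the same route as the paper, which offers no separate proof of Theorem \ref{th:frames} beyond the remark that one reruns the proofs of Section \ref{sec:kernel-in-s0} with \eqref{eq:frame-rep} in place of \eqref{eq:1412a} --- exactly what you do, including the tensorization of the dual pair, the interchange of the roles of $g^{(i)}$ and $h^{(i)}$, and the conjugation bookkeeping. One caveat: the step you single out as the main obstacle is not really one, since \eqref{eq:frame-rep} is a hypothesis of the theorem; boundedness of the analysis map $\SO\to\Lp{1}(\Lambda)$ follows from the restriction theorem for $\SO$ on closed subgroups (already quoted in the paper in the proof of Corollary \ref{cor:0401}), boundedness of synthesis $\Lp{1}(\Lambda)\to\SO$ is a Bochner integral of uniformly $\SO$-bounded atoms, and no discreteness or co-compactness of $\Lambda$ enters (the case $\Lambda=G\times\ghat$ is precisely the continuous setting), so no appeal to coorbit discretization machinery is required.
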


We can very explicit if $G=\R^{n}$. In this case it is known that for any $n\in\N$ the Gaussian function $g^{(n)}(x) = e^{-\pi x\cdot x}$, $x\in\R^{n}$ generates a Gabor frame for $\Lp{2}(\R^{n})$ with respect to the lattice $\Lambda = a\Z^{2n}\subset \R^{2n}$ whenever $0<a<1$. Hence in this case the integrals in Theorem \ref{th:frames} become a sum over lattice points. In particular, any linear and bounded operator $T$ from $\SO(\R^{n})$ into $\SOprime(\R^{m})$ has a  kernel in $\SO(\R^{n+m})$ if and only if the Gabor frame matrix of $T$ has coefficients in
$\ell^1(\Z^{2m})$, i.e.\ 
\[ \sum_{\substack{ \lambda^{(n)}\in a\Z^{2n} \\ \lambda^{(m)}\in a\Z^{2m}}} \big\vert \big( \pi(\lambda^{(m)}) g^{(m)} , T \pi(\lambda^{(n)}) g^{(n)} \big)_{\SO,\SOprime(\R^{m})}  \big\vert < \infty.\]

\subsection{Analogies with linear algebra} 
\label{sec:linear-albera}

If $A$ is an $n_{2}\times n_{1}$ matrix, then it defines an operator $\widehat{A}$ from $\C^{n_{1}}$ into $\C^{n_{2}}$,
\[ \widehat{A} : \C^{n_{1}} \to \C^{n_{2}}, \widehat{A} (v^{(1)}) = A \cdot v^{(1)}, \ \ v^{(1)}\in \C^{n_{1}}.\]
Conversely, if a linear operator $\widehat{A}$ from $\C^{n_{1}}$ into $\C^{n_{2}}$ is given and we use the standard basis for these spaces, then the matrix representation of $\widehat{A}$ is
\begin{equation} \label{eq:1312a} A(i,j) = (A(e_{j}))^{\top} \cdot e_{i}, \ \ i=1,\ldots,n_{2}, \ j=1,\ldots, n_{2} \end{equation}
and then $\widehat{A}(v^{(1)}) = A \cdot v^{(1)}$.
If a matrix $A$ is as above and if we let a matrix $B\in \C^{n_{3}\times n_{2}}$ define an operator $\widehat{B}$ from $\C^{n_{2}}$ into $\C^{n_{3}}$, then their composition, $\widehat{B}\circ\widehat{A}$,  is represented by the product of the two matrices. That is, 
\begin{equation} \label{eq:1312b} \widehat{B}\circ\widehat{A} : \C^{n_{1}} \to \C^{n_{3}}, \ \widehat{B}\circ \widehat{A} (v^{(1)}) = B\cdot A\cdot v^{(1)}, \ \ v^{(1)} \in \C^{n_{1}},\end{equation}
according to usual matrix multiplication: 
$$(B\cdot A)(i,j) = \displaystyle\sum_{k=1}^{n_{2}} B(i,k) \cdot A(k,j), \,\, i=1,\ldots, n_{3}, j=1,\ldots,n_{1}. $$

The next two results show that the inner kernel theorem allows us to extend both \eqref{eq:1312a} and \eqref{eq:1312b} from matrices to operators in $\mathcal{B}(G_{1},G_{2})$. In particular, the role of the unit vectors in $\C^{n}$ is taken by the Dirac delta distributions, $\delta_{x}:\SO(G)\to\C$, $\delta_{x}:f\mapsto f(x)$, $f\in\SO(G)$, $x\in G$. If $G_{i} = \Z/n_{i} \Z$, $i=1,2$, then the results reduce to the matrix representation of linear
mappings. 

\begin{lemma} \label{le:kernel-pointwise} Given $T\in \mathcal{B}(G_{1},G_{2})$ and $A\in\mathcal{A}(G_{1},G_{2})$  its kernel satisfies for  $x^{(i)}\in G_{i}, \, i=1,2$:
\begin{align*} 
& \kappa(T)(x^{(1)},x^{(2)}) = ( T\delta_{x^{(1)}}, \delta_{x^{(2)}})_{\SO,\SOprime(G_{2})} \ \ \text{and} \ \  \kappa(A)(x^{(1)},x^{(2)}) = A(\delta_{x^{(1)}},\delta_{x^{(2)}}),
\end{align*}
\end{lemma}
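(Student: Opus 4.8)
The plan is to read off both identities directly from the defining relation of the inner kernel theorem by inserting point evaluations. Write $K = \kappa(T) = \kappa(A) \in \SO(G_{1}\times G_{2})$ for the common kernel. Since $\SO(G_{i})$ embeds continuously into $\mathbf{C}_{0}(G_{i})$, each Dirac evaluation $\delta_{x^{(i)}} : f^{(i)}\mapsto f^{(i)}(x^{(i)})$ is a genuine element of $\SOprime(G_{i})$, and since $T\in\mathcal{B}(G_{1},G_{2})\subseteq\textnormal{Lin}(\SOprime(G_{1}),\SO(G_{2}))$ the quantity $(T\delta_{x^{(1)}},\delta_{x^{(2)}})_{\SO,\SOprime(G_{2})}$ is well defined. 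Identity \eqref{eq:1402a} of Theorem \ref{th:new-inner-kernel-theorem} holds for \emph{all} $\sigma^{(i)}\in\SOprime(G_{i})$, so I would simply specialise it to $\sigma^{(i)} = \delta_{x^{(i)}}$, obtaining
\[ (K,\delta_{x^{(1)}}\otimes\delta_{x^{(2)}})_{\SO,\SOprime(G_{1}\times G_{2})} = A(\delta_{x^{(1)}},\delta_{x^{(2)}}) = (T\delta_{x^{(1)}},\delta_{x^{(2)}})_{\SO,\SOprime(G_{2})}. \]
Everything then reduces to evaluating the single pairing on the left.

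The key step is to identify the tensor product $\delta_{x^{(1)}}\otimes\delta_{x^{(2)}}$ with the point evaluation $\delta_{(x^{(1)},x^{(2)})}\in\SOprime(G_{1}\times G_{2})$. By the defining property of the tensor product of distributions, every elementary tensor $f^{(1)}\otimes f^{(2)}$ satisfies
\[ (f^{(1)}\otimes f^{(2)}, \delta_{x^{(1)}}\otimes\delta_{x^{(2)}}) = f^{(1)}(x^{(1)})\,f^{(2)}(x^{(2)}) = (f^{(1)}\otimes f^{(2)})(x^{(1)},x^{(2)}) = (f^{(1)}\otimes f^{(2)},\delta_{(x^{(1)},x^{(2)})}). \]
Since the linear span of elementary tensors is norm dense in $\SO(G_{1}\times G_{2})$ by Lemma \ref{le:SO-tensor-factorization}, and both functionals are continuous, the two distributions agree on all of $\SO(G_{1}\times G_{2})$, hence coincide.

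It then remains only to evaluate the test function $K$ against this Dirac distribution. Because $K\in\SO(G_{1}\times G_{2})$ is a genuine continuous function, the definition of $\delta_{(x^{(1)},x^{(2)})}$ gives $(K,\delta_{(x^{(1)},x^{(2)})})_{\SO,\SOprime(G_{1}\times G_{2})} = K(x^{(1)},x^{(2)}) = \kappa(T)(x^{(1)},x^{(2)})$, and chaining this with the displayed equalities yields both assertions at once. I do not anticipate a genuine obstacle: the only point requiring care is the factorisation $\delta_{x^{(1)}}\otimes\delta_{x^{(2)}} = \delta_{(x^{(1)},x^{(2)})}$, which, though intuitively evident, must be justified through density of elementary tensors rather than taken for granted, since the tensor product of distributions is defined only through its action on such tensors.
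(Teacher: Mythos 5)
Your proposal is correct and follows essentially the same route as the paper's own proof: specialise the inner kernel theorem to $\sigma^{(i)}=\delta_{x^{(i)}}$, identify $\delta_{x^{(1)}}\otimes\delta_{x^{(2)}}$ with $\delta_{(x^{(1)},x^{(2)})}$ via their action on elementary tensors, and evaluate the continuous function $\kappa(T)\in\SO(G_{1}\times G_{2})$ at the point. Your extra care in justifying the tensor identification by density of elementary tensors is exactly the uniqueness argument implicit in the paper's definition of the tensor product of distributions.
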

\begin{proof} 
It is easy to verify the equality $\delta_{x^{(1)},x^{(2)}}= \delta_{x^{(1)}}\otimes \delta_{x^{(2)}}$,
since 
\[ \big( f^{(1)}\otimes f^{(2)}, \delta_{x^{(1)}} \otimes \delta_{x^{(2)}}\big)_{\SO,\SOprime(G_{1}\times G_{2})} = f^{(1)}(x^{(1)}) \cdot f^{(2)}(x^{(2)}) 
=   (f^{(1)} \otimes f^{(2)})(x^{(1)}, x^{(2)}).\] 
The desired result now follows from the inner kernel theorem:
\begin{align*} & 
\kappa(T)(x^{(1)},x^{(2)}) = \big( \kappa(T) , \delta_{x^{(1)},x^{(2)}}\big)_{\SO,\SOprime(G_{1}\times G_{2})} \\
&
= \big( \kappa(T) , \delta_{x^{(1)}}\otimes \delta_{x^{(2)}}\big)_{\SO,\SOprime(G_{1}\times G_{2})} = \big( T \delta_{x^{(1)}} , \delta_{x^{(2)}} \big)_{\SO,\SOprime(G_{2})}.
\end{align*}
The equality for the kernel of $A$ follows in the same fashion.
\end{proof}
The role of the ``delta-basis'' in Lemma \ref{le:kernel-pointwise} can also be taken by a continuous Gabor frame (cf.\ Section \ref{sec:gabor-frames}). 
 \begin{corollary} For $i=1,2$ let $g^{(i)}\in\SO(G_{i})\backslash\{0\}$ and $x^{(i)}\in G_{i}$.
\begin{enumerate}
\item[(i)] For $T\in \mathcal{B}(G_{1},G_{2})$ one has  
\[ \kappa(T)(x^{(1)},x^{(2)}) = \Vert g^{(1)}\Vert_{2}^{-2} \int_{G_{1}\times\ghat_{1}} \overline{\big(\pi(\TFelement^{(1)}) g^{(1)}\big)(x^{(1)})} \cdot \big( T \circ \pi(\TFelement^{(1)}) g^{(1)}\big)(x^{(2)}) \, d\TFelement^{(1)}.\]
\item[(ii)] For $A\in\mathcal{A}(G_{1},G_{2})$ one has
\begin{align*} \kappa(A)(x^{(1)},x^{(2)}) = \Vert g^{(1)}\otimes g^{(2)} \Vert_{2}^{-2} & 
\int\limits_{\substack{G_{1}\times \ghat_{1} \times  G_{2}\times\ghat_{2}}} \overline{\big(\pi(\TFelement^{(1)}) g^{(1)}\big)(x^{(1)})} \, \overline{\big(\pi(\TFelement^{(2)}) g^{(2)}\big)(x^{(2)})} \\
& \hspace{1cm} \cdot \, A\big( \pi(\TFelement^{(1)}) g^{(1)}, \pi(\TFelement^{(2)}) g^{(2)} \big) \, d(\TFelement^{(1)},\TFelement^{(2)}).\end{align*}
\end{enumerate}
\end{corollary}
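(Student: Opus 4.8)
The plan is to combine the pointwise description of the kernel from Lemma~\ref{le:kernel-pointwise} with the integral representations of the operators already established in Corollary~\ref{cor:2408b} and Theorem~\ref{th:1608a}, so that the corollary reduces to substituting Dirac deltas and computing their short-time Fourier transforms. No new estimates are needed; everything is a direct computation once the right identities are invoked.

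First I would record the one genuine computation involved. Since $\SO(G)\subseteq\mathbf{C}_{0}(G)$, the Dirac delta $\delta_{x}$ is a well-defined element of $\SOprime(G)$ acting by evaluation at $x$. Because $\pi(\TFelement)g\in\SO(G)$, and hence $\overline{\pi(\TFelement)g}\in\SO(G)$, the definition of the short-time Fourier transform gives
\[ \mathcal{V}_{g}\delta_{x}(\TFelement) = \big(\overline{\pi(\TFelement)g},\delta_{x}\big)_{\SO,\SOprime(G)} = \overline{\big(\pi(\TFelement)g\big)(x)}, \quad \TFelement\in G\times\ghat. \]

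For part (i) I would start from Lemma~\ref{le:kernel-pointwise}, which yields $\kappa(T)(x^{(1)},x^{(2)}) = ( T\delta_{x^{(1)}},\delta_{x^{(2)}})_{\SO,\SOprime(G_{2})}$, and then apply the weak-sense integral representation of Corollary~\ref{cor:2408b} with $g=g^{(1)}$, $\sigma^{(1)}=\delta_{x^{(1)}}$ and $\sigma^{(2)}=\delta_{x^{(2)}}$. Substituting the delta transform above for $\mathcal{V}_{g^{(1)}}\delta_{x^{(1)}}$, and using that $T\pi(\TFelement^{(1)})g^{(1)}\in\SO(G_{2})$ so that $\delta_{x^{(2)}}$ acts on it by evaluation, i.e.\ $( T\pi(\TFelement^{(1)})g^{(1)},\delta_{x^{(2)}})_{\SO,\SOprime(G_{2})} = \big(T\pi(\TFelement^{(1)})g^{(1)}\big)(x^{(2)})$, the claimed formula follows immediately. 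For part (ii), Lemma~\ref{le:kernel-pointwise} gives $\kappa(A)(x^{(1)},x^{(2)}) = A(\delta_{x^{(1)}},\delta_{x^{(2)}})$, and I would feed $\sigma^{(i)}=\delta_{x^{(i)}}$ into the integral representation \eqref{eq:integral-A} of Theorem~\ref{th:1608a}(i), once more replacing each $\mathcal{V}_{g^{(i)}}\delta_{x^{(i)}}$ by $\overline{(\pi(\TFelement^{(i)})g^{(i)})(x^{(i)})}$.

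The only point requiring care — and the one mild obstacle — is the normalization constant. Theorem~\ref{th:1608a} is stated under the assumption $\Vert g^{(i)}\Vert_{2}=1$, whereas the present corollary allows arbitrary nonzero windows, so I would invoke the remark following Theorem~\ref{th:1608a} (equivalently, reinstate the factor $\Vert g^{(1)}\otimes g^{(2)}\Vert_{2}^{-2}$ coming from Lemma~\ref{le:STFT-SO-SOprime}) to obtain the prefactor appearing in part (ii); the factor $\Vert g^{(1)}\Vert_{2}^{-2}$ in part (i) is already present in Corollary~\ref{cor:2408b}. Beyond tracking these constants nothing further is needed: the finiteness of the integrals, and hence the legitimacy of evaluating the representations at the deltas, is guaranteed by $T\in\mathcal{B}(G_{1},G_{2})$ and $A\in\mathcal{A}(G_{1},G_{2})$ through the norm characterizations of Corollary~\ref{cor:2408a}.
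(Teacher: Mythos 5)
Your proposal is correct and follows essentially the same route as the paper, whose proof is precisely the one-line instruction to combine the integral representations of Theorem~\ref{th:1608a} and Corollary~\ref{cor:2408b} with Lemma~\ref{le:kernel-pointwise}; you supply the details the paper leaves implicit (the identity $\mathcal{V}_{g}\delta_{x}(\TFelement)=\overline{(\pi(\TFelement)g)(x)}$, evaluation of $\delta_{x^{(2)}}$ on $T\pi(\TFelement^{(1)})g^{(1)}\in\SO(G_{2})$, and the normalization constant for non-unit windows). No gaps.
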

\begin{proof}
Combine equality \eqref{eq:integral-B} of Theorem \ref{th:1608a}, Corollary \ref{cor:2408b} with Lemma \ref{le:kernel-pointwise}.
\end{proof}
The composition rule of operators represented by matrices has the following analogous continuous formulation for operators 
with kernel in $\SO$.

\begin{lemma} \label{le:comp-of-operators}
If $T_{1}\in \mathcal{B}(G_{1},G_{2})$ and $T_{2}\in \mathcal{B}(G_{2},G_{3})$, then $T_{2}\circ T_{1} \in \mathcal{B}(G_{1},G_{3})$ and
\[ \kappa(T_{2}\circ T_{1})(x^{(1)},x^{(3)})  = \int_{G_{2}} \kappa(T_{1})(x^{(1)},x^{(2)}) \, \cdot \, \kappa(T_{2})(x^{(2)},x^{(3)}) \, dx^{(2)}, \ \ x^{(i)} \in G_{i}, \ i=1,3.\]
Moreover, using the norm on $\mathcal{B}$ as defined in Corollary \ref{cor:2408a}, there exists a constant $c>0$ such that
\[ \Vert T_{2}\circ T_{1} \Vert_{\mathcal{B}} \le c \, \Vert T_{2} \Vert_{\mathcal{B}} \, \Vert T_{1} \Vert_{\mathcal{B}}.\]
\end{lemma}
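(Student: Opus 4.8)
The plan is to proceed in three stages: first show that $T_2\circ T_1$ genuinely belongs to $\mathcal{B}(G_1,G_3)$ (so that it has a kernel in $\SO$ at all), then identify that kernel by probing with Dirac distributions, and finally obtain the submultiplicative norm estimate by transporting everything to a bilinear ``partial product'' on the tensor-factorized spaces. For membership I would argue directly from the net characterization in Definition \ref{def:the-spaces}, avoiding kernels at this stage. The composition is well defined because $T_1$ maps $\SOprime(G_1)$ into $\SO(G_2)$, which embeds continuously into $\SOprime(G_2)$ by Lemma \ref{le:SOprime-induced-by-SO}, the domain of $T_2$; boundedness of $T_2\circ T_1$ follows by composing the three bounded maps. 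Now let $(\sigma_{\netparam})$ be a bounded \ws-convergent net in $\SOprime(G_1)$. Since $T_1\in\mathcal{B}(G_1,G_2)$, the net $(T_1\sigma_{\netparam})$ converges in the norm of $\SO(G_2)$; by the continuous embedding $\SO(G_2)\hookrightarrow\SOprime(G_2)$ it then converges in the norm of $\SOprime(G_2)$, hence is in particular bounded and \ws-convergent there. Applying $T_2\in\mathcal{B}(G_2,G_3)$ yields norm convergence of $(T_2T_1\sigma_{\netparam})$ in $\SO(G_3)$, which is exactly the defining property of $\mathcal{B}(G_1,G_3)$.

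Once membership is secured, Theorem \ref{th:new-inner-kernel-theorem} guarantees a kernel $\kappa(T_2\circ T_1)\in\SO(G_1\times G_3)$, and I would compute it pointwise via Lemma \ref{le:kernel-pointwise}, namely $\kappa(T_2\circ T_1)(x^{(1)},x^{(3)}) = \big((T_2\circ T_1)\delta_{x^{(1)}},\delta_{x^{(3)}}\big)_{\SO,\SOprime(G_3)}$. Writing $h := T_1\delta_{x^{(1)}}\in\SO(G_2)$, the same lemma identifies $h$ as the function $x^{(2)}\mapsto\kappa(T_1)(x^{(1)},x^{(2)})$. It remains to evaluate $(T_2 h,\delta_{x^{(3)}})$, which by the inner kernel theorem equals $(\kappa(T_2),\sigma_h\otimes\delta_{x^{(3)}})_{\SO,\SOprime(G_2\times G_3)}$, where $\sigma_h$ is the functional induced by $h$. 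Expanding $\kappa(T_2)\in\SO(G_2\times G_3)$ through its tensor factorization (Lemma \ref{le:SO-tensor-factorization}) and using that $\sigma_h$ acts by integration against $h$ while $\delta_{x^{(3)}}$ acts by evaluation, this collapses to $\int_{G_2}\kappa(T_2)(x^{(2)},x^{(3)})\,h(x^{(2)})\,dx^{(2)}$; substituting the value of $h$ gives precisely the claimed composition formula. The interchange of the tensor sum with the $G_2$-integration is justified by absolute convergence, using the embeddings $\SO\hookrightarrow\Lp{2}$ and $\SO\hookrightarrow\mathbf{C}_{0}$.

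For the norm bound, which I expect to be the real work, I would first reduce to a statement purely about kernels: by Corollary \ref{cor:2408a} together with the isomorphism of Theorem \ref{th:new-inner-kernel-theorem}, the norm $\Vert\cdot\Vert_{\mathcal{B}}$ is equivalent to the $\SO$-norm of the kernel, so it suffices to bound the bilinear map that sends $(K_1,K_2)$ to $K_1\diamond K_2$, where $(K_1\diamond K_2)(x^{(1)},x^{(3)})=\int_{G_2}K_1(x^{(1)},x^{(2)})\,K_2(x^{(2)},x^{(3)})\,dx^{(2)}$, from $\SO(G_1\times G_2)\times\SO(G_2\times G_3)$ into $\SO(G_1\times G_3)$. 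Factorizing $K_1=\sum_j f^{(1)}_j\otimes f^{(2)}_j$ and $K_2=\sum_k \tilde f^{(2)}_k\otimes f^{(3)}_k$, the product becomes $\sum_{j,k}\big(\int_{G_2}f^{(2)}_j\,\tilde f^{(2)}_k\big)\,f^{(1)}_j\otimes f^{(3)}_k$; estimating each scalar coefficient by $\Vert f^{(2)}_j\Vert_2\,\Vert\tilde f^{(2)}_k\Vert_2\le C^2\,\Vert f^{(2)}_j\Vert_{\SO}\,\Vert\tilde f^{(2)}_k\Vert_{\SO}$ via Cauchy–Schwarz and $\SO\hookrightarrow\Lp{2}$, and summing through the multiplicativity of the tensor norm, one obtains $\Vert K_1\diamond K_2\Vert_{\SO}\le c\,\Vert K_1\Vert_{\SO}\,\Vert K_2\Vert_{\SO}$ after passing to the infimum over representations. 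Chaining the norm equivalences then yields the constant $c$ in the statement. The delicate point is precisely the legitimacy of the rearrangement: one must check that the doubly-indexed series converges absolutely in $\SO(G_1\times G_3)$, and it is exactly the $\Lp{2}$-pairing estimate on the coefficients, combined with the product of the two absolutely convergent factorization sums, that supplies this.
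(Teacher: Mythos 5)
Your proposal is correct, but it reaches the lemma by a route that differs in substance from the paper's for the first two assertions. The paper never argues membership $T_{2}\circ T_{1}\in\mathcal{B}(G_{1},G_{3})$ directly from Definition \ref{def:the-spaces}. Instead it factorizes both kernels via Lemma \ref{le:SO-tensor-factorization}, shows by the $\Lp{2}$/$\Lp{\infty}$ embedding estimates that the double series $\sum_{j,k}(f_{j}^{(2)},h_{k}^{(2)})_{\SO,\SOprime(G_{2})}\,f_{j}^{(1)}\otimes h_{k}^{(3)}$ converges absolutely in the projective norm (which simultaneously yields the ``moreover'' estimate), and only at the very end verifies the pairing identity $(T_{2}\circ T_{1}\sigma^{(1)},\sigma^{(3)})=(\kappa,\sigma^{(1)}\otimes\sigma^{(3)})$ against arbitrary tensors, so that membership falls out of Theorem \ref{th:new-inner-kernel-theorem}. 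You do the opposite: membership first, purely formally, because the embedding $\SO(G_{2})\hookrightarrow\SOprime(G_{2})$ of Lemma \ref{le:SOprime-induced-by-SO} converts $\SO$-norm convergence into bounded \ws\ convergence, so composing two $\mathcal{B}$-operators through it stays in $\mathcal{B}$; then the kernel formula comes cheaply by probing with Dirac deltas via Lemma \ref{le:kernel-pointwise} (which precedes this lemma in the paper, so the use is legitimate), rather than by the paper's double tensor expansion against general $\sigma^{(1)}\otimes\sigma^{(3)}$. This decoupling of the qualitative statement (membership) from the quantitative one is arguably cleaner, and the analytic work that remains --- the coefficient bound $\vert(f_{j}^{(2)},h_{k}^{(2)})_{\SO,\SOprime(G_{2})}\vert\le C^{2}\Vert f_{j}^{(2)}\Vert_{\SO}\Vert h_{k}^{(2)}\Vert_{\SO}$ and the absolute convergence justifying the sum/integral interchange --- is exactly the estimate the paper performs. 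Your third stage (submultiplicativity via the projective tensor norm together with the equivalence of $\Vert T\Vert_{\mathcal{B}}$ with the $\SO$-norm of $\kappa(T)$ coming from Theorem \ref{th:1608a} and Corollary \ref{cor:2408a}) coincides with the paper's argument.

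One small repair is needed in your membership step: you assert that $(T_{1}\sigma_{\netparam})$, being norm convergent in $\SOprime(G_{2})$, is ``in particular bounded''. For nets (unlike sequences) norm convergence does \emph{not} imply boundedness, since the elements indexed before any given index need not be finitely many and can have arbitrarily large norms. The boundedness you need is nevertheless true, and it follows from hypotheses you already invoke: $(\sigma_{\netparam})$ is bounded in $\SOprime(G_{1})$ and $T_{1}$ is a bounded operator into $\SO(G_{2})$, so $\sup_{\netparam}\Vert T_{1}\sigma_{\netparam}\Vert_{\SO(G_{2})}<\infty$, and the continuity of the embedding $\SO(G_{2})\hookrightarrow\SOprime(G_{2})$ then bounds the net in $\SOprime(G_{2})$. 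With that one-line fix your argument is complete.
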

\begin{corollary} The Banach space $(\mathcal{B}(G,G),\Vert\cdot\Vert_{\mathcal{B}})$ forms a Banach algebra under composition.
\end{corollary}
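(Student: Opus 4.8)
The plan is to establish the three assertions in turn: that $T_2\circ T_1$ again lies in $\mathcal{B}(G_1,G_3)$, that its kernel is the stated ``partial integration'' of $\kappa(T_1)$ and $\kappa(T_2)$, and finally the submultiplicative norm bound. First I would verify membership in $\mathcal{B}(G_1,G_3)$. Since $T_1$ maps into $\SO(G_2)$ and the latter embeds continuously into $\SOprime(G_2)$ by Lemma \ref{le:SOprime-induced-by-SO}, the composition $T_2\circ T_1:\SOprime(G_1)\to\SO(G_3)$ is a well-defined bounded linear operator. For the defining continuity property, let $(\sigma_{\netparam})$ be a bounded net in $\SOprime(G_1)$ that is \ws{} convergent to $\sigma_0$. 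Because $T_1\in\mathcal{B}(G_1,G_2)$, the net $(T_1\sigma_{\netparam})$ converges in the norm of $\SO(G_2)$ to $T_1\sigma_0$. Since $T_2$ is bounded from $\SOprime(G_2)$ into $\SO(G_3)$ and $\SO(G_2)\hookrightarrow\SOprime(G_2)$ continuously, the restriction of $T_2$ to $\SO(G_2)$ is norm continuous, so $T_2T_1\sigma_{\netparam}\to T_2T_1\sigma_0$ in the norm of $\SO(G_3)$. Thus $T_2\circ T_1\in\mathcal{B}(G_1,G_3)$.

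For the kernel formula I would argue pointwise using Lemma \ref{le:kernel-pointwise}, which evaluates a kernel in $\SO$ as $\kappa(T)(x,y)=(T\delta_{x},\delta_{y})_{\SO,\SOprime}$. Applying this gives $\kappa(T_2\circ T_1)(x^{(1)},x^{(3)})=(T_2(T_1\delta_{x^{(1)}}),\delta_{x^{(3)}})_{\SO,\SOprime(G_3)}$, where $T_1\delta_{x^{(1)}}$ is the function $x^{(2)}\mapsto\kappa(T_1)(x^{(1)},x^{(2)})$, again by Lemma \ref{le:kernel-pointwise}. It then remains to show that, for any $h\in\SO(G_2)$ (viewed in $\SOprime(G_2)$), one has $(T_2h)(x^{(3)})=\int_{G_2}\kappa(T_2)(x^{(2)},x^{(3)})\,h(x^{(2)})\,dx^{(2)}$. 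This I would obtain from the inner kernel relation $(T_2h,\delta_{x^{(3)}})_{\SO,\SOprime(G_3)}=(\kappa(T_2),h\otimes\delta_{x^{(3)}})_{\SO,\SOprime(G_2\times G_3)}$ together with a tensor decomposition $\kappa(T_2)=\sum_j c_j\otimes d_j$ from Lemma \ref{le:SO-tensor-factorization}: the induced-functional description in Lemma \ref{le:SOprime-induced-by-SO}(i) turns the pairing against $h\otimes\delta_{x^{(3)}}$ into $\sum_j d_j(x^{(3)})\int_{G_2}c_j(x^{(2)})h(x^{(2)})\,dx^{(2)}$, which is the claimed integral since $\kappa(T_2)(x^{(2)},x^{(3)})=\sum_j c_j(x^{(2)})d_j(x^{(3)})$ (the interchange of sum and integral being justified by absolute convergence). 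Substituting $h=T_1\delta_{x^{(1)}}$ yields exactly $\int_{G_2}\kappa(T_1)(x^{(1)},x^{(2)})\,\kappa(T_2)(x^{(2)},x^{(3)})\,dx^{(2)}$.

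For the norm inequality I would reduce, via the Banach-space isomorphism $\mathcal{B}\cong\SO$ of Theorem \ref{th:new-inner-kernel-theorem} (which, with Corollary \ref{cor:2408a}, makes $\Vert\cdot\Vert_{\mathcal{B}}$ equivalent to the $\SO$-norm of the kernel), to proving that the middle-variable contraction $(K_1,K_2)\mapsto\int_{G_2}K_1(\cdot,x^{(2)})K_2(x^{(2)},\cdot)\,dx^{(2)}$ is a bounded bilinear map $\SO(G_1\times G_2)\times\SO(G_2\times G_3)\to\SO(G_1\times G_3)$. Using the projective tensor characterization \eqref{eq:1508b}, I would write $K_1=\sum_i a_i\otimes b_i$ and $K_2=\sum_j c_j\otimes d_j$; the contraction becomes $\sum_{i,j}\big(\int_{G_2}b_i c_j\,dx^{(2)}\big)\,a_i\otimes d_j$, and each scalar factor is controlled by $\big\vert\int_{G_2}b_i c_j\big\vert\le\Vert b_i\Vert_2\Vert c_j\Vert_2\le C^2\Vert b_i\Vert_{\SO}\Vert c_j\Vert_{\SO}$ by Cauchy--Schwarz and the continuous embedding $\SO\hookrightarrow\Lp{2}$ (with constant $C$). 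Since $\Vert a_i\otimes d_j\Vert_{\SO}=\Vert a_i\Vert_{\SO}\Vert d_j\Vert_{\SO}$, summing and taking infima over representations gives $\Vert\kappa(T_2\circ T_1)\Vert_{\SO}\le C^2\Vert K_1\Vert_{\SO}\Vert K_2\Vert_{\SO}$, whence $\Vert T_2\circ T_1\Vert_{\mathcal{B}}\le c\,\Vert T_2\Vert_{\mathcal{B}}\,\Vert T_1\Vert_{\mathcal{B}}$ after reinserting the norm equivalences.

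The main obstacle I anticipate is the careful justification of the kernel identity, in particular the passage $(T_2h)(x^{(3)})=\int_{G_2}\kappa(T_2)(x^{(2)},x^{(3)})\,h(x^{(2)})\,dx^{(2)}$, which hinges on correctly interpreting the embedding $\SO(G_2)\hookrightarrow\SOprime(G_2)$ and the tensor pairing rather than on any deep analysis; the norm estimate, by contrast, is a clean consequence of the projective tensor structure once one invokes $\SO\hookrightarrow\Lp{2}$.
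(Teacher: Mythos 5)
Your proposal is correct; it proves (as it must) the substance of Lemma \ref{le:comp-of-operators}, from which the corollary follows, but by a partly different route than the paper. The paper's proof works entirely at the level of tensor decompositions: both kernels are expanded via Lemma \ref{le:SO-tensor-factorization}, the contraction is shown to equal the absolutely convergent double series $\sum_{j,k}(f^{(2)}_{j},h^{(2)}_{k})_{\SO,\SOprime(G_{2})}\,f^{(1)}_{j}\otimes h^{(3)}_{k}$ in $\SO(G_{1})\hat{\otimes}\SO(G_{3})$ (which yields at one stroke that the kernel lies in $\SO(G_{1}\times G_{3})$ and the norm bound), and only then is this function verified to be $\kappa(T_{2}\circ T_{1})$ by pairing against elementary tensors $\sigma^{(1)}\otimes\sigma^{(3)}$; membership $T_{2}\circ T_{1}\in\mathcal{B}(G_{1},G_{3})$ is thus obtained through the inner kernel theorem. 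You reverse the logic: membership is proved first, directly from Definition \ref{def:the-spaces} (bounded \ws\ convergent nets go to $\SO(G_{2})$-norm convergent nets under $T_{1}$, and $T_{2}$ restricted to the embedded copy of $\SO(G_{2})$ is norm-to-norm continuous --- an argument absent from the paper's proof of this lemma, though the same device appears in the proof of Proposition \ref{pr:reg-of-id}(i)); the kernel formula is then derived pointwise from the delta-evaluation identity of Lemma \ref{le:kernel-pointwise}, using a tensor expansion of $\kappa(T_{2})$ alone to show that $T_{2}$ acts on embedded $\SO(G_{2})$ functions as an integral operator against $\kappa(T_{2})$; your norm estimate is then essentially the paper's (Cauchy--Schwarz in the middle variable, multiplicativity of the $\SO$ norm on elementary tensors, infima over representations). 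Your route cleanly separates the cheap qualitative fact (closure of $\mathcal{B}$ under composition) from the quantitative ones, and it meshes nicely with the linear-algebra analogy of Section \ref{sec:linear-albera}; the paper's route gets kernel membership, the kernel identity and the norm estimate out of a single computation, without recourse to point evaluations. Two legitimacy checks, both of which come out fine: the equivalence $\Vert T\Vert_{\mathcal{B}}\asymp\Vert\kappa(T)\Vert_{\SO}$ you invoke does follow from Theorem \ref{th:new-inner-kernel-theorem} together with Corollary \ref{cor:2408a}; and, exactly as in the paper, submultiplicativity is only obtained up to a constant $c$, so in the literal sense one should pass to the equivalent norm $c\,\Vert\cdot\Vert_{\mathcal{B}}$ to get a Banach algebra --- a wrinkle shared with the paper, not a gap in your argument.
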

\begin{proof}[Proof of Lemma \ref{le:comp-of-operators}]
Let us first show that the integral is well-defined. 
By Lemma \ref{le:SO-tensor-factorization} we can write
\[ \kappa(T_{1}) = \sum_{j\in \N} f_{j}^{(1)} \otimes f_{j}^{(2)}  \ \ \text{and} \ \ \kappa(T_{2}) = \sum_{j\in \N} h_{j}^{(2)} \otimes h_{j}^{(3)}\]
for suitable $f_{j}^{(i)}\in \SO(G_{i})$, $i=1,2$ and $h_{j}^{(i)}\in \SO(G_{i})$, $i=2,3$ and where $j\in \N$. Furthermore,
\[ \sum_{j\in \N} \Vert f_{j}^{(1)}\Vert_{\SO} \, \Vert f_{j}^{(2)} \Vert_{\SO} < \infty \ \ \text{and} \ \ \sum_{j\in \N} \Vert h_{j}^{(1)}\Vert_{\SO} \, \Vert h_{j}^{(2)} \Vert_{\SO} < \infty. \]
Because $\SO(G)$ is continuously embedded into $\Lp{2}(G)$ and into $\Lp{\infty}(G)$  the following estimate applies: for all $x^{(i)} \in G_{i}$, $i=1,2,3$ 
\begin{align*} 
& \quad \int_{G_{2}} \big\vert \kappa(T_{1})(x^{(1)},x^{(2)}) \, \cdot \, \kappa(T_{2})(x^{(2)},x^{(3)}) \big\vert \, dx^{(2)}\\
& \le \int_{G_{2}} \sum_{j,k\in\N} \vert f_{j}^{(1)}(x^{(1)}) \cdot f_{j}^{(2)}(x^{(2)}) \cdot h_{k}^{(2)}(x^{(2)}) \cdot h_{k}^{(3)}(x^{(3)}) \vert \, dx^{(2)} \\
& \le \sum_{j,k\in\N} \Vert f_{j}^{(1)}\Vert_{\infty} \, \Vert h_{k}^{(3)} \Vert_{\infty} \, \int_{G_{2}} \vert f_{j}^{(2)}(x^{(2)}) \cdot h_{k}^{(2)}(x^{(2)}) \vert \, dx^{(2)} \\
& \le \sum_{j,k\in\N} \Vert f_{j}^{(1)}\Vert_{\infty} \, \Vert h_{k}^{(3)} \Vert_{\infty} \,  \Vert f_{j}^{(2)} \Vert_{2} \, \Vert h_{k}^{(2)} \Vert_{2} \\
& \le c \, \sum_{j,k\in\N} \Vert f_{j}^{(1)}\Vert_{\SO} \, \Vert h_{k}^{(3)} \Vert_{\SO} \,  \Vert f_{j}^{(2)} \Vert_{\SO} \, \Vert h_{k}^{(2)} \Vert_{\SO} < \infty,
\end{align*}
for some $c>0$. This shows that the integral and thus the function $\kappa(T_{2}\circ T_{1}):G_{1}\times G_{3}\to \C$ is well-defined. 
Note that
\begin{align*} 
\kappa(T_{2}\circ T_{1})(x^{(1)},x^{(3)}) & = \int_{G_{2}} \kappa(T_{1})(x^{(1)},x^{(2)}) \, \kappa(T_{2})(x^{(2)},x^{(3)}) \, dx^{(2)} \\
& = \int_{G_{2}} \sum_{j,k\in\N} f_{j}^{(1)}(x^{(1)}) \, f_{j}^{(2)}(x^{(2)}) \, h_{k}^{(2)}(x^{(2)}) \, h_{k}^{(3)}(x^{(3)})  \, dx^{(2)} \\
& = \sum_{j,k\in\N} (f_{j}^{(2)},h_{k}^{(2)})_{\SO,\SOprime(G_{2})} \, \big( f_{j}^{(1)} \otimes h_{k}^{(3)}\big)(x^{(1)},x^{(3)} ).
\end{align*}
Hence $\kappa(T_{1}\circ T_{2}) = \sum_{j,k\in\N} (f_{j}^{(2)},h_{k}^{(2)})_{\SO,\SOprime(G_{2})} \, f_{j}^{(1)} \otimes h_{k}^{(3)}$.
The above calculation shows that
\[ \sum_{j,k\in\N} \Vert (f_{j}^{(2)}, h_{k}^{(2)}) \, f_{j}^{(1)} \Vert_{\SO} \, \Vert h_{k}^{(3)} \Vert_{\SO} \le \infty. \]
Hence $\kappa(T_{2}\circ T_{1}) \in \SO(G_{1})\hat{\otimes} \SO(G_{3})$. By Lemma \ref{le:SO-tensor-factorization} this implies that $\kappa(T_{2}\circ T_{1})\in \SO(G_{1}\times G_{3})$ as well as the moreover-part of the lemma. Let us show that the function which we defined as $\kappa(T_{2}\circ T_{1})$ indeed is the kernel of the operator $T_{2}\circ T_{1}$: if $\sigma^{(i)}\in \SOprime(G_{i})$, $i=1,3$,
then
\begin{align*}
& \quad ( T_{2}\circ T_{1} \sigma^{(1)},\sigma^{(3)})_{\SO,\SOprime(G_{3})} = (\kappa(T_{2}) , T_{1}\sigma^{(1)} \otimes \sigma^{(3)})_{\SO,\SOprime(G_{2}\times G_{3})} \\
& = \sum_{k\in\N} ( h_{k}^{(2)} \otimes h_{k}^{(3)}, T_{1}\sigma^{(1)} \otimes \sigma^{(3)})_{\SO,\SOprime(G_{2}\times G_{3})} \\
& = \sum_{k\in\N} (T_{1}\sigma^{(1)},h_{k}^{(2)})_{\SO,\SOprime(G_{2})} \, (h_{k}^{(3)}, \sigma^{(3)})_{\SO,\SOprime(G_{3})} \\
& = \sum_{k\in\N} (\kappa(T_{1}), \sigma^{(1)}\otimes h_{k}^{(2)})_{\SO,\SOprime(G_{1}\times G_{2})} \, (h_{k}^{(3)}, \sigma^{(3)})_{\SO,\SOprime(G_{3})} \\
& = \sum_{j,k\in\N} (f_{j}^{(1)}\otimes f_{j}^{(2)}, \sigma^{(1)}\otimes h_{k}^{(2)})_{\SO,\SOprime(G_{1}\times G_{2})} \, (h_{k}^{(3)}, \sigma^{(3)})_{\SO,\SOprime(G_{3})} \\
& = \Big( \sum_{j,k\in\N} (f_{j}^{(2)},h_{k}^{(2)})_{\SO,\SOprime(G_{2})} \, f_{j}^{(1)} \otimes h_{k}^{(3)} , \sigma^{(1)} \otimes \sigma^{(3)} \Big)_{\SO,\SOprime(G_{1}\times G_{3})}.
\end{align*}

\end{proof}

\subsection{Some examples, nuclearity and trace-class results}
\label{sec:examples}
\begin{example} \label{ex:rank-one-operator}

The prototypical example of an element in $\SO(G_{1}\times G_{2})$ is the tensor-product function $f = f^{(1)} \otimes f^{(2)}$, $f^{(i)}\in \SO(G_{i})$, $i=1,2$. It is not difficult to show that the unique corresponding operators $A$, $T$ and $S$ 
according to Theorem \ref{th:new-inner-kernel-theorem} are the following ones:
\begin{align*} 
& A : \SOprime(G_{1})\times \SOprime(G_{2}) \to \C, \ A(\sigma^{(1)},\sigma^{(2)}) = (f^{(1)},\sigma^{(1)})_{\SO,\SOprime(G_{1})} \, (f^{(2)},\sigma^{(2)})_{\SO,\SOprime(G_{2})},\\
& T : \SOprime(G_{1})\to \SO(G_{2}), \ T(\sigma^{(1)}) = (f^{(1)},\sigma^{(1)})_{\SO,\SOprime(G_{1})} \cdot f^{(2)},\\
& S : \SOprime(G_{2})\to \SO(G_{1}), \ S(\sigma^{(2)}) = (f^{(2)},\sigma^{(2)})_{\SO,\SOprime(G_{2})} \cdot f^{(1)},
\end{align*}
where $\sigma^{(i)}\in \SOprime(G_{i})$, $i=1,2$. \end{example} 
Observe that the range of the operators $T$ and $S$ in Example \ref{ex:rank-one-operator} is one-dimensional. Naturally, not all rank-one operators have a kernel in $\SO$. For example, let $f^{(1)}$ be a function in $\SO(G_{1})$ and let $f^{(2)}$ be a function in $\Lp{2}(G_{2})$ which is not also in $\SO(G_{2})$, then 
\[ T(\sigma^{(1)}) = (f^{(1)},\sigma^{(1)})_{\SO,\SOprime(G_{1})} \cdot f^{(2)}\]
is a bounded rank-one operator from $\SOprime(G_{1})$ into $\Lp{2}(G_{2})$ which does not have a kernel in $\SO(G_{1}\times G_{2})$. Indeed, if we restrict $T$ to an operator from $\SO(G_{1})$ into $\Lp{2}(G_{2})\subseteq \SOprime(G_{2})$, then (by the outer kernel theorem) its kernel in $\SOprime(G_{1}\times G_{2})$ is the functional induced by the function $f^{1}\otimes f^{2}$. Similarly, one can show that the operator 
\[ S(h^{(2)}) = \langle f^{(2)},\overline{h^{(2)}}\rangle_{\Lp{2}(G_{2})} \cdot f^{(1)}, \ \ h^{(2)}\in \Lp{2}(G_{2}), \]
is a linear and bounded rank-one operator from $\Lp{2}(G_{2})$ into $\SO(G_{1})$ with kernel $f^{2}\otimes f^{1}$.

\begin{remark} \label{rem:tensor-factorization} By Lemma \ref{le:SO-tensor-factorization}   every  $f \in \SO(G_{1}\times G_{2})$ has a representation 
\[f = \sum_{j\in \N} f^{(1)}_{j} \otimes f^{(2)}_{j} 
\ \ \text{such that} \ \ \sum_{j\in\N} \Vert f^{(1)}_{j} \Vert_{\SO} \, \Vert f^{(2)}_{j}\Vert_{\SO} < \infty. \]
The inner kernel theorem  implies that the corresponding operator $T\in\mathcal{B}(G_{1},G_{2})$ satisfies
\[ T \sigma^{(1)} = \sum_{j\in \N} (f_{j}^{(1)} , \sigma^{(1)} )_{\SO,\SOprime(G_{1})} \cdot  f_{j}^{(2)} \ \ \text{for all} \ \ \sigma^{(1)}\in\SOprime(G_{1}),\]
where the sum is absolutely convergent in the norm on $\mathcal{B}$.
\end{remark}
This immediately leads to the following.
\begin{corollary} \label{cor:0201} Finite-rank operators in $\mathcal{B}(G_{1},G_{2})$ are norm dense in $\mathcal{B}(G_{1},G_{2})$. In fact, for any $T\in \mathcal{B}(G_{1},G_{2})$ there exists a series of rank one operators $(T_{n})_{n\in\N}$ in $\mathcal{B}(G_{1},G_{2})$ such that
\[ \big\Vert T - \sum_{n\in\N} T_{n} \big\Vert_{\mathcal{B}} = 0 \ \ \text{and} \ \ \sum_{n\in\N} \Vert T_{n} \Vert_{\mathcal{B}} < \infty.\]
\end{corollary}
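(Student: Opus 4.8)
The plan is to derive Corollary \ref{cor:0201} directly from Remark \ref{rem:tensor-factorization}, which already does essentially all of the work. The strategy is to take the tensor-series representation of the kernel supplied by Lemma \ref{le:SO-tensor-factorization}, group it into a sequence of rank-one operators, and verify that both the absolute summability of operator norms and the norm-convergence of the partial sums follow from the corresponding statements about the tensor factorization of $\kappa(T)$.

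First I would fix an arbitrary $T\in\mathcal{B}(G_{1},G_{2})$ and invoke the inner kernel theorem (Theorem \ref{th:new-inner-kernel-theorem}) to obtain its kernel $\kappa(T)\in\SO(G_{1}\times G_{2})$. By Lemma \ref{le:SO-tensor-factorization} I may write $\kappa(T)=\sum_{j\in\N}f_{j}^{(1)}\otimes f_{j}^{(2)}$ with $\sum_{j\in\N}\Vert f_{j}^{(1)}\Vert_{\SO}\,\Vert f_{j}^{(2)}\Vert_{\SO}<\infty$. For each $n\in\N$ I define the rank-one operator $T_{n}\in\mathcal{B}(G_{1},G_{2})$ by $T_{n}\sigma^{(1)}=(f_{n}^{(1)},\sigma^{(1)})_{\SO,\SOprime(G_{1})}\cdot f_{n}^{(2)}$, which is exactly the operator with kernel $f_{n}^{(1)}\otimes f_{n}^{(2)}$ described in Example \ref{ex:rank-one-operator}.

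Next I would control $\Vert T_{n}\Vert_{\textnormal{op},\SOprime\to\SO}$. For any $\sigma^{(1)}\in\SOprime(G_{1})$ one has $\Vert T_{n}\sigma^{(1)}\Vert_{\SO}=\vert(f_{n}^{(1)},\sigma^{(1)})_{\SO,\SOprime(G_{1})}\vert\,\Vert f_{n}^{(2)}\Vert_{\SO}\le\Vert f_{n}^{(1)}\Vert_{\SOprime}\,\Vert\sigma^{(1)}\Vert_{\SOprime}\,\Vert f_{n}^{(2)}\Vert_{\SO}$, where the action of $f_{n}^{(1)}$ as a functional on $\SOprime(G_{1})$ is bounded by its $\SOprime$-norm via the canonical embedding $\iota:\SO\to\SOdoubleprime$. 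Using the continuity of the embedding $\SO\hookrightarrow\SOprime$ (Lemma \ref{le:SOprime-induced-by-SO}), there is a constant $c'>0$ with $\Vert f_{n}^{(1)}\Vert_{\SOprime}\le c'\,\Vert f_{n}^{(1)}\Vert_{\SO}$, so $\Vert T_{n}\Vert_{\textnormal{op},\SOprime\to\SO}\le c'\,\Vert f_{n}^{(1)}\Vert_{\SO}\,\Vert f_{n}^{(2)}\Vert_{\SO}$. Summing in $n$ gives $\sum_{n\in\N}\Vert T_{n}\Vert_{\textnormal{op},\SOprime\to\SO}\le c'\sum_{n\in\N}\Vert f_{n}^{(1)}\Vert_{\SO}\,\Vert f_{n}^{(2)}\Vert_{\SO}<\infty$, which establishes the absolute summability.

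Finally I would show that the series converges to $T$ in operator norm. Since $\textnormal{Lin}(\SOprime(G_{1}),\SO(G_{2}))$ is a Banach space and the partial sums form a Cauchy sequence by absolute summability, the series $\sum_{n}T_{n}$ converges in operator norm to some limit operator; it remains to identify this limit with $T$. This I would do by comparing kernels: the partial sums $\sum_{n=1}^{N}T_{n}$ have kernel $\sum_{n=1}^{N}f_{n}^{(1)}\otimes f_{n}^{(2)}$, which converges to $\kappa(T)$ in $\SO(G_{1}\times G_{2})$; since the inner kernel theorem gives an isomorphism between $\SO(G_{1}\times G_{2})$ and $\mathcal{B}(G_{1},G_{2})$ (hence kernel-convergence corresponds to operator-convergence in the $\mathcal{B}$-norm, which dominates the operator norm), the limit has kernel $\kappa(T)$ and therefore equals $T$ by uniqueness. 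I expect the only mild subtlety to be bookkeeping the two potentially different norms (the native $\mathcal{B}$-norm versus the $\textnormal{Lin}(\SOprime,\SO)$-operator norm appearing in the statement); since the Corollary is phrased purely with the operator norm and that norm is dominated by the $\mathcal{B}$-norm via Corollary \ref{cor:2408a}, convergence in the stronger norm immediately yields the claimed convergence, so no genuine obstacle arises.
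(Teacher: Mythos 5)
Your route is the same as the paper's: the paper deduces this corollary immediately from Remark \ref{rem:tensor-factorization}, which is precisely your combination of Theorem \ref{th:new-inner-kernel-theorem}, the tensor factorization of Lemma \ref{le:SO-tensor-factorization}, and the rank-one operators of Example \ref{ex:rank-one-operator}; your identification of the limit operator through the kernel isomorphism (using that kernel convergence in $\SO(G_{1}\times G_{2})$ dominates operator-norm convergence) is also how the paper organizes these facts.

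However, one intermediate inequality in your norm estimate is false as written. You claim
\[
\big\vert (f_{n}^{(1)},\sigma^{(1)})_{\SO,\SOprime(G_{1})}\big\vert \le \Vert f_{n}^{(1)}\Vert_{\SOprime}\,\Vert \sigma^{(1)}\Vert_{\SOprime},
\]
attributing this to the canonical embedding $\iota:\SO\to\SOdoubleprime$. This conflates two different embeddings: the embedding $\SO(G_{1})\hookrightarrow\SOprime(G_{1})$ of Lemma \ref{le:SOprime-induced-by-SO}, whose norm appears on your right-hand side, and the canonical embedding $\iota$ into the double dual, which is \emph{isometric}, i.e.\ $\Vert\iota(f)\Vert_{\SOdoubleprime}=\Vert f\Vert_{\SO}$, not $\Vert f \Vert_{\SOprime}$. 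A dual pairing cannot in general be bounded by a product of two $\SOprime$-norms. Concretely, on $G_{1}=\R$ take $\sigma^{(1)}=\delta_{0}$ and the $\Lp{2}$-normalized Gaussians $f_{\lambda}(t)=\lambda^{1/4}e^{-\pi\lambda t^{2}}$: then $(f_{\lambda},\delta_{0})_{\SO,\SOprime}=f_{\lambda}(0)=\lambda^{1/4}\to\infty$, while by Lemma \ref{le:STFT-norm-on-SOprime} both $\Vert f_{\lambda}\Vert_{\SOprime}$ (controlled by $\sup_{\TFelement}\vert(\pi(\TFelement)g,f_{\lambda})\vert\le\Vert g\Vert_{2}\,\Vert f_{\lambda}\Vert_{2}$, which is bounded in $\lambda$) and $\Vert\delta_{0}\Vert_{\SOprime}$ stay bounded, so the claimed inequality fails. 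The repair is immediate and actually shortens your argument: by the definition \eqref{eq:operator-norm-on-SOprime} of the dual norm (equivalently, by the isometry of $\iota$) one has $\vert(f_{n}^{(1)},\sigma^{(1)})\vert\le\Vert f_{n}^{(1)}\Vert_{\SO}\,\Vert\sigma^{(1)}\Vert_{\SOprime}$, hence $\Vert T_{n}\Vert_{\textnormal{op},\SOprime\to\SO}\le\Vert f_{n}^{(1)}\Vert_{\SO}\,\Vert f_{n}^{(2)}\Vert_{\SO}$ directly, with no constant $c'$ and no appeal to Lemma \ref{le:SOprime-induced-by-SO}. With this correction, the rest of your proof (absolute summability, Cauchy partial sums, identification of the limit by uniqueness of kernels) is sound.
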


Because $\SO$ is dense in $\Lp{2}$ it follows that the finite-rank operators of $\mathcal{B}(G_{1},G_{2})$ are dense in the space of Hilbert-Schmidt operators from $\Lp{2}(G_{1})$ into $\Lp{2}(G_{2})$.

\begin{corollary} \label{cor:0301} All operators in $\mathcal{B}(G_{1},G_{2})$ are nuclear operators from the Banach space $\SOprime(G_{1})$ into the Banach space $\SO(G_{2})$.
\end{corollary}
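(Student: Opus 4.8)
The plan is to recall the definition of a nuclear operator and then read it off directly from the tensor-product representation already established in Remark \ref{rem:tensor-factorization}. An operator $N$ between Banach spaces $X$ and $Y$ is nuclear if it can be written as $N = \sum_{j\in\N} x_j^* \otimes y_j$ with $x_j^*\in X^*$, $y_j\in Y$, and $\sum_{j\in\N}\Vert x_j^*\Vert_{X^*}\,\Vert y_j\Vert_Y < \infty$. Here $X = \SOprime(G_1)$ and $Y = \SO(G_2)$, so I must produce, for a given $T\in\mathcal{B}(G_1,G_2)$, a sequence of functionals on $\SOprime(G_1)$ and a sequence of elements of $\SO(G_2)$ with the summability property.

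First I would take any $T\in\mathcal{B}(G_1,G_2)$ and let $K = \kappa(T)\in\SO(G_1\times G_2)$ be its kernel, which exists by the inner kernel theorem (Theorem \ref{th:new-inner-kernel-theorem}). Applying Lemma \ref{le:SO-tensor-factorization} to $K$, I obtain a representation $K = \sum_{j\in\N} f_j^{(1)}\otimes f_j^{(2)}$ with $\sum_{j\in\N}\Vert f_j^{(1)}\Vert_{\SO}\,\Vert f_j^{(2)}\Vert_{\SO}<\infty$. By Remark \ref{rem:tensor-factorization} the corresponding operator acts as
\[ T\sigma^{(1)} = \sum_{j\in\N} (f_j^{(1)},\sigma^{(1)})_{\SO,\SOprime(G_1)}\cdot f_j^{(2)}, \qquad \sigma^{(1)}\in\SOprime(G_1).\]
The key observation is that each term is exactly a rank-one building block of the required form: the map $\sigma^{(1)}\mapsto (f_j^{(1)},\sigma^{(1)})_{\SO,\SOprime(G_1)}$ is a bounded linear functional on $\SOprime(G_1)$, namely the image of $f_j^{(1)}$ under the canonical embedding $\iota:\SO(G_1)\to\SOdoubleprime(G_1)$ described in the preliminaries, and $f_j^{(2)}\in\SO(G_2)$ is the target vector.

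The one point requiring care is the summability estimate in the correct norms, i.e.\ checking $\sum_j \Vert \iota(f_j^{(1)})\Vert_{\SOdoubleprime(G_1)}\,\Vert f_j^{(2)}\Vert_{\SO(G_2)}<\infty$. Since $\iota$ is an isometry into $\SOdoubleprime(G_1) = (\SOprime(G_1))^*$, one has $\Vert \iota(f_j^{(1)})\Vert_{\SOdoubleprime(G_1)} = \Vert f_j^{(1)}\Vert_{\SO(G_1)}$, so the nuclear sum is precisely $\sum_j \Vert f_j^{(1)}\Vert_{\SO(G_1)}\,\Vert f_j^{(2)}\Vert_{\SO(G_2)}$, which is finite by the choice of the tensor factorization. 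Here I use that the functionals live in $\SOdoubleprime(G_1)$, the honest dual of the domain $\SOprime(G_1)$, which is exactly where nuclearity requires them to sit; the fact (recalled before Lemma \ref{le:STFT-norm-on-SOprime}) that $\iota(\SO(G_1))$ consists precisely of the bounded \ws\ continuous functionals confirms these are genuine elements of the dual. I expect the main (though modest) obstacle to be bookkeeping the norms so that the functional-side factors are measured in $\SOdoubleprime(G_1)$ rather than in $\SO(G_1)$; the isometry of $\iota$ resolves this, after which the nuclear representation and its absolute convergence follow immediately, and nuclearity of $T$ is established.
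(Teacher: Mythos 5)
Your proposal is correct and follows essentially the same route as the paper: both invoke the tensor factorization of the kernel via Remark \ref{rem:tensor-factorization} and then recognize the resulting rank-one terms $\sigma^{(1)}\mapsto (f_j^{(1)},\sigma^{(1)})_{\SO,\SOprime(G_1)}\cdot f_j^{(2)}$ as a nuclear representation, with the functionals realized as $\iota(f_j^{(1)})\in\SOdoubleprime(G_1)$ through the canonical embedding. Your explicit isometry bookkeeping $\Vert\iota(f_j^{(1)})\Vert_{\SOdoubleprime}=\Vert f_j^{(1)}\Vert_{\SO}$ is a slightly sharper statement of the paper's appeal to the continuous embedding of $\SO(G_1)$ into $\SOdoubleprime(G_1)$, but the argument is the same.
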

\begin{proof} By \cite[Chapter III, \S 7]{scwo99} all nuclear operators from the Banach space $\SOprime(G_{1})$ into the Banach space $\SO(G_{2})$ are of the form 
\[ T:\SOprime(G_{1})\to \SO(G_{2}), \ T \sigma = \sum_{j\in \N} \psi^{(1)}_{j}(\sigma^{(1)}) \cdot f^{(2)}_{j},\]
where $(\psi_{j}^{(1)})$ is a sequence in $\SOdoubleprime(G_{1})$ and $(f^{(2)}_{j})$ is a sequence in $\SO(G_{2})$ such that 
\[ \sum_{j\in \N} \Vert \psi_{j}^{(1)} \Vert_{\SOdoubleprime} \, \Vert f^{(2)}_{j} \Vert_{\SO} < \infty.\]
Remark \ref{rem:tensor-factorization} combined with the fact that $\SO(G_{1})$ is continuously embedded into $\SOdoubleprime(G_{1})$ via the natural embedding implies that all operators in $\mathcal{B}(G_{1},G_{2})$ are nuclear. 
\end{proof}
By the embedding of $\SO$ into $\SOprime$ as described in Lemma \ref{le:SOprime-induced-by-SO} it follows that all the operators in $\mathcal{B}(G_{1},G_{2})$ are also nuclear operators from $\SO(G_{1})$ into $\SO(G_{2})$; from $\SO(G_{1})$ into $\SOprime(G_{2})$; and from $\SOprime(G_{1})$ into $\SOprime(G_{2})$. 

\begin{corollary} \label{cor:0401} An operator $T$ in $\mathcal{B}(G,G)$ with kernel $\kappa(T)\in \SO(G\times G)$ is a trace-class operator on both $\SO(G)$ and $\SOprime(G)$. Its trace satisfies $\textnormal{tr}(T) = \int_{G} \kappa(T)(x,x) \,dx$. 
\end{corollary}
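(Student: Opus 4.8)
The plan is to build everything on the absolutely convergent rank-one expansion from Remark \ref{rem:tensor-factorization}: writing the (unique) kernel as $\kappa(T)=\sum_{j\in\N}f^{(1)}_j\otimes f^{(2)}_j$ with $\sum_{j\in\N}\|f^{(1)}_j\|_{\SO}\,\|f^{(2)}_j\|_{\SO}<\infty$, the operator acts by $T\sigma=\sum_{j}(f^{(1)}_j,\sigma)_{\SO,\SOprime(G)}\,f^{(2)}_j$. To view $T$ as an endomorphism of a \emph{single} Banach space I would compose with the canonical embedding $\SO(G)\hookrightarrow\SOprime(G)$ of Lemma \ref{le:SOprime-induced-by-SO}(i): precomposing turns $T$ into a nuclear operator on $\SO(G)$, postcomposing turns it into a nuclear operator on $\SOprime(G)$ (this nuclearity is exactly Corollary \ref{cor:0301} with the remark following it). As a cross-check I would also note that the chain $\Lp{2}(G)\hookrightarrow\SOprime(G)\xrightarrow{T}\SO(G)\hookrightarrow\Lp{2}(G)$ realizes $T$ as a trace-class operator on the Hilbert space $\Lp{2}(G)$, where trace-class theory is unambiguous.

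The main obstacle is that, on a Banach space, the trace of a nuclear operator is representation-independent only under the approximation property. Here I would invoke that both $\SO(G)$ and $\SOprime(G)$ enjoy the (metric) approximation property; establishing this, or citing it from the literature, is the genuinely delicate point, whereas the rest is bookkeeping. Granting it, the trace can be read off from the distinguished representation above.

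For the computation I would apply each functional to its companion vector. On $\SO(G)$ the $j$-th term $f\mapsto(f^{(1)}_j,f)_{\SO,\SOprime(G)}\,f^{(2)}_j$ contributes $(f^{(1)}_j,f^{(2)}_j)_{\SO,\SOprime(G)}$; on $\SOprime(G)$ the same scalar appears, because the canonical map $\SO(G)\hookrightarrow\SOdoubleprime(G)$ respects the bilinear pairing. Hence in every realization
\[ \textnormal{tr}(T)=\sum_{j\in\N}(f^{(1)}_j,f^{(2)}_j)_{\SO,\SOprime(G)}=\sum_{j\in\N}\int_G f^{(1)}_j(x)\,f^{(2)}_j(x)\,dx. \]

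It remains to identify this sum with $\int_G\kappa(T)(x,x)\,dx$. Since $\SO(G\times G)\hookrightarrow\mathbf{C}_{0}(G\times G)$, the restriction $x\mapsto\kappa(T)(x,x)=\sum_j f^{(1)}_j(x)f^{(2)}_j(x)$ is a well-defined continuous function, and the estimate $\int_G|f^{(1)}_j(x)f^{(2)}_j(x)|\,dx\le\|f^{(1)}_j\|_2\,\|f^{(2)}_j\|_2\le c\,\|f^{(1)}_j\|_{\SO}\|f^{(2)}_j\|_{\SO}$ (continuity of $\SO(G)\hookrightarrow\Lp{2}(G)$) with summable right-hand side justifies interchanging summation and integration, giving $\int_G\kappa(T)(x,x)\,dx=\textnormal{tr}(T)$. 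Finally, since $\kappa(T)$ is uniquely determined by $T$ through the inner kernel theorem, the quantity $\int_G\kappa(T)(x,x)\,dx$ is manifestly intrinsic to $T$, which I would use as independent reassurance that the trace does not depend on the chosen expansion even before appealing to the approximation property.
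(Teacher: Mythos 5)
Your core computation coincides with the paper's: both start from the absolutely convergent factorization $\kappa(T)=\sum_{j}f^{(1)}_{j}\otimes f^{(2)}_{j}$ of Remark \ref{rem:tensor-factorization}, read off the induced rank-one expansion of $T$ on $\SO(G)$ and on $\SOprime(G)$, and identify $\sum_{j}(f^{(1)}_{j},f^{(2)}_{j})_{\SO,\SOprime(G)}$ with $\int_{G}\kappa(T)(x,x)\,dx$ by interchanging sum and integral. The only local difference is how integrability of the diagonal is secured: the paper invokes the restriction theorem (the diagonal is a closed subgroup of $G\times G$, so $x\mapsto\kappa(T)(x,x)$ lies in $\SO(G)$, see \cite{fe81-2} or \cite{ja19}), which is a stronger conclusion, while your termwise Cauchy--Schwarz estimate with summable bounds is more elementary and suffices for the corollary. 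Your $\Lp{2}$-chain is a legitimate sanity check (no approximation-property issues arise on a Hilbert space), but it does not by itself give the statement about $\SO(G)$ and $\SOprime(G)$.

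The genuine divergence --- and the gap --- is the approximation property. The paper never needs it: it defines ``trace class'' following Ruston \cite{ru51}, i.e.\ an operator is trace class if it \emph{admits} an absolutely convergent rank-one representation, and the trace is the sum $\sum_{j}(b_{j},\sigma_{j})_{\Bsp,\Bsp'}$ computed from such a representation; with that convention, Remark \ref{rem:tensor-factorization} plus your computation already close the proof. You instead work in the Grothendieck framework, in which the trace of a nuclear operator is well defined only when the underlying Banach space has the (metric) approximation property, and you leave that property unestablished for $\SO(G)$ and $\SOprime(G)$. That is not bookkeeping: for elementary groups one can get it from Wilson bases ($\SO\cong\ell^{1}$, $\SOprime\cong\ell^{\infty}$, both with MAP), but the entire point of this paper is the general LCA case, where precisely such arguments are unavailable --- so your proof rests on a claim that is arguably harder than the corollary itself. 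Note also that your closing ``independent reassurance'' (uniqueness of $\kappa(T)$ makes $\int_{G}\kappa(T)(x,x)\,dx$ intrinsic) does not repair this: it shows independence only among representations arising from factorizations of $\kappa(T)$ inside $\SO(G\times G)$, not among arbitrary nuclear representations $Tf=\sum_{j}(f,\sigma_{j})b_{j}$ with $\sigma_{j}$ merely in $\SOprime(G)$ (resp.\ $\SOdoubleprime(G)$), which is exactly what the approximation property is needed to control. Either prove or properly cite the AP in this generality, or --- much simpler --- adopt Ruston's definition as the paper does, whereupon your computation \emph{is} the paper's proof.
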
 
\begin{proof}
Following  \cite{ru51} we say that an operator $T$ on a Banach space $\Bsp$ is of
trace class if it has the form
\[ T: \Bsp \to \Bsp, \ Tx = \sum_{j\in\N} ( x, \sigma_{j})_{\Bsp,\Bsp'} \cdot b_{j} \ \ \text{for all} \ \ x\in \Bsp,\]
for  suitable sequences $(\sigma_{j})$ in $\Bsp'$ and $(b_{j})$ in $\Bsp$ such that $\sum_{j} \Vert \sigma_{j} \Vert_{\Bsp'} \, \Vert b_{j} \Vert_{\Bsp} < \infty$.
The trace of $T$ is $\textnormal{tr}(T) = \sum_{j\in \N} (b_{j},\sigma_{j})_{\Bsp,\Bsp'}$. Remark \ref{rem:tensor-factorization} shows that for $\Bsp=\SO(G)$ or $\Bsp=\SOprime(G)$ the operators in $\mathcal{B}(G,G)$ have the desired form. Here we also use that $\SO$ is continuously embedded into $\SOprime$ and $\SOdoubleprime$ via Lemma \ref{le:SOprime-induced-by-SO} and the canonical embedding $\iota:\SO\to\SOdoubleprime$, respectively. Before we show that $\textnormal{tr}(T) = \int_{G} \kappa(T)(x,x) \, dx$ we will first prove that the integral is well-defined. The inner kernel theorem states that $\kappa(T)$ is a function in $\SO(G\times G)$. Observe that the diagonal $\{ (x,y)\in G\times G: x=y\}$ is a closed subgroup of $G\times G$. It is a fact (see \cite[Theorem 7]{fe81-2} or \cite[Theorem 5.7]{ja19}) that the restriction of an $\SO$ function to a closed subgroup belongs to $\SO$ of that subgroup. In our case this means that $x\mapsto\kappa(T)(x,x)$ is a function in $\SO(G)$ and, in particular, that it is integrable (as $\SO\subseteq \Lp{1}$). Now, since $\kappa(T) = \sum_{j\in\N} f_{j}^{(1)}\otimes f_{j}^{(2)}$,
\begin{align*}
\textnormal{tr}(T) & = \sum_{j\in\N} (f_{j}^{(1)},f_{j}^{(2)})_{\SO,\SOprime(G)} = \sum_{j\in\N} \int_{G} f_{j}^{(1)}(x) \, f_{j}^{(2)}(x) \, dx \\
& = \int_{G} \sum_{j\in\N} f_{j}^{(1)}(x) \, f_{j}^{(2)}(x) \, dx = \int_{G} \kappa(T)(x,x) \, dx.
\end{align*}
\end{proof}
\begin{remark} Since $\SO$ is continuously embedded into $\SOprime$ and $(\SOprime,w^{*})'\cong \SO$ it is reasonable to extend the definition of trace-class operator from \cite{ru51} used in the proof of Corollary \ref{cor:0401} as follows: We say a (linear and continuous) operator from $\SOprime(G)$ with the \ws \ topology into $\SO(G)$ with its norm topology is of trace-class if 
\[ T : \SOprime(G) \to \SO(G), \ T \sigma = \sum_{j\in\N} (f^{(1)}_{j}, \sigma)_{\SO,\SOprime} \cdot f^{(2)}_{j} \ \ \text{for all} \ \ \sigma\in\SOprime(G),\]
for suitable sequences $(f^{(i)}_{j})$ in $\SO(G)$ and such that $\sum_{j} \Vert f^{(1)}_{j} \Vert \, \Vert f^{(2)}_{j} \Vert < \infty$. The trace of such an operator is then $\textnormal{tr}(T) = \sum_{j\in\N} (f^{(1)}_{j},f^{(2)}_{j})_{\SO,\SOprime}$. In that case, it is clear from Remark \ref{rem:tensor-factorization} that $\mathcal{B}(G,G)$ coincides \emph{exactly} with the  trace-class operators defined in this way. 
\end{remark}

Let us consider another important example of elements in $\mathcal{B}(G,G)$.
\begin{example}[Product-convolution operators] \label{ex:prod-conv-op} For any two functions $h_{1}$ and $h_{2}$ in $\SO(G)$  the product-convolution operator
\[ PC_{h_{1},h_{2}} : \SOprime(G) \to \Misp(G)\cong\SO(G), \ PC_{h_{1},h_{2}}(\sigma) = (\sigma\cdot h_{1})*h_{2},\]
and the convolution-product operator
\[ CP_{h_{1},h_{2}} : \SOprime(G) \to \Misp(G)\cong\SO(G), \ CP_{h_{1},h_{2}}(\sigma) = (\sigma* h_{1})\cdot h_{2},\]
are linear and bounded operators, which send norm bounded \ws \ convergent nets in $\SOprime(G)$ into norm convergent nets in $\Misp(G)\cong\SO(G)$. That is, both operators belong to $\mathcal{B}(G,G)$.
One can show that 
$\kappa(PC_{h_{1},h_{2}})= \tau_{1}(h_{1}\otimes h_{2})$ and  $\kappa(CP_{h_{1},h_{2}})= \tau_{2}(h_{1}\otimes h_{2})$, where
\begin{align*}
& \tau_{1}:\SO(G\times G)\to \SO(G\times G), \ \tau_{1}(f)(s,t) = f(s,t-s), \\
& \tau_{2}:\SO(G\times G)\to \SO(G\times G), \ \tau_{2}(f)(s,t) = f(t-s,t). 
\end{align*}
\end{example}

Product-convolution operators can be used to prove Lemma \ref{le:bounded-S0prime-approx} (see \cite[Proposition 6.15]{ja19} for the details). The kernel theorems translate Lemma \ref{le:bounded-S0prime-approx} into a statement for operators:
\begin{lemma} \label{le:0502a} For any operator $T\in\Lin(\SO(G_{1}),\SOprime(G_{2}))$ there exists a  net of operators $(T_{\netparam})$ in $\mathcal{B}(G_{1},G_{2})$, bounded in $\Lin(\SO(G_{1}),\SOprime(G_{2}))$ such that, for all $f^{(i)}\in \SO(G_{i})$, $i=1,2$, 
\[ \lim_{\netparam} \big\vert \big( f^{(2)}, (T - T_{\netparam}) f^{(1)} \big)_{\SO,\SOprime(G_{2})} \big\vert = 0, \ \ \Vert T_{\netparam} \Vert_{\textnormal{op},\SO\to\SOprime} \le \Vert T \Vert_{\textnormal{op},\SO\to\SOprime}.\]
\end{lemma}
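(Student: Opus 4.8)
The plan is to lift the \ws\ approximation of distributions furnished by Lemma \ref{le:bounded-S0prime-approx} from the level of kernels to the level of operators, using the two kernel theorems as the dictionary between an operator and its kernel. First I would invoke the outer kernel theorem (Theorem \ref{th:outer-kernel-theorem}) to replace the given $T\in\textnormal{Lin}(\SO(G_{1}),\SOprime(G_{2}))$ by its kernel $\kappa(T)\in\SOprime(G_{1}\times G_{2})$, so that
\[ (f^{(2)},Tf^{(1)})_{\SO,\SOprime(G_{2})} = (f^{(1)}\otimes f^{(2)},\kappa(T))_{\SO,\SOprime(G_{1}\times G_{2})}, \quad f^{(i)}\in\SO(G_{i}). \]
Applying Lemma \ref{le:bounded-S0prime-approx} to $\kappa(T)$ produces a net $(\kappa_{\netparam})$ in $\Misp(G_{1}\times G_{2})\cong\SO(G_{1}\times G_{2})$ converging \ws\ to $\kappa(T)$ and satisfying $\Vert\kappa_{\netparam}\Vert_{\SOprime}\le\Vert\kappa(T)\Vert_{\SOprime}$. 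Since each $\kappa_{\netparam}$ is induced by a function in $\SO(G_{1}\times G_{2})$, the inner kernel theorem (Theorem \ref{th:new-inner-kernel-theorem}) supplies a unique $T_{\netparam}\in\mathcal{B}(G_{1},G_{2})$ with $\kappa(T_{\netparam})=\kappa_{\netparam}$; these are the desired approximating operators, viewed as elements of $\textnormal{Lin}(\SO(G_{1}),\SOprime(G_{2}))$ through the embeddings $\SO(G_{i})\hookrightarrow\SOprime(G_{i})$.

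Next I would verify the pointwise convergence. Reading the defining identity \eqref{eq:1402a} of the inner kernel theorem with $\sigma^{(1)}=f^{(1)}$ and $\sigma^{(2)}=f^{(2)}$ regarded as the distributions they induce, and using the compatibility of the two tensor products together with the symmetry $(f,h)_{\SO,\SOprime}=(h,f)_{\SO,\SOprime}$ for $\SO$-functions, gives
\[ (f^{(2)},T_{\netparam} f^{(1)})_{\SO,\SOprime(G_{2})} = (f^{(1)}\otimes f^{(2)},\kappa_{\netparam})_{\SO,\SOprime(G_{1}\times G_{2})}. \]
Subtracting this from the analogous identity for $T$ yields
\[ \big(f^{(2)},(T-T_{\netparam})f^{(1)}\big)_{\SO,\SOprime(G_{2})} = \big(f^{(1)}\otimes f^{(2)},\kappa(T)-\kappa_{\netparam}\big)_{\SO,\SOprime(G_{1}\times G_{2})}, \]
whose modulus tends to $0$ along the net because $f^{(1)}\otimes f^{(2)}\in\SO(G_{1}\times G_{2})$ is a legitimate test function and $\kappa_{\netparam}\to\kappa(T)$ in the \ws\ sense. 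This is the first assertion.

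The remaining and most delicate step is the norm bound $\Vert T_{\netparam}\Vert_{\textnormal{op},\SO\to\SOprime}\le\Vert T\Vert_{\textnormal{op},\SO\to\SOprime}$ (which in particular gives the boundedness of the net). The idea is that the outer kernel identification becomes \emph{isometric} once $\SO(G_{1}\times G_{2})$ is equipped with the projective tensor-product norm \eqref{eq:1508b}, legitimate by Lemma \ref{le:SO-tensor-factorization}: since the unit ball of a projective tensor product is the closed absolutely convex hull of elementary tensors of norm at most one, for every such operator $S$ one has
\[ \Vert S\Vert_{\textnormal{op},\SO\to\SOprime} = \sup_{\Vert f^{(1)}\Vert_{\SO}\le1,\ \Vert f^{(2)}\Vert_{\SO}\le1}\big\vert (f^{(1)}\otimes f^{(2)},\kappa(S))\big\vert = \Vert\kappa(S)\Vert_{\SOprime(G_{1}\times G_{2})}, \]
where the right-hand norm is the one dual to the projective norm. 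Running Lemma \ref{le:bounded-S0prime-approx} with respect to this (equivalent) choice of norm, its bound $\Vert\kappa_{\netparam}\Vert\le\Vert\kappa(T)\Vert$ then translates directly into $\Vert T_{\netparam}\Vert_{\textnormal{op}}\le\Vert T\Vert_{\textnormal{op}}$. The main obstacle is precisely this norm bookkeeping: I must check that the (window-independent) contraction property underlying the construction in Lemma \ref{le:bounded-S0prime-approx} is compatible with the projective dual norm, so that the estimate really holds with constant $1$ rather than merely up to the equivalence of norms; otherwise one only obtains uniform boundedness of the net and has to argue separately that the precise constant can be normalized to one.
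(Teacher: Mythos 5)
Your proposal is exactly the route the paper intends: the paper offers no proof of Lemma \ref{le:0502a} beyond the remark preceding it, namely that the kernel theorems translate Lemma \ref{le:bounded-S0prime-approx} into a statement about operators, and this is precisely your chain (outer kernel theorem applied to $T$, Lemma \ref{le:bounded-S0prime-approx} applied to $\kappa(T)$, inner kernel theorem to return to operators, \ws\ convergence tested on elementary tensors $f^{(1)}\otimes f^{(2)}$). The norm-bookkeeping obstacle you flag at the end is a genuine subtlety that the paper passes over in silence: with the window norms the kernel map is an isomorphism but \emph{not} an isometry between $\textnormal{Lin}(\SO(G_{1}),\SOprime(G_{2}))$ and $\SOprime(G_{1}\times G_{2})$, so the contraction bound of Lemma \ref{le:bounded-S0prime-approx} transfers a priori only up to the constant of norm equivalence. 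It can be closed, and more simply than by re-running Lemma \ref{le:bounded-S0prime-approx} in the projective dual norm: the net in that lemma is built from product--convolution operators (cf.\ Example \ref{ex:prod-conv-op}), and on the product group one may take the smoothing windows to be tensor products $g_{\netparam}=g^{(1)}_{\netparam}\otimes g^{(2)}_{\netparam}$ and $h_{\netparam}=h^{(1)}_{\netparam}\otimes h^{(2)}_{\netparam}$ with $\Vert h^{(i)}_{\netparam}\Vert_{1}\le 1$ and $\Vert g^{(i)}_{\netparam}\Vert_{\mathbf{A}}\le 1$. Then the kernel-level smoothing $\kappa(T)\mapsto(\kappa(T)\cdot g_{\netparam})\ast h_{\netparam}$ corresponds on the operator level to pre-composing $T$ with a multiplication--convolution operator on $\SO(G_{1})$ and post-composing with one on $\SOprime(G_{2})$; since convolution by an $\Lp{1}$-normalized function and multiplication by an $\mathbf{A}$-normalized function are contractions for \emph{every} window norm on $\SO$ (hence, by duality, on $\SOprime$), this yields $\Vert T_{\netparam}\Vert_{\textnormal{op},\SO\to\SOprime}\le\Vert T\Vert_{\textnormal{op},\SO\to\SOprime}$ with constant exactly $1$, while membership $T_{\netparam}\in\mathcal{B}(G_{1},G_{2})$ follows as in Proposition \ref{pr:reg-of-id}(i) and the \ws\ convergence is the part you already established. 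Your projective-norm argument also works, for the same underlying reason (tensor products of contractions are contractions for the projective norm), but the composition formulation avoids changing norms altogether.
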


Similar to Lemma \ref{le:0502a}, the inner kernel theorem can be used to translate Lemma \ref{le:S0-Gabor-expansion} from a statement of $\SO$ to a statement of $\mathcal{B}(G_{1},G_{2})$.

\begin{proposition} \label{pr:operator-gabor-rep} Let $T_{0}$ be a non-trivial operator in $\mathcal{B}(G_{1},G_{2})$. The operators $T$ in $\mathcal{B}(G_{1},G_{2})$ are exactly
those of the form 
\begin{align*} T  = \sum_{j\in \N} c_{j} \, \pi(\TFelement^{(2)}_{j} ) \circ T_{0} \circ \pi(\TFelement^{(1)}_{j} ), \end{align*}
where $c\in \ell^{1}(\N)$ and $(\TFelement^{(i)}_{j})$ are sequences in $G_{i}\times\ghat_{i}$, $i=1,2$. The sum converges in $\mathcal{B}(G_{1},G_{2})$ and furthermore $\Vert T \Vert = \inf \Vert c \Vert_{1} $, where the infimum is taken over all admissible representations of $T$ as above, defines an equivalent norm on $\mathcal{B}(G_{1},G_{2})$.
\end{proposition}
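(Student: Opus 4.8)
The plan is to transport the atomic (Gabor) expansion of $\SO(G_{1}\times G_{2})$ furnished by Lemma \ref{le:S0-Gabor-expansion} across the Banach space isomorphism $\kappa : \mathcal{B}(G_{1},G_{2}) \to \SO(G_{1}\times G_{2})$ provided by the inner kernel theorem (Theorem \ref{th:new-inner-kernel-theorem}). The one point that must be matched up is which operation on operators corresponds, under $\kappa$, to a time-frequency shift of the kernel. I claim that conjugation $T \mapsto \pi(\TFelement^{(2)}) \circ T \circ \pi(\TFelement^{(1)})$ corresponds to a (twisted) time-frequency shift of $\kappa(T)$: writing $\TFelement^{(i)}=(x^{(i)},\omega^{(i)})$,
\[ \kappa\big(\pi(\TFelement^{(2)}) \circ T \circ \pi(\TFelement^{(1)})\big) = \omega^{(1)}(x^{(1)}) \cdot \pi\big((-x^{(1)}, x^{(2)}), (\omega^{(1)}, \omega^{(2)})\big)\, \kappa(T), \]
where on the right $\pi$ denotes a time-frequency shift on $G_{1}\times G_{2}$.

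First I would establish this intertwining identity on the dense set of rank-one operators with kernel $f^{(1)}\otimes f^{(2)}$ (Example \ref{ex:rank-one-operator}, Remark \ref{rem:tensor-factorization}, Corollary \ref{cor:0201}). For such $T$, using the action of $\pi(\TFelement^{(1)})$ on $\SOprime(G_{1})$ (the identity $(f^{(1)}, \pi(x^{(1)}, \omega^{(1)}) \sigma^{(1)}) = \omega^{(1)}(x^{(1)})(\pi(-x^{(1)}, \omega^{(1)}) f^{(1)}, \sigma^{(1)})$ from the preliminaries), the operator $\pi(\TFelement^{(2)}) T \pi(\TFelement^{(1)})$ is again rank-one, with kernel $\omega^{(1)}(x^{(1)}) [\pi(-x^{(1)}, \omega^{(1)}) f^{(1)}] \otimes [\pi(\TFelement^{(2)}) f^{(2)}]$; since time-frequency shifts factor over tensor products this equals $\omega^{(1)}(x^{(1)}) \pi((-x^{(1)}, x^{(2)}), (\omega^{(1)}, \omega^{(2)}))(f^{(1)} \otimes f^{(2)})$. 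Both sides are norm-continuous in $T$ (the left because $\kappa$ and composition with the isometries $\pi(\TFelement^{(i)})$ are bounded, the right because $\pi$ is isometric on $\SO$), so the identity extends to all of $\mathcal{B}(G_{1},G_{2})$ by density. The key structural observation is that $((x^{(1)}, \omega^{(1)}), (x^{(2)}, \omega^{(2)})) \mapsto ((-x^{(1)}, x^{(2)}), (\omega^{(1)}, \omega^{(2)}))$ is a topological group isomorphism of $(G_{1} \times \ghat_{1}) \times (G_{2} \times \ghat_{2})$ onto $(G_{1} \times G_{2}) \times (\ghat_{1} \times \ghat_{2})$; in particular it is surjective, so every time-frequency shift of the kernel is realized by some conjugation.

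With the intertwining in hand the argument is formal. Set $g_{0}=\kappa(T_{0})$, a non-zero element of $\SO(G_{1}\times G_{2})$ since $T_{0}$ is non-trivial and $\kappa$ is injective. Given $T\in\mathcal{B}(G_{1},G_{2})$, apply Lemma \ref{le:S0-Gabor-expansion} to $\kappa(T)$ with window $g_{0}$ to get $c\in\ell^{1}(\N)$ and time-frequency points $\mu_{j}$ with $\kappa(T)=\sum_{j} c_{j}\,\pi(\mu_{j}) g_{0}$, absolutely convergent in $\SO(G_{1}\times G_{2})$. By surjectivity choose $\TFelement_{j}^{(1)},\TFelement_{j}^{(2)}$ with $\pi(\mu_{j}) g_{0} = \overline{\omega_{j}^{(1)}(x_{j}^{(1)})}\,\kappa(\pi(\TFelement_{j}^{(2)}) T_{0} \pi(\TFelement_{j}^{(1)}))$; absorbing the unimodular factors into the coefficients (leaving $|c_{j}|$ unchanged) and using continuity and injectivity of $\kappa$, together with the fact that $\kappa$ carries the absolutely convergent series term by term, yields $T = \sum_{j} c_{j}\, \pi(\TFelement_{j}^{(2)}) \circ T_{0} \circ \pi(\TFelement_{j}^{(1)})$, convergent in $\mathcal{B}(G_{1},G_{2})$ because $\Vert\pi(\TFelement_{j}^{(2)}) T_{0} \pi(\TFelement_{j}^{(1)})\Vert_{\mathcal{B}} \asymp \Vert T_{0}\Vert_{\mathcal{B}}$ and $c\in\ell^{1}$.

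Conversely, every such series lies in $\mathcal{B}(G_{1},G_{2})$: each summand does, since pre-composition with $\pi(\TFelement^{(1)})$ preserves boundedness and weak$^{*}$ convergence of nets in $\SOprime(G_{1})$ while post-composition with the isometry $\pi(\TFelement^{(2)})$ preserves norm convergence in $\SO(G_{2})$, and the absolutely convergent $\ell^{1}$-series then converges in the Banach space $\mathcal{B}(G_{1},G_{2})$. Finally, combining $\Vert T\Vert_{\mathcal{B}} \asymp \Vert\kappa(T)\Vert_{\SO}$ with the equivalent norm $\Vert\kappa(T)\Vert_{\SO} \asymp \inf\Vert c\Vert_{1}$ of Lemma \ref{le:S0-Gabor-expansion}, and transporting the infimum through the coefficient-preserving bijection of representations, shows that $\inf\Vert c\Vert_{1}$ over all representations $T = \sum_{j} c_{j} \pi(\TFelement_{j}^{(2)}) T_{0} \pi(\TFelement_{j}^{(1)})$ is an equivalent norm on $\mathcal{B}(G_{1},G_{2})$. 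The main obstacle is the intertwining identity of the second paragraph, both in getting the twist/sign bookkeeping right and in confirming that the induced map on time-frequency points is onto.
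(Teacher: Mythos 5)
Your proposal is correct and follows essentially the same route as the paper's proof: both apply the atomic decomposition of Lemma \ref{le:S0-Gabor-expansion} to $\kappa(T)$ with window $\kappa(T_{0})$ and transport it through the inner kernel theorem via the covariance $\kappa\big(\pi(\TFelement^{(2)})\circ T\circ\pi(\TFelement^{(1)})\big)=\omega^{(1)}(x^{(1)})\,\pi\big((-x^{(1)},x^{(2)}),(\omega^{(1)},\omega^{(2)})\big)\,\kappa(T)$, whose phase bookkeeping matches exactly the unimodular factor $\omega^{(1)}_{j}(x^{(1)}_{j})$ that the paper absorbs into the $\ell^{1}$ coefficients. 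The only difference is mechanical: the paper establishes this covariance by a direct weak pairing of $T\sigma^{(1)}$ against $\sigma^{(2)}$ for general $T$, whereas you prove it on rank-one kernels and extend by density of finite-rank operators; you also spell out the converse inclusion and the norm equivalence, which the paper's proof leaves implicit.
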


A similar statement is true for the space $\mathcal{A}(G_{1},G_{2})$. In that case, if $A_{0}$ is a non-trivial element in $\mathcal{A}(G_{1},G_{2})$, then all operators in $\mathcal{A}(G_{1},G_{2})$ are exactly those of the form
\[ A(\sigma^{(1)},\sigma^{(2)}) = \sum_{j\in \N} c_{j} \, A_{0} \big(\pi(\TFelement^{(1)}_{j}) \sigma^{(1)}, \pi(\TFelement^{(2)}_{j}) \sigma^{(2)} \big)\]
with $c$ and $\TFelement_{j}^{(i)}$ in Proposition \ref{pr:operator-gabor-rep}.

\begin{proof}[Proof of Proposition \ref{pr:operator-gabor-rep}] By Lemma \ref{le:S0-Gabor-expansion} we know that for any $T\in \mathcal{B}(G_{1},G_{2})$ there exists a sequence $c\in \ell^{1}(\N)$ and a sequence $(x^{(1)}_{j},x^{(2)}_{j},\omega^{(1)}_{j},\omega^{(2)}_{j})$ in $G_{1}\times G_{2}\times \ghat_{1}\times\ghat_{2}$ such that 
\[ \kappa(T) = \sum_{j\in\N} c_{j} E_{\omega^{(1)}_{j},\omega^{(2)}_{j}} T_{x^{(1)}_{j},x^{(2)}_{j}} \, \kappa(T_{0}).\]
Hence
\begin{align*}  (T\sigma^{(1)},\sigma^{(2)})_{\SO,\SOprime(G_{2})} & = (\kappa(T), \sigma^{(1)}\otimes \sigma^{(2)})_{\SO,\SOprime(G_{1}\times G_{2})} \\
& = \sum_{j\in\N} c_{j} \, \big( E_{\omega^{(1)}_{j},\omega^{(2)}_{j}} T_{x^{(1)}_{j},x^{(2)}_{j}} \kappa(T_{0}),\sigma^{(1)}\otimes \sigma^{(2)}\big)_{\SO,\SOprime(G_{1}\times G_{2})} \\
& = \sum_{j\in\N} c_{j} \, \big( \kappa(T_{0}), [T_{-x^{(1)}_{j}} E_{\omega^{(1)}_{j}} \sigma^{(1)}] \otimes [T_{-x^{(2)}_{j}} E_{\omega^{(2)}_{j}} \sigma^{(2)}] \big)_{\SO,\SOprime(G_{1},G_{2})} \\
& = \sum_{j\in\N} c_{j} \, \big( T_{0}\circ \, T_{-x^{(1)}_{j}} \, E_{\omega^{(1)}_{j}} \sigma^{(1)}, T_{-x^{(2)}_{j}} E_{\omega^{(2)}_{j}} \sigma^{(2)} \big)_{\SO,\SOprime(G_{2})} \\
& = \sum_{j\in\N} \underbrace{\omega^{(1)}_{j}(x^{(1)}_{j}) \, c_{j}}_{\in\ell^{1}(\N)} \, \big( \underbrace{E_{\omega^{(2)}_{j}} \, T_{x^{(2)}_{j}}}_{=:\pi(\TFelement^{(2)}_{j})} \circ T_{0} \circ  \underbrace{E_{\omega^{(1)}_{j}} \, T_{-x^{(1)}_{j}}}_{=:\pi(\TFelement^{(1)}_{j})} \sigma^{(1)},   \sigma^{(2)} \big)_{\SO,\SOprime(G_{2})}.
\end{align*}
\end{proof}

If we take $T_{0}$ to be a rank-one operator in $\mathcal{B}(G_{1},G_{2})$ as in Example \ref{ex:rank-one-operator} with $\kappa(T_{0})=f^{(1)}\otimes f^{(2)}$, then Proposition \ref{pr:operator-gabor-rep} states that any operator $T\in\mathcal{B}(G_{1},G_{2})$ has the form
\[  T \sigma^{(1)}  = \sum_{j\in\N} c_{j} \big( \pi(\TFelement^{(1)}_{j}) f^{(1)} , \sigma^{(1)} \big)_{\SO,\SOprime(G_{1})} \, \pi(\TFelement^{(2)}_{j}) f^{(2)} \]
for all $\sigma^{(1)}\in\SOprime(G_{1})$,
for a suitable sequence $c\in\ell^{1}(\N)$ and sequences $(\TFelement^{i}_{j})$ in $G_{i}\times\ghat_{i}$, $i=1,2$.

\subsection{Regularizing approximations of the identity}
\label{sec:reg-app-id}

Since $\SO$ is \ws \ dense in $\SOprime$ it is possible to approximate the kernel $\kappa(T)\in\SOprime(G_{1}\times G_{2})$ of a general operator $T$ in $\Lin(\SO(G_{1}),\SOprime(G_{2}))$ by a net (or sequence) of functions $\kappa_{\netparam}$ in $\SO(G_{1}\times G_{2})$ that converges in the \ws  sense towards $\kappa(T)$. The  associated operators $T_{\netparam}\in\mathcal{B}(G_{1},G_{2})$ satisfy
\begin{equation} \lim_{\netparam} \big\vert \big( f^{(2)} , (T - T_{\netparam}) f^{(1)}\big)_{\SO,\SOprime(G_{2})} \big\vert = 0 \ \ \text{for all} \ \ f^{(i)}\in\SO(G_{i}), \ i=1,2. \label{eq:0702a} \end{equation}
We saw this already in Lemma \ref{le:0502a}.
In this section we propose a construction of a net of operators $(T_{\netparam})$ in $\mathcal{B}(G_{1},G_{2})$ such that \eqref{eq:0702a} holds, that is \emph{not} based on the modification of the kernel per se (which is the idea behind Lemma \ref{le:0502a}), but rather by a composition of the given operator $T$ with certain operators:
 we introduce the idea of a  \emph{regularizing approximations of the identity}.

\begin{definition} \label{def:reg-app-id} A \emph{regularizing approximation of the identity} of $\SO(G)$ is a net of operators $(T_{\netparam})$  in $\mathcal{B}(G,G)$ (equivalently  $\kappa(T_{\netparam})\in \SO(G\times G)$) for each $\netparam$ and which satisfies the following conditions:

\begin{enumerate}[label=(\roman*)]
\item \label{strong-id} $\lim_{\netparam} \Vert T_{\netparam} f - f \Vert_{\SO} = 0 \ \ \text{for all} \ \ f\in\SO(G)$,
\item \label{SO-norm-bound} $\sup_{\netparam} \Vert T_{\netparam} \Vert_{\textnormal{op},\SO\to\SO} < \infty$,
\item \label{SOprime-norm-bound} $\sup_{\netparam} \Vert T_{\netparam} \Vert_{\textnormal{op},\SOprime\to\SOprime} < \infty$,

\item \label{weak-id} $\lim_{\netparam} \vert (f,T_{\netparam} \sigma - \sigma )_{\SO,\SOprime(G)} \vert = 0 \ \ \text{for all} \ \ f\in\SO(G), \ \sigma\in\SOprime(G)$.
\end{enumerate} 
\end{definition}
\begin{remark} Statement (i) and (ii) for the adjoint operators $(T_{\netparam}^{\times})$ implies (iv) and (iii), respectively. Hence we need only conditions (i) and (ii) for self-adjoint operators. Moreover, (i) implies (ii) for the case of a sequence of operators, due to the Banach-Steinhaus principle. 
\end{remark}

We list three examples of such families of operators at the end of this section.
It is straightforward to show that the properties of a regularizing approximation of the identity implies convergence of $\Lp{2}$.
\begin{lemma} If $(T_{\netparam})$ is a regularizing approximation of the identity for $\SO(G)$, then 
\[ \sup_{\netparam} \Vert T_{\netparam} \Vert_{\textnormal{op},\Lp{2}\to \Lp{2}} < \infty \ \ \text{and} \ \ \lim_{\netparam} \Vert T_{\netparam} f - f\Vert_{2} = 0 \ \ \text{for all} \ \ f\in \Lp{2}(G).\]
Moreover, the net $(\kappa(T_{\netparam}))$ in $\SO(G\times G)\subseteq\SOprime(G\times G)$ converges towards the kernel of the identity operator in the \ws sense.
\end{lemma}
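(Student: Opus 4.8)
The plan is to establish the three assertions in turn, relying on the four defining properties of a regularizing approximation of the identity and on the continuous embedding $\SO(G)\hookrightarrow\Lp{2}(G)\hookrightarrow\SOprime(G)$. First I would treat the uniform $\Lp{2}$-operator bound. The key observation is that the operators $T_\netparam$ map $\SO$ into $\SO$ (property \ref{SO-norm-bound}) and $\SOprime$ into $\SOprime$ (property \ref{SOprime-norm-bound}), both uniformly in $\netparam$, and that $\Lp{2}(G)$ sits as an interpolation space ``between'' $\SO(G)$ and $\SOprime(G)$. More concretely, since $\SO\subseteq\Lp{2}\subseteq\SOprime$ with continuous and dense inclusions and the $\Lp{2}$ inner product is the restriction of the $\SO$--$\SOprime$ duality, I would show that the operator norm of $T_\netparam$ on $\Lp{2}$ is controlled by a product (or geometric mean) of its norms on $\SO$ and on $\SOprime$. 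The cleanest route is to use the self-adjointness structure: for $f\in\SO(G)$ one estimates $\Vert T_\netparam f\Vert_2^2 = \langle T_\netparam f, T_\netparam f\rangle = (T_\netparam f, \overline{T_\netparam f})_{\SO,\SOprime}$ and bounds this by $\Vert T_\netparam f\Vert_{\SO}\,\Vert T_\netparam f\Vert_{\SOprime}$, then applies \ref{SO-norm-bound} and \ref{SOprime-norm-bound} together with the embedding constants to conclude $\sup_\netparam\Vert T_\netparam\Vert_{\textnormal{op},\Lp{2}\to\Lp{2}}<\infty$.

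With the uniform $\Lp{2}$-bound in hand, the convergence $\lim_\netparam\Vert T_\netparam f - f\Vert_2 = 0$ for all $f\in\Lp{2}(G)$ follows by a standard density-plus-uniform-boundedness argument. For $f\in\SO(G)$ we have $\Vert T_\netparam f - f\Vert_2 \le c\,\Vert T_\netparam f - f\Vert_{\SO}\to 0$ by property \ref{strong-id} and the embedding $\SO\hookrightarrow\Lp{2}$. Since $\SO(G)$ is dense in $\Lp{2}(G)$ and the family $\{T_\netparam - I\}$ is uniformly bounded on $\Lp{2}$ (using the $\Lp{2}$-bound just proved), an $\varepsilon/3$ argument extends the convergence from the dense subspace $\SO$ to all of $\Lp{2}$.

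For the last assertion I would identify the kernel of the identity operator via the outer kernel theorem. The identity on $\SO$, viewed as an element of $\textnormal{Lin}(\SO(G),\SOprime(G))$ through the embedding $\SO\hookrightarrow\SOprime$, has as its kernel the distribution in $\SOprime(G\times G)$ induced by the diagonal, characterized by $(f^{(1)}\otimes f^{(2)}, \kappa(I))_{\SO,\SOprime(G\times G)} = (f^{(2)}, f^{(1)})_{\SO,\SOprime(G)} = \int_G f^{(1)}(x)f^{(2)}(x)\,dx$. The claim that $\kappa(T_\netparam)\to\kappa(I)$ in the \ws{} sense means precisely that $(f^{(1)}\otimes f^{(2)},\kappa(T_\netparam))_{\SO,\SOprime(G\times G)}\to(f^{(1)}\otimes f^{(2)},\kappa(I))_{\SO,\SOprime(G\times G)}$ for all $f^{(i)}\in\SO(G_i)$, and by the (inner and outer) kernel theorems the left-hand side equals $(f^{(2)}, T_\netparam f^{(1)})_{\SO,\SOprime(G)}$. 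Thus the statement reduces to $\lim_\netparam (f^{(2)}, T_\netparam f^{(1)})_{\SO,\SOprime} = (f^{(2)}, f^{(1)})_{\SO,\SOprime}$, which is immediate from property \ref{strong-id}: $\vert(f^{(2)}, (T_\netparam - I)f^{(1)})_{\SO,\SOprime}\vert \le \Vert f^{(2)}\Vert_{\SOprime}\,\Vert T_\netparam f^{(1)} - f^{(1)}\Vert_{\SO}\to 0$.

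The main obstacle is the uniform $\Lp{2}$-operator bound, since the other two parts are routine density and duality arguments. The subtlety is that interpolation between the Banach spaces $\SO$ and $\SOprime$ is not literally complex or real interpolation in the classical sense; one must instead exploit the compatibility of the three dualities and the self-adjoint (or bounded-adjoint) structure of the $T_\netparam$ arising from the remark following Definition \ref{def:reg-app-id}. If one does not assume self-adjointness, the cleanest workaround is to bound $\Vert T_\netparam f\Vert_2$ by testing against arbitrary $h\in\SO(G)$ via $\vert\langle T_\netparam f, h\rangle\vert = \vert(h, T_\netparam f)_{\SO,\SOprime}\vert$ and using the adjoint bound $\Vert T_\netparam^{\times}\Vert_{\textnormal{op},\SOprime\to\SOprime}$ together with density of $\SO$ in $\Lp{2}$, so that the $\Lp{2}$-norm is recovered as a supremum over the dense unit ball. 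I expect this duality manipulation to be the only genuinely delicate point.
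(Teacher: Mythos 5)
Your proof of the first assertion --- the uniform bound $\sup_{\netparam}\Vert T_{\netparam}\Vert_{\textnormal{op},\Lp{2}\to\Lp{2}}<\infty$, which you yourself single out as the main obstacle --- has a genuine gap: neither of the two routes you propose closes it. The geometric-mean estimate gives
\[ \Vert T_{\netparam}f\Vert_{2}^{2}\le\Vert T_{\netparam}f\Vert_{\SO}\,\Vert T_{\netparam}f\Vert_{\SOprime}\le \Big(\sup_{\netparam}\Vert T_{\netparam}\Vert_{\textnormal{op},\SO\to\SO}\Big)\Big(\sup_{\netparam}\Vert T_{\netparam}\Vert_{\textnormal{op},\SOprime\to\SOprime}\Big)\,\Vert f\Vert_{\SO}\,\Vert f\Vert_{\SOprime}, \]
but $\Vert f\Vert_{\SO}\Vert f\Vert_{\SOprime}$ is \emph{not} dominated by a multiple of $\Vert f\Vert_{2}^{2}$; only the reverse inequality $\Vert f\Vert_{2}^{2}\le c\,\Vert f\Vert_{\SO}\Vert f\Vert_{\SOprime}$ holds in general. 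In the $\Lp{1}$--$\Lp{2}$--$\Lp{\infty}$ picture on phase space furnished by the STFT this is the failure of $\Vert F\Vert_{1}\Vert F\Vert_{\infty}\le C\Vert F\Vert_{2}^{2}$: concretely, for $f_{n}=\pi(\TFelement_{1})g+n^{-1/2}\sum_{k=2}^{n+1}\pi(\TFelement_{k})g$ with sufficiently separated points $\TFelement_{k}$ one has $\Vert f_{n}\Vert_{\SO}\Vert f_{n}\Vert_{\SOprime}\sim\sqrt{n}$ while $\Vert f_{n}\Vert_{2}^{2}\sim 2$, so your estimate cannot produce a bound by $\Vert f\Vert_{2}^{2}$. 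The duality workaround fares no better: writing $\Vert T_{\netparam}f\Vert_{2}=\sup\{\vert\langle T_{\netparam}f,h\rangle\vert : h\in\SO(G),\ \Vert h\Vert_{2}\le1\}$ and transposing $T_{\netparam}$ onto $h$ requires a uniform $\Lp{2}$-bound for the adjoint family, i.e.\ precisely the statement being proved (the argument is circular); estimating $\vert\langle T_{\netparam}f,h\rangle\vert$ directly by endpoint norms yields either $\Vert f\Vert_{\SO}\Vert h\Vert_{\SOprime}$ or $\Vert f\Vert_{\SOprime}\Vert h\Vert_{\SO}$, which do not reduce to $\Vert f\Vert_{2}\Vert h\Vert_{2}$ for the same reason as above, or else $\Vert T_{\netparam}\Vert_{\textnormal{op},\SOprime\to\SO}\Vert f\Vert_{\SOprime}\Vert h\Vert_{\SOprime}$ --- and $\Vert T_{\netparam}\Vert_{\textnormal{op},\SOprime\to\SO}$ is \emph{never} uniformly bounded for a regularizing approximation of the identity: a uniform bound $M$ would, by property \ref{strong-id}, force $\Vert f\Vert_{\SO}=\lim_{\netparam}\Vert T_{\netparam}f\Vert_{\SO}\le M\Vert f\Vert_{\SOprime}$ for all $f\in\SO(G)$, which the functions $f_{n}$ above refute.

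The missing idea is exactly the one the paper invokes in its (one-line) proof: genuine interpolation between the uniform $\SO\to\SO$ and $\SOprime\to\SOprime$ bounds, using that $\SO(G)=\Misp(G)$, $\Lp{2}(G)=\mathbf{M}^{2}(G)$, $\SOprime(G)=\mathbf{M}^{\infty}(G)$ form an interpolation scale. Your worry that classical interpolation ``does not literally apply'' is discharged by transference rather than by avoiding interpolation: the operator $S_{\netparam}:=\mathcal{V}_{g}\,T_{\netparam}\,\mathcal{V}_{g}^{*}$ on phase space is, by properties \ref{SO-norm-bound} and \ref{SOprime-norm-bound}, uniformly bounded on $\Lp{1}(G\times\ghat)$ and on $\Lp{\infty}(G\times\ghat)$, hence uniformly bounded on $\Lp{2}(G\times\ghat)$ by the classical Riesz--Thorin theorem; since $\Vert g\Vert_{2}^{-1}\mathcal{V}_{g}$ is an isometry of $\Lp{2}(G)$ into $\Lp{2}(G\times\ghat)$ with $\Vert g\Vert_{2}^{-2}\mathcal{V}_{g}^{*}\mathcal{V}_{g}=\textnormal{Id}$, the identity $T_{\netparam}=\Vert g\Vert_{2}^{-4}\mathcal{V}_{g}^{*}S_{\netparam}\mathcal{V}_{g}$ makes the uniform bound descend to $\Lp{2}(G)$. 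Your remaining two assertions are handled correctly and essentially as in the paper: the $\Lp{2}$-convergence is the standard density argument (contingent, of course, on the uniform bound), and the moreover-part reduces to convergence against elementary tensors, which your estimate $\vert(f^{(2)},(T_{\netparam}-\textnormal{Id})f^{(1)})_{\SO,\SOprime(G)}\vert\le\Vert f^{(2)}\Vert_{\SOprime}\Vert T_{\netparam}f^{(1)}-f^{(1)}\Vert_{\SO}$ establishes via property \ref{strong-id}, where the paper quotes property \ref{weak-id} instead; note that both you and the paper tacitly use that such convergence, together with Lemma \ref{le:SO-tensor-factorization} and $\sup_{\netparam}\Vert T_{\netparam}\Vert_{\textnormal{op},\SO\to\SOprime}<\infty$, upgrades to \ws\ convergence against all of $\SO(G\times G)$.
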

\begin{proof}
The first statement follows by interpolation theory for operators and assumptions \ref{SO-norm-bound} and \ref{SOprime-norm-bound} in Definition \ref{def:reg-app-id}. Now, since $\SO$ is continuously embedded and dense in $\Lp{2}$, Definition \ref{def:reg-app-id}\ref{strong-id} implies that
\[ \lim_{\netparam} \Vert T_{\netparam} f - f \Vert_{2} \to 0 \ \ \text{for all} \ \ f\in \Lp{2}(G).\]
The moreover part follows from the fact that Definition \ref{def:reg-app-id}\ref{weak-id} implies \eqref{eq:0702a}
\end{proof}

Regularizing approximations of the identity allow us to construct a concrete family of operators that have kernels in $\SO(G_{1}\times G_{2})$, which approximate in the \ws \ sense any given operator that has as (abstract) kernel in $\SOprime(G_{1}\times G_{2})$.

\begin{proposition} \label{pr:reg-of-id}
For $i=1,2$ let $(T_{\netparam}^{(i)})$ be a regularizing approximation of the identity for $\SO(G_{i})$. For any operator $T \in \Lin(\SO(G_{1}),\SOprime(G_{2}))$ the collection of operators $(T_{\netparam})$,  $T_{\netparam} := T_{\netparam}^{(2)} \circ T \circ T_{\netparam}^{(1)}$ is such that
\begin{enumerate}
\item[(i)] $T_\netparam\in \mathcal{B}(G_{1},G_{2})$ for each $\netparam$, i.e., $\kappa(T_{\netparam})\in\SO(G_{1}\times G_{2})$,
\item[(ii)] $\lim_{\netparam} \big\vert \big( f^{(2)}, (T - 
T_{\netparam})f^{(1)}\big)_{\SO,\SOprime(G_{2})} \big\vert = 0$ for all $f^{(i)}\in\SO(G_{i})$, $i=1,2$,
\item[(iii)] $\kappa(T_{\netparam})$ converges to $\kappa(T)$ in the \ws \ sense.
\item[(iv)] $\sup_{\netparam} \Vert T_{\netparam} \Vert_{\textnormal{op},\SO\to\SOprime} < \infty$. 

\item[(v)]For $T \in  \Lin(\Lp{2}(G_{1}),\Lp{2}(G_{2})) \text{ one has }
 \lim_{\netparam} \Vert (T - T_{\netparam}) f \Vert_{\Lp{2}(G_{2})} = 0 \ \ \text{for all} \ \  f\in\Lp{2}(G_{1}).$
 
\item[(vi)]For $T \in \Lin(\SO(G_{1}),\SO(G_{2})) \text{ one has } 
 \lim_{\netparam} \Vert (T - T_{\netparam}) f \Vert_{\SO(G_{2})} = 0 \ \ \text{for all} \ \  f\in\SO(G_{1}).$
 \end{enumerate}
\end{proposition}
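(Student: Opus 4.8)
The plan is to establish the six assertions in the order (i), (ii), (iv), (iii), and finally (v)--(vi), because the weak$^{*}$ convergence in (iii) will be deduced from the uniform bound recorded in (iv). Everything rests on writing $T_{\netparam} = T^{(2)}_{\netparam}\circ T\circ T^{(1)}_{\netparam}$ and telescoping against the two regularizing families through their defining properties in Definition \ref{def:reg-app-id}. For (i) I would feed a bounded \ws convergent net $(\sigma_{\beta})$ in $\SOprime(G_{1})$ through the three factors: $T^{(1)}_{\netparam}\in\mathcal{B}(G_{1},G_{1})$ sends it to a norm convergent net in $\SO(G_{1})$; the bounded and norm continuous $T$ then produces a net in $\SOprime(G_{2})$ which is norm convergent and, crucially, \emph{bounded} — its boundedness being inherited from $\sup_{\beta}\Vert\sigma_{\beta}\Vert_{\SOprime}<\infty$ together with the operator norms of $T^{(1)}_{\netparam}$ and $T$, and not merely read off from its norm convergence (a norm convergent net need not be bounded); being bounded and \ws convergent, this net is mapped by $T^{(2)}_{\netparam}\in\mathcal{B}(G_{2},G_{2})$ to a norm convergent net in $\SO(G_{2})$. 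Hence $T_{\netparam}\in\mathcal{B}(G_{1},G_{2})$, equivalently $\kappa(T_{\netparam})\in\SO(G_{1}\times G_{2})$ by Theorem \ref{th:new-inner-kernel-theorem}.

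For (ii) I would set $\rho_{\netparam}:=T\,T^{(1)}_{\netparam}f^{(1)}$ and $\rho:=Tf^{(1)}$. Since $T^{(1)}_{\netparam}f^{(1)}\to f^{(1)}$ in $\SO(G_{1})$ by Definition \ref{def:reg-app-id}\ref{strong-id} and $T$ is bounded, $\rho_{\netparam}\to\rho$ in $\SOprime(G_{2})$. The quantity to control is $(f^{(2)},T^{(2)}_{\netparam}\rho_{\netparam}-\rho)_{\SO,\SOprime(G_{2})}$, which I would split as
\[ (f^{(2)},T^{(2)}_{\netparam}(\rho_{\netparam}-\rho))_{\SO,\SOprime(G_{2})} + (f^{(2)},T^{(2)}_{\netparam}\rho-\rho)_{\SO,\SOprime(G_{2})}. \]
The second term tends to $0$ directly by Definition \ref{def:reg-app-id}\ref{weak-id} applied with the \emph{fixed} distribution $\rho$, and the first is bounded by $\Vert f^{(2)}\Vert_{\SO}\,\sup_{\netparam}\Vert T^{(2)}_{\netparam}\Vert_{\mathrm{op},\SOprime\to\SOprime}\,\Vert\rho_{\netparam}-\rho\Vert_{\SOprime}\to 0$ using Definition \ref{def:reg-app-id}\ref{SOprime-norm-bound}. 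This splitting is the main obstacle: the difficulty is precisely that the intermediate distribution $\rho_{\netparam}$ carries the same index $\netparam$ as the outer regularizer, so a naive appeal to the single-distribution statement \ref{weak-id} is illegitimate; it is the uniform $\SOprime\to\SOprime$ bound \ref{SOprime-norm-bound} that absorbs the moving part and decouples the double limit.

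For (iv) I would compose operator norms, $\Vert T_{\netparam}f^{(1)}\Vert_{\SOprime(G_{2})}\le \Vert T^{(2)}_{\netparam}\Vert_{\mathrm{op},\SOprime\to\SOprime}\,\Vert T\Vert_{\mathrm{op}}\,\Vert T^{(1)}_{\netparam}\Vert_{\mathrm{op},\SO\to\SO}\,\Vert f^{(1)}\Vert_{\SO(G_{1})}$, whose first and third factors are uniformly bounded by \ref{SOprime-norm-bound} and \ref{SO-norm-bound}, yielding $\sup_{\netparam}\Vert T_{\netparam}\Vert_{\mathrm{op},\SO\to\SOprime}<\infty$. Statement (iii) then follows from (ii) and (iv): by Theorem \ref{th:outer-kernel-theorem} one has $(f^{(1)}\otimes f^{(2)},\kappa(T_{\netparam})-\kappa(T))=(f^{(2)},(T_{\netparam}-T)f^{(1)})\to 0$, so the kernels converge on all elementary tensors, while (iv) together with the isomorphism of Theorem \ref{th:outer-kernel-theorem} gives $\sup_{\netparam}\Vert\kappa(T_{\netparam})\Vert_{\SOprime(G_{1}\times G_{2})}<\infty$. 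An $\varepsilon/3$ argument, using the absolutely convergent tensor expansion of an arbitrary $F\in\SO(G_{1}\times G_{2})$ from Lemma \ref{le:SO-tensor-factorization} to truncate, then upgrades convergence from elementary tensors to all of $\SO(G_{1}\times G_{2})$, which is exactly weak$^{*}$ convergence.

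Finally, (v) and (vi) use the same telescoping identity $T-T_{\netparam}=(I-T^{(2)}_{\netparam})T+T^{(2)}_{\netparam}T(I-T^{(1)}_{\netparam})$ applied to a fixed $f$, now measured in the $\Lp{2}$- resp.\ $\SO$-norm. For (v) the first summand $(I-T^{(2)}_{\netparam})(Tf)\to 0$ in $\Lp{2}(G_{2})$ because $Tf\in\Lp{2}(G_{2})$ and the regularizers converge to the identity on $\Lp{2}$ (the lemma preceding this proposition), while the second is dominated by $\sup_{\netparam}\Vert T^{(2)}_{\netparam}\Vert_{\mathrm{op},\Lp{2}\to\Lp{2}}\,\Vert T\Vert\,\Vert(I-T^{(1)}_{\netparam})f\Vert_{2}\to 0$ by that same lemma. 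For (vi) the identical computation goes through verbatim with $\Lp{2}$ replaced by $\SO$, invoking \ref{strong-id} and \ref{SO-norm-bound} instead. These two parts are routine once the telescoping identity and the auxiliary $\Lp{2}$-lemma are in hand.
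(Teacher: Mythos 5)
Your proposal is correct and follows essentially the same route as the paper: the same telescoping decomposition $T - T_{\netparam} = (\textnormal{Id}-T^{(2)}_{\netparam})\circ T + T^{(2)}_{\netparam}\circ T\circ(\textnormal{Id}-T^{(1)}_{\netparam})$ drives (ii), (v) and (vi), part (iv) is the same composition of operator norms, and (i) is the same three-factor mapping argument. If anything, you are more careful than the paper in two places: in (i) you justify that the intermediate net in $\SOprime(G_{2})$ is bounded (not merely norm convergent) before feeding it to $T^{(2)}_{\netparam}$, and in (iii) you supply the uniform-bound-plus-tensor-density argument (via (iv), the outer kernel theorem and Lemma \ref{le:SO-tensor-factorization}) where the paper only asserts ``this is implied by (ii)''.
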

\begin{proof}
(i). For any $\netparam$ and $i=1,2$ the operator $T_{\netparam}^{(i)}$ belongs to $\mathcal{B}(G_{i},G_{i})$ and thus maps bounded \ws \ convergent nets in $\SOprime(G_{i})$ into norm convergent nets in $\SO(G_{i})$. Since $T\in\Lin(\SO(G_{1}),\SOprime(G_{2}))$ it is clear that then $T_{\netparam} = T_{\netparam}^{(2)}\circ T \circ T_{\netparam}^{(1)}$ also maps bounded \ws \ convergent nets in $\SOprime(G_{1})$ into norm convergent nets in $\SO(G_{2})$. Hence $T_{\netparam}\in\mathcal{B}(G_{1},G_{2})$.\\
(ii). This is a simple estimate:
\begin{align*}
& \lim_{\netparam} \big\vert \big(f^{(2)}, (T- T_{\netparam}^{(2)}\circ T \circ T_{\netparam}^{(1)}) f^{(1)} \big)_{\SO,\SOprime(G_{2})} \big\vert \\
& \le \lim_{\netparam} \big\vert \big(f^{(2)}, (T- T_{\netparam}^{(2)}\circ T) f^{(1)}\big)_{\SO,\SOprime(G_{2})} \big\vert \\
& \quad + \lim_{\netparam} \big\vert \big(f^{(2)},(T_{\netparam}^{(2)}\circ T - T_{\netparam}^{(2)}\circ T \circ T_{\netparam}^{(1)}) f^{(1)} \big)_{\SO,\SOprime(G_{2})} \big\vert \\
& \le \underbrace{\lim_{\netparam} \big\vert \big(f^{(2)}, (\textnormal{Id}_{\SOprime} - T_{\netparam}^{(2)}) (T f^{(1)})\big)_{\SO,\SOprime(G_{2})} \big\vert}_{= 0 \ \ \text{(by Definition \ref{def:reg-app-id}\ref{weak-id})}} \\
& \quad + \lim_{\netparam} \Vert f^{(2)} \Vert_{\SO} \Vert T_{\netparam}^{(2)} \Vert_{\textnormal{op},\SOprime\to\SOprime} \, \Vert T \Vert_{\textnormal{op},\SO\to\SOprime} \, \Vert (\textnormal{Id}_{\SO} - T_{\netparam}^{(1)}) f^{(1)} \Vert_{\SO} \\
& \le \Vert f^{(2)} \Vert_{\SO} \, \Big( \sup_{\netparam} \Vert T_{\netparam}^{(2)} \Vert_{\textnormal{op},\SOprime\to\SOprime} \Big) \, \Vert T \Vert_{\textnormal{op},\SO\to\SOprime} \, \lim_{\netparam} \Vert T_{\netparam} f^{(1)} - f^{(1)} \Vert_{\SO} = 0.
\end{align*}
(iii). This is implied by (ii). \\
(iv). By definition the operators $T_{\netparam}^{(1)}$ and $T_{\netparam}^{(2)}$ have uniformly bounded operator norms as operators on $\SO$ and $\SOprime$. Thus
\begin{align*} & \sup_{\netparam} \Vert T_{\netparam} \Vert_{\textnormal{op},\SO\to\SOprime} = \sup_{\netparam} \Vert T_{\netparam}^{(2)} \circ T \circ T_{\netparam}^{(1)} \Vert_{\textnormal{op},\SO\to\SOprime} \\
& \le \Big( \sup_{\netparam} \Vert T_{\netparam}^{(2)} \Vert_{\textnormal{op},\SOprime\to\SOprime} \Big) \, \Vert T \Vert_{\textnormal{op},\SO\to\SOprime} \, \Big( \sup_{\netparam} \Vert T_{\netparam}^{(1)} \Vert_{\textnormal{op},\SO\to\SO} \Big) < \infty \end{align*}
(vi). In case $T\in\Lin(\SO(G_{1}),\SO(G_{2}))$ we make the following estimate: for all $f\in\SO(G_{1})$
\begin{align*}
& \lim_{\alpha} \Vert (T-T_{\netparam}^{(2)} \circ T \circ T_{\netparam}^{(1)}) f\Vert_{\SO} \\
& \le \underbrace{\lim_{\alpha} \Vert (T-T_{\netparam}^{(2)} \circ T ) f\Vert_{\SO}}_{=0} + \lim_{\alpha} \Vert (T_{\netparam}^{(2)} \circ T -T_{\netparam}^{(2)} \circ T \circ T_{\netparam}^{(1)}) f\Vert_{\SO} \\
& \le \big(\sup_{\alpha} \Vert T_{\netparam}^{(2)} \Vert_{\textnormal{op},\SO\to\SO}  \big) \, \Vert T \Vert_{\textnormal{op},\SO\to\SO} \, \lim_{\alpha} \Vert f - T_{\netparam}^{(1)} f\Vert_{\SO} = 0 
\end{align*}
The proof for (v) is similar.
\end{proof}

We now show one particular application of regularizing approximations of the identity.
\begin{proposition} \label{pr:L2-op-approximation} Consider an operator $S\in \Lin(\Lp{2}(G_{1}),\Lp{2}(G_{2}))$ and $T\in\Lin(\Lp{2}(G_{2}),\Lp{2}(G_{3}))$. Let $(S_{\netparam})$ and $(T_{\netparam})$ be the nets of operators in $\mathcal{B}(G_{1},G_{2})$ and $\mathcal{B}(G_{2},G_{3})$ associated to $S$ and $T$ as in Proposition \ref{pr:reg-of-id}, respectively. In that case the kernel of the operator $T\circ S\in\Lin(\Lp{2}(G_{1}),\Lp{2}(G_{3}))$ is the \ws \ limit of the kernels of the net of operators $(T_{\netparam}\circ S_{\netparam})$, $\kappa(T_{\netparam}\circ S_{\netparam}) \stackrel{\textnormal{w}^{*}}{\longrightarrow} \kappa(T\circ S)$, i.e.,
\[ \lim_{\netparam} \big\vert \big( f^{(3)}, (T\circ S-T_{\alpha}\circ S_{\alpha}) f^{(1)}\big)_{\SO,\SOprime(G_{3})}\big\vert = 0 \ \ \text{for all} \ \ f^{i}\in \SO(G_{i}), \ i=1,3.\]
\end{proposition}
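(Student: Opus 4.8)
The plan is to prove the stronger statement that $T_{\alpha}\circ S_{\alpha}\,f^{(1)}\to T\circ S\,f^{(1)}$ in the norm of $\Lp{2}(G_{3})$ and then pass to the pairing. First I would record that $(f^{(3)},\cdot)_{\SO,\SOprime(G_{3})}$ is continuous along $\Lp{2}$-convergent arguments: since $f^{(3)}\in\SO(G_{3})\subseteq\Lp{2}(G_{3})$ and every $h\in\Lp{2}(G_{3})$ induces a distribution with $(f^{(3)},h)_{\SO,\SOprime(G_{3})}=\int_{G_{3}}f^{(3)}(t)\,h(t)\,dt$, the Cauchy--Schwarz inequality gives $\vert(f^{(3)},h)_{\SO,\SOprime(G_{3})}\vert\le\Vert f^{(3)}\Vert_{2}\,\Vert h\Vert_{2}$. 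As both $T\circ S$ and each $T_{\alpha}\circ S_{\alpha}$ send $f^{(1)}\in\SO(G_{1})\subseteq\Lp{2}(G_{1})$ into $\Lp{2}(G_{3})$, it then suffices to show $\Vert T_{\alpha}S_{\alpha}\,f^{(1)}-T S\,f^{(1)}\Vert_{2}\to 0$.

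Next I would unfold the construction of Proposition \ref{pr:reg-of-id}: there are regularizing approximations of the identity $(R^{(i)}_{\alpha})$ for $\SO(G_{i})$, $i=1,2,3$, with $S_{\alpha}=R^{(2)}_{\alpha}\circ S\circ R^{(1)}_{\alpha}$ and $T_{\alpha}=R^{(3)}_{\alpha}\circ T\circ R^{(2)}_{\alpha}$, whence
\[ T_{\alpha}\circ S_{\alpha}=R^{(3)}_{\alpha}\circ T\circ R^{(2)}_{\alpha}\circ R^{(2)}_{\alpha}\circ S\circ R^{(1)}_{\alpha}.\]
By the Lemma following Definition \ref{def:reg-app-id}, each net $(R^{(i)}_{\alpha})$ is uniformly bounded on $\Lp{2}(G_{i})$ and converges strongly to the identity there. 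Setting $Q_{\alpha}:=R^{(2)}_{\alpha}\circ R^{(2)}_{\alpha}$, a two-term expansion shows that $Q_{\alpha}$ is likewise uniformly $\Lp{2}$-bounded and strongly convergent to the identity on $\Lp{2}(G_{2})$.

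I would then finish with a telescoping estimate carried out in $\Lp{2}$. With $\textnormal{Id}$ the identity,
\begin{align*}
T_{\alpha}S_{\alpha}f^{(1)}-T S f^{(1)}
&= R^{(3)}_{\alpha}\,T\,Q_{\alpha}\,S\,(R^{(1)}_{\alpha}-\textnormal{Id})f^{(1)}\\
&\quad + R^{(3)}_{\alpha}\,T\,(Q_{\alpha}-\textnormal{Id})\,S f^{(1)}\\
&\quad + (R^{(3)}_{\alpha}-\textnormal{Id})\,T S f^{(1)}.
\end{align*}
The first summand vanishes in the limit because $\Vert(R^{(1)}_{\alpha}-\textnormal{Id})f^{(1)}\Vert_{2}\to 0$ while the $\Lp{2}$-operator norms of $R^{(3)}_{\alpha}$, $T$, $Q_{\alpha}$, $S$ stay uniformly bounded; the second vanishes by strong convergence of $Q_{\alpha}$ on the fixed element $Sf^{(1)}$ together with the uniform bound on $R^{(3)}_{\alpha}T$; the third vanishes by strong convergence of $R^{(3)}_{\alpha}$ on the fixed element $TSf^{(1)}$. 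Combined with the first paragraph, this yields the assertion.

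The hard part will be the bookkeeping of the ``mode'' in which each $R^{(i)}_{\alpha}$ acts. A priori these are maps $\SOprime\to\SO$, but in the composition they are applied to arguments that in general lie only in $\Lp{2}$ (for instance $S\,R^{(1)}_{\alpha}f^{(1)}$ need not be in $\SO(G_{2})$), and the two consecutive copies of $R^{(2)}_{\alpha}$ act on a moving target. The resolution is precisely the $\Lp{2}$-part of the Lemma after Definition \ref{def:reg-app-id}, which supplies a single net that is uniformly bounded and strongly convergent to the identity on $\Lp{2}$ within the Gelfand triple; the uniform boundedness is exactly what absorbs the moving target in the telescoping. It is essential that $S$ and $T$ be $\Lp{2}$-bounded rather than merely elements of $\textnormal{Lin}(\SO,\SOprime)$, as the whole argument is run in $\Lp{2}$.
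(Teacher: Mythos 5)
Your proposal is correct, but it is organized differently from the paper's proof, and the difference is worth noting. The paper keeps $S_{\netparam}$ and $T_{\netparam}$ as black boxes: it splits $T\circ S-T_{\netparam}\circ S_{\netparam}=(T-T_{\netparam})\circ S+T_{\netparam}\circ(S-S_{\netparam})$, estimates both terms directly inside the $(\SO,\SOprime)$ pairing, and quotes Proposition \ref{pr:reg-of-id}(v) twice — $\Vert (T-T_{\netparam})Sf^{(1)}\Vert_{2}\to 0$ for the fixed element $Sf^{(1)}\in\Lp{2}(G_{2})$, and $\Vert (S-S_{\netparam})f^{(1)}\Vert_{2}\to 0$ — absorbing the moving argument in the second term by the uniform bound $\sup_{\netparam}\Vert T_{\netparam}\Vert_{\textnormal{op},\SOprime\to\SOprime}$ together with the embedding $\Lp{2}\hookrightarrow\SOprime$. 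You instead unfold the construction $S_{\netparam}=R^{(2)}_{\netparam}SR^{(1)}_{\netparam}$, $T_{\netparam}=R^{(3)}_{\netparam}TR^{(2)}_{\netparam}$, introduce $Q_{\netparam}=R^{(2)}_{\netparam}R^{(2)}_{\netparam}$, and run a three-term telescoping entirely in $\Lp{2}$, using only the Lemma following Definition \ref{def:reg-app-id} (uniform $\Lp{2}$-bounds and strong $\Lp{2}$-convergence of each $R^{(i)}_{\netparam}$, inherited by $Q_{\netparam}$ via your two-term expansion). What your route buys: you prove the strictly stronger statement $\Vert T_{\netparam}S_{\netparam}f^{(1)}-TSf^{(1)}\Vert_{2}\to 0$, from which the weak$^{*}$ claim follows by Cauchy--Schwarz, and you avoid all $\SOprime$-norm bookkeeping — in particular the bound $\sup_{\netparam}\Vert T_{\netparam}\Vert_{\textnormal{op},\SOprime\to\SOprime}$, which the paper invokes although Proposition \ref{pr:reg-of-id}(iv) literally records only the $\SO\to\SOprime$ bound (it is easily derived, but your argument sidesteps the issue entirely). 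What the paper's route buys: brevity, since Proposition \ref{pr:reg-of-id}(v) is reused as a black box, with no need to unfold the definitions or to deal with the doubled middle factor $R^{(2)}_{\netparam}R^{(2)}_{\netparam}$; your $Q_{\netparam}$ device is precisely the price of opening the boxes, and you handle it correctly.
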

\begin{remark} The usefulness here is that the composition of the operators $S$ and $T$ can we approximated in the \ws \ sense by a composition of operators $S_{\netparam}$ and $T_{\netparam}$ that have kernels in $\SO$. Observe that the composition $T_{\netparam}\circ S_{\netparam}$ is well understood, cf.\ the ``continuous matrix-matrix product'' in Section \ref{sec:linear-albera}.
\end{remark}
\begin{proof}[Proof of Proposition \ref{pr:L2-op-approximation}]
By the estimates  of Proposition \ref{pr:reg-of-id} and the  embedding of $\Lp{2}$  into $\SOprime$ one has: 
\begin{align*}
& \lim_{\netparam} \big\vert \big( f^{(3)}, (T\circ S-T_{\alpha}\circ S_{\alpha}) f^{(1)}\big)_{\SO,\SOprime(G_{3})}\big\vert \\
& \le \lim_{\netparam} \big\vert \big( f^{(3)}, (T\circ S-T_{\alpha}\circ S) f^{(1)}\big)_{\SO,\SOprime(G_{3})}\big\vert\\
& \quad + \lim_{\netparam} \big\vert \big( f^{(3)}, (T_{\alpha}\circ S-T_{\alpha}\circ S_{\alpha}) f^{(1)}\big)_{\SO,\SOprime(G_{3})}\big\vert\\
& \le \Vert f^{(3)} \Vert_{\SO} 
\lim_{\alpha}  \Vert (T-T_{\alpha}) Sf^{(1)} \Vert_{2} \\
& \quad + \Vert f^{(3)} \Vert_{\SO} \big( \sup_{\alpha} \Vert T_{\alpha} \Vert_{\textnormal{op},\SOprime\to\SOprime}\big) \lim_{\alpha} \Vert Sf^{(1)}- S_{\alpha} f^{(1)} \Vert_{2} = 0.
\end{align*}
\end{proof}

Let us apply Proposition \ref{pr:L2-op-approximation} to a concrete example:  
\begin{example} The Fourier transform $\mathcal{F}$ is an operator from $\Lp{2}(G)$ onto $\Lp{2}(\ghat)$. Furthermore its inverse $\mathcal{F}^{-1}$ is an operator from $\Lp{2}(\ghat)$ onto $\Lp{2}(G)$. It is clear that $\mathcal{F}^{-1} \circ \mathcal{F} = \textnormal{Id}_{\Lp{2}(G)}$. It is not difficult to verify that their kernels in $\SOprime$ (guaranteed by the outer kernel theorem) are as follows:
\begin{align*} 
\bullet \ & \kappa(\mathcal{F})\in\SOprime(G\times\ghat) \text{ is induced by the function } G\times\ghat\to\C, (x,\omega) \mapsto \overline{\omega(x)},\\
 & \text{so that } (h, \mathcal{F}f)_{\SO,\SOprime(\ghat)} = \int_{G\times\ghat} f(x) h(\omega) \, \overline{\omega(x)} \, d(x,\omega) \ \ \text{for all} \ \ f\in\SO(G), h\in \SO(\ghat). \\
\bullet \ & \kappa(\mathcal{F}^{-1})\in\SOprime(\ghat\times G) \text{ is induced by the function } \ghat\times G\to\C, (\omega,x) \mapsto \omega(x),\\
& \text{so that } (f, \mathcal{F}^{-1}h)_{\SO,\SOprime(G)} = \int_{\ghat\times G} h(\omega) f(x) \, \omega(x) \, d(x,\omega) \ \ \text{for all} \ \ f\in\SO(G), h\in \SO(\ghat). \\
\bullet \ & \kappa(\textnormal{Id}_{\Lp{2}(G)}) \in \SOprime(G\times G) \text{ is the functional defined by } f_{1} \otimes f_{2} \mapsto \int_{G} f_{1}(x) f_{2}(x) \, dx, \\
& \text{for all $f_{1},f_{2}\in\SO(G)$. This is typically expressed as } \kappa(\textnormal{Id}_{\Lp{2}(G)}) = \delta(y-x). 
\end{align*}

While we describe the distributional kernel for the identity operator as a generalized function of two variables, in the spirit of a Kronecker delta (describing the unit matrix), 
  simply given as $\delta_{Kron}(F) = \int_{G} F(x,x) \, dx$, $F  \in \SO(G\times G)$
 it has become a common understanding to describe the kernel as a continuous collection  of Dirac delta distributions $\delta_y$, or with the usual notation  $\delta(y)$ this becomes just  $\delta(y-x)$.

Let now $(\mathcal{F}_{\netparam})$, $(\mathcal{F}^{-1}_{\netparam})$ be two nets of operators in $\mathcal{B}(G,\ghat)$ and $\mathcal{B}(\ghat,G)$ associated to $\mathcal{F}$ and $\mathcal{F}^{-1}$ as in Proposition \ref{pr:L2-op-approximation}. In that case
\begin{align*} 
& \kappa(\mathcal{F}_{\netparam}) \stackrel{\textnormal{w}^{*}}{\longrightarrow} \kappa(\mathcal{F}), \ \kappa(\mathcal{F}_{\netparam}^{-1}) \stackrel{\textnormal{w}^{*}}{\longrightarrow} \kappa(\mathcal{F}^{-1}),\\
& \kappa(\mathcal{F}^{-1}_{\netparam} \circ \mathcal{F}_{\netparam}) \stackrel{\textnormal{w}^{*}}{\longrightarrow} \kappa(\mathcal{F}^{-1}\circ \mathcal{F}) = \kappa(\textnormal{Id}_{\Lp{2}(G)}).\end{align*}
At the same time Lemma \ref{le:comp-of-operators} tells us that $\kappa(\mathcal{F}^{-1}_{\netparam} \circ \mathcal{F}_{\netparam})$ is the function in $\SO(G\times G)$ given by 
\begin{align*} 
& \kappa(\mathcal{F}^{-1}_{\netparam} \circ \mathcal{F}_{\netparam})(x,y) = \int_{\ghat} \kappa(\mathcal{F}_{\alpha})(x,\omega)\cdot \kappa(\mathcal{F}_{\alpha}^{-1})(\omega,y) \, d\omega. \end{align*}
If ``we take the limit'' of the above integral, then we are lead to the following ``identity'', which is often found in physics and engineering:
\begin{align*} & \int_{\ghat} \kappa(\mathcal{F})(x,\omega) \cdot \kappa(\mathcal{F}^{-1})(\omega,y) \, d\omega = \kappa(\textnormal{Id}_{\Lp{2}(G)}) \\
\Leftrightarrow \quad & \int_{\ghat} \omega(y-x) \, d\omega = \delta(y-x).\end{align*}
Expressed in the familiar setting $G=\ghat=\R$: $\displaystyle\int_{\R} e^{2\pi i \omega(y-x)} \, d\omega = \delta(y-x), x,y\in\R$. 
\end{example}

We now consider examples of regularizing approximations of the identity. 

\begin{example} \textnormal{(Partial sums of Gabor frame operators)} Let $g\in \SO(\R)$ and $a,b>0$ be such that $\{ \pi(\lambda) g\}_{\lambda\in a\Z\times b\Z}$ is a Parseval Gabor frame for $\Lp{2}(\R)$, i.e., 
\[  \Vert f \Vert_{2}^{2} = \sum_{\lambda\in a\Z\times b\Z} \vert \langle f, \pi(\lambda)g \rangle \vert^{2}  \ \ \text{for all} \ \ f\in \Lp{2}(\R).\]
In that case the associated Gabor frame operator
\[ S_{g} : \SO(\R)\to\SO(\R), \ S_{g} f = \sum_{\lambda\in a\Z\times b\Z} \langle f, \pi(\lambda) g\rangle \pi(\lambda)g, \ f\in \SO(\R)\]
is the identity on $\SO(\R)$.
Let $(\Lambda_{N})$, $N\in\N$ be a family of finite subsets of $a\Z\times b\Z$ so that for every point $\lambda\in a\Z\times b\Z$ there exists an $N_{0}\in\N$ such that $N>N_{0}$ implies that $\lambda\in \Lambda_{N}$. For every $N\in\N$ we define the operator
\[ S_{g,N} : \SO(\R)\to\SO(\R), \ S_{g,N} f = \sum_{\lambda\in \Lambda_{N}} \langle f, \pi(\lambda)g\rangle \pi(\lambda) g.\]
It extends to an operator on $\SOprime(\R)$ in the following way:
\[ S_{g,N} : \SOprime(\R)\to\SOprime(\R), \ (f, S_{g,N} \sigma)_{\SO,\SOprime(\R)} = \big( f,  \sum_{\lambda\in \lambda_{N}} (\overline{\pi(\netparam)g},\sigma)_{\SO,\SOprime(\R)} \, \pi(\lambda) g\big)_{\SO,\SOprime(\R)}.\]
The collection of operators $(S_{g,N})_{N\in\N}$ is a regularizing approximation of the identity: It is straight forward to write an explicit formula for the kernel of the operator $S_{g,N}$, namely
\[ \kappa(S_{g,N})(t_{1},t_{2}) = \sum_{\lambda\in\Lambda_{N}} \overline{\pi(\lambda) g(t_{1})} \, \pi(\lambda) g(t_{2}), \ \ t_{1},t_{2}\in\R,\]
such that $(f_{2} , S_{g,N}f_{1})_{\SO,\SOprime(\R)} = (f_{1}\otimes f_{2}, \kappa(S_{g,N}))_{\SO,\SOprime(\R^{2})}$. Hence $\kappa(S_{g,N})\in\SO(\R^{2})$.
Concerning condition \ref{SO-norm-bound} and \ref{SOprime-norm-bound} we need the following two inequalities: 
for any $f\in\SO(\R)$ and $\sigma\in\SOprime(\R)$ there exists a constant $c>0$ such that 
\begin{equation}\label{eq:0702b} \sum_{\lambda\in a\Z\times b\Z} \vert \langle f, \pi(\lambda) g \rangle \vert \le c \, \Vert f \Vert_{\SO} \, \Vert g \Vert_{\SO} \ \ \text{and} \ \ \sup_{\lambda\in a\Z\times b\Z} \vert (\pi(\lambda)g, \sigma) \vert \le c \, \Vert g \Vert_{\SO} \, \Vert \sigma \Vert_{\SOprime}.
\end{equation}
We can then make the following estimates:
\begin{align*}
& \Vert S_{g,N} \Vert_{\textnormal{op},\SO\to\SO} =  \sup_{\substack{f\in\SO(\R)\\ \Vert f \Vert_{\SO}=1}} \Vert S_{g,N} f \Vert_{\SO} \\
& \le \sup_{\substack{f\in\SO(\R)\\ \Vert f \Vert_{\SO}=1}} \big\Vert \sum_{\lambda\in \Lambda_{N}} \langle f, \pi(\lambda)g\rangle \pi(\lambda) g \big\Vert_{\SO} \le \sup_{\substack{f\in\SO(\R)\\ \Vert f \Vert_{\SO}=1}}  \sum_{\lambda\in \Lambda_{N}} \vert \langle f, \pi(\lambda)g\rangle\vert \,  \Vert g \Vert_{\SO} \\
& \le  \sup_{\substack{f\in\SO(\R)\\ \Vert f \Vert_{\SO}=1}}  \sum_{\lambda\in a\Z \times b \Z} \vert \langle f, \pi(\lambda)g\rangle\vert \,  \Vert g \Vert_{\SO} \stackrel{\eqref{eq:0702b}}{\le} c \, \sup_{\substack{f\in\SO(\R)\\ \Vert f \Vert_{\SO}=1}}  \Vert f \Vert_{\SO} \, \Vert g \Vert_{\SO}^{2} = c \, \Vert g \Vert_{\SO}^{2} .
\end{align*}
Hence $\sup_{N} \Vert S_{g,N} \Vert_{\textnormal{op},\SO\to\SO} < \infty$. Similarly, also using \eqref{eq:0702b}, we can show that
\begin{align*} 
& \Vert S_{g,N} \Vert_{\textnormal{op},\SOprime\to\SOprime} \le c \, \Vert g \Vert_{\SO}^{2}.
\end{align*}
Finally, because $S_{g}$ is the identity on $\SO(\R)$ we find that
\begin{align*}
\lim_{N\to\infty} \Vert S_{g,N} f - f \Vert_{\SO} \le \Vert g \Vert_{\SO} \, \lim_{N\to\infty} \sum_{\lambda\in a\Z\times b\Z\backslash\Lambda_{N}} \vert \langle f, \pi(\lambda)g\rangle \vert  = 0
\end{align*}
where the last equality follows from the fact that for any two functions $f,g\in\SO(\R)$ the sequence $\{ \langle f, \pi(\lambda) g\rangle \}_{\lambda\in a\Z\times b\Z}$ is absolutely summable. In a similar way one can show that $(S_{g,N})$ satisfies condition \ref{weak-id} in Definition \ref{def:reg-app-id}.
\end{example}

\begin{example}\textnormal{(Product-convolution operators)} In the sequel  $\mathbf{A}(G)$ is the \emph{Fourier algebra} $\mathbf{A}(G) = \{ f \in C_{0}(G) \, : \, \exists h\in \Lp{1}(\ghat) \ \text{s.t.} \ f = \mathcal{F}_{\ghat}h\}$, here $\mathcal{F}_{\ghat}$ is the Fourier transform from $L^{1}(\ghat)$ into $C_{0}(G)$. The norm in the Fourier algebra is defined by $\Vert f \Vert_{\mathbf{A}} = \Vert h \Vert_{1}$, where $h$ is as before. We now construct regularizing approximations of the identity with the help of product-convolution operators. As described in, e.g., \cite[Proposition 4.18]{ja19}, it is possible to find nets of functions $(h_{\netparam})\in \SO(G)$ and $(g_{\netparam})\in \SO(G)$ such that
\[ \lim_{\netparam} \Vert f * h_{\netparam} - f \Vert_{\SO} = 0 \ \ \text{and} \ \ \lim_{\netparam} \Vert f \cdot g_{\netparam}  - f \Vert_{1} =0 \ \ \forall \ f\in \SO(G),\]
where $\Vert h_{\netparam} \Vert_{1} \le 1$ and $\Vert g_{\netparam} \Vert_{\mathbf{A}(G)} \le 1$ for all $\netparam$. The net of operators
\[ T_{\netparam}: \SOprime(G)\to\SO(G), \ T_{\netparam} \sigma = (\sigma \cdot g_{\netparam})*h_{\netparam}, \ \sigma\in\SOprime(G)\]
is a regularizing approximation of the identity. 
\end{example}

\begin{example}\textnormal{(Localization operators)} Let $(H_{n})$ be a sequence of uniformly bounded functions in $\mathbf{C}_{c}(\R^{2})$ that converges 
uniformly over compact sets 
to the constant function $1$ and take $g$ to be a non-zero function in $\SO(\R)$
with $\|g\|_2 =1$. Then the operators
\[ T_{n} : \SOprime(\R)\to\SO(\R), \ T_{n} \sigma = \int_{\R^{2}} H_{n}(\TFelement) \, (\overline{\pi(\TFelement) g}, \sigma)_{\SO,\SOprime(\R)} \, \pi(\TFelement) g \, d\TFelement\]
form a regularizing approximation of the identity. 

Similar statements can be obtained for Gabor multipliers with respect to
tight Gabor families. 
\end{example}

\subsection{Kernel theorems for modulation spaces}
\label{sec:kernel-for-modspaces}

The inner and outer kernel theorem characterize the operators that are linear and bounded from $\SOprime(G_{1})$ into $\SO(G_{2})$ and from $\SO(G_{1})$ into $\SOprime(G_{2})$, respectively (with some added assumptions in the former case). In between $\SO(G)$ and $\SOprime(G)$, or more precisely, in between the embedding of $\SO(G)$ into $\SOprime(G)$ and $\SOprime(G)$
there is a well-studied family of spaces called the (unweighted) modulation spaces. We refer to \cite{fe03-1,fe06} and the relevant chapters in \cite{gr01} for more on those spaces. 
Meanwhile they are also well presented in the books 
\cite{beok20} and \cite{coro20}. 
For our purpose here we only want to recall the following. 

\begin{definition}   for  $p\in [1,\infty]$, $ g\in\SO(G)\backslash\{0\}$, the modulation space $\mathbf{M}^{p}(G)$ 
is given by
\begin{equation} \label{eq:1403a} \mathbf{M}^{p}(G) = \Big\{ \sigma\in \SOprime(G) \, : \,  \Vert \sigma \Vert_{\mathbf{M}^{p}} := \left ( \int_{G\times \ghat} \big\vert \, ( \pi(\TFelement) g, \sigma )_{\SO,\SOprime} \, \big\vert^{p} \, d\TFelement \right )^{1/p} < \infty \Big\}. \end{equation}
In case $p=\infty$ the definition is modified in the obvious way.
\end{definition}
One can show that different functions $g$ induce equivalent norms. As already mentioned in Section \ref{sec:preliminaries} we have $\Misp(G)\cong \SO(G)$ and $\mathbf{M}^{\infty}(G) = \SOprime(G)$.
For $p\in (1,\infty)$, the modulation space $\mathbf{M}^{p}(G)$ is reflexive and $(\mathbf{M}^{p}(G))' \cong \mathbf{M}^{p'}(G)$, where $1/p + 1/p' = 1$. For any fixed function $g\in \SO(G)\backslash\{0\}$, the action of a generalized function $\sigma\in \mathbf{M}^{p'}(G)$ on a generalized function $f\in \mathbf{M}^{p}(G)$ is given by
\begin{equation} \label{eq:1403b} ( f, \sigma)_{\mathbf{M}^{p},\mathbf{M}^{p'}(G)} = \Vert g \Vert_{2}^{-2} \int_{G\times\ghat} \big( \overline{\pi(\TFelement) g}, f\big)_{\SO,\SOprime(G)} \, \big( \pi(\TFelement)g, \sigma\big)_{\SO,\SOprime(G)} \, d\TFelement.\end{equation}

In light of the inner and outer kernel theorems we may ask: can we characterize the bounded linear operators from $\mathbf{M}^{p}(G)$ into $\mathbf{M}^{q}(G)$ for some $p,q\in[1,\infty]$. It is straight forward to generalize Theorem \ref{th:1608a} to the following sufficient condition for operators in $\Lin(\SO(G_{1}),\SOprime(G_{2}))$ to be operators from $\mathbf{M}^{p'}(G_{1})$ into $\mathbf{M}^{q}(G_{2})$.

\begin{proposition} \label{pr:operator-with-kernel-in-mpq} Fix any two  functions  $g^{(i)}\in \SO(G_{i})\backslash\{0\}$, $i=1,2$ and let $p,q\in[1,\infty]$. If an operator $T\in \Lin(\SO(G_{1}),\SOprime(G_{2}))$ 
satisfies the condition 
\[ \displaystyle \int_{G_{2}\times \ghat_{2}} \Big( \int_{G_{1}\times\ghat_{1}} \big\vert \big(\pi(\TFelement^{(2)}) g^{(2)} , T \pi(\TFelement^{(1)}) g^{(1)}\big)_{\SO,\SOprime(G_{2})} \big\vert^{p} \, d\TFelement^{(1)} \Big)^{q/p} \, d\TFelement^{(2)} < \infty, \]
then $T$ is 
bounded  from $\mathbf{M}^{p'}(G_{1})$ into $\mathbf{M}^{q}(G_{2})$. 
Hence, for $\sigma^{(1)} \in \mathbf{M}^{p'}(G)$ and $\sigma^{(2)}\in \mathbf{M}^{q'}(G)$,
\begin{align*} & \quad \ \Vert g^{(1)} \otimes g^{(2)} \Vert_{2}^{2} \ \big( \sigma^{(2)}, T \sigma^{(1)} \big)_{\mathbf{M}^{q'},\mathbf{M}^{q}} \\
& = \int\limits_{G_{1}\times\ghat_{1} \times G_{2}\times\ghat_{2} }
\mathcal{V}_{g^{(1)}}\sigma^{(1)}(\TFelement^{(1)}) \cdot \mathcal{V}_{g^{(2)}}\sigma^{(2)}(\TFelement^{(2)}) \cdot \big(\pi(\TFelement^{(2)}) g^{(2)} , T \, \pi(\TFelement^{(1)}) g^{(1)}\big)_{\SO,\SOprime(G_{2})}  \, d(\TFelement^{(1)},\TFelement^{(2)}).
\end{align*}
\end{proposition}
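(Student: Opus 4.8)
The plan is to package the hypothesis as a mixed-norm integrability statement and then to recognise the representation in Proposition \ref{pr:1108}(i) as a sesquilinear form which is bounded on the product of two modulation spaces. Write
\[ F(\TFelement^{(1)},\TFelement^{(2)}) = \big(\pi(\TFelement^{(2)}) g^{(2)}, T\,\pi(\TFelement^{(1)}) g^{(1)}\big)_{\SO,\SOprime(G_{2})}, \]
so that the assumption is exactly that the mixed Lebesgue norm
\[ \Vert F\Vert_{q,p} = \Big( \int_{G_{2}\times\ghat_{2}} \Big( \int_{G_{1}\times\ghat_{1}} \vert F(\TFelement^{(1)},\TFelement^{(2)})\vert^{p}\, d\TFelement^{(1)}\Big)^{q/p} d\TFelement^{(2)}\Big)^{1/q} \]
is finite (with the usual modification at $p,q=\infty$). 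Throughout I will use that, for every $r\in[1,\infty]$ and every window, $\mathcal{V}_{g}$ maps $\mathbf{M}^{r}(G)$ into $\Lp{r}(G\times\ghat)$ with $\Vert \mathcal{V}_{g}\sigma\Vert_{\Lp{r}}$ an equivalent norm on $\mathbf{M}^{r}(G)$, as is immediate from \eqref{eq:1403a}.

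For $\sigma^{(1)}\in\mathbf{M}^{p'}(G_{1})$ and $\sigma^{(2)}\in\mathbf{M}^{q'}(G_{2})$ I would define the form $B(\sigma^{(1)},\sigma^{(2)})$ to be the double integral on the right-hand side of the asserted identity, and bound it by two successive applications of H\"older's inequality: first in $\TFelement^{(1)}$ with the conjugate exponents $p',p$, which for each fixed $\TFelement^{(2)}$ produces the factor $\Vert\mathcal{V}_{g^{(1)}}\sigma^{(1)}\Vert_{\Lp{p'}}\,\Vert F(\cdot,\TFelement^{(2)})\Vert_{\Lp{p}}$, and then in $\TFelement^{(2)}$ with the conjugate exponents $q',q$. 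Since $1/p'+1/p=1$ and $1/q'+1/q=1$ this closes exactly and yields
\[ \vert B(\sigma^{(1)},\sigma^{(2)})\vert \le \Vert\mathcal{V}_{g^{(1)}}\sigma^{(1)}\Vert_{\Lp{p'}}\,\Vert\mathcal{V}_{g^{(2)}}\sigma^{(2)}\Vert_{\Lp{q'}}\,\Vert F\Vert_{q,p} \le c\,\Vert\sigma^{(1)}\Vert_{\mathbf{M}^{p'}}\,\Vert\sigma^{(2)}\Vert_{\mathbf{M}^{q'}}\,\Vert F\Vert_{q,p} \]
for a constant $c$ depending only on the windows. Thus $B$ is a bounded sesquilinear form on $\mathbf{M}^{p'}(G_{1})\times\mathbf{M}^{q'}(G_{2})$, and Proposition \ref{pr:1108}(i) identifies it on the subset $\SO(G_{1})\times\SO(G_{2})$ with $\Vert g^{(1)}\otimes g^{(2)}\Vert_{2}^{2}\,(f^{(2)},Tf^{(1)})_{\SO,\SOprime(G_{2})}$.

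For fixed $\sigma^{(1)}$ the map $\sigma^{(2)}\mapsto B(\sigma^{(1)},\sigma^{(2)})$ is then a bounded conjugate-linear functional on $\mathbf{M}^{q'}(G_{2})$. For $q\in(1,\infty)$ the duality $(\mathbf{M}^{q'})'\cong\mathbf{M}^{q}$ together with the pairing \eqref{eq:1403b} represents it by a unique element, which I would call $T\sigma^{(1)}\in\mathbf{M}^{q}(G_{2})$, satisfying $\Vert T\sigma^{(1)}\Vert_{\mathbf{M}^{q}}\le c\,\Vert\sigma^{(1)}\Vert_{\mathbf{M}^{p'}}\,\Vert F\Vert_{q,p}$; unwinding \eqref{eq:1403b} turns $B(\sigma^{(1)},\sigma^{(2)})=\Vert g^{(1)}\otimes g^{(2)}\Vert_{2}^{2}\,(\sigma^{(2)},T\sigma^{(1)})_{\mathbf{M}^{q'},\mathbf{M}^{q}}$ into exactly the displayed formula, while agreement with the original operator on $\SO(G_{1})$ follows from the identification of the previous paragraph together with the norm-density of $\SO(G_{1})$ in $\mathbf{M}^{p'}(G_{1})$. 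The extreme case $q=\infty$ (so $q'=1$, $\mathbf{M}^{q'}=\SO$) is the same argument run through the standard pairing $(\SO)'=\SOprime=\mathbf{M}^{\infty}$.

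The genuinely delicate point — and the one I expect to require the most care — is the endpoint $q=1$, where $q'=\infty$ and $\mathbf{M}^{q'}=\SOprime$. Here $(\SOprime)'=\SOdoubleprime$ strictly contains $\mathbf{M}^{1}\cong\SO$, so boundedness of $B(\sigma^{(1)},\cdot)$ alone does not place $T\sigma^{(1)}$ inside $\mathbf{M}^{1}$; one must additionally show that $\sigma^{(2)}\mapsto B(\sigma^{(1)},\sigma^{(2)})$ is weak$^{*}$ continuous on $\SOprime(G_{2})$, whereupon the characterisation $(\SOprime,w^{*})'\cong\SO$ recalled in Section \ref{sec:preliminaries} forces $T\sigma^{(1)}\in\SO=\mathbf{M}^{1}$. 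This weak$^{*}$ continuity is most cleanly obtained by computing $\mathcal{V}_{g^{(2)}}(T\sigma^{(1)})$ directly from the inner integral and reading off that it lies in $\Lp{1}(G_{2}\times\ghat_{2})$, i.e.\ by the same mixed-norm estimate evaluated along a bounded weak$^{*}$ approximating net as in Lemma \ref{le:bounded-S0prime-approx}. The analogous bookkeeping is needed on the domain side when $p=1$ (so $p'=\infty$ and $\SO$ is no longer norm-dense in $\mathbf{M}^{p'}=\SOprime$): there the extension of the form in its first variable must be taken as given by the integral itself rather than manufactured by continuity, which is legitimate precisely because the estimate above bounds it outright.
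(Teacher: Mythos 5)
Your proposal is correct and follows exactly the route the paper intends: the paper offers no written proof of this proposition beyond asserting that it is a ``straightforward'' generalization of Theorem \ref{th:1608a}, and your argument --- the integral representation from Proposition \ref{pr:1108}(i), two successive applications of H\"older's inequality against the mixed norm $\Vert F\Vert_{q,p}$, and the modulation-space duality realized by \eqref{eq:1403b} --- is precisely that generalization. Your extra care at the endpoints ($q=1$, where mere boundedness of $B(\sigma^{(1)},\cdot)$ on $\SOprime(G_{2})$ only places $T\sigma^{(1)}$ in $\SOdoubleprime(G_{2})$ and one must verify weak$^{*}$ continuity, e.g.\ by dominated convergence using that the inner integral lies in $\Lp{1}(G_{2}\times\ghat_{2})$, to conclude $T\sigma^{(1)}\in\iota(\SO(G_{2}))\cong\mathbf{M}^{1}(G_{2})$; and $p=1$, where the extension must be taken as defined by the integral since $\SO(G_{1})$ is not norm-dense in $\mathbf{M}^{\infty}(G_{1})$) addresses genuine issues that the paper's one-line assertion glosses over, and both of your proposed fixes are sound.
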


In general, the assumption in Proposition \ref{pr:operator-with-kernel-in-mpq} is only \emph{sufficient} for $T$ to be a bounded operator from $\mathbf{M}^{p'}(G_{1})$ to $\mathbf{M}^{q}(G_{2})$. For example, if $p=q=2$, then the identity operator is bounded on $\mathbf{M}^{2}(G) \cong \Lp{2}(G)$, but its kernel is not in $\Lp{2}(G\times G)$.


Recently, in \cite{bagr17,bagrsp19} and \cite{coni17} it has been shown that for 
\[ \text{(1) \ $p=\infty$  and $q\in [1,\infty]$ \ \ \textnormal{and} \ \ \ (2) \ $p\in [1,\infty]$ and $q=\infty$
 }\]
 it is possible to give (relatively abstract) necessary and sufficient conditions for operators to be continuous from $\mathbf{M}^{p'}(G_{1})$ to $\mathbf{M}^{q}(G_{2})$ in terms of the kernels belonging to certain modulation spaces. Such results 
confirm the usefulness of coorbit spaces, here specifically of modulation spaces.

\section{Proof of the inner kernel theorem} \label{sec:proof}

Here we give the proof for the inner kernel theorem, Theorem \ref{th:new-inner-kernel-theorem}. It is useful to introduce the space $\tilde{\mathcal{B}}(G_{1},G_{2})$:

\begin{definition} \label{def:B-tilde} Let $G_{1}$ and $G_{2}$ be locally compact abelian Hausdorff groups. We then define
\begin{enumerate}
\item[]  $ \mathcal{\tilde{B}}(G_{1},G_{2}) =\{ T \in \Lin(\SOprime(G_{1}),\iota(\SO(G_{2}))) \, : \, T$ maps every bounded \ws convergent net in $\SOprime(G_{1})$ into a norm convergent net in $\iota(\SO(G_{2}))\subseteq\SOdoubleprime(G_{2}) \}$.
\end{enumerate}
\end{definition} 

\noindent 
The identification of $\SO(G)$ with $\iota(\SO(G))$ (see Section \ref{sec:S0}) implies that $\mathcal{\tilde{B}}(G_{1},G_{2})\cong \mathcal{B}(G_{1},G_{2})$.

\begin{proof}[Proof of Theorem \ref{th:new-inner-kernel-theorem}]
We will show that the three Banach spaces $\SO(G_{1}\times G_{2})$, $\mathcal{A}$ and $\tilde{\mathcal{B}}(G_{1},G_{2})$ are isomorphic. By the isomorphism between $\mathcal{\tilde{B}}(G_{1},G_{2})\cong \mathcal{B}(G_{1},G_{2})$ and the fact that $\SO(G_{1}\times G_{2})\cong \SO(G_{2}\times G_{1})$ the inner kernel theorem follows. 
In order to prove the desired identifications, we consider the following two operators.
\begin{align*} 
c & : \SO(G_{1}\times G_{2}) \to \mathcal{A}, \ c(K) = \Big[ (\sigma^{(1)},\sigma^{(2)})\mapsto (K, \sigma^{(1)}\otimes \sigma^{(2)})_{\SO,\SOprime(G_{1}\times G_{2})} \Big], \\
d  & : \mathcal{A}\to \tilde{\mathcal{B}}(G_{1},G_{2}), \ d(A) = \Big[ \sigma^{(1)} \mapsto \big[  \sigma^{(2)} \mapsto A(\sigma^{(1)},\sigma^{(2)} ) \ \big]\ \Big],
\end{align*}
where $K\in \SO(G_{1}\times G_{2})$, $A\in \mathcal{A}$ and $\sigma^{(i)}\in \SOprime(G_{i})$, $i=1,2$. 
In Lemma \ref{le:c-operator} and Lemma \ref{le:d-operator} we will show that both these operators are well-defied, linear and bounded. 

Furthermore, let $\SOprime(G_{1})\otimes \SOprime(G_{2})$ be the tensor product of $\SOprime(G_{1})$ and $\SOprime(G_{2})$, that is, the linear span of elementary tensors, 
\[ \SOprime(G_{1})\otimes \SOprime(G_{2}) = \{ \sigma \in \SOprime(G_{1}\times G_{2}) \, : \, \sigma = \textstyle\sum_{j=1}^{N} \sigma^{(1)}_{j}\otimes \sigma^{(2)}_{j}, \ N\in \N \,\} .\]
Then, for a given $T\in \tilde{\mathcal{B}}(G_{1},G_{2})$, we define the operator
\[ e(T) : \SOprime(G_{1})\otimes\SOprime(G_{2}) \to \C, \ e(T) \Big( \sum_{j=1}^{N} \sigma^{(1)}_{j} \otimes \sigma^{(2)}_{j}\Big) = \sum_{j=1}^{N} T(\sigma^{(1)}_{j})(\sigma^{(2)}_{j}). \]
So far, it is not clear whether the value of $e(T)(\sigma)$, $\sigma\in \SOprime(G_{1})\otimes \SOprime(G_{2})$   depends on the particular representation $\sum_{j=1}^{N} \sigma^{(1)}_{j} \otimes \sigma^{(2)}_{j}$ of $\sigma$. We will show in a moment that this is not the case.

In Lemma \ref{le:e-operator} we show that $e(T)$ is continuous with respect to the \ws \ topology induced by functions in $\SO(G_{1}\times G_{2})$. Because $\SOprime(G_{1})\otimes \SOprime(G_{2})$ is \ws \ dense in $\SOprime(G_{1}\times G_{2})$ (this is the case because the generalized functions induced by $\SO(G_{1})\otimes \SO(G_{2})$ are \ws \ dense in $\SOprime(G_{1}\times G_{2})$ and they are a subspace of $\SOprime(G_{1})\otimes \SOprime(G_{2})$), there is a unique \ws \ continuous extension of $e(T)$, which we also call $e(T)$, to a functional from $\SOprime(G_{1}\times G_{2})$ to $\C$. We can therefore define the operator 
\[ e: \tilde{\mathcal{B}} \to \iota(\SO(G_{1}\times G_{2}))\subseteq \SOdoubleprime(G_{1}\times G_{2}),\]
which, to every $T\in \tilde{\mathcal{B}}$, assigns the operator $e(T)$ from above. Since $\iota(\SO(G_{1}\times G_{2}))\cong \SO(G_{1}\times G_{2})$ we can consider $e$ as an operator from $\tilde{\mathcal{B}}$ into $\SO(G_{1}\times G_{2})$. 

Now, given $K\in \SO(G_{1}\times G_{2})$, $A\in \mathcal{A}$ and $T\in \tilde{\mathcal{B}}(G_{1},G_{2})$ one can, simply by the definitions of the three operators $c$, $d$ and $e$, show that
\[ e \circ d \circ c(K) =  K, \ \ c\circ e\circ d (A) = A, \ \ d\circ c \circ e(T) = T.\]
This implies that $c$, $d$ and $e$ are injective, surjective, and hence invertible. We conclude that $e$ is the (unique) inverse operator of $d\circ c$, thus $e(T)(\sigma)$ for $\sigma\in \SOprime(G_{1})\otimes \SOprime(G_{2})$ can \emph{not} depend on a particular representation of $\sigma$ as discussed earlier in the proof. Because $\SO(G_{1}\times G_{2})$ is a Banach space, it follows that also the normed vector spaces $\mathcal{A}$ and $\tilde{\mathcal{B}}(G_{1},G_{2})$ are Banach spaces. To complete the proof it remains only to prove Lemma \ref{le:c-operator}, \ref{le:d-operator}, and \ref{le:e-operator}.
\end{proof}

In order to verify weak$^{*}$ continuity of functionals the following result is essential to us.
\begin{lemma}[{\cite[Corollary 2.7.9]{me98-1}}] \label{le:weak-star-continuity-of-functionals}
Let $\Xsp$ be a Banach space and $\Xsp'$ its continuous dual space. For a functional $\varphi:\Xsp'\to \C$ the following statements are equivalent:
\begin{enumerate}
\item[(i)] $\varphi$ is \ws \ continuous, i.e., if $(x'_{\netparam})$ is a \ws \ convergent net in $\Xsp'$ with limit $x'_{0}$, then for all $\epsilon>0$ there exists a $\netparam_{0}$ such that for all $\netparam > \netparam_{0}$ one has $ \vert \varphi(x'_{\netparam}-x'_{0}) \vert < \epsilon.$ 
\item[(ii)] $\varphi$ is continuous with respect to the bounded \ws \ topology, i.e., if $(x'_{\netparam})$ is a (in $\Xsp'$ norm) bounded \ws \ convergent net in $\Xsp'$ with limit $x'_{0}$, then for all $\epsilon>0$ there exists a $\netparam_{0}$ such that one has $ \vert \varphi(x'_{\netparam}-x'_{0}) \vert < \epsilon \, $ for all $\netparam > \netparam_{0}$.  
\end{enumerate}
\end{lemma}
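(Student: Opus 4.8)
The implication (i) $\Rightarrow$ (ii) is immediate: a bounded \ws{} convergent net is in particular a \ws{} convergent net, so if $\varphi$ is \ws{} continuous it certainly sends such a net to a convergent net in $\C$. All of the content therefore lies in (ii) $\Rightarrow$ (i), and the plan is to reduce this direction to the Krein--Smulian theorem. Throughout, $\varphi$ is a \emph{linear} functional, as is clear from the way $\varphi(x'_{\netparam} - x'_0)$ is written in the statement.

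First I would invoke the elementary fact that a linear functional on a topological vector space is continuous for the ambient topology if and only if its kernel is closed. Applied to $\Xsp'$ equipped with its \ws{} topology, this means that it suffices to show that $N := \ker\varphi$ is \ws{} closed (the case $\varphi = 0$ being trivial). Since $N$ is a linear subspace it is in particular convex, so the Krein--Smulian theorem is available: a convex subset $C$ of the dual of a Banach space is \ws{} closed precisely when $C \cap \{ x' \in \Xsp' : \Vert x' \Vert \le r \}$ is \ws{} closed for every $r > 0$. Thus the task reduces to proving that each slice $N_{r} := N \cap \{ x' \in \Xsp' : \Vert x' \Vert \le r \}$ is \ws{} closed.

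To verify this, I would take an arbitrary net $(x'_{\netparam})$ in $N_{r}$ that \ws{} converges to some $x'_0 \in \Xsp'$. By construction this net is norm bounded, since it lies in the ball of radius $r$, and it is \ws{} convergent, so hypothesis (ii) applies and gives $\varphi(x'_{\netparam}) \to \varphi(x'_0)$. Since $\varphi(x'_{\netparam}) = 0$ for every $\netparam$, we obtain $\varphi(x'_0) = 0$, i.e.\ $x'_0 \in N$; moreover $x'_0$ lies in the closed ball of radius $r$ because that ball is \ws{} closed by the Banach--Alaoglu theorem. Hence $x'_0 \in N_{r}$, which proves that $N_{r}$ is \ws{} closed. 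Krein--Smulian then yields that $N = \ker\varphi$ is \ws{} closed, and therefore $\varphi$ is \ws{} continuous, which establishes (i).

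The only substantial external input is the Krein--Smulian theorem, and that is where I expect the real weight of the argument to sit; the remaining steps use nothing beyond the \ws{} compactness of closed balls (Banach--Alaoglu) and the standard equivalence between continuity of a linear functional and closedness of its kernel. I anticipate no difficulty other than invoking Krein--Smulian correctly in the present case, where the convex set in question is the linear subspace $\ker\varphi$, and making sure that the reduction to bounded nets dovetails exactly with the ball-wise hypothesis of that theorem.
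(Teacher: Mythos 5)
Your proof is correct, but note that the paper does not prove this lemma at all: it is imported directly from Megginson [Corollary 2.7.9], so the comparison to make is with the cited source rather than with anything in the paper itself. Your direction (i) $\Rightarrow$ (ii) is indeed trivial, and your reduction in (ii) $\Rightarrow$ (i) is sound: $\varphi$ is linear (as you note, the statement presupposes this), a linear functional on a topological vector space is continuous precisely when its kernel is closed, hypothesis (ii) together with the weak$^{*}$ closedness of closed balls (Banach--Alaoglu) shows that each truncation $\ker\varphi\cap\{x'\in\Xsp' : \Vert x'\Vert\le r\}$ is weak$^{*}$ closed, and Krein--Smulian finishes the argument. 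One caveat about where you source your black box: in Megginson's own development the dependency runs in the opposite direction --- the continuity criterion you are proving (Theorem 2.7.8/Corollary 2.7.9 there) is obtained first, from the analysis of the bounded weak$^{*}$ topology, and the Krein--Smulian theorem on convex sets (Theorem 2.7.11 there) is then deduced from it --- so citing Megginson for Krein--Smulian would make your argument circular. The route you take is the one in Conway's \emph{A Course in Functional Analysis}, where Krein--Smulian is proved directly and this functional criterion is read off from it exactly as you do. In short: your proof is a valid, shorter derivation that leans on the heavier theorem as given, while the cited source builds both statements from the same underlying machinery, which is why the paper could treat the functional statement as the more primitive of the two.
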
 

\begin{lemma} \label{le:c-operator}
The operator 
\[ c : \SO(G_{1}\times G_{2}) \to \mathcal{A}, \ c(K) = \left[ (\sigma^{(1)},\sigma^{(2)})\mapsto (K, \sigma^{(1)}\otimes \sigma^{(2)})_{\SO,\SOprime(G_{1}\times G_{2})} \right ].  \]
is well-defined, linear and bounded.
\end{lemma}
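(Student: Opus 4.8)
The plan is to verify in turn that for each fixed $K$ the form $c(K)$ is bilinear, bounded, and separately \ws\ continuous in each coordinate (so that $c(K)\in\mathcal{A}$), and then that $K\mapsto c(K)$ is itself linear and bounded. Linearity of $c$ and bilinearity of $c(K)$ are immediate from the linearity of the pairing $(\,\cdot\,,\,\cdot\,)_{\SO,\SOprime(G_{1}\times G_{2})}$ in its first argument and the bilinearity of the map $(\sigma^{(1)},\sigma^{(2)})\mapsto\sigma^{(1)}\otimes\sigma^{(2)}$. For the norm bound I would estimate
\[ \big\vert c(K)(\sigma^{(1)},\sigma^{(2)})\big\vert = \big\vert (K,\sigma^{(1)}\otimes\sigma^{(2)})_{\SO,\SOprime(G_{1}\times G_{2})}\big\vert \le \Vert K\Vert_{\SO}\,\Vert\sigma^{(1)}\otimes\sigma^{(2)}\Vert_{\SOprime}, \]
and then control the tensor norm using the identity \eqref{eq:soprime-tensor-norm} together with Lemma \ref{le:STFT-norm-on-SOprime}, which provides a constant $C>0$ with $\Vert\sigma^{(1)}\otimes\sigma^{(2)}\Vert_{\SOprime}\le C\,\Vert\sigma^{(1)}\Vert_{\SOprime}\,\Vert\sigma^{(2)}\Vert_{\SOprime}$. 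The resulting estimate $\vert c(K)(\sigma^{(1)},\sigma^{(2)})\vert\le C\,\Vert K\Vert_{\SO}\,\Vert\sigma^{(1)}\Vert_{\SOprime}\,\Vert\sigma^{(2)}\Vert_{\SOprime}$ simultaneously shows that $c(K)$ is a bounded bilinear form and that $\Vert c(K)\Vert\le C\,\Vert K\Vert_{\SO}$, i.e.\ that $c$ is bounded.

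The substantive step is the separate \ws\ continuity of $c(K)$. I would fix $\sigma^{(2)}\in\SOprime(G_{2})$ and use Lemma \ref{le:SO-tensor-factorization} to write $K=\sum_{j}f^{(1)}_{j}\otimes f^{(2)}_{j}$ with $\sum_{j}\Vert f^{(1)}_{j}\Vert_{\SO}\,\Vert f^{(2)}_{j}\Vert_{\SO}<\infty$. Expanding the pairing against the elementary tensor $\sigma^{(1)}\otimes\sigma^{(2)}$ yields
\[ c(K)(\sigma^{(1)},\sigma^{(2)}) = \sum_{j}(f^{(1)}_{j},\sigma^{(1)})_{\SO,\SOprime(G_{1})}\,(f^{(2)}_{j},\sigma^{(2)})_{\SO,\SOprime(G_{2})} = \big(f_{\sigma^{(2)}},\sigma^{(1)}\big)_{\SO,\SOprime(G_{1})}, \]
where $f_{\sigma^{(2)}}:=\sum_{j}(f^{(2)}_{j},\sigma^{(2)})_{\SO,\SOprime(G_{2})}\,f^{(1)}_{j}$. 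This series converges absolutely in $\SO(G_{1})$, since $\sum_{j}\vert(f^{(2)}_{j},\sigma^{(2)})_{\SO,\SOprime(G_{2})}\vert\,\Vert f^{(1)}_{j}\Vert_{\SO}\le\Vert\sigma^{(2)}\Vert_{\SOprime}\sum_{j}\Vert f^{(2)}_{j}\Vert_{\SO}\,\Vert f^{(1)}_{j}\Vert_{\SO}<\infty$, so $f_{\sigma^{(2)}}\in\SO(G_{1})$. Hence the map $\sigma^{(1)}\mapsto c(K)(\sigma^{(1)},\sigma^{(2)})$ is evaluation against the fixed test function $f_{\sigma^{(2)}}$, which is \ws\ continuous by the very definition of the \ws\ topology on $\SOprime(G_{1})$ (equivalently, by the characterization that $\iota(\SO(G_{1}))$ is exactly the set of \ws\ continuous functionals on $\SOprime(G_{1})$). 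Continuity in the second coordinate follows by the symmetric argument, collecting $\sum_{j}(f^{(1)}_{j},\sigma^{(1)})_{\SO,\SOprime(G_{1})}\,f^{(2)}_{j}\in\SO(G_{2})$.

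I expect the \ws-continuity step to be the main obstacle, since it is the defining feature of $\mathcal{A}$ and the only place where more than formal manipulation is required; the crucial input is the tensor factorization of Lemma \ref{le:SO-tensor-factorization}, which converts the abstract pairing into an absolutely convergent series of rank-one terms and thereby exhibits the partially evaluated functional as a genuine element of $\SO$. If one preferred to argue through bounded nets, Lemma \ref{le:weak-star-continuity-of-functionals} would reduce the task to checking continuity along norm-bounded \ws\ convergent nets, but the factorization argument already delivers the stronger, unrestricted \ws\ continuity directly, so this reduction is not actually needed here.
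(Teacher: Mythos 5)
Your proposal is correct, but at the decisive step it takes a genuinely different route from the paper. The paper proves \ws\ continuity of $c(K)$ in two stages: first for $K$ a \emph{finite} sum of elementary tensors, $K\in\SO(G_{1})\otimes\SO(G_{2})$, by checking convergence along norm-bounded \ws\ convergent nets term by term; then for general $K$ by a norm-density $\epsilon/2$-argument approximating $K$ by finite tensor sums, with the uniform bound on the net entering the estimate. Since this only establishes continuity along \emph{bounded} \ws\ convergent nets, the paper must invoke Lemma \ref{le:weak-star-continuity-of-functionals} (Megginson's criterion) to upgrade to genuine \ws\ continuity. You instead use the full projective tensor factorization of Lemma \ref{le:SO-tensor-factorization}, $K=\sum_{j}f^{(1)}_{j}\otimes f^{(2)}_{j}$ with $\sum_{j}\Vert f^{(1)}_{j}\Vert_{\SO}\Vert f^{(2)}_{j}\Vert_{\SO}<\infty$, and resum the absolutely convergent series to exhibit the partial evaluation $\sigma^{(1)}\mapsto c(K)(\sigma^{(1)},\sigma^{(2)})$ as the pairing against an explicit function $f_{\sigma^{(2)}}\in\SO(G_{1})$; the series interchanges you perform are justified exactly as you say, by norm convergence of the series and norm continuity of the pairings. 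This makes the functional \ws\ continuous by the very definition of the \ws\ topology, so you get unrestricted \ws\ continuity in one stroke, with no density argument and no appeal to the bounded-net criterion. What each approach buys: yours is shorter and conceptually cleaner, anticipating the identification $\mathcal{A}\cong\tilde{\mathcal{B}}$ (your $f_{\sigma^{(2)}}$ is essentially the value of the associated operator in $\mathcal{B}(G_{2},G_{1})$ at $\sigma^{(2)}$); the paper's argument uses only norm density of finite tensor sums rather than the full strength of the projective factorization, and it rehearses the bounded-net technique via Lemma \ref{le:weak-star-continuity-of-functionals} that recurs in the companion Lemmas \ref{le:d-operator} and \ref{le:e-operator}. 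The remaining parts of your proof (bilinearity, the bound $\vert c(K)(\sigma^{(1)},\sigma^{(2)})\vert\le C\,\Vert K\Vert_{\SO}\Vert\sigma^{(1)}\Vert_{\SOprime}\Vert\sigma^{(2)}\Vert_{\SOprime}$ via \eqref{eq:soprime-tensor-norm} and Lemma \ref{le:STFT-norm-on-SOprime}, and the boundedness of $c$) coincide with the paper's.
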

\begin{proof}
Let a function $K\in \SO(G_{1}\times G_{2})$ be given. Then for some constant $ a > 0$
\begin{align} \vert \, c(K)(\sigma^{(1)},\sigma^{(2)}) \vert & = \vert ( K, \sigma^{(1)}\otimes \sigma^{(2)}) \vert \le \Vert K \Vert_{\SO}  \Vert \sigma^{(1)}\otimes \sigma^{(2)} \Vert_{\SOprime} \nonumber \\
& \stackrel{\eqref{eq:soprime-tensor-norm}}{\le} a \, \Vert K \Vert_{\SO} \, \Vert \sigma^{(1)} \Vert_{\SOprime} \, \Vert \sigma^{(2)} \Vert_{\SOprime}, \label{eq:040817a}  \end{align}
Hence $c(K)(\sigma^{(1)},\sigma^{(2)})$ is well-defined. The bilinearity of $c(K)$ is clear. Also, 
\begin{equation} \label{eq:040817b} \sup_{\substack{ \Vert \sigma^{(i)} \Vert_{\SOprime(G_{i})} = 1, \, \, i=1,2 }} \vert c(K)(\sigma^{(1)},\sigma^{(2)}) \vert \le a \, \Vert K \Vert_{\SO}.\end{equation}
This shows that $c(K)$ is an element in $\Bil(\SOprime(G_{1})\times \SOprime(G_{2}), \C)$. Let us show that $c(K)\in \mathcal{A}$, i.e., $c(K)$ is \ws \ continuous in each variable. In order to show this, let us first consider a function $K\in \SO(G_{1})\otimes \SO(G_{2})\subseteq \SO(G_{1}\times G_{2})$, that is, a function of the form
\[ K = \sum_{j=1}^{N} f^{(1)}_{j} \otimes f^{(2)}_{j}, \ (f^{(i)}_{j})_{j=1}^{N} \ \text{in} \ \SO(G_{i}), \ i=1,2, \ N\in\N.\]
If $(\sigma^{(1)}_{\netparam})$ is a bounded \ws \ convergent net in $\SOprime(G_{1})$ with limit $\sigma_{0}^{(1)}$, and  $\sigma^{(2)}\in \SOprime(G_{2})$, then 
\begin{align*}
& \quad \ \lim_{\netparam} \vert c(K)(\sigma^{(1)}_{\netparam} - \sigma^{(1)}_{0}, \sigma^{(2)})\vert
 = \lim_{\netparam} \big\vert \big(K,(\sigma^{(1)}_{\netparam}-\sigma^{(1)}_{0}) \otimes  \sigma^{(2)}\big) \big\vert \\
& = \lim_{\netparam} \vert \sum_{j=1}^{N} (f_{j}^{(1)},\sigma_{\netparam}^{(1)}-\sigma_{0}^{(1)}) \, (f_{j}^{(2)}, \sigma^{(2)}) \, \vert \\
& \le a \, \max_{j} \Vert f_{j}^{(2)} \Vert_{\SO} \, \Vert \sigma^{(2)}\Vert_{\SOprime} \, \sum_{j=1}^{N} \lim_{\netparam} \vert (f_{j}^{(1)}, \sigma_{\netparam}^{(1)}-\sigma_{0}^{(1)}) \vert = 0.
\end{align*}
By Lemma \ref{le:weak-star-continuity-of-functionals} the operator $c(K)$ is \ws \ continuous in the first coordinate. The continuity in the second coordinate is proven in the same fashion. Let now $K$ be any function in $\SO(G_{1}\times G_{2})$. Then, given any $\epsilon>0$, we can find a function $\tilde{K}\in \SO(G_{1})\otimes \SO(G_{2})$ such that 
\[ \Vert K - \tilde{K} \Vert_{\SO} \, \cdot \sup_{\netparam,\{0\}} \Vert \sigma^{(1)}_{(\cdot)} \Vert_{\SOprime} \, \Vert \sigma^{(2)} \Vert_{\SOprime}  < \frac{\epsilon}{4} .\]
With this $\tilde{K}$ fixed, there is, as we just showed, an index $\netparam_{0}$ such that for all $\netparam>\netparam_{0}$ 
\[ \vert c(\tilde{K})(\sigma^{(1)}_{\netparam}-\sigma^{(1)}_{0}, \sigma^{(2)}) \vert < \epsilon/2.\]
Hence, for $\netparam>\netparam_{0}$ we have that
\begin{align*}
& \quad \ \vert \, c(K)(\sigma^{(1)}_{\netparam}-\sigma^{(1)}_{0}, \sigma^{(2)}) \vert \\
& = \vert \, c(K-\tilde{K}+\tilde{K})(\sigma^{(1)}_{\netparam}-\sigma^{(1)}_{0}, \sigma^{(2)}) \vert \\
& \le \vert \, c(K-\tilde{K})(\sigma^{(1)}_{\netparam}-\sigma^{(1)}_{0}, \sigma^{(2)}) \vert + \vert \,  c(\tilde{K})(\sigma^{(1)}_{\netparam}-\sigma^{(1)}_{0}, \sigma^{(2)})\vert\\
& < 2 \, \Vert K - \tilde{K} \Vert_{\SO} \, \sup_{\netparam,\{0\}} \Vert \sigma^{(1)}_{(\cdot)} \Vert_{\SOprime} \, \Vert \sigma^{(2)} \Vert_{\SOprime} + \epsilon/2\\
& < \epsilon/2 + \epsilon/2 = \epsilon.
\end{align*}
We have thus shown that $c(K)$ is \ws \ continuous in the first coordinate for any $K\in\SO(G_{1}\times G_{2})$. The continuity in the second coordinate is proven in the same way. 
Consequently 
$c$ is a mapping from $\SO(G_{1}\times G_{2})$ into $\mathcal{A}$. The linearity of $c$ is clear.  
Finally, the boundedness of 
$c$ follows from the   inequalities concerning $c(K)$ above, namely,
\begin{align*}
\sup_{\substack{K\in \SO(G_{1}\times G_{2}) \\ \Vert K \Vert_{\SO} = 1}} \Vert c(K) \Vert_{\Bil(\SOprime\times\SOprime,\C)} \le a,
\end{align*} 
where $a$ is the same constant as in \eqref{eq:040817a} and \eqref{eq:040817b}. Hence the operator $c$ is well-defined, linear and bounded.
\end{proof}
\begin{lemma} \label{le:d-operator} The operator
\[ d  : \mathcal{A}\to \tilde{\mathcal{B}}(G_{1},G_{2}), \ d(A) = \Big[ \sigma^{(1)} \mapsto \big[  \sigma^{(2)} \mapsto A(\sigma^{(1)},\sigma^{(2)} ) \ \big] \, \Big], \ \sigma^{(i)}\in \SOprime(G_{i}), \ i=1,2,\]
is well-defined, linear and bounded.
\end{lemma}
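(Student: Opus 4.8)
The plan is to verify, in order, that $d(A)$ takes values in $\iota(\SO(G_2))$, that it is linear, that it is bounded as an operator into $\SOdoubleprime(G_2)$, and finally---the only substantial point---that it actually lands in $\tilde{\mathcal{B}}(G_1,G_2)$, i.e.\ maps bounded \ws{} convergent nets to norm convergent nets in $\iota(\SO(G_2))$. For the first three properties I would fix $A\in\mathcal{A}(G_1,G_2)$ and $\sigma^{(1)}\in\SOprime(G_1)$ and observe that $\sigma^{(2)}\mapsto A(\sigma^{(1)},\sigma^{(2)})$ is a bounded linear functional on $\SOprime(G_2)$ which, by definition of $\mathcal{A}$, is \ws{} continuous in its argument. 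By Lemma~\ref{le:weak-star-continuity-of-functionals} together with the identification of $\iota(\SO(G_2))$ with the bounded \ws{} continuous functionals on $\SOprime(G_2)$, this functional equals $\iota(h_{\sigma^{(1)}})$ for a unique $h_{\sigma^{(1)}}\in\SO(G_2)$, so $d(A)(\sigma^{(1)})\in\iota(\SO(G_2))$. Linearity in $\sigma^{(1)}$ is immediate from the bilinearity of $A$, and boundedness follows from $\Vert d(A)(\sigma^{(1)})\Vert_{\SOdoubleprime}=\sup_{\Vert\sigma^{(2)}\Vert_{\SOprime}\le1}\vert A(\sigma^{(1)},\sigma^{(2)})\vert\le\Vert A\Vert\,\Vert\sigma^{(1)}\Vert_{\SOprime}$, giving $d(A)\in\textnormal{Lin}(\SOprime(G_1),\iota(\SO(G_2)))$ with $\Vert d(A)\Vert_{\textnormal{op}}\le\Vert A\Vert$.

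The heart of the matter is the net-continuity requirement. Writing $S:=d(A)$ and reducing to differences, it suffices to show that a bounded net $\sigma^{(1)}_{\netparam}\to0$ in the \ws{} sense forces $\sup_{\Vert\sigma^{(2)}\Vert_{\SOprime}\le1}\vert A(\sigma^{(1)}_{\netparam},\sigma^{(2)})\vert\to0$. I expect this to be \emph{the main obstacle}: the separate \ws{} continuity of $A$ only yields convergence for each \emph{fixed} $\sigma^{(2)}$, i.e.\ weak convergence $S\sigma^{(1)}_{\netparam}\to0$ in $\SO(G_2)$, and for a general bounded operator weak convergence of the images does not upgrade to norm convergence (on a reflexive space the analogous claim is outright false, as $\langle e_n,e_n\rangle=1$ for the weakly null basis of $\ell^{2}$ shows). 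The non-reflexivity of $\SO$ is precisely what must be exploited.

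The extra ingredient is the \emph{other} continuity built into $\mathcal{A}$. Weak$^{*}$ continuity of $A$ in the first coordinate says exactly that for every $\sigma^{(2)}$ the map $\sigma^{(1)}\mapsto(S\sigma^{(1)},\sigma^{(2)})_{\SO,\SOprime(G_2)}=A(\sigma^{(1)},\sigma^{(2)})$ is \ws{} continuous, which is the statement that $S:\SOprime(G_1)\to\SO(G_2)$ is \ws-to-weak continuous. Since the unit ball of $\SOprime(G_1)$ is \ws{} compact by Banach--Alaoglu, its image under $S$ is weakly compact in $\SO(G_2)$; hence $S$ is a weakly compact operator. I would then transport this into a quantitative tail estimate: via the short-time Fourier transform and the equivalent $\Misp$-norm of Lemma~\ref{le:SOprime-induced-by-SO}, the space $\SO(G_2)$ embeds isomorphically onto a closed subspace of $\Lp{1}(G_2\times\ghat_2)$, so $\{\mathcal{V}_{g^{(2)}}S\sigma^{(1)}:\Vert\sigma^{(1)}\Vert_{\SOprime}\le1\}$ is relatively weakly compact in $\Lp{1}$ and therefore, by the Dunford--Pettis theorem, uniformly integrable and uniformly tight.

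To conclude I would fix $\epsilon>0$ and use the norm identity $\Vert S\sigma^{(1)}_{\netparam}\Vert_{\SO}\asymp\int_{G_2\times\ghat_2}\vert A(\sigma^{(1)}_{\netparam},\sigma_{\pi(\TFelement)g^{(2)}})\vert\,d\TFelement$, where $\sigma_{\pi(\TFelement)g^{(2)}}$ is the distribution induced by $\pi(\TFelement)g^{(2)}$. By tightness there is a compact $K\subseteq G_2\times\ghat_2$ with $\int_{K^{c}}<\epsilon$ for all $\netparam$. On $K$ the functionals $A(\sigma^{(1)}_{\netparam},\cdot)$ are uniformly bounded, hence equicontinuous, while $\{\sigma_{\pi(\TFelement)g^{(2)}}:\TFelement\in K\}$ is norm-compact in $\SOprime(G_2)$ (strong continuity of the time-frequency shifts on $\SO(G_2)$ and the continuous embedding $\SO\hookrightarrow\SOprime$); equicontinuity plus pointwise convergence on a norm-compact set promotes the convergence to uniform convergence over $\TFelement\in K$, so $\int_{K}\to0$ because $K$ has finite Haar measure. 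Adding the two contributions gives $\Vert S\sigma^{(1)}_{\netparam}\Vert_{\SO}\to0$, which is exactly the required net-continuity and finishes the proof that $d$ is well-defined, linear and bounded into $\tilde{\mathcal{B}}(G_1,G_2)$.
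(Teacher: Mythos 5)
Your proposal is correct, but at the decisive step it takes a genuinely different route from the paper's. The preliminary parts coincide: like the paper, you show $d(A)(\sigma^{(1)})$ is a bounded, \ws\ continuous functional, hence lies in $\iota(\SO(G_{2}))$, and you get the same bound $\Vert d(A)\Vert_{\textnormal{op}}\le\Vert A\Vert$. For the net-continuity, however, the paper stays on the $G_{2}$ side: it writes $\Vert d(A)(\sigma^{(1)}_{\netparam}-\sigma^{(1)}_{0})\Vert_{\SOdoubleprime}$ as a supremum of $\vert A(\sigma^{(1)}_{\netparam}-\sigma^{(1)}_{0},\sigma^{(2)})\vert$ over the unit ball of $\SOprime(G_{2})$ and disposes of it in a few lines by invoking Banach--Alaoglu compactness of that ball together with the principle that continuous mappings on compact sets are uniformly continuous. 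You instead work on the $G_{1}$ side: separate \ws\ continuity makes $d(A)$ \ws -to-weak continuous, Banach--Alaoglu applied to the unit ball of $\SOprime(G_{1})$ makes it a weakly compact operator, and you then transport everything into $\Lp{1}(G_{2}\times\ghat_{2})$ via $\SO\cong\Misp$, using Dunford--Pettis for uniform tightness and a norm-equicontinuity argument (uniform boundedness of $\{A(\sigma^{(1)}_{\netparam},\cdot)\}$ against the norm-compact set $\{\sigma_{\pi(\TFelement)g^{(2)}}:\TFelement\in K\}$) to upgrade pointwise to uniform convergence on compacta. Your route is longer and imports two standard facts external to the paper --- the general-measure (non-$\sigma$-finite) form of Dunford--Pettis for Haar measure, and norm continuity of $\TFelement\mapsto\pi(\TFelement)g^{(2)}$ in $\SO(G_{2})$ --- but it buys genuine robustness at exactly the point where care is required: as you note, separate \ws\ continuity only gives weak convergence of the images, and pointwise convergence over a merely \ws -compact ball does not self-improve to uniform convergence without an equicontinuity input (the inner product on $\ell^{2}$, with $\langle e_{n},e_{n}\rangle=1$, witnesses the failure for separately weakly continuous forms). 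The paper's compactness step is terse and implicitly needs precisely such an ingredient; your norm-equicontinuity observation is the kind of argument that would be required to flesh it out, so your proof is not only different but arguably more complete at the one delicate point of the lemma.
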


\begin{proof} Let $A$ be an operator in $\mathcal{A}$. Let us show that $d(A)$ is an operator in $\tilde{\mathcal{B}}(G_{1},G_{2})$. That is, we need to show that $d(A)\in \Lin(\SOprime(G_{1}), \iota(\SOprime(G_{2})))$ and that $d(A)$ maps bounded \ws \ convergent nets in $\SOprime(G_{1})$ into norm convergent nets in $\SOdoubleprime(G_{2})$. 
Since $A\in\mathcal{A}$ it is clear that for all $\sigma^{(1)}\in\SOprime(G_{1})$ and $\sigma^{(2)}\in\SOprime(G_{2})$ we have the estimate
\begin{equation} \label{eq:0112c} \vert d(A)(\sigma^{(1)})(\sigma^{(2)}) \vert = \vert A(\sigma^{(1)},\sigma^{(2)}) \vert \le \Vert A \Vert_{\textnormal{op}} \Vert \sigma^{(1)} \Vert_{\SOprime} \, \Vert \sigma^{(2)} \Vert_{\SOprime} < \infty. \end{equation}
Hence the functional
\[ d(A)(\sigma_{1}): \SOprime(G_{2}) \to \C, \ d(A)(\sigma^{(1)})(\sigma^{(2)}) = A(\sigma^{(1)},\sigma^{(2)})\] 
is well-defined. The bilinearity of $A$ implies that $d(A)(\sigma^{(1)})$ is linear. In order to show that the functional is also bounded we use the estimate from \eqref{eq:0112c}. This yields
\begin{equation} \label{eq:0112d} \sup_{\substack{\sigma^{(2)}\in\SOprime(G_{2}) \\ \Vert \sigma^{(2)} \Vert = 1}} \vert d(A)(\sigma^{(1)})(\sigma^{(2)}) \vert \stackrel{\eqref{eq:0112c}}{\le} \sup_{\substack{\sigma^{(2)}\in\SOprime(G_{2}) \\ \Vert \sigma^{(2)} \Vert = 1}} \Vert A \Vert_{\textnormal{op}} \, \Vert \sigma^{(1)} \Vert_{\SOprime} \, \Vert \sigma^{(2)} \Vert_{\SOprime} =   \Vert A \Vert_{\textnormal{op}} \, \Vert \sigma^{(1)} \Vert_{\SOprime}<\infty.\end{equation}
Hence $d(A)(\sigma^{(1)})$ is also bounded. The \ws continuity of this functional is also easy to show: if $(\sigma^{(2)}_{\netparam})$ is a \ws \ convergent net in $\SOprime(G_{2})$ with limit $\sigma^{(2)}_{0}\in \SOprime(G_{2})$, then, since $A$ is \ws \ continuous in the second coordinate, 
\[ \lim_{\netparam} \vert d(A)(\sigma^{(1)})(\sigma^{(2)}_{\netparam}-\sigma^{(2)}_{0}) \vert = \lim_{\netparam} \vert A(\sigma^{1},\sigma^{(2)}_{\netparam}-\sigma^{(2)}_{0}) \vert = 0.\]
Thus $d(A)(\sigma^{(1)})\in\iota(\SO(G_{2}))$. Let us verify that $d(A)$ is a bounded operator from $\SOprime(G_{1})$ into $\iota(\SO(G_{2}))\subseteq \SOdoubleprime(G_{2})$.
\begin{equation} \label{eq:0112e} \sup_{\substack{ \Vert \sigma^{(1)}
\Vert_{\SOprime(G_{1})} \leq 1 }} \Vert d(A)(\sigma^{(1)}) \Vert_{\SOdoubleprime} \stackrel{\eqref{eq:0112d}}{\le} \sup_{\substack{  \Vert \sigma^{(1)} \Vert_{\SOprime(G_{1})} \leq 1 }} \Vert A \Vert_{\textnormal{op}} \, \Vert \sigma^{(1)} \Vert_{\SOprime} = \Vert A \Vert_{\textnormal{op}}. 
\end{equation} 
We have thus shown that $d(A)\in \Lin(\SOprime(G_{1}),\iota(\SO(G_{2})))$. It is left to show that $d(A)$ maps bounded \ws \ convergent nets in $\SOprime(G_{1})$ into norm convergent nets in $\iota(\SO(G_{2}))\subseteq \SOdoubleprime(G_{2})$. Given a bounded \ws \ convergent net $(\sigma^{(1)}_{\netparam})$   in $\SOprime(G_{1})$ with limit $\sigma^{(1)}_{0}$ one has: 
\begin{align*} \lim_{\netparam} \Vert d(A)(\sigma^{(1)}_{\netparam}-\sigma^{(1)}_{0}) \Vert_{\SOdoubleprime} & = \lim_{\netparam} \sup_{\substack{ \Vert \sigma^{(2)} \Vert_{\SOprime(G_{2})} \le 1  }}  \vert d(A)(\sigma^{(1)}_{\netparam}-\sigma^{(1)}_{0})(\sigma^{(2)}) \vert \\ & = \lim_{\netparam} \sup_{\substack{ \Vert \sigma^{(2)} \Vert_{\SOprime(G_{2})} \le 1  }} \vert A(\sigma^{(1)}_{\netparam}-\sigma^{(1)}_{0},\sigma^{(2)}) \vert. \end{align*}
We need to show that the limit is equal to zero. Note that $A$ is \ws \ continuous in the first and second entry. By the Banach-Alaoglu Theorem (\cite[Theorem 2.6.18]{me98-1}) the unit ball of $\SOprime(G_{2})$ is compact in the \ws \ topology. Continuous mappings on compact sets are uniformly  continuous,  therefore we  conclude that 
\[ \lim_{\netparam} \Vert d(A)(\sigma^{(1)}_{\netparam}-\sigma^{(1)}_{0}) \Vert_{\SOdoubleprime} = \lim_{\netparam} \sup_{\substack{    \Vert \sigma^{(2)} \Vert_{\SOprime(G_{2}) } \le 1  }} \vert A(\sigma^{(1)}_{\netparam}-\sigma^{(1)}_{0},\sigma^{(2)}) \vert = 0.\]
Hence $d(A)(\sigma^{(1)}_{\netparam})$ is a $\SOdoubleprime(G_{2})$-norm convergent net with limit $d(A)(\sigma^{(1)}_{0})$. Thus $d(A)\in \tilde{\mathcal{B}}$ and hence $d$ is a well-defined operator and clearly linear. It is also bounded: 
\[ \Vert d \Vert_{\textnormal{op}} = \sup_{ \Vert A \Vert = 1} \Vert d(A) \Vert_{\textnormal{op},\SOprime\to\SOdoubleprime} \stackrel{\eqref{eq:0112e}}{\le} 1.  \]
\end{proof}  

\begin{lemma} \label{le:e-operator} For every $T\in \tilde{\mathcal{B}}(G_{1},G_{2})$, the operator given by 
\[ e(T) : \SOprime(G_{1})\otimes\SOprime(G_{2}) \to \C, \ e(T) \Big( \sum_{j=1}^{N} \sigma^{(1)}_{j} \otimes \sigma^{(2)}_{j}\Big) = \sum_{j=1}^{N} T(\sigma^{(1)}_{j})(\sigma^{(2)}_{j}). \]
is 
linear and continuous with respect to the \ws \ topology induced by 
$\SO(G_{1}\times G_{2})$.
\end{lemma}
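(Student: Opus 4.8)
The plan is to realise $e(T)$ as the linearisation of the bounded bilinear form
\[ \Phi : \SOprime(G_1) \times \SOprime(G_2) \to \C, \qquad \Phi(\sigma^{(1)},\sigma^{(2)}) = T(\sigma^{(1)})(\sigma^{(2)}), \]
and to extract the weak$^{*}$ continuity of $e(T)$ from strong continuity properties of $\Phi$ that are forced by the definition of $\tilde{\mathcal{B}}(G_{1},G_{2})$. First I would record that $\Phi$ is well defined and bounded: since $T(\sigma^{(1)})\in\iota(\SO(G_{2}))\subseteq\SOdoubleprime(G_{2})$, evaluation at $\sigma^{(2)}$ is meaningful and $|\Phi(\sigma^{(1)},\sigma^{(2)})|\le\|T\|\,\|\sigma^{(1)}\|_{\SOprime}\,\|\sigma^{(2)}\|_{\SOprime}$. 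Bilinearity of $\Phi$ guarantees that the prescription $e(T)\big(\sum_{j}\sigma^{(1)}_{j}\otimes\sigma^{(2)}_{j}\big)=\sum_{j}\Phi(\sigma^{(1)}_{j},\sigma^{(2)}_{j})$ is independent of the chosen representation — the algebraic tensor product $D:=\SOprime(G_{1})\otimes\SOprime(G_{2})$ injects into the bilinear forms on $\SO(G_{1})\times\SO(G_{2})$, so two representations of the same $\sigma$ give the same value — which already settles linearity and well-definedness of $e(T)$ on $D$.

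The decisive structural input is that $\Phi$ is jointly continuous along bounded weak$^{*}$ convergent nets. In the second slot this is immediate: for fixed $\sigma^{(1)}$ the functional $\sigma^{(2)}\mapsto\Phi(\sigma^{(1)},\sigma^{(2)})$ is the element $T(\sigma^{(1)})$ of $\iota(\SO(G_{2}))$, hence bounded and weak$^{*}$ continuous. In the first slot I would invoke the defining property of $\tilde{\mathcal{B}}(G_{1},G_{2})$: if $\sigma^{(1)}_{\netparam}\to\sigma^{(1)}_{0}$ is a bounded weak$^{*}$ convergent net, then $T(\sigma^{(1)}_{\netparam})\to T(\sigma^{(1)}_{0})$ in the \emph{norm} of $\SO(G_{2})$. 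Writing
\[ \Phi(\sigma^{(1)}_{\netparam},\sigma^{(2)}_{\netparam})-\Phi(\sigma^{(1)}_{0},\sigma^{(2)}_{0}) = \big(T(\sigma^{(1)}_{\netparam}-\sigma^{(1)}_{0}),\sigma^{(2)}_{\netparam}\big)_{\SO,\SOprime(G_{2})} + \big(T(\sigma^{(1)}_{0}),\sigma^{(2)}_{\netparam}-\sigma^{(2)}_{0}\big)_{\SO,\SOprime(G_{2})}, \]
the first term is dominated by $\|T(\sigma^{(1)}_{\netparam}-\sigma^{(1)}_{0})\|_{\SO}\,\sup_{\netparam}\|\sigma^{(2)}_{\netparam}\|_{\SOprime}\to 0$ and the second tends to $0$ by the preceding point, giving joint bounded weak$^{*}$ continuity. (Together with Lemma \ref{le:weak-star-continuity-of-functionals} this also shows $\Phi$ is separately fully weak$^{*}$ continuous, so $\Phi\in\mathcal{A}(G_{1},G_{2})$.)

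Finally I would upgrade this to weak$^{*}$ continuity of $e(T)$ on $D$. By Lemma \ref{le:weak-star-continuity-of-functionals} it is enough to test against bounded weak$^{*}$ convergent nets $\sigma_{\netparam}\to\sigma_{0}$ in $D$, and the device for handling a general finite sum $\sum_{j}\sigma^{(1)}_{j}\otimes\sigma^{(2)}_{j}$ is the reproducing formula of Lemma \ref{le:STFT-SO-SOprime}: applied in the $G_{2}$-variable with a fixed window $g^{(2)}$ it rewrites $e(T)(\sigma)=\int_{G_{2}\times\ghat_{2}}(\cdots)\,d\TFelement^{(2)}$ with an integrand assembled from $\Phi$ evaluated on time-frequency shifts of $g^{(2)}$ against the short-time Fourier transform of the second factor, and the same manoeuvre in the $G_{1}$-variable expresses $e(T)(\sigma)$ through the weak$^{*}$ behaviour of $\sigma$ alone, at which point the joint continuity of $\Phi$ closes the estimate. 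Once weak$^{*}$ continuity on the weak$^{*}$-dense subspace $D$ is secured, $e(T)$ is weak$^{*}$-uniformly continuous (linear and continuous at $0$) and extends uniquely to a weak$^{*}$ continuous functional on $\SOprime(G_{1}\times G_{2})$; by the characterisation of $\iota(\SO)$ as the bounded weak$^{*}$ continuous functionals recalled in the Preliminaries, this extension automatically lands in $\iota(\SO(G_{1}\times G_{2}))$, which is precisely how the kernel materialises in $\SO$. I expect this last step to be the main obstacle: weak$^{*}$ continuity of $e(T)$ on $D$ is in fact \emph{equivalent} to the existence of a representing function in $\SO(G_{1}\times G_{2})$, so the finite-sum case (as opposed to a single tensor) carries the full analytic weight of the theorem, and pushing the reproducing formula through together with the joint bounded weak$^{*}$ continuity of $\Phi$ is exactly what must be executed carefully.
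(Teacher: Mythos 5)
Your preliminary steps are sound: the well-definedness and linearity argument via the algebraic tensor product is correct (indeed more self-contained than the paper, which defers that point to the proof of Theorem \ref{th:new-inner-kernel-theorem}), and so is the joint continuity of $\Phi$ along bounded nets \emph{whose two factors} converge weak$^{*}$. But that is not the continuity the lemma asserts, and here is the genuine gap. The relevant topology is the weak$^{*}$ topology that $\SO(G_{1}\times G_{2})$ induces on $\SOprime(G_{1})\otimes\SOprime(G_{2})$: the hypothesis is only that the net of \emph{tensors} $\sigma^{(1)}_{\netparam}\otimes\sigma^{(2)}_{\netparam}$ converges as functionals on $\SO(G_{1}\times G_{2})$, and tensor convergence does not imply factor convergence. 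For instance, with fixed $\tau^{(i)}\in\SOprime(G_{i})\setminus\{0\}$, the sequence $\sigma^{(1)}_{n}=(-1)^{n}\tau^{(1)}$, $\sigma^{(2)}_{n}=(-1)^{n}n^{-1}\tau^{(2)}$ is bounded, the tensors $n^{-1}\,\tau^{(1)}\otimes\tau^{(2)}$ converge to $0$ even in norm, yet $(\sigma^{(1)}_{n})$ has no weak$^{*}$ limit; your continuity of $\Phi$ says nothing about such nets. The missing idea, which is the actual core of the paper's proof, is a device reducing tensor convergence to factor convergence: assume $\sigma^{(1)}_{\netparam}\otimes\sigma^{(2)}_{\netparam}\stackrel{w^{*}}{\to}0$, normalize so that $\Vert\sigma^{(2)}_{\netparam}\Vert_{\SOprime}=1$, and write the limit as $0\otimes\sigma^{(2)}_{0}$ with $\Vert\sigma^{(2)}_{0}\Vert_{\SOprime}=2$; a contradiction argument excludes that \emph{both} factors fail to converge, and weak$^{*}$ lower semicontinuity of the norm excludes $\sigma^{(2)}_{\netparam}\stackrel{w^{*}}{\to}\sigma^{(2)}_{0}$ (it would force $2\le 1$), so necessarily $\sigma^{(1)}_{\netparam}\stackrel{w^{*}}{\to}0$. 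Only then does the defining property of $\tilde{\mathcal{B}}(G_{1},G_{2})$ give $\Vert T\sigma^{(1)}_{\netparam}\Vert_{\SOdoubleprime}\to 0$, and boundedness of $(\sigma^{(2)}_{\netparam})$ finishes the elementary-tensor case; finite sums are then handled by linearity.

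Your proposed fallback for the decisive step cannot be completed as sketched, because it is circular. Executing the reproducing-formula idea, one arrives at
\[ e(T)(\sigma)=\Vert g^{(2)}\Vert_{2}^{-2}\int_{G_{2}\times\ghat_{2}}\big(k_{\TFelement^{(2)}}\otimes\pi(\TFelement^{(2)})g^{(2)},\,\sigma\big)_{\SO,\SOprime(G_{1}\times G_{2})}\,d\TFelement^{(2)}, \]
where $k_{\TFelement^{(2)}}\in\SO(G_{1})$ represents the bounded weak$^{*}$ continuous functional $\sigma^{(1)}\mapsto T(\sigma^{(1)})\big(\overline{\pi(\TFelement^{(2)})g^{(2)}}\big)$. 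Weak$^{*}$ convergence of a bounded net $\sigma_{\netparam}$ gives pointwise convergence of the integrand, but exchanging limit and integral needs a domination of the form $\int_{G_{2}\times\ghat_{2}}\Vert k_{\TFelement^{(2)}}\Vert_{\SO}\,d\TFelement^{(2)}<\infty$, and this is (up to equivalence of norms and conjugation of the window) precisely condition \eqref{eq:norm-B} of Theorem \ref{th:1608a}, i.e.\ it is \emph{equivalent} to $\kappa(T)\in\SO(G_{1}\times G_{2})$ --- the conclusion of the inner kernel theorem this lemma is designed to prove. In the paper's logical order Theorem \ref{th:1608a} is a consequence of Theorem \ref{th:new-inner-kernel-theorem}, so it is not available here. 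Note also that your diagnosis is slightly misplaced: the analytic weight is not in passing from a single tensor to finite sums (that step is routine by linearity), but in the elementary-tensor case itself, which the normalization/contradiction argument above resolves without any integral representation.
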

\begin{proof}
Let us first show that $e(T)$ is a well-defined and linear operator on $\SOprime(G_{1})\otimes\SOprime(G_{2})$. Indeed, we find that for all finite sequences $(\sigma^{(i)}_{j})_{j=1}^{N}$ in $\SOprime(G_{i})$, $i=1,2$, $N\in\N$, 
\[\big \vert e(T)\left({\textstyle\sum\limits_{j=1}\limits^{N}} \sigma^{(1)}_{j} \otimes \sigma^{(2)}_{j} \right) \big \vert = \big \vert \sum_{j=1}^{N} T(\sigma^{(1)}_{j})(\sigma^{(2)}_{j}) \big \vert \le \Vert T \Vert_{\textnormal{op},\SOprime\to\SOdoubleprime} \, \sum_{j=1}^{N} \Vert \sigma^{(1)}_{j} \Vert_{\SOprime} \, \Vert \sigma^{(2)}_{j} \Vert_{\SOprime} < \infty. \]
For $e(T)$ to be well-defined we should verify that the value of $e(T)(\sigma)$, $\sigma\in \SOprime(G_{1})\otimes \SOprime(G_{2})$ is independent of its particular representation $\sum_{j=1}^{N}\sigma^{(1)}_{j} \otimes \sigma^{(2)}_{j}$. This issue is resolved in the proof of Theorem \ref{th:new-inner-kernel-theorem}. The linearity of the operator $e(T)$ follows immediately from its definition. 

Let us now show that $e(T)$ is \ws \ continuous. 
That is, we wish to show that if a bounded net of elementary tensors,  $(\sigma^{(1)}_{\netparam}\otimes \sigma^{(2)}_{\netparam})$ is \ws \ convergent towards $\sigma^{(1)}_{0} \otimes \sigma^{(2)}_{0}$, then
\begin{equation} \label{eq:3011a} \lim_{\netparam} e(T)(\sigma^{(1)}_{\netparam} \otimes \sigma^{(2)}_{\netparam}) = e(T)(\sigma^{(1)}_{0}\otimes \sigma^{(2)}_{0}).\end{equation}

Since $e(T)$ is linear, it is enough to verify its \ws \ continuity at $0$. We may write the zero element in $\SOprime(G_{1})\otimes \SOprime(G_{2})$ as $0 = \sigma^{(1)}_{0} \otimes \sigma^{(2)}_{0}$, where $\sigma^{(1)}_{0} = 0$ and $\sigma^{(2)}_{0}$ is some non-zero element in $\SOprime(G_{2})$ with $\Vert \sigma_{0}^{(2)}\Vert_{\SOprime} = 2$. Assume now that $(\sigma^{(1)}_{\netparam}\otimes \sigma^{(2)}_{\netparam})\stackrel{\textnormal{w}^{*}}{\longrightarrow} 0 = (0\otimes \sigma^{(2)}_{0})$. Furthermore,  we may assume without loss of generality  
that $\sup_{\netparam} \Vert \sigma^{(1)}_{\netparam} \Vert_{\SOprime(G_{1})} < \infty$ and 
$\Vert \sigma^{(2)}_{\netparam} \Vert_{\SOprime} = 1$ for all $\netparam$ (in order to achieve this normalization use that $\sigma^{(1)}\otimes \sigma^{(2)} = \alpha \sigma^{(1)} \otimes \alpha^{-1}\sigma^{(2)}$ for all $\alpha\in\C\backslash\{0\}$. If $\sigma^{(2)}_{\netparam}=0$, then use that $\sigma^{(1)}\otimes 0 = 0 \otimes \sigma^{(2)} = 0$ and then normalize appropriately). 

Assume for a moment that $(\sigma^{(1)}_{\netparam}) \stackrel{\textnormal{w}^{*}}{\longarrownot \longrightarrow} 0$ and that $(\sigma^{(2)}_{\netparam}) \stackrel{\textnormal{w}^{*}}{\longarrownot \longrightarrow} \sigma^{(2)}_{0}$. Then, for $i=1,2$ there exist a function $h^{(i)}\in \SO(G_{i})$ and an $\epsilon^{(i)}>0$ such that, for all index $\netparam^{(i)}_{0}$ we have that $\netparam^{(i)}>\netparam^{(i)}_{0}$ and $\vert ( h^{(i)},\sigma^{(i)}_{\netparam^{(i)}}-\sigma^{(i)}_{0})_{\SO,\SOprime(G_{i})}\vert \ge \epsilon^{(i)}$. This allows us, for sufficiently large $\netparam$, to achieve the inequality
\begin{equation} \label{eq:3011b} {\epsilon^{(1)}}{\epsilon^{(2)}} \le \vert (h^{(1)},\sigma^{(1)}_{\netparam})_{\SO,\SOprime(G_{1})} (h^{(2)},\sigma^{(2)}_{\netparam}-\sigma^{(2)}_{0})_{\SO,\SOprime(G_{2})}\vert. \end{equation}
On the other hand, because by assumption $(\sigma^{(1)}_{\netparam}\otimes \sigma^{(2)}_{\netparam})\stackrel{\textnormal{w}^{*}}{\longrightarrow} (0\otimes \sigma^{(2)}_{0}) = 0$ we can ensure that, for sufficiently high values of $\netparam$,
\[ \vert (h^{(1)},\sigma^{(1)}_{\netparam})_{\SO,\SOprime(G_{1})} \, (h^{(2)},\sigma^{(2)}_{\netparam}-\sigma^{(2)}_{0})_{\SO,\SOprime(G_{2})}\vert < {\epsilon}^{(1)}{\epsilon}^{(2)}.\]
This is a contradiction to \eqref{eq:3011b} and therefore the assumption that $(\sigma^{(1)}_{\netparam}) \stackrel{\textnormal{w}^{*}}{\longarrownot \longrightarrow} 0$ and that $(\sigma^{(2)}_{\netparam}) \stackrel{\textnormal{w}^{*}}{\longarrownot\longrightarrow} \sigma^{(2)}_{0}$ is wrong. We must therefore be in either of the following three situations:
\begin{enumerate}
\item[(i)] $(\sigma^{(1)}_{\netparam}) \stackrel{\textnormal{w}^{*}}{\longrightarrow} 0$ and $(\sigma^{(2)}_{\netparam}) \stackrel{\textnormal{w}^{*}}{\longarrownot\longrightarrow} \sigma^{(2)}_{0}$
\item[(ii)] $(\sigma^{(1)}_{\netparam}) \stackrel{\textnormal{w}^{*}}{\longrightarrow} 0$ and $(\sigma^{(2)}_{\netparam})  \stackrel{\textnormal{w}^{*}}{\longrightarrow} \sigma^{(2)}_{0}$
\item[(iii)] $(\sigma^{(1)}_{\netparam}) \stackrel{\textnormal{w}^{*}}{\longarrownot\longrightarrow} 0$ and $(\sigma^{(2)}_{\netparam}) \stackrel{\textnormal{w}^{*}}{\longrightarrow} \sigma^{(2)}_{0}$
\end{enumerate}
Assume for a moment that $(\sigma^{(2)}_{\netparam}) \stackrel{\textnormal{w}^{*}}{\longrightarrow} \sigma_{2}^{(0)}$. It follows from \cite[Theorem 2.6.14]{me98-1} that this implies  
\[ \Vert \sigma^{(2)}_{0} \Vert_{\SOprime} \le \liminf_{\netparam} \Vert \sigma^{(2)}_{\netparam} \Vert_{\SOprime}.\]
However, with our choice of normalization we find that
\[ 2 = \Vert \sigma^{(2)}_{0} \Vert_{\SOprime} \le \liminf_{\netparam} \Vert \sigma^{(2)}_{\netparam} \Vert_{\SOprime} = 1,\]
which, clearly, can not be the case. We must therefore be in situation (i). We thus have that $(\sigma^{(1)}_{\netparam}) \stackrel{\textnormal{w}^{*}}{\longrightarrow} \sigma^{(1)}_{0} = 0$. Note that $T$ maps bounded \ws \ convergent nets in $\SOprime(G_{1})$ into norm convergent nets in $\SOdoubleprime(G_{2})$. Thus $\lim_{\netparam} \Vert T \sigma^{(1)}_{\netparam} \Vert_{\SOdoubleprime} = 0$. We therefore find that
\begin{align*} & \quad \, \lim_{\netparam} \vert e(T)(\sigma^{(1)}_{\netparam}\otimes\sigma^{(2)}_{\netparam}) - e(T)(\sigma^{(1)}_{0}\otimes \sigma^{(2)}_{0})\vert \\
& = \lim_{\netparam} \vert T(\sigma^{(1)}_{\netparam})(\sigma^{(2)}_{\netparam}) - T(0)(\sigma^{(2)}_{0}) \vert  = \lim_{\netparam} \vert T(\sigma^{(1)}_{\netparam})(\sigma^{(2)}_{\netparam}) \vert \\
& \le \lim_{\netparam} \Vert T(\sigma^{(1)}_{\netparam}) \Vert_{\SOdoubleprime} \, \Vert \sigma^{(2)}_{\netparam} \Vert_{\SOprime} \\
& \le  \big( \sup_{\netparam} \Vert \sigma^{(2)}_{\netparam} \Vert_{\SOprime} \big) \, \lim_{\netparam} \Vert T(\sigma^{(1)}_{\netparam}) \Vert_{\SOdoubleprime} = 0.
 \end{align*}
We have thus verified the continuity of $e(T)$ for elementary tensors with respect to the \ws \ topology induced by functions in $\SO(G_{1}\times G_{2})$. This continuity is preserved by finite linear combinations and as a consequence $e(T)$ is continuous from $\SOprime(G_{1})\otimes \SOprime(G_{2})$ into $\C$.
\end{proof}

\begin{remark}
It is worthwhile to note that in the proof of the inner kernel theorem we only used that the linear span of the elementary tensors in $\SO$, i.e., $\SO(G_{1})\otimes \SO(G_{2})$, is norm-dense in $\SO(G_{1}\times G_{2})$ and that the linear span of the elementary tensors in $\SOprime$, i.e., $\SOprime(G_{1})\otimes \SOprime(G_{2})$ is \ws-dense in $\SOprime(G_{1}\times G_{2})$.
It is therefore possible to formulate the inner kernel theorem in a more general setting with the necessary assumptions, e.g., for coorbit spaces (cf.\ \cite{bagrsp19}) or general Banach spaces. We leave this for elsewhere. See also Remark \ref{rem:2601b}.
 
\end{remark}

\subsection*{Acknowledgments}
We are grateful to the anonymous referee. The comments helped to widen the scope of the presentation in the manuscript. 
The first version of the manuscript was finished (spring 2017)  while the first author was a guest professor at the Technical University of Munich  
(Chair of Theoretical Information Technology: Holger Boche). 
The work of M.S.J.\ was carried out during the tenure of the ERCIM 'Alain Bensoussan` Fellowship Programme at NTNU and finished with support by Deutsches Elektronen-Synchrotron DESY and HamburgX grant LFF-HHX-03 to the Center for Data and Computing in Natural Sciences (CDCS) from the Hamburg Ministry of Science, Research, Equalities and Districts.

\bibliographystyle{abbrv}

\begin{thebibliography}{10}

\bibitem{bagr17}
P.~{B}alazs and K.~{G}r{\"o}chenig.
\newblock {A} guide to localized frames and applications to {G}alerkin-like
  representations of operators.
\newblock In {I}saac {P}esenson, {H}rushikesh {M}haskar, {A}zita {M}ayeli,
  {Q}uoc {T}.~{L}e {G}ia, and {D}ing-{X}uan {Z}hou, editors, {\em {N}ovel
  {M}ethods in {H}armonic {A}nalysis with {A}pplications to {N}umerical {A}nalysis and {D}ata   {P}rocessing}, {A}pplied and {N}umerical {H}armonic {A}nalysis series   ({A}{N}{H}{A}). {B}irkh{\"a}user/{S}pringer,  47-79. 2017.
  
\bibitem{bagrsp19}
P.~{B}alazs, K.~{G}r{\"o}chenig, and M.~{S}peckbacher.
\newblock {K}ernel theorems in coorbit theory.
\newblock {\em Trans. Amer. Math. Soc. Ser. B}, 6 (2019), 346-364.  

\bibitem{ba10-2}
S.~{B}annert.
\newblock {B}anach-{G}elfand {T}riples and {A}pplications in {T}ime-{F}requency   {A}nalysis.
\newblock Master's thesis, {U}niversity of {V}ienna, 2010.

\bibitem{beok20}
A.~{B}enyi and K.~A. {O}koudjou.
\newblock {\em {M}odulation {S}paces. {W}ith {A}pplications to
{P}seudodifferential {O}perators and {N}onlinear {S}chr{\"o}dinger
{E}quations}.
\newblock {A}ppl. {N}um. {H}arm. {A}nal. ({A}{N}{H}{A}) series. {S}pringer
({B}irkh{\"a}user), {N}ew {Y}ork, 2020. 

\bibitem{boro15}
M.~{B}ownik and K.~{R}oss.
\newblock {T}he structure of translation-invariant spaces on locally compact   abelian groups.
\newblock {\em J. Fourier Anal. Appl.}, 21(4):849--884, 2015.

\bibitem{br61}
F.~{B}ruhat. sur un groupe localement compact et applications à l
  etude des repr{\'e}sentations des groupes $p$-adiques.
\newblock {\em Bull. Soc. Math. France}, 89:43--75, 1961.

\bibitem{ch16}
O.~{C}hristensen.
\newblock {\em {A}n {I}ntroduction to {F}rames and {R}iesz {B}ases}.
\newblock {A}pplied and {N}umerical {H}armonic {A}nalysis. {B}irkh{\"a}user    {B}asel, {S}econd edition, 2016.

\bibitem{cofelu08}
E.~{C}ordero, H.~G. {F}eichtinger, and F.~{L}uef.
\newblock {B}anach {G}elfand triples for {G}abor analysis.
\newblock In {\em {P}seudo-differential {O}perators}, volume 1949 of {\em
  {L}ecture {N}otes in {M}athematics}, 1--33. {S}pringer, {B}erlin, 2008.

\bibitem{coni17}
E.~{C}ordero and F.~{N}icola.
\newblock {K}ernel theorems for modulation spaces.
\newblock {\em J. Fourier Anal. Appl.} 25/1,  131-144, 2019.

\bibitem{coro20}
E.~{C}ordero and L.~{R}odino.
\newblock {\em {T}ime-frequency {A}nalysis of {O}perators and {A}pplications}.
\newblock {D}e {G}ruyter {S}tudies in {M}athematics, {B}erlin, 2020. 

\bibitem{de10-1}
A.~{D}elcroix.
\newblock {K}ernel theorems in spaces of generalized functions.
\newblock In {\em {L}inear and non-linear theory of generalized functions and   its applications}, volume~88 of {\em {B}anach {C}enter {P}ubl.},   77--89. {P}olish {A}cad. {S}ci. {I}nst. {M}ath., {W}arsaw, 2010.

\bibitem{fe80}
H.~G. {F}eichtinger.
\newblock {U}n espace de {B}anach de distributions temp{\'e}r{\'e}es sur les   groupes localement compacts ab{\'e}liens.
\newblock {\em C. R. Acad. Sci. Paris S'er. A-B}, 290(17):791--794, 1980.

\bibitem{fe81-2}
H.~G. {F}eichtinger.
\newblock {O}n a new {S}egal algebra.
\newblock {\em Monatsh. Math.}, 92:269--289, 1981.

\bibitem{fe87-1}
H.~G. {F}eichtinger.
\newblock {M}inimal {B}anach spaces and atomic representations.
\newblock {\em Publ. Math. Debrecen}, 34(3-4):231--240, 1987.

\bibitem{fe03-1}
H.~G. {F}eichtinger.
\newblock {M}odulation spaces of locally compact {A}belian groups.
\newblock In R.~{R}adha, M.~{K}rishna, and S.~{T}hangavelu, editors, {\em
  {P}roc. {I}nternat. {C}onf. on {W}avelets and {A}pplications},  1--56,   {C}hennai, {J}anuary 2002, 2003. {N}ew {D}elhi {A}llied {P}ublishers.

\bibitem{fe06}
H.~G. {F}eichtinger.
\newblock {M}odulation {S}paces: {L}ooking {B}ack and {A}head.
\newblock {\em Sampl. Theory Signal Image Process.}, 5(2):109--140, 2006.


\bibitem{fe09}
H.~G. {F}eichtinger.
\newblock {B}anach {G}elfand triples for applications in physics and
engineering.
\newblock volume 1146 of {\em {A}{I}{P} {C}onf. {P}roc.}, pages 189--228.
{A}mer. {I}nst. {P}hys., 2009.


\bibitem{fe19}
H.~G. {F}eichtinger.
\newblock {C}lassical {F}ourier {A}nalysis via mild distributions.
\newblock {\em {M}{E}{S}{A}, {N}on-linear {S}tudies}, 26(4):783--804, 2019. 

\bibitem{fe20-1}
H.~G. {F}eichtinger.
\newblock {A} sequential approach to mild distributions.
\newblock {\em {A}xioms}, 9(1):1--25, 2020.



\bibitem{fegr89}
H.~G. {F}eichtinger and K.~{G}r{\"o}chenig.
\newblock {B}anach spaces related to integrable group representations and their   atomic decompositions, {I}.
\newblock {\em J. Funct. Anal.}, 86(2):307--340, 1989.

\bibitem{fegr92-1}
H.~G. {F}eichtinger and K.~{G}r{\"o}chenig.
\newblock {G}abor wavelets and the {H}eisenberg group: {G}abor expansions and
  short time {F}ourier transform from the group theoretical point of view.
\newblock In C.~K. {C}hui, editor, {\em {W}avelets :a tutorial in theory and
  applications}, volume~2 of {\em {W}avelet {A}nal. {A}ppl.}, 359--397.
  {A}cademic {P}ress, {B}oston, 1992.

\bibitem{feko98}
H.~G. {F}eichtinger and W.~{K}ozek.
\newblock {Q}uantization of {T}{F} lattice-invariant operators on elementary
  {L}{C}{A} groups.
\newblock In H.~G. {F}eichtinger and T.~{S}trohmer, editors, {\em {G}abor
  analysis and algorithms}, {A}ppl. {N}umer. {H}armon. {A}nal., 233--266.
  {B}irkh{\"a}user {B}oston, {B}oston, {M}{A}, 1998.

\bibitem{fo16}
G.~{F}olland.
\newblock {\em {A} {C}ourse in {A}bstract {H}armonic {A}nalysis}.
\newblock {T}extbooks in {M}athematics. {C}{R}{C} {P}ress, {B}oca {R}aton,
  {S}econd edition, 2016.

\bibitem{gr01}
K.~{G}r{\"o}chenig.
\newblock {\em {F}oundations of {T}ime-{F}requency {A}nalysis}.
\newblock {A}ppl. {N}umer. {H}armon. {A}nal. {B}irkh{\"a}user, {B}oston,
  {M}{A}, 2001.

\bibitem{ho03}
L.~{H}{\"o}rmander.
\newblock {\em {T}he {A}nalysis of {L}inear {P}artial {D}ifferential
  {O}perators {I}: {D}istribution {T}heory and {F}ourier {A}nalysis.}
\newblock {C}lassics in {M}athematics. {S}pringer, {R}eprint of the 2nd
  {E}dition 1990 edition, 2003.

\bibitem{jale16-2}
M.~S.~{J}akobsen and J.~{L}emvig.
\newblock {D}ensity and duality theorems for regular {G}abor frames.
\newblock {\em J. Funct. Anal.}, 270(1):229 -- 263, 2016.

\bibitem{ja19}
M.~S. {J}akobsen.
\newblock {O}n a (no longer) {N}ew {S}egal {A}lgebra: {A} {R}eview of the
  {F}eichtinger {A}lgebra.
\newblock {\em J. Fourier Anal. Appl.}, 24(6):1579--1660, 2018.  

\bibitem{me98-1}
R.~{M}egginson.
\newblock {\em {A}n {I}ntroduction to {B}anach {S}pace {T}heory}, volume 183 of
  {\em {G}raduate {T}exts in {M}athematics}.
\newblock {S}pringer-{V}erlag, {N}ew {Y}ork, 1998.

\bibitem{na65-2}
L.~{N}achbin.
\newblock {\em {T}he {H}aar {I}ntegral.}
\newblock {P}rinceton, {N}.{J}.-{T}oronto-{N}ew {Y}ork-{L}ondon: {D}. {V}an
  {N}ostrand {C}ompany, 1965.

\bibitem{os75}
M.~S. {O}sborne.
\newblock {O}n the {S}chwartz-{B}ruhat space and the {P}aley-{W}iener theorem
  for locally compact {A}belian groups.
\newblock {\em J. Funct. Anal.}, 19:40--49, 1975.

\bibitem{rest00}
H.~{R}eiter and J.~D. {S}tegeman.
\newblock {\em {C}lassical {H}armonic {A}nalysis and {L}ocally {C}ompact
  {G}roups. 2nd ed.}
\newblock {C}larendon {P}ress, {O}xford, 2000.

\bibitem{ru51}
A.~{R}uston.
\newblock {O}n the {F}redholm theory of integral equations for operators
  belonging to the trace class of a general {B}anach space.
\newblock {\em {P}roc. {L}ondon {M}ath. {S}oc. (2)}, 53:109--124, 1951.

\bibitem{scwo99}
H.~{S}chaefer and M.~{W}olff.
\newblock {\em {T}opological {V}ector {S}paces}, volume~3 of {\em {G}raduate
  {T}exts in {M}athematics}.
\newblock {S}pringer-{V}erlag, {N}ew {Y}ork, {S}econd edition, 1999.

\bibitem{tr67}
F.~{T}reves.
\newblock {\em {T}opological {V}ector {S}paces, {D}istributions and {K}ernels}.
\newblock Number~25 in {P}ure {A}ppl. {M}ath. {A}cademic {P}ress, {N}ew {Y}ork,
  1967.

\end{thebibliography}


\end{document}